\newtheorem{theorem}{Theorem}
\newtheorem{corollary}[theorem]{Corollary}
\newtheorem{proposition}[theorem]{Proposition}
\newtheorem{lemma}[theorem]{Lemma}
\newtheorem*{theoremA}{Theorem~A}
\newtheorem*{theoremB}{Theorem~B}
\theoremstyle{definition}
\newtheorem{definition}[theorem]{Definition}
\newtheorem{example}[theorem]{Example}
\newtheorem{remark}[theorem]{Remark}
\newtheorem{problem}[theorem]{Problem}
\newtheorem*{remark*}{Remark}
\numberwithin{equation}{section}
\newcommand\mL{L\kern-0.08cm\char39}
\newcommand{\md}{d\kern-0.035cm\char39\kern-0.03cm}
\newcommand{\mt}{t\kern-0.035cm\char39\kern-0.03cm}
\newcommand{\ml}{l\kern-0.035cm\char39\kern-0.03cm}
\renewcommand{\epsilon}{\varepsilon}
\newcommand{\Homeo}{{\mathcal H}}
\newcommand{\RRR}{{\mathbb R}}
\newcommand{\ZZZ}{{\mathbb Z}}
\newcommand{\NNN}{{\mathbb N}}
\newcommand{\AAa}{{\mathcal A}}
\newcommand{\im}{\operatorname{\rm Im}}
\newcommand{\Int}{\mathop{\rm Int}}
\newcommand{\Cob}{\operatorname{\rm Cob}}
\newcommand{\cob}{\mathop{\rm cob}}
\newcommand{\dist}{\mathop{\rm dist}}
\newcommand{\diam}{\mathop{\rm diam}}
\newcommand{\Bd}{\mathop{\rm Bd}}
\newcommand{\Id}{\operatorname{Id}}
\newcommand{\Hit}{\operatorname{Hit}}
\newcommand{\pr}{\mathop{\rm Pr}}
\newcommand{\Coin}{\mathop{\rm Coin}}
\newcommand{\card}{\mathop{\rm card}}
\newcommand{\Ydeg}{Y^{\rm deg}}
\newcommand{\sgn}{\operatorname{sgn}}
\newcommand{\Orb}{\mathop{\rm Orb}}
\newcommand{\mymod}{\operatorname{mod}}
\newcommand{\tor}{\mathop{\rm tor}}
\newcommand{\abs}[1]{\lvert#1\rvert}
\newcommand{\caseI}{(11-22)}
\newcommand{\caseII}{(12-21)}
\newcommand{\caseIy}{(11-22*)}
\newcommand{\caseIIy}{(12-21*)}
\begin{document}
\begin{large}

\title[Minimal direct products]{Minimal direct products}

\author{Mat\'u\v s Dirb\'ak}
\author{\mL ubom\'\i r Snoha}
\author{Vladim\'\i r \v Spitalsk\'y}
\address{Department of Mathematics, Faculty of Natural Sciences, Matej Bel University,
Tajovsk\'{e}ho 40, 974 01 Bansk\'{a} Bystrica, Slovakia}

\email{[Matus.Dirbak, Lubomir.Snoha, Vladimir.Spitalsky]@umb.sk}

\thanks{This work was supported by the Slovak Research and Development Agency under contract No.~APVV-15-0439 and partially by VEGA grant 1/0158/20.}

\subjclass[2020]{Primary 37B05; Secondary 37B45}

\keywords{Minimal map, minimal space, product-minimal space, homeo-product-minimal space, direct product.}

\begin{abstract}
A space is called minimal if it admits a minimal continuous selfmap. 
We give examples of metrizable continua $X$ admitting both minimal homeomorphisms and minimal noninvertible maps, whose squares $X\times X$ are not minimal, i.e., they admit neither minimal homeomorphisms nor minimal noninvertible maps, thus providing a definitive answer to a question posed by Bruin, Kolyada and the second author in 2003. (In 2018, Boro\'nski, Clark and Oprocha provided an answer in the case when only homeomorphisms were considered.)

Then we introduce and study the notion of product-minimality. We call a compact metrizable space~$Y$ product-minimal if, for every minimal system $(X,T)$ given by a metrizable space $X$ and a continuous selfmap~$T$, there is a continuous map $S\colon Y\to Y$ such that the product $(X\times Y,T\times S)$ is minimal. 
If such a map $S$ always exists in the class of homeomorphisms, we say that $Y$ is a 
homeo-product-minimal space. 
We show that many classical examples of minimal spaces, including compact connected metrizable abelian groups, compact connected manifolds without boundary admitting a free action of a nontrivial compact connected Lie group, and many others, are in fact homeo-product-minimal.
\end{abstract}

\maketitle

\setcounter{tocdepth}{1}
\tableofcontents


\section{Introduction and main results}\label{S:intro}

Throughout the paper we consider only metrizable spaces; however, sometimes we emphasize metrizability,
mainly in definitions and statements of results.
By a dynamical system we mean a pair $(X,T)$, where $X$ is a (metrizable) space 
and $T\colon X\to X$ is a continuous map. Note that if not stated otherwise,
$X$ is not necessarily compact and $T$ is not necessarily a homeomorphism. 
The system is called \emph{minimal} if there is no 
proper subset $M \subseteq X$ that is nonempty, closed and $T$-invariant 
(i.e., $T(M)\subseteq M$). In such a case, we also say that the map $T$ itself 
is minimal. Clearly, a system $(X, T)$ is minimal if and only if the 
(forward) orbit $\Orb_T(x) = \{x, T(x), T^2(x), \dots \}$ of every point 
$x\in X$ is dense in $X$. Recall that for a homeomorphism $T$ on a compact space $X$, minimality is equivalent to the density of all full orbits.

Throughout the present paper, a (metrizable) space admitting a minimal map 
is said to be a \emph{minimal space}. The classification of 
(compact) minimal spaces is a well-known open problem in topological 
dynamics that is solved only in some particular cases; for some references see, 
e.g.,~\cite{KS}.

\subsection{Minimal spaces with nonminimal squares}\label{SS:minXnonminX2}

Even such a basic and natural question, which is explicitly posed 
in~\cite[p.~126]{BKS}, as to whether the product of two compact minimal spaces 
is a minimal space has not been answered to date in its full generality, 
though recently a negative answer has been provided in the special case 
when homeomorphisms rather than continuous maps are considered. In fact, 
Boro\'nski, Clark and Oprocha~\cite{BCO} found a continuum $Y$
 admitting a minimal \emph{homeomorphism} such that $Y\times Y$ does not admit any 
minimal \emph{homeomorphisms} (recall that a \emph{continuum} is a compact connected metrizable space). One of the aims of the present paper is to 
solve the abovementioned problem completely, 
i.e.,~to find a space admitting both a minimal homeomorphism and a minimal noninvertible map,
whose square is not minimal, i.e., it admits neither a minimal homeomorphism nor a minimal noninvertible map.

To present out results, we need to introduce some terminology. Recall that, by \cite{DST}, a compact metrizable 
space $X$ is called a \emph{Slovak space} if it has at least three elements, 
admits a minimal homeomorphism $T$ and the group $\Homeo(X)$ of all homeomorphisms on $X$ 
is $\Homeo(X) = \{T^n \colon \, n\in \mathbb Z\}$. In~\cite{DST} it has been
proved that if $X$ is a Slovak space then it is a continuum, the cyclic 
group $\Homeo(X)$ is infinite (i.e., isomorphic to $\mathbb Z$) and all its 
elements, except the identity, are minimal homeomorphisms. 

In \cite[Section 4]{DST} a class of Slovak spaces has been constructed; these particular Slovak spaces are said to be \emph{DST spaces} throughout the present paper. These spaces are described in detail in Section~\ref{Sec:the.Slovak}. 
For now, let us only mention that no DST space admits a minimal noninvertible map.

The third part of the following theorem gives a definitive negative answer to the mentioned question from \cite{BKS};
there, $\mathbb T^n$ denotes the $n$-torus.
The first two parts of the theorem are of independent interest. (For the definition of a product-minimal space used in the second part, see Definition~\ref{D:pum space}.)

\begin{theoremA}\label{T:Slovak} Let $X$ be an arbitrary DST space.
\begin{enumerate}
\item 
The space $X$ admits a minimal homeomorphism, but $X\times X$ does not admit any minimal continuous map (see Theorem~\ref{T:thmA1}).

\item 
Let $Y$ be a product-minimal path-connected continuum.
Then $X\times Y$ is a minimal space\footnote{The space $X\times Y$ admits a minimal homeomorphism or a minimal noninvertible map or both, depending on $Y$.} with nonminimal square	
(see Theorem~\ref{T:XxY-nonminimal-square}).

\item 
Let $n\ge 2$ be an integer.
Then the space $X\times\mathbb T^n$ admits a minimal homeomorphism as well as a minimal noninvertible map,
but its square is not minimal
(see Theorem~\ref{T:XxT-nonminimal-square}).
\end{enumerate}

\end{theoremA}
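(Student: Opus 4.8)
The plan is to build the desired map on $X \times \mathbb{T}^n$ as a product (or skew product) $T \times S$, where $T$ is a minimal homeomorphism of the DST space $X$ and $S$ is a suitable selfmap of the torus $\mathbb{T}^n$, and then separately rule out minimality of the square. The key structural input should be part (2) of this very theorem: since the torus $\mathbb{T}^n$ is a compact connected abelian group, it is product-minimal (indeed homeo-product-minimal, as announced in the abstract), and it is certainly a path-connected continuum. Hence part (2) immediately gives that $X \times \mathbb{T}^n$ is a minimal space. So the real content of part (3) is the refinement that $X \times \mathbb{T}^n$ admits minimal maps of \emph{both} types — an invertible one and a noninvertible one — together with the nonminimality of the square.

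For the \emph{homeomorphism} version I would take $S$ to be a minimal rotation of $\mathbb{T}^n$ whose rotation vector is rationally independent together with the relevant data of $T$, and verify that $T \times S$ is a minimal homeomorphism; this is the kind of "relative minimality over a minimal base" argument where one uses that the only closed invariant sets project onto closed invariant subsets of the factors, and then exploits the group rotation structure on the fibers (an equicontinuity/Furstenberg-type skew-product argument). For the \emph{noninvertible} version, the idea is to replace the rotation $S$ on one torus coordinate by a minimal noninvertible map — for instance a map of the form $z \mapsto z^k \cdot (\text{rotation})$ on a circle factor, or more robustly to use a skew product over $(X,T)$ with noninvertible fiber maps — so that the resulting map on $X \times \mathbb{T}^n$ is minimal but not a homeomorphism. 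Here one must be careful: the fiber maps must be chosen so that minimality is preserved (again a density-of-orbits argument in the skew product), while the total map fails injectivity; the torus provides enough room ($n \ge 2$) to absorb both a "rotation-like" direction ensuring minimality and a "covering-like" direction destroying invertibility.

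The nonminimality of the square $X \times \mathbb{T}^n \times X \times \mathbb{T}^n$ should follow by the same mechanism as in part (1). Reorganizing the square as $(X \times X) \times \mathbb{T}^{2n}$, I would argue that any continuous selfmap of this product, when composed with the projection to the $X \times X$ factor, is constrained: the rigidity of the DST space $X$ — namely $\Homeo(X) = \{T^m : m \in \mathbb{Z}\}$ together with the fact that $X$ admits no minimal noninvertible map — forces the $X \times X$-component of any candidate minimal map to behave, on each fiber, like a pair of powers of $T$, and then a factor/semiconjugacy argument shows $X \times X$ itself would have to be a minimal space, contradicting part (1) (equivalently Theorem~\ref{T:thmA1}). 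Concretely I expect this to go through a lemma extracting, from a minimal map on $A \times B$ with $A$ "dynamically rigid", a minimal map on $A$ or a strong structural restriction; part (1)'s proof presumably already contains exactly such an extraction, and part (3) should invoke it almost verbatim after peeling off the torus coordinates.

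The main obstacle I anticipate is the construction of the minimal \emph{noninvertible} map on $X \times \mathbb{T}^n$: one must simultaneously guarantee that forward orbits are dense (minimality) and that the map is genuinely non-injective, and these pull in opposite directions, so the choice of fiber maps over the minimal system $(X,T)$ has to be made quite carefully — this is presumably where the hypothesis $n \ge 2$ is essential and where the bulk of the technical work in Theorem~\ref{T:XxT-nonminimal-square} lies. The nonminimality half, by contrast, should be a relatively formal consequence of part (1) and the rigidity of DST spaces.
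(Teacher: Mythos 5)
The genuine gap is in your construction of the minimal \emph{noninvertible} map on $X\times\mathbb T^n$. The concrete mechanism you propose --- a ``covering-like direction'', e.g.\ $z\mapsto z^k\cdot(\text{rotation})$ on one circle factor, paired with a rotation-like direction --- cannot work: for $|k|\ge2$ (or $k=0$) such a circle map has a fixed point and is therefore nonminimal, and since it would be a factor of your candidate map, the whole product would be nonminimal. Indeed, no single circle coordinate can carry the noninvertibility, because every minimal continuous selfmap of $\mathbb S^1$ is invertible. The device you are missing is the almost 1-to-1 extension technique (Lemma~\ref{L:minim.ext} together with Lemma~\ref{L:closed}): by \cite[Theorem~2.1]{BCO}, for $n\ge2$ and residually many $t$ the time-$t$ map $\varphi_t$ of a minimal irrational flow on $\mathbb T^n$ admits a noninvertible minimal map $S_t$ on $\mathbb T^n$ as an almost 1-to-1 extension; choosing $t$ simultaneously in the residual set for which $T\times\varphi_t$ is minimal (Theorem~\ref{T:flow.centr}), the map $T\times S_t$ is an almost 1-to-1 extension of the minimal homeomorphism $T\times\varphi_t$, hence minimal, and it is noninvertible because $S_t$ is. This is also the true role of the hypothesis $n\ge2$: not that the torus needs ``room'' for two directions, but that $\mathbb T^1$ admits no noninvertible minimal maps while $\mathbb T^n$ with $n\ge2$ does; the hard analytic work is outsourced to \cite{BCO} rather than done in Theorem~\ref{T:XxT-nonminimal-square} itself.

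The remaining parts of your outline do match the paper. The minimal homeomorphism on $X\times\mathbb T^n$ comes from the homeo-product-minimality of compact connected abelian groups (Theorem~\ref{T:groups}), proved by a uniform-distribution/Baire argument rather than by any ``rational independence with the data of $T$'' (which is not meaningful for a non-rotation base). The nonminimality of the square is obtained exactly as you guess, by rewriting $(X\times\mathbb T^n)^2$ as $X\times X\times\mathbb T^{2n}$ and rerunning the path-component rigidity analysis of part (1) with a path-connected continuum $Y$ appended (Theorem~\ref{T:XxXxY}); the one technical wrinkle is that $W_{m,n}\times Y$ need not have the fixed point property, so the coincidence argument must be applied fibrewise to $F_y=\pi_{12}\circ F(\cdot,\cdot,y)$, and the conclusion is that $F^2$ is a skew product over some $T^{c\times d}$, whose nonminimality (Proposition~\ref{P:T-ab-nonminimal}) gives the contradiction.
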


We do not know whether Theorem~A is true for \emph{every} Slovak space $X$, but it is true for DST spaces. Let us now compare such 
a DST space $X$ with the space $Y$, which is constructed in \cite{BCO} to prove a weaker version of (1). 
Since the construction of $Y$ in \cite{BCO} is inspired by that of $X$ in \cite{DST}, the spaces $X$ and $Y$ share some common features. However, there are also significant differences between them.

\begin{itemize}
\item Both $X$ and $Y$ admit minimal homeomorphisms. While $Y^2$ does not admit minimal
homeomorphisms, leaving open the possibility that it admits minimal continuous maps, 
we prove that $X^2$ does not admit minimal continuous maps at all (to prove that 
$X^2$ does not admit minimal homeomorphisms is much easier, see 
Theorem~\ref{T:TA.homeo}).

\item Both $X$ and $Y$ are nondegenerate indecomposable continua, so they have 
uncountably many composants, see~\cite[Theorems 11.13 and 11.15]{Nad}. 

\item For both $X$ and $Y$, it is true that all but one of their composants are 
continuous injective images of the real line. 

\item The exceptional composant of $Y$ consists of countably many pseudo-arcs connected by
arcs, while the exceptional composant of $X$ is the union of countably 
many topologist's sine curves (see Figure~\ref{Fig:gamma} in Section~\ref{Sec:the.Slovak}). 
Therefore, each nondegenerate 
proper subcontinuum of $X$ is decomposable, 
while $Y$ does not have this property.

\item Not only for $N=2$ but even for every $N\ge 2$, the space $Y^N$ does not admit a minimal homeomorphism.
The same is true for $X$ (see Remark~\ref{R:homeo-case-XN}). However, it is not known whether $Y^N$ ($N\ge 2$) and $X^N$ ($N\ge 3$) admit
minimal noninvertible maps.
\end{itemize}

Every DST space is a one-dimensional continuum. Theorem~A(2) shows that there are other possibilities for the topological dimension of a minimal space that has nonminimal square. In fact, 
such spaces can have arbitrary positive topological dimensions, including infinity.

Concerning the minimality of spaces and their powers, Theorem~A shows that there is a space $X$ such that the set $S_X=\{n\in\mathbb N\colon X^n \text{ is minimal}\}$
contains $1$ and does not contain $2$. This situation suggests the following (very general but) interesting problem.
\begin{problem}
	Characterize all sets $S\subseteq \mathbb N$ for which there exists a compact metrizable 
	space $X$ such that $X^n$ is a minimal space if and only if $n\in S$ (that is, such that $S_X=S$).
\end{problem}

Our interest in this paper is in discrete dynamical systems. Let us mention, however, that for continuous flows 
a counterexample as in Theorem~A is not possible. In fact, the class of compact metrizable spaces admitting 
minimal continuous flows is closed with respect to at most countable products; 
see~\cite[Theorem 25]{Di} for an even stronger result.

In connection with Theorem~A, let us also mention some other interesting facts.
\begin{itemize}
	\item 
	There exist nonminimal continua $X,Z$ such that
	the product $X\times Z$ is minimal; see Example~\ref{e:1}.
	
	\item 
	There exist minimal continua $X,Z$ such that $X\times Z$
	admits a minimal direct product and every 
	skew-product on $X\times Z$ is a direct product;
	see Example~\ref{e:3}.\footnote{As usual, by a skew product on $X\times Z$ we mean a continuous map of the form $F(x,z)=(f(x), g(x,z))$. We often write $g_x$ instead of $g(x,\cdot)$ and $(f,g_x)$ instead of $F$.}
	
	\item 
	There exist continua $X,Y$ such that all the spaces $X,Y,X\times Y$ 
	admit minimal homeomorphisms and every homeomorphism on $X\times Y$ takes 
	the form of a direct product; see Example~\ref{e:factorwise-rigid}. 
	Therefore, $X\times Y$ is a factorwise rigid\footnote{The Cartesian product $X\times Y$
		is called \emph{factorwise rigid} provided that if $h\colon X \times Y \to X\times Y$
		is a homeomorphism, then $h$ is either of the form
		$h(x, y) = (f(x), g(y))$, where $f\colon X\to X$ and $g\colon Y\to Y$ are homeomorphisms, or
		$h(x, y) = (f(y), g(x))$, where $f\colon Y\to X$ and $g\colon X\to Y$ are
		homeomorphisms. Of course, if $X$ and $Y$ are not homeomorphic, then the latter case cannot
		occur. The notion of factorwise rigidity has been used since~\cite{BL}, but already in~\cite{KKT} 
		it was proven that the product of two Menger universal curves has the described property.} continuum admitting 
	a minimal homeomorphism. 
\end{itemize}

\subsection{Product-minimal spaces}\label{SS:PMspaces}

Minimal direct product systems may seem too special, for instance, compared with
minimal skew products, but sometimes they naturally appear and prove to be useful. Let us illustrate this situation by an example. First, recall that by a theorem due to H. Weyl (see, 
e.g.,~\cite[Chapter I, Theorem~4.1]{KuiNie}), if $(a_n)_{n=1}^\infty$ is 
a sequence of distinct integers, then the sequence $(a_n\theta)_{n=1}^\infty$ 
is uniformly distributed $\mymod 1$ for almost all real numbers $\theta$. 
Using this fact, one can show (see, e.g.,~\cite[Proposition 1]{KST}) that 
if $(X,T)$ is a (not necessarily compact) minimal dynamical system, then there exists an (irrational) rotation $R$ of the circle 
$\mathbb S^1$ such that the direct product $(X\times \mathbb S^1, T\times R)$ 
is minimal. This result has proven to be useful in the description of minimal 
sets of fiber-preserving maps in graph bundles (see~\cite{KST}). Note that this 
shows that circle $\mathbb S^1$ is a (homeo-)product-minimal space 
according to Definition~\ref{D:pum space} below.

The spaces discovered in \cite{DST} arose as a result of a (more or less) explicit construction, and they can certainly be said to admit ``few'' minimal maps. In many other situations in the literature, the minimality of a space $Y$ is verified by the Baire category method; in this way, it is shown that minimality is a typical property among the continuous maps on $Y$ from a particular class. One is then naturally led to a (vague) claim that such a space $Y$ admits ``many'' minimal maps.

If we want to make the expression ``many'' be precise, there are several possible notions to pursue, including cardinality, topological size, algebraic size and others. Each of these approaches has advantages in one situation or another. 
What we expect from the notion of ``many'' is that the family of all minimal maps on $Y$ is large enough compared with all minimal dynamical systems. For instance, on the circle there is a large enough family of minimal maps in the sense described above: for an arbitrary prescribed minimal dynamical system, the circle supports a minimal rotation disjoint from it.

These ideas lead us to the following definition.

\begin{definition}\label{D:pum space}
	A compact metrizable space $Y$ is said to be \emph{product-minimal}, 
	or a \emph{PM space}, if for every minimal dynamical system $(X,T)$ with $X$ metrizable, not necessarily compact, there exists 
	a continuous map $S\colon Y\to Y$ such that the direct product 
	$(X\times Y, T \times S)$ is minimal.  If
	such a map $S$ always exists in the class of homeomorphisms, we say that $Y$ is a 
	\emph{homeo-product-minimal space} or an \emph{HPM space}. 
\end{definition}

A long (incomplete) list of (homeo-)product-minimal spaces can be found in Theorem~B below. By definition, every HPM space is a PM space and every PM space is minimal.  There are PM spaces that are not HPM; see Theorem~\ref{T:cantoroid} and Remark~\ref{R:PMnotHPM}. Furthermore, every DST space $X$ is a minimal continuum that is not product-minimal (see Proposition~\ref{P:Slovak.not.HPM}(b) and Example~\ref{e:M-not-PM}). Notice also that no Slovak space is homeo-product-minimal (see Proposition~\ref{P:Slovak.not.HPM}(a)).

\medskip

Note that in Definition~\ref{D:pum space}, the space $X$ in the base is not assumed to be compact because, quite surprisingly, our methods used to prove Theorem~B do not require the compactness of $X$. 
We emphasize, however, that the metrizability of $X$ is essential. Because if we allowed, for instance, the universal compact minimal dynamical system in the base, then (homeo-)product-minimality would reduce to the class of degenerate spaces alone.

On the other hand, a product-minimal space $Y$ is compact by definition. 
In fact, the compactness of $Y$ is used heavily in our proof of Theorem~B (Lemma~\ref{L:fiber} indicates a reason for this assumption). Moreover, the compactness assumption is less restrictive 
than it might seem. In fact, an obvious necessary condition for the product-minimality
of a space $Y$ is its minimality. However, minimal metrizable spaces are 
either compact or not locally compact (by Gottschalk's theorem~\cite{Gott},
if $X$ is a noncompact Hausdorff space with a compact subset having nonempty 
interior, then $X$ does not admit any minimal map). Therefore, if we do not 
want to complicate things by considering spaces $Y$ that are not locally 
compact, we may and must assume the compactness of $Y$.\footnote{An example of a space $Y$ 
that satisfies the definition of (homeo-)product-minimality except for the 
compactness requirement is the space $\mathbb Q$ of  rational numbers. In fact, given a minimal system $(X,T)$, we can find a minimal rotation $R$ of $\mathbb S^1$ such that the product $T\times R$ is minimal. By restricting $R$ on a single full orbit, we obtain a homeomorphism $S$ on $\mathbb Q$ with $T\times S$ minimal.}

\medskip

If, in Definition~\ref{D:pum space}, we were interested in \emph{skew product} systems on $X\times Y$ rather than in direct product systems, then many more spaces $Y$ would be product-minimal, at least if $X$ were assumed to be compact. For instance, by~\cite{GW,DM}, every compact connected manifold $Y$ without boundary has the property that for every (not necessarily invertible) minimal dynamical system $(X,T)$ on a compact metrizable space $X$ there exists a minimal skew product system on $X\times Y$ with the base map $T$. Among such manifolds $Y$ there are also those that are not minimal, say $\mathbb{S}^2$. Therefore, such minimal skew product systems cannot generally have the form of a direct product. However, as we have seen, in the case of $Y=\mathbb S^1$ such a minimal skew product system exists even in the class of direct products. Moreover, Theorem~B(8) shows that this is also true for many other compact connected manifolds $Y$ without boundary.
	
Let us mention that if $X$ and $Y$ are disjoint unions of two circles (or both $X$ and $Y$ consist of two points only), then $X\times Y$ admits a minimal skew product but does not admit any minimal direct products; see Example~\ref{e:2}. (We believe that there also exist minimal continua $X,Y$ with this property. Obviously, none of them can be a PM space though.)

In connection with this example, the following problem seems interesting.

\begin{problem}\label{Prob:skew}
	Let $Y$ be a \emph{minimal} continuum  such that for every (say, compact) minimal system $(X,T)$, the product $X\times Y$ admits a minimal skew product $(T,g_x)$ with the base $T$. Does it follow that $Y$ is a PM space? If, in addition, such a skew product can always be found with invertible fiber maps $g_x$, then does it follow that $Y$ is an HPM space?
\end{problem}

Notice that if $(T,g_x)$ is a minimal skew product on $X\times Y$ and
a minimal direct product of the form $T\times S$ exists on $X\times Y$, then it is still possible that $T\times g_{x}$ (in fact, $g_x$) is nonminimal for every $x\in X$, as Example~\ref{E:skew.direct.fiber} shows.
Further, without the assumption of the minimality of $Y$, the answer to Problem~\ref{Prob:skew} would be negative. Indeed, for instance, the sphere $Y=\mathbb S^2$ would be a counterexample by its nonminimality and \cite[Theorem~1]{GW}. 

\medskip

Another main aim of the present paper, in addition to Theorem~A, is to study which of the familiar minimal spaces are (homeo-)product-minimal. This is an interesting
problem on its own, but one can also hope that results in this direction can be a useful tool (recall that the product-minimality of
the circle has been used in~\cite{KST} in the study of the topology of minimal sets).
As a trivial observation, notice that the product of a minimal space with a product-minimal space is minimal.
Therefore, the (incomplete) list of product-minimal spaces in Theorem~B given below
enables the production of new minimal spaces from the old spaces (of course, the minimality of some of them also follows from earlier results, such as \cite{GW,DM}).

In the next theorem, we summarize our main results on 
product-minimal spaces. Notice that part (1) generalizes the 
fact that the circle is an HPM space. 
For the definitions of cantoroids and Sierpi\'nski curves, we refer the reader to 
Sections~\ref{S:CtrCtrd} and~\ref{S:Sierp},
respectively.

\begin{theoremB}\label{T:PM spaces}
	Each of the following spaces is an HPM space, hence also a PM space.
	\begin{enumerate}
		\item Every compact connected metrizable abelian group 
		(for a stronger result, see Theorem~\ref{T:groups}, cf.~Theorem~\ref{skew.dir}).
		
		\item Every space of the form $Y\times Z$, where $Y$ is a 
		nondegenerate HPM space and $Z$ is a compact metrizable space 
		admitting a minimal action of an arc-wise connected topological group
		(see Theorem~\ref{T:prod.PM.arc}; cf. Corollary~\ref{C:prod.PM.arc}).
		
		\item Every quotient space $(\Gamma\times Z)/\Lambda$, obtained from 
		$\Gamma\times Z$ by applying the diagonal action of $\Lambda$, where
		$\Gamma$ is an infinite compact connected metrizable abelian group, 
		$\Lambda$ is a finite subgroup of $\Gamma$ and
		$Z$ is a compact connected (not necessarily abelian) metrizable group, 
		on which the group $\Lambda$ acts by automorphisms 
		(see Theorem~\ref{T:fact.groups} and, for a more general result,
		Theorem~\ref{T:fact.space}).
		
		\item The Klein bottle (see Theorem~\ref{T:Klein}).
		
		\item The Cantor space (see Theorem~\ref{T:Cantor}).
		
		\item The Sierpi\'nski curve on the 2-torus and the Sierpi\'nski curve 
		on the Klein bottle (see Theorem~\ref{T:Sierp}).
		
		\item Every compact metrizable space $Y$ admitting a minimal continuous flow 
		whose centralizer in the group of homeomorphisms of $Y$ acts transitively 
		on $Y$ in the algebraic sense (see Theorem~\ref{T:flow.centr}, cf.~also 
		Theorem~\ref{T:flow.centr.gp}).
		
		\item Every compact connected manifold $Y$ without boundary admitting
        a free action of a nontrivial compact connected Lie group
		(see Theorem~\ref{T:manif.homeo-PM.Lie}; for an analogous result in the smooth category, see Theorem~\ref{T:manif.diff-PM.Lie}).
		
		\item All odd-dimensional spheres and compact connected Lie groups (see Remarks~\ref{R:manif.homeo-PM} and~\ref{R:manif.homeo-PM.Lie}).
				
	\end{enumerate}
Moreover,
\begin{enumerate}
\item [(10)] every cantoroid is a PM space (see Theorem~\ref{T:cantoroid}).
\end{enumerate}	
\end{theoremB}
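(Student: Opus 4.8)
The statement to prove is Theorem~B(10): \emph{every cantoroid is a PM space}. (The definition of cantoroid is promised for Section~\ref{S:CtrCtrd}, so I will work from the informal understanding that a cantoroid is a cantoroid-like one-dimensional continuum — roughly, a continuum all of whose nondegenerate proper subcontinua are arcs, or more generally the ``fibered'' analogue of a Cantor set built from arcs — the precise definition being whatever is given there. What matters is that a cantoroid admits a natural quotient onto a Cantor set with ``small'' fibers, and carries a rich self-map structure.)

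\paragraph{Overall strategy.} The plan is to reduce product-minimality of a cantoroid $Y$ to two facts already available in the paper: (i) the Cantor space is an HPM space (Theorem~B(5), i.e.\ Theorem~\ref{T:Cantor}); and (ii) a cantoroid admits a surjection $\pi\colon Y\to C$ onto a Cantor set $C$ under which minimality can be transported. Given a minimal system $(X,T)$ with $X$ metrizable (not necessarily compact), first apply Theorem~\ref{T:Cantor} to obtain a homeomorphism $R\colon C\to C$ such that $(X\times C, T\times R)$ is minimal. I would then build a continuous map $S\colon Y\to Y$ which is a ``lift'' of $R$ along $\pi$, i.e.\ $\pi\circ S = R\circ\pi$, and which on each fiber $\pi^{-1}(c)$ is arranged so that orbits spread out within fibers. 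The claim is that $T\times S$ is then minimal on $X\times Y$. Because product-minimality only requires \emph{some} continuous $S$ (not a homeomorphism), I have the freedom to make $S$ highly noninvertible on fibers, which is exactly what makes the construction tractable and is presumably why (10) asserts only PM, not HPM.

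\paragraph{Key steps.} First I would recall/establish the structural description of a cantoroid: the decomposition of $Y$ into its nondegenerate ``atoms'' (arcs) together with degenerate points, with the quotient space $C=Y/\!\sim$ a Cantor set and quotient map $\pi$ monotone (connected fibers) and such that only at most countably many fibers are nondegenerate — this is where the cantoroid hypotheses do the work, and I'd cite the relevant structure lemma from Section~\ref{S:CtrCtrd}. Second, using Theorem~\ref{T:Cantor}, pick $R$ on $C$ with $T\times R$ minimal on $X\times C$. Third, construct $S$: on the (open, dense, full-measure-in-$C$ in the Baire sense) set of points whose fibers are degenerate, $S$ is forced; over the countably many nondegenerate arc-fibers, define $S$ to map the arc-fiber over $c$ \emph{onto} the arc-fiber over $R(c)$ by, say, an affine surjection, while over degenerate-image fibers collapse appropriately — continuity is checked using that the nondegenerate fibers shrink to points off a meager/countable set, so the ``arc-widths'' tend to $0$. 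Fourth, verify minimality of $T\times S$: take a nonempty closed invariant set $M\subseteq X\times Y$; its image $(\mathrm{id}_X\times\pi)(M)$ is closed, invariant under $T\times R$, hence equals $X\times C$ by minimality there; then show $M$ meets every fiber $X\times\pi^{-1}(c)$ in something that, pushed around by the dynamics, forces $M$ to contain a whole fiber over a point whose fiber is an arc, and — because arc-fibers are visited densely and $S$ surjects arc onto arc — $M$ contains an entire arc-fiber; finally a connectedness/minimality-in-the-base argument (invariance of $M$ under $T\times S$ and density of $T$-orbits) propagates this to all of $X\times Y$. The point is that the only ``extra'' directions beyond the Cantor base are the arc-fibers, and along an arc a surjective affine map together with dense recurrence in the base kills any proper closed invariant set.

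\paragraph{Main obstacle.} The hard part will be Step four — proving that no proper closed invariant set survives in the fiber (arc) direction. Transporting minimality \emph{down} along $\pi$ via Theorem~\ref{T:Cantor} is easy (continuous factors of minimal systems are minimal); the difficulty is going \emph{up}: a closed invariant $M$ could a priori project onto all of $X\times C$ yet meet each arc-fiber in a single point, tracing out a ``graph'' that is itself minimal. I must rule this out, and here I expect to use (a) that the nondegenerate fibers are only countably many, so the ``graph'' would have to be constant (a genuine section) over a residual set of fibers, forcing a contradiction with the surjectivity of $S$ on arc-fibers once the orbit revisits an arc-fiber; and (b) a Baire-category / connectedness argument showing $M\cap(X\times\pi^{-1}(c))$ is a subcontinuum of the arc $\pi^{-1}(c)$ for $c$ in a dense set, which under the affine surjective $S$ and dense base recurrence can only be the whole arc. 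A secondary technical point is the continuity of $S$ at points that are limits of nondegenerate fibers; this needs the metric estimate that $\operatorname{diam}\pi^{-1}(c_n)\to 0$ whenever $c_n\to c$ with $\pi^{-1}(c)$ degenerate, which should be part of the cantoroid structure theory and which I would invoke rather than reprove. If the precise definition of cantoroid in Section~\ref{S:CtrCtrd} is more permissive than ``arcs as atoms'' (e.g.\ allowing other atoms), the same scheme applies provided each atom is itself a PM space and the atoms shrink; in the worst case one may need to replace ``affine surjection of arcs'' by ``any minimal-making surjection of the atom'' supplied by induction on the structure, but the Cantor-base-plus-countably-many-shrinking-atoms skeleton of the argument is unchanged.
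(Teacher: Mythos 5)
Your high-level skeleton (reduce to the Cantor space being HPM, then lift the resulting minimal product up to the cantoroid) matches the paper's, but there is a genuine gap in both the construction of $S$ and the ``going up'' step, and it stems from a wrong guess at the definition. A cantoroid is \emph{not} a continuum built from countably many shrinking arc-atoms over a Cantor base: it is a compact metrizable space with no isolated points in which the set $\Ydeg$ of degenerate components is dense. In particular a nondegenerate cantoroid is never connected, its nondegenerate components can be arbitrary continua (not arcs), and there can be uncountably many of them. Consequently your explicit fiber-by-fiber construction of $S$ (affine surjections of arc-fibers over a Denjoy-style base map) and your proposed resolution of the ``main obstacle'' (Baire category over countably many nondegenerate fibers) both collapse. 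Moreover, even granting a correct structure theory, the existence of a suitable $S$ on a cantoroid is not something one builds by hand in a few lines: the paper invokes \cite[Theorem~24]{BDHSS}, a nontrivial result asserting that for every minimal homeomorphism $h$ of the Cantor space $C$ the cantoroid $Y$ admits a minimal continuous map $S$ which is an \emph{almost 1-to-1 extension} of $(C,h)$.

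The almost 1-to-1 property is precisely the missing idea that dissolves the difficulty you correctly identified. Once $p\colon(Y,S)\to(C,h)$ is almost 1-to-1, the map $\Id_X\times p\colon(X\times Y,\,T\times S)\to(X\times C,\,T\times h)$ is an almost 1-to-1 \emph{closed} semiconjugacy onto a minimal system (Theorem~\ref{T:Cantor} supplies $h$ with $T\times h$ minimal; Lemma~\ref{L:closed} gives closedness), and Lemma~\ref{L:minim.ext} states that any almost 1-to-1 closed extension of a minimal system is minimal. This is exactly the content of Proposition~\ref{P:general}, which the paper applies verbatim. Your worry about a closed invariant set $M$ projecting onto all of $X\times C$ while meeting fibers in proper subsets is killed by density of singleton fibers: near a singleton fiber, $p^{-1}(U)$ is contained in any prescribed open set, so minimality downstairs forces orbits upstairs into that set. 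Without the almost 1-to-1 hypothesis on $S$ your argument has no mechanism to rule out the ``graph'' scenario, so as written the proof does not go through.
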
  

Since the circle is a homeo-product-minimal space, the following question is natural (and, apparently, quite challenging).

\begin{problem}
Is the pseudo-circle a (homeo-)product-minimal space?
\end{problem}

All solenoids are homeo-product-minimal by virtue of Theorem~B(1) or~(7). We do not know whether this is also true for generalized solenoids (see Section~\ref{Sec:the.Slovak} for the definition). Generalized solenoids that are not solenoids still admit minimal continuous flows
but, being nonhomogeneous (see footnote~\ref{ftn:generalized-solenoid} in Section~\ref{Sec:the.Slovak}), 
they do not satisfy the assumption from Theorem~B(7), as follows from Remark~\ref{R:homogeneous}.

\begin{problem}
Are all generalized solenoids (homeo-)product-minimal spaces?
\end{problem}

The notion of product-minimality is related to that of the disjointness of dynamical systems. Indeed, recall that the product of two compact minimal systems is minimal if and only if the two systems are disjoint. The theory of disjointness, initiated in \cite{F}, is now a deeply developed part of the theory of dynamical systems (see \cite{Gla} and the references therein). The main problem in the theory consists of determining the class of all systems that are disjoint from every system in a given class. However, when studying product-minimal systems, these results cannot be directly applied, since in the definition of a product-minimal space no additional information on the system $(X,T)$ is available, except its minimality.

\medskip

The paper is organized as follows. In Section~\ref{S:pm-observations}, we summarize some useful facts on minimal products and product-minimal spaces; in particular, we describe operations under which the class of (homeo-)product-minimal spaces is closed. In Sections~\ref{S:thmB-groups}--\ref{S:thmB-smooth-manifolds},
we prove all the theorems that constitute the ten parts of Theorem~B. 
Then, in Section~\ref{S:examples}, we present the examples mentioned in the introduction. Finally, in Sections~\ref{Sec:the.Slovak}--\ref{Sec:other.min}, we prove all three parts of Theorem~A.

Sections~\ref{S:pm-observations}--\ref{S:examples}, which center around Theorem~B, are mostly
attributable to the first author, and Sections~\ref{Sec:the.Slovak}--\ref{Sec:other.min},
which center around Theorem~A, are mostly attributable to the second and third authors.
However, both parts are closely related, and we believe that they deserve
to be published together as a whole.
We started this research more than 10 years ago, being motivated by the question of
whether the product of minimal spaces has to be minimal and by the fact that the circle
is a homeo-product-minimal space (in the terminology introduced above).


\section{First observations on minimal products and product-minimal spaces}\label{S:pm-observations}

Let us begin by noticing that some new (homeo-)product-minimal spaces can be produced from the old ones.

\begin{proposition}
	The class of all (homeo-)product-minimal spaces is closed with respect to at most countable products.
\end{proposition}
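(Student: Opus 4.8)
The plan is to show that if $\{Y_i\}_{i\in I}$ is an at most countable family of (homeo-)product-minimal spaces, then $Y=\prod_{i\in I}Y_i$ is again (homeo-)product-minimal. First I would fix an arbitrary minimal dynamical system $(X,T)$ with $X$ metrizable, and aim to construct, by a diagonal/recursive argument, a family of maps $S_i\colon Y_i\to Y_i$ such that the product $(X\times\prod_i Y_i,\,T\times\prod_i S_i)$ is minimal. The key observation is that a product of systems $(X\times Y_1\times\cdots, T\times S_1\times\cdots)$ can be regrouped: viewing $X\times Y_1$ together with the map $T\times S_1$ as a new ``base'' system, the full product is $(X\times Y_1)\times Y_2\times\cdots$ with base map $T\times S_1$, and this base system is itself minimal (provided it was chosen so). So the construction is inductive: having produced $S_1,\dots,S_{n-1}$ so that $(X\times Y_1\times\cdots\times Y_{n-1},\,T\times S_1\times\cdots\times S_{n-1})$ is minimal, apply the (homeo-)product-minimality of $Y_n$ with base system this $(n-1)$-fold product to obtain $S_n\colon Y_n\to Y_n$ (a homeomorphism in the HPM case) with the $n$-fold product minimal.

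For a \emph{finite} family this inductive step already finishes the proof, since after finitely many steps we have the minimality of the whole product, and the maps $S_i$ are homeomorphisms in the HPM case (a finite product of homeomorphisms is a homeomorphism). For a \emph{countably infinite} family $\{Y_i\}_{i\in\NNN}$ the above recursion produces maps $S_i$ for all $i$, and I would then need the following passage-to-the-limit fact: if $(X\times Y_1\times\cdots\times Y_n,\,T\times S_1\times\cdots\times S_n)$ is minimal for every $n$, then the inverse-limit/product system $(X\times\prod_{i\in\NNN}Y_i,\,T\times\prod_{i\in\NNN}S_i)$ is minimal as well. This is a standard fact: the projections onto the initial $n$ coordinates are factor maps, minimality passes to inverse limits of minimal systems when the product map is continuous (which it is, since each $S_i$ is continuous and a countable product of continuous maps is continuous in the product topology), and a countable product of compact metrizable spaces is compact metrizable, so $\prod_i Y_i$ (hence $X\times\prod_i Y_i$) is a legitimate target. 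Concretely, to see minimality one checks that every point has dense orbit: a basic open set in the product depends on only finitely many coordinates, say the first $n$ of the $Y_i$'s plus an open set in $X$, and density in that finite piece is exactly the minimality of the $n$-th finite product, which we already have.

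I would then conclude: the map $S=\prod_{i\in I}S_i$ is continuous on the compact metrizable space $Y=\prod_{i\in I}Y_i$, and $T\times S$ is minimal; moreover in the HPM case each $S_i$ is a homeomorphism, so $S$ is a homeomorphism (a product of homeomorphisms, with inverse $\prod_i S_i^{-1}$, continuous in the product topology). Hence $Y$ is (homeo-)product-minimal.

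The main obstacle I anticipate is purely bookkeeping rather than conceptual: one must be careful that the ``base'' systems fed into the definition of product-minimality at each stage remain \emph{metrizable} (true, since finite and countable products of metrizable spaces are metrizable) and genuinely \emph{minimal} (guaranteed by the previous inductive step), and that in the countable case the limiting argument for minimality of the product system is correctly justified — but this last point is routine and is precisely the kind of elementary fact about minimal systems that could be cited or dispatched in a line. No delicate estimates or deep theorems are needed; the proposition is essentially a formal consequence of the definition of (homeo-)product-minimality together with associativity of direct products and the stability of minimality under passing to countable products of already-compatible systems.
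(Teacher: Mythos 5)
Your proposal is correct and follows essentially the same route as the paper: recursively apply the (homeo-)product-minimality of $Y_n$ with the already-minimal $(n-1)$-fold product as the base system, then pass to the limit by noting that the full product is an inverse limit of the minimal finite-stage products (equivalently, basic open sets depend on finitely many coordinates). No gaps.
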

\begin{proof}
	We prove the proposition only for HPM spaces; the proof for PM spaces is analogous. So let $Y_i$, $i=1,2,\dots$, be 
	HPM spaces. Then $Y_i$ are compact metrizable by definition and hence so is $Y=\prod_{i=1}^\infty Y_i$. Let $(X,T)$ be a minimal system. Since $Y_1$ is HPM, there is a homeomorphism $S_1$ on $Y_1$ such that $(X\times Y_1, T\times S_1)$ is minimal. Since $Y_2$ is HPM, there is a homeomorphism $S_2$ on $Y_2$ such that $(X\times Y_1 \times Y_2, T\times S_1 \times S_2)$ is minimal. Continuing in this way, we find a sequence of homeomorphisms $S_i\colon Y_i\to Y_i$ ($i\in\mathbb N$) such that the product $\big(X\times \prod_{i=1}^N Y_i, T\times \prod_{i=1}^N S_i\big)$ is minimal for every $N\in\mathbb N$. 
	Then $S = \prod_{i=1}^{\infty} S_i$ is a homeomorphism on $Y$. Moreover, the system $(X\times Y, T\times S)$ is minimal, being an inverse limit of the minimal systems $\big(X\times \prod_{i=1}^N Y_i, T\times \prod_{i=1}^N S_i\big)$ ($N=1,2,\dots$).
\end{proof}

\begin{remark}
	There are also other, much less obvious ways to produce new (homeo-)product-minimal spaces from the old ones:
	\begin{itemize}
		\item by multiplying with an appropriate (not necessarily minimal) space; see Theorem~\ref{T:prod.PM.arc} and Corollary~\ref{C:prod.PM.arc},
		\item by passing to special quotient spaces of products; see Theorem~\ref{T:fact.space} and Theorem~\ref{T:fact.groups}, and
		\item by passing to special almost 1-1 extensions; see Proposition~\ref{P:general}.
	\end{itemize}
\end{remark}

In the following proposition, $\mathfrak{c}$ stands for the cardinality of the continuum.

\begin{proposition}\label{P:PM.con.comp}
	A PM space is either connected or has $\mathfrak{c}$ components. Consequently, a nondegenerate PM space has no isolated points.
\end{proposition}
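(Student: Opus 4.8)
The plan is to analyze the space of connected components of a PM space $Y$. Let $\pi_0(Y)$ denote the quotient of $Y$ obtained by collapsing each connected component to a point; since $Y$ is compact metrizable, $\pi_0(Y)$ is a compact metrizable totally disconnected space, so it is either finite or contains a Cantor set, and in any case its cardinality is finite, countably infinite, or $\mathfrak c$. The goal is to rule out the intermediate possibilities: I want to show that if $Y$ is disconnected, then $\pi_0(Y)$ must be uncountable, and in fact of cardinality exactly $\mathfrak c$ (which is automatic once it is uncountable, by the structure of compact metrizable spaces).

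First I would observe that any continuous selfmap $S$ of $Y$ permutes the components only in a constrained way: $S$ induces a continuous selfmap $\bar S$ of $\pi_0(Y)$. If the direct product $(X\times Y, T\times S)$ is minimal for some minimal $(X,T)$, then projecting to the second coordinate and then to $\pi_0(Y)$ shows that $\bar S$ must be a minimal selfmap of $\pi_0(Y)$ — more precisely, that $(X \times \pi_0(Y), T\times \bar S)$ is minimal, because a factor of a minimal system is minimal and the map $X\times Y \to X\times\pi_0(Y)$ is a well-defined continuous surjection intertwining $T\times S$ with $T\times \bar S$. Hence $\pi_0(Y)$ itself is a PM space (in the weak sense of admitting, for the given $(X,T)$, a compatible $S$), and in particular it is a minimal space. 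Now I invoke the key dichotomy for minimal spaces already recalled in the introduction: a totally disconnected minimal compact metrizable space with more than one point cannot be finite (a finite space with $\ge 2$ points admits no minimal map, since orbits are eventually periodic and a periodic orbit is a proper closed invariant subset), and it cannot be countably infinite either — a countably infinite compact Hausdorff space has an isolated point (by Baire category, or because it is scattered), and by Gottschalk's theorem / the elementary fact that an isolated point forces the orbit closure structure to degenerate, a compact minimal space has no isolated points unless it is a single point. So $\pi_0(Y)$ is either a single point (i.e., $Y$ is connected) or a perfect compact metrizable totally disconnected space, hence of cardinality $\mathfrak c$; that is exactly the first assertion.

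For the consequence, suppose $Y$ is a nondegenerate PM space; I must show $Y$ has no isolated points. If $Y$ is connected and nondegenerate this is automatic. If $Y$ is disconnected, then by the first part it has $\mathfrak c$ components. An isolated point $y$ of $Y$ would be a one-point component $\{y\}$ that is moreover an isolated point of $\pi_0(Y)$; but $\pi_0(Y)$ is perfect, a contradiction. Alternatively and more directly: $Y$ being PM is in particular minimal, and a compact minimal space with an isolated point is a singleton, contradicting nondegeneracy — so no appeal to the component structure is even needed for the "consequently" clause, though it fits naturally here.

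The main obstacle, such as it is, is making the passage to the factor $\pi_0(Y)$ clean: one must check that collapsing components is a continuous closed map (true because $Y$ is compact metrizable and the components coincide with the quasicomponents, so the decomposition is upper semicontinuous), and that it is genuinely $S$-equivariant for \emph{every} continuous $S$ (true because continuous images of connected sets are connected, so $S$ maps components into components). Once that functoriality is in hand, everything reduces to the already-recalled basic facts about minimal maps on totally disconnected compacta, and no further work is required.
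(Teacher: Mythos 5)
Your reduction to the component space $\pi_0(Y)$ is fine as far as it goes (the decomposition into components of a compact metrizable space is upper semicontinuous, the quotient is compact metrizable and totally disconnected, and every continuous selfmap descends), but the argument then collapses at the dichotomy step. You claim that ``a finite space with $\ge 2$ points admits no minimal map, since orbits are eventually periodic and a periodic orbit is a proper closed invariant subset.'' This is false: the cyclic permutation of $k$ points is minimal for every $k$ (the unique periodic orbit is the whole space, not a proper subset). Likewise, a compact minimal space with an isolated point need not be a singleton --- it is a finite periodic orbit, possibly with $k\ge 2$ points; the paper states exactly this. So minimality of $\pi_0(Y)$ alone does not exclude the case of finitely many ($k\ge 2$) components, and your proof never rules it out. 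Note also that your conclusion ``uncountably many components implies $\pi_0(Y)$ is perfect'' is not automatic (a Cantor set plus an isolated point has $\mathfrak c$ components), so even the ``consequently'' clause cannot be derived the way you derive it; it must be deduced from the first statement, as the paper does.

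The missing idea is that the product-minimality hypothesis has to be tested against a specifically chosen base system, not merely used to infer that $Y$ (or $\pi_0(Y)$) is minimal. If $Y$ has exactly $k\ge 2$ components, take $(X,T)$ to be the cyclic permutation of a $k$-point set. Any minimal $S$ on $Y$ must cyclically permute the $k$ components, so $(X\times Y,T\times S)$ factors onto $(X\times X,T\times T)$, which is not minimal because the diagonal is a proper closed invariant set. Hence no $S$ works for this particular $(X,T)$, and $Y$ is not PM. Your framework would accommodate this fix (with $\bar S$ on $\pi_0(Y)\cong X$ conjugate to $T$), but as written the proof does not contain it, and the step it substitutes for it is wrong.
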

\begin{proof}
	Let $Y$ be a PM space. By minimality, $Y$ has either finitely many or $\mathfrak{c}$ components. So assume that the number $k$ of components of $Y$ is finite, $k\geq2$. Let $(X,T)$ be the cyclic permutation of a space $X$ with $k$ elements. If $S\colon Y\to Y$ is minimal then $S$ cyclically permutes the $k$ components of $Y$. Consequently, the nonminimal system $(X\times X,T\times T)$ is a factor of $(X\times Y,T\times S)$ and so the latter system is not minimal. This proves the first statement of the proposition. To prove the second statement, notice that if a minimal system has an isolated point then it is a (finite) periodic orbit.
\end{proof}

As explained in Subsection~\ref{SS:PMspaces}, intuitively, a necessary condition for a space $Y$ to be HPM or PM is that it admits sufficiently many minimal homeomorphisms or minimal continuous maps. Therefore, the following proposition is not surprising.

\begin{proposition}\label{P:Slovak.not.HPM}
	Let $Y$ be a nondegenerate compact metrizable space.
	\begin{enumerate}
		\item[(a)] If the set of all minimal homeomorphisms on $Y$ is (at most) countable then $Y$ is not a homeo-product-minimal space. In particular, no Slovak space is homeo-product-minimal.
		\item[(b)] If the set of all minimal continuous maps on $Y$ is (at most) countable then $Y$ is not a product-minimal space. In particular, no DST space is product-minimal.
	\end{enumerate} 
\end{proposition}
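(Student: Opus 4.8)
The plan is to prove the contrapositive in each part: if $Y$ is homeo-product-minimal (resp.\ product-minimal), then the set of minimal homeomorphisms (resp.\ minimal continuous maps) on $Y$ is uncountable. The key observation is that a single minimal homeomorphism, or even countably many of them, cannot be ``universal enough'' to handle all possible minimal base systems $(X,T)$. To exploit this, I would feed the definition of (H)PM a carefully chosen base system. A natural choice is to take, for a given continuous self\-map $S$ of $Y$ and a point $y_0\in Y$, the base to be $(X,T)=(\overline{\Orb_S(y_0)}, S)=(Y,S)$ itself. If $Y$ is PM, then for this base there is a continuous $S'\colon Y\to Y$ with $(Y\times Y, S\times S')$ minimal; in particular the diagonal factor forces $S'$ to be minimal and, moreover, $S$ and $S'$ must be disjoint in Furstenberg's sense, so $S'\neq S$. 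This already hints that the number of minimal maps must be large, but turning ``not equal'' into ``uncountably many'' requires more care.

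The cleaner route is this. Suppose, for contradiction, that the set $\mathcal{M}$ of minimal continuous self\-maps of $Y$ is at most countable, say $\mathcal{M}=\{S_1,S_2,\dots\}$ (possibly finite). First I would note that $\mathcal{M}$ is nonempty, since $Y$ is PM hence minimal. Now form the base system $(X,T)=\big(\coprod_{i} Y_i,\, S\big)$, where each $Y_i$ is a copy of $Y$, the index set runs over $\mathcal M$, and $S$ acts as $S_i$ on $Y_i$ --- wait, that base is not minimal. Instead I would take $X$ to be the disjoint union over $i$ of a single full (forward) orbit closure of $S_i$ in $Y_i$; but more straightforwardly: use the product of the $(Y,S_i)$ restricted to a minimal subsystem. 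The point I am circling is that I want a single minimal system $(X,T)$ that ``sees'' every $S_i$, so that a hypothetical minimal $T\times S$ with $S\in\mathcal M$ forces $S=S_i$ for each $i$ simultaneously --- an outright contradiction once $|\mathcal M|\geq 2$.

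Here is the construction I would actually commit to. Let $\mathcal M=\{S_i\}$ be the (countable) set of minimal continuous maps on $Y$ and fix base points $y_i\in Y$. Put $X=\prod_i (Y, S_i)$ with $T=\prod_i S_i$ acting coordinatewise, and let $(X_0,T)$ be a minimal subsystem of $(X,T)$ (which exists by Zorn's lemma since $X$ is compact metrizable). Each coordinate projection $(X_0,T)\to(Y,S_i)$ is onto by minimality of $(Y,S_i)$, so $(Y,S_i)$ is a factor of $(X_0,T)$ for every $i$. Now apply the PM hypothesis to $(X_0,T)$: there is a continuous $S\colon Y\to Y$ with $(X_0\times Y, T\times S)$ minimal, and then $S\in\mathcal M$, so $S=S_j$ for some $j$. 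But $(Y,S_j)$ is a factor of $(X_0,T)$, hence $(Y\times Y, S_j\times S_j)$ is a factor of $(X_0\times Y, T\times S_j)$, and the latter being minimal would force the former to be minimal --- impossible, since the diagonal in $Y\times Y$ is a proper nonempty closed invariant set (recall $Y$ is nondegenerate). This contradiction proves (b). For (a) the argument is verbatim the same with ``continuous map'' replaced by ``homeomorphism'' throughout, using that a countable product of homeomorphisms is a homeomorphism on the compact metrizable space $X_0$. The ``in particular'' clauses then follow immediately: a Slovak space has $\Homeo(Y)=\{T^n\}$ countable, and a DST space admits no minimal noninvertible map while its minimal homeomorphisms are again only the (countably many) powers of $T$, so $\mathcal M$ is countable.

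The main obstacle, and the step that needs the most care, is the set-theoretic size of the product $X=\prod_i(Y,S_i)$ and the existence of a minimal subsystem $(X_0,T)$ that still surjects onto each factor: one must make sure this $X_0$ is metrizable (a countable product of compact metrizable spaces is compact metrizable, so this is fine) and genuinely has every $(Y,S_i)$ as a factor --- which is where minimality of each $(Y,S_i)$ is used, guaranteeing the $i$-th projection of the closed invariant set $X_0$ is all of $Y$. Everything else is the standard fact that a factor of a minimal system is minimal, together with the elementary observation that $Y\times Y$ is never minimal for nondegenerate $Y$ because the diagonal obstructs it. One should also double-check the degenerate edge case is excluded by hypothesis (``$Y$ nondegenerate''), since for a one-point space the diagonal argument collapses.
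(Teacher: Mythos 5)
Your argument is correct and is essentially the paper's own proof: both take a minimal subsystem of the countable product of all minimal systems $(Y,S_i)$, observe that each $(Y,S_i)$ is then a factor of the base, and conclude that any candidate $T\times S_j$ factors onto the nonminimal $(Y\times Y,S_j\times S_j)$ (nonminimal via the diagonal, using nondegeneracy of $Y$). The only cosmetic difference is the meandering in your first two paragraphs before you commit to the construction; the final argument matches the paper's.
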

\begin{proof}
	We prove part (a); the proof of part (b) is similar. Let $(X,T)$ be an arbitrary minimal subsystem of the direct product of all the systems $(Y,S)$ with $S$ being a minimal homeomorphism on $Y$. By compactness of $Y$ and the countability assumption, $X$ is a (nonempty) compact metrizable space. 
	 We claim that there is no homeomorphism $S\in\Homeo(Y)$ with $(X\times Y,T\times S)$ minimal. 
	 So let $S\in\Homeo(Y)$ be minimal. By minimality of $T$, $X$ has full projections onto all coordinates, so	 
	 $(Y,S)$ is a factor of $(X,T)$.
	 Since the product $(Y\times Y,S\times S)$ is not minimal because $Y$ is not a singleton, it follows that the product $(X\times Y,T\times S)$ is not minimal. Thus, $Y$ is not a homeo-product-minimal space.
\end{proof}

In a dynamical system $(X,T)$, a point $x$ whose orbit $\{x, T(x), T^2(x), \dots\}$
is dense in $X$, is called a \emph{transitive point}. A system is called \emph{point-transitive}
if it has a transitive point. 
Clearly, if $(X,T)$ is a point-transitive system then the space $X$, being separable and metrizable, has a countable basis.
Notice that a system is minimal if and only if 
all points are transitive or, equivalently, if and only if the \emph{$\omega$-limit set} $\omega_T(x)$ of
every point $x\in X$ is the whole space $X$ (recall that $\omega_T(x)$ is the set of all limit points of the \emph{trajectory} $(T^n(x))_{n=0}^\infty$ of $x$). A system $(X,T)$ is called \emph{(topologically) transitive}
if for each pair of nonempty open sets $U$ and $V$ in $X$ there is $n\in \mathbb N$ such that
$T^n(U) \cap V \neq \emptyset$.
If $(X,T)$ is a dynamical system and a subset $A$ of $X$ is $T$-invariant, 
then we also use a less precise notation $(A,T)$ for the system $(A, T|_A)$.

The following lemma is certainly well known, but we record it for a future reference nonetheless.

\begin{lemma}\label{L:fiber}
	Let $(X,T)$ be a minimal system and $Y$ be a compact metrizable space.
	Then a skew product map $Q$ on the space $X\times Y$ of the form $Q(x,y)=(T(x), S_x(y))$
	is minimal if and only if there is $x_0\in X$ such that the whole fiber $\{x_0\}\times Y$ consists of transitive points of $Q$. 
\end{lemma}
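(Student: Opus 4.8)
The plan is to prove both implications, with the forward direction being essentially trivial and the backward direction carrying all the content. For the ``only if'' part, if $Q$ is minimal then every point of $X\times Y$ is transitive, so in particular the fiber $\{x_0\}\times Y$ over any chosen $x_0\in X$ consists of transitive points, and we are done. For the ``if'' part, suppose there is an $x_0\in X$ such that every point of $\{x_0\}\times Y$ is a transitive point of $Q$. I would fix an arbitrary point $(a,b)\in X\times Y$ and show that its orbit closure is all of $X\times Y$; equivalently, I would show that $\omega_Q(a,b)=X\times Y$.

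First I would observe that, since $(X,T)$ is minimal, the orbit of $a$ under $T$ is dense in $X$, so there is a sequence $n_k\to\infty$ with $T^{n_k}(a)\to x_0$. Passing to a subsequence and using compactness of $Y$, I may assume that $Q^{n_k}(a,b)=(T^{n_k}(a),\,(S_{T^{n_k-1}(a)}\circ\cdots\circ S_a)(b))$ converges to some point $(x_0,y_0)\in\{x_0\}\times Y$. Hence $(x_0,y_0)\in\omega_Q(a,b)$, and since $\omega_Q(a,b)$ is closed and $Q$-invariant, it contains the whole orbit closure of $(x_0,y_0)$. By hypothesis $(x_0,y_0)$ is a transitive point of $Q$, so its orbit closure is $X\times Y$. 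Therefore $\omega_Q(a,b)=X\times Y$, which is exactly minimality of $Q$ since $(a,b)$ was arbitrary.

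The main point to be careful about — the ``hard part,'' though it is not really hard here — is the compactness argument producing a convergent subsequence: this is where compactness of $Y$ is used, and it is the step that genuinely requires $Y$ to be compact rather than merely, say, locally compact (this is the phenomenon the surrounding discussion alludes to). Apart from that, the only thing to verify is that the fiber maps compose correctly under iteration of the skew product, i.e.\ that $Q^n(x,y)=(T^n(x),\,S_{T^{n-1}(x)}\circ\cdots\circ S_{T(x)}\circ S_x(y))$, which is a routine induction on $n$ and need not be spelled out in detail. Everything else is a direct unwinding of the definitions of transitive point, $\omega$-limit set, and minimality recalled just before the statement.
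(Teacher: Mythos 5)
Your proof is correct and follows essentially the same route as the paper's: the forward direction is immediate, and for the converse you use minimality of $T$ plus compactness of $Y$ to land a limit point of the orbit of $(a,b)$ in the fiber $\{x_0\}\times Y$, then invoke transitivity of that limit point. The only cosmetic difference is that you argue via the orbit closure of $(x_0,y_0)$ inside the closed invariant set $\omega_Q(a,b)$, while the paper writes $\omega_Q(a,b)\supseteq\omega_Q(x_0,y_0)=X\times Y$; both are fine.
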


\begin{proof}
	One implication is trivial. To prove the converse implication, assume that $\{x_0\}\times Y$ 
	consists of transitive points of $Q$. Fix $(x,y)\in X\times Y$. Using minimality of $T$ and 
	compactness of $Y$, we find $y_0\in Y$ such that 
	$\omega_{Q}(x,y) \ni (x_0,y_0)$. Then, since $(x_0,y_0)$ is transitive, it follows that $\omega_Q(x,y)\supseteq\omega_Q(x_0,y_0)=X\times Y$, hence $(x,y)$ is also transitive.
\end{proof}

Let $(X,T)$ be a dynamical system.
It is called \emph{totally minimal}
if all the iterates $T^n$, $n\in\mathbb N$, are minimal.
A point $x\in X$ is said to be \emph{periodically recurrent} 
if for every neighborhood $U$ of $x$ there exists a positive integer $p$ such that $x$ 
returns back into $U$ with period~$p$, i.e., $T^{kp}(x) \in U$ for $k=0,1,2,\ldots$.

The following result is presumably well known, but we are unable to give a reference, so 
a proof is included.

\begin{proposition}\label{P:period.rec}
		Let $(X,T)$ and $(Y,S)$ be dynamical systems given by continuous
		maps on metrizable spaces, $Y$ being a compact space. Assume that $(X,T)$ is minimal and
		$(Y,S)$ is totally minimal. If $(X,T)$ contains a
		periodically recurrent point $x_0$ then the product $(X\times Y,T
		\times S)$
		is minimal.
\end{proposition}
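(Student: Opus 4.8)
The plan is to use Lemma~\ref{L:fiber}: it suffices to find one fiber $\{x_0\}\times Y$ all of whose points are transitive for $T\times S$. I would take $x_0$ to be the periodically recurrent point furnished by the hypothesis, and show that \emph{every} $(x_0,y)$ has dense orbit. Fix $y\in Y$; I must show that $\{(T^n(x_0),S^n(y)):n\ge 0\}$ is dense in $X\times Y$. Let $U\subseteq X$ and $V\subseteq Y$ be nonempty open sets; I need some $n$ with $T^n(x_0)\in U$ and $S^n(y)\in V$.

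The key idea is to exploit periodic recurrence to restrict attention to an arithmetic progression of times. Since $T$ is minimal, there is $m\ge 0$ with $T^m(x_0)\in U$. Let $W$ be a neighborhood of $x_0$ with $T^m(W)\subseteq U$; by periodic recurrence of $x_0$ there is $p\ge 1$ with $T^{kp}(x_0)\in W$ for all $k\ge 0$, hence $T^{m+kp}(x_0)\in U$ for all $k\ge 0$. So the ``good times'' for the $X$-coordinate contain the full progression $\{m+kp:k\ge 0\}$. Now I look at the $Y$-coordinate along these times: the point $S^m(y)$ must be steered into $V$ using only the map $S^p$. This is exactly where total minimality of $(Y,S)$ enters: $S^p$ is minimal (as $p\ge 1$), so the $S^p$-orbit of $S^m(y)$ is dense in $Y$, and therefore there exists $k\ge 0$ with $(S^p)^k(S^m(y))=S^{m+kp}(y)\in V$. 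Taking $n=m+kp$ gives $T^n(x_0)\in U$ and $S^n(y)\in V$ simultaneously, proving $(x_0,y)$ is transitive. As $y$ was arbitrary, the whole fiber $\{x_0\}\times Y$ consists of transitive points, and Lemma~\ref{L:fiber} (applied to the skew product $Q=T\times S$, i.e.\ $S_x=S$ for all $x$) yields minimality of $(X\times Y,T\times S)$.

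I do not anticipate a serious obstacle here; the argument is a clean combination of two standard facts. The one point requiring a little care is making sure the roles of the two coordinates are correctly decoupled: periodic recurrence controls the return times in $X$ to a \emph{single} period $p$, and it is precisely this single $p$ (not the full semigroup $\NNN$) that must act minimally on $Y$ — hence the need for \emph{total} minimality rather than mere minimality of $S$. The compactness of $Y$ is used implicitly through Lemma~\ref{L:fiber}; the proof of transitivity of the fiber itself does not need it, but invoking the lemma does. If one preferred to avoid Lemma~\ref{L:fiber}, the same computation directly shows every point $(x,y)\in X\times Y$ is transitive by first using minimality of $T$ to bring $x$ near $x_0$, but routing through the lemma is cleaner.
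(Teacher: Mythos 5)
Your proof is correct and follows essentially the same route as the paper: both reduce to Lemma~\ref{L:fiber} and combine periodic recurrence of $x_0$ (giving an arithmetic progression $m+kp$ of return times into $U$) with minimality of $S^p$ (to steer the $Y$-coordinate along that progression). The only difference is one of packaging: the paper first proves $\omega_{T\times S}(x_0,y)\supseteq\{x_0\}\times Y$ and then spreads this to all of $X\times Y$ via forward invariance of the omega-limit set and surjectivity of $S$, whereas you hit an arbitrary $U\times V$ in one step by pulling $U$ back under $T^m$ to a neighborhood $W$ of $x_0$ before invoking periodic recurrence --- a slightly more economical arrangement of the same ideas.
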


\begin{proof}
		Denote $F=T\times S$. By virtue of Lemma~\ref{L:fiber}, it suffices to show that $\omega_F(x_0,y)=X\times Y$ for every $y\in Y$. First we prove that for every
		$y\in Y$, $\omega_F(x_0,y) \supseteq \{x_0\}\times Y$.
		To this end fix $y,y' \in Y$ and a basic neighborhood $U\times V$
		of the point $(x_0,y')$. There exists $p\in \mathbb N$ such that
		$T^{kp}(x_0) \in U$ for all $k=0,1,2,\dots$. Since $S^p$ is
		a minimal map, there exist infinitely many times
		$k\in \mathbb N$ with $F^{kp}(x_0,y)=(T^{kp}(x_0),(S^p)^k(y)) \in U\times V$. Since
		the argument works for each basic neighborhood $U\times V$ of $(x_0,y')$, 
		we have $\omega_F(x_0,y)\ni(x_0,y')$. Consequently, $\omega_F(x_0,y) \supseteq \{x_0\}\times Y$.
		
		Now we show that $\omega_F(x_0,y)=X\times Y$.
		Clearly, for every $n\in\mathbb N$, $\omega_F(x_0,y) \supseteq F^n(\omega_F(x_0,y))\supseteq F^n(\{x_0\}\times Y)=\{T^n(x_0)\}\times S^n(Y)$. 
		Since $S$ is surjective by its minimality and compactness of $Y$, and the $T$-orbit of $x_0$ is dense in $X$, it follows that the 
		closed set $\omega_F(x_0,y)$ contains the union of a dense set of fibers. Hence $\omega_F(x_0,y)
		=X\times Y$.
\end{proof}

We conclude this section with the following property of (homeo-)product-minimal spaces.

\begin{proposition}\label{P:trans.PM}
	Let $Y$ be a (homeo-)product-minimal space. Let $(X,T)$ be a dynamical system given by a continuous map $T$ on a metrizable space $X$. Suppose that $X$ has no isolated points and $T$ is point-transitive. Then there is a minimal continuous map (homeomorphism) $S\colon Y\to Y$ such that $(X\times Y,T\times S)$ is point-transitive.
\end{proposition}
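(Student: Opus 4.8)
The plan is to reduce the point-transitive statement for the possibly non-minimal base $(X,T)$ to the genuinely minimal situation covered by the definition of a (homeo-)product-minimal space. Since $T$ is point-transitive, fix a transitive point $x_0\in X$ and let $\overline{\Orb_T(x_0)}=X$. The first step is to pass to the subsystem living on the orbit closure in a cleverer way: consider the \emph{forward orbit} $Z=\Orb_T(x_0)=\{x_0,T(x_0),T^2(x_0),\dots\}$ together with $T|_Z$. This $Z$ is $T$-invariant, dense in $X$, and the system $(Z,T)$ is minimal — indeed every point $T^k(x_0)$ has forward orbit $\{T^k(x_0),T^{k+1}(x_0),\dots\}$, which is dense in $Z$ because removing finitely many points from a dense subset of a space with no isolated points leaves a dense subset. (This is exactly where the ``no isolated points'' hypothesis on $X$ is used; note $Z$ is metrizable as a subspace of $X$.) So $(Z,T|_Z)$ is a minimal metrizable — not necessarily compact — dynamical system, precisely the kind of system the definition of product-minimality applies to.

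Next, since $Y$ is (homeo-)product-minimal, apply the definition to the minimal system $(Z,T|_Z)$ to obtain a continuous map (resp.\ homeomorphism) $S\colon Y\to Y$ such that $(Z\times Y, (T|_Z)\times S)$ is minimal. In particular $S$ is a minimal map on $Y$, which settles the ``minimal $S$'' part of the conclusion. It remains to check that $(X\times Y, T\times S)$ is \emph{point-transitive}. For this, I claim that $(x_0,y)$ is a transitive point of $T\times S$ for any chosen $y\in Y$ — in fact for every $y\in Y$. Fix $y\in Y$. By minimality of $(Z\times Y,(T|_Z)\times S)$, the forward orbit of $(x_0,y)$ is dense in $Z\times Y$. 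But $Z\times Y$ is dense in $X\times Y$ (as $Z$ is dense in $X$ and $Y$ is fixed), and density is transitive, so the forward orbit of $(x_0,y)$ is dense in $X\times Y$. Hence $(x_0,y)$ is a transitive point of $T\times S$, so $(X\times Y,T\times S)$ is point-transitive.

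The only genuinely delicate point is the verification that $(Z,T|_Z)$ is minimal, i.e.\ that the forward orbit of \emph{every} point of $Z$ — not just of $x_0$ — is dense in $Z$. The subtlety is that $Z$, being merely a forward orbit, need not be compact and need not be closed in $X$, so one cannot invoke standard compactness-based arguments about $\omega$-limit sets; one must argue directly that $\overline{\{T^{k}(x_0),T^{k+1}(x_0),\dots\}}=\overline{Z}\cap X$ contains $Z$, which rests on the elementary topological fact that in a space with no isolated points the closure of a set is unchanged by deleting finitely many points. Everything else is a routine transfer of density along inclusions and an application of the defining property of (homeo-)product-minimality. The parenthetical ``(homeomorphism)'' version is obtained verbatim by taking $S$ to be a homeomorphism, which the definition of an HPM space provides.
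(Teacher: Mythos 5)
Your proposal is correct and follows essentially the same route as the paper: restrict $T$ to the (dense) forward orbit of a transitive point, observe that the absence of isolated points makes this restricted system minimal, apply the defining property of a (homeo-)product-minimal space to it, and transfer density from $Z\times Y$ to $X\times Y$. The only cosmetic difference is that the paper phrases the minimality of the restricted system via recurrence of $x_0$, whereas you argue directly that deleting finitely many points from a dense set in a space without isolated points preserves density; both are the same elementary fact.
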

\begin{proof}
	Fix a transitive point $x_0$ for $T$ and denote its orbit by $X'$. Since $x_0$ is not isolated, it is recurrent and so the restriction of $T$ onto $X'$ is minimal. Since the space $Y$ is (homeo-)product-minimal, there is a minimal continuous map (homeomorphism) $S\colon Y\to Y$ such that the product $(X'\times Y,T|_{X'}\times S)$ is minimal. Finally, by density of $X'\times Y$ in $X\times Y$ it follows that the product $(X\times Y,T\times S)$ is point-transitive.
\end{proof}

\section{Compact connected abelian groups}\label{S:thmB-groups}

\subsection{Auxiliary results}

Let $G$ be a compact abelian group. The multiplicative semigroup of positive integers $\mathbb N^*$ acts on $G$ by endomorphisms $A_n$ defined by $A_n(g)=g^n$ ($g\in G$, $n\in\mathbb N^*$); the corresponding action will be denoted by $\pi$. If $G$ is connected then all $A_n$ are epimorphisms \cite[Theorem~8.4]{HofMor} and hence they preserve the (normalized) Haar measure $\mu$ of $G$ \cite[p.~20]{Wal}. We shall denote by $\widehat{G}$ the dual group of characters of $G$. Recall that  $\widehat{G}$ is an orthonormal basis of the (complex) Hilbert space $L^2(\mu)$ \cite[p.~13]{Wal}. Moreover, $\widehat{G}$ is torsion-free if and only if $G$ is connected \cite[Corollary~8.5]{HofMor}.

\begin{lemma}\label{s.mix}
	Let $G$ be a compact connected abelian group and let $\mu$ be the Haar
	measure on $G$. Then the action $\pi$ of $\mathbb N^*$ on $G$ is
	mixing with respect to $\mu$, that is, for all Borel sets $B,C\subseteq G$,
	\begin{equation*}
	\lim_{n\to\infty}\mu(A_n^{-1}(B)\cap C)=\mu(B)\mu(C).
	\end{equation*}
\end{lemma}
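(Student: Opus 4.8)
## Proof plan for Lemma \ref{s.mix}

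The plan is to verify the mixing condition on the orthonormal basis of $L^2(\mu)$ given by the characters $\widehat{G}$, and then extend to arbitrary Borel sets by a standard density argument. Concretely, mixing of the action $\pi$ with respect to $\mu$ is equivalent to the statement that for all $\phi,\psi\in L^2(\mu)$,
\begin{equation*}
\lim_{n\to\infty}\int_G (\phi\circ A_n)\,\overline{\psi}\,d\mu = \Big(\int_G \phi\,d\mu\Big)\overline{\Big(\int_G \psi\,d\mu\Big)}.
\end{equation*}
By bilinearity and the fact that $\widehat{G}$ is an orthonormal basis, it suffices to check this when $\phi=\chi$ and $\psi=\eta$ are characters. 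Here I would use that $A_n$ preserves $\mu$ (noted in the excerpt, since $G$ is connected and hence $A_n$ is an epimorphism) together with the elementary identity $\chi\circ A_n = \chi^n$ in $\widehat{G}$ (because $A_n(g)=g^n$ and $\chi$ is a homomorphism, so $\chi(g^n)=\chi(g)^n$).

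The key computation is then: $\int_G \chi^n\,\overline{\eta}\,d\mu = \langle \chi^n,\eta\rangle$, which by orthonormality of characters equals $1$ if $\chi^n=\eta$ and $0$ otherwise. I would split into two cases. If $\chi$ is the trivial character, then $\chi^n$ is trivial for all $n$, and both sides reduce to $\langle \mathbf 1,\eta\rangle=\int_G\overline\eta\,d\mu$, which matches the right-hand side (since $\int\chi\,d\mu=1$). If $\chi$ is nontrivial, then since $\widehat{G}$ is torsion-free (equivalent to connectedness of $G$, as recalled in the excerpt), the powers $\chi^n$, $n\in\mathbb N$, are pairwise distinct and in particular $\chi^n\neq\eta$ for all but at most one $n$; hence $\langle\chi^n,\eta\rangle=0$ for all large $n$, so the limit is $0$, which equals the right-hand side because $\int_G\chi\,d\mu=0$ for a nontrivial character $\chi$. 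This establishes the mixing identity on characters.

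To pass from characters to arbitrary $L^2$ functions, I would approximate $\phi,\psi\in L^2(\mu)$ by finite linear combinations of characters; the estimate is uniform in $n$ because $\|\phi\circ A_n\|_2 = \|\phi\|_2$ (measure preservation) and $\|\psi\|_2$ controls the other factor via Cauchy--Schwarz, so the bilinear functionals $(\phi,\psi)\mapsto \int(\phi\circ A_n)\overline\psi\,d\mu$ are equicontinuous and the limit passes to the closure. Finally, applying the identity with $\phi=\mathbf 1_B$ and $\psi=\mathbf 1_C$ and using $\int\mathbf 1_B\circ A_n\,\overline{\mathbf 1_C}\,d\mu = \mu(A_n^{-1}(B)\cap C)$, $\int\mathbf 1_B\,d\mu=\mu(B)$, $\int\mathbf 1_C\,d\mu=\mu(C)$ gives exactly the asserted convergence $\mu(A_n^{-1}(B)\cap C)\to\mu(B)\mu(C)$.

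The only mildly delicate point is ensuring that for a nontrivial character $\chi$ the powers $\chi^n$ ($n\ge 1$) are genuinely distinct (and distinct from any fixed $\eta$ for large $n$); this is precisely where torsion-freeness of $\widehat{G}$ — i.e., connectedness of $G$ — is used, and without it the lemma would fail (e.g.\ for a finite group $\chi^n$ is eventually periodic and the limit need not exist). Everything else is a routine orthonormal-basis and measure-preservation argument, so I do not expect a genuine obstacle.
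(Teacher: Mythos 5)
Your proposal is correct and follows essentially the same route as the paper: verify the mixing identity on characters using torsion-freeness of $\widehat{G}$ (so that $\chi^n=\eta$ for at most one $n$ when $\chi$ is nontrivial), extend to all of $L^2(\mu)$ by density of the span of $\widehat{G}$ together with the uniform-in-$n$ Cauchy--Schwarz bound coming from measure preservation, and finish with indicator functions. The paper phrases the density step as showing that certain sets $R_f$, $L_g$ are closed linear subspaces of $L^2(\mu)$ containing $\widehat{G}$, which is the same equicontinuity argument you describe.
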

\begin{proof}
	\emph{1st step.} We show that for all $\gamma,\delta\in\widehat{G}$,
	\begin{equation}\label{Eq:s.mix.1}
	\lim_{n\to\infty}\int(\gamma\circ A_n)\delta \,d\mu=\int\gamma \,d\mu\int\delta\, d\mu.
	\end{equation}
	
	Fix $\gamma,\delta\in\widehat{G}$. If $\gamma=1$ then \eqref{Eq:s.mix.1} holds obviously. So assume that $\gamma\neq1$. Then $\int\gamma\,d\mu=0$. Since $\widehat{G}$
	is torsion-free, we have $(\gamma\circ A_n)\delta=\gamma^n\delta=1$
	for at most one $n\in\mathbb N^*$; hence $\int(\gamma\circ A_n)\delta\,d\mu=0$ for all sufficiently large $n$. Thus,
	\begin{equation*}
	\lim_{n\to\infty}\int(\gamma\circ A_n)\delta\,d\mu=0=\int\gamma\,d\mu\int\delta\,d\mu.
	\end{equation*}
	
	\emph{2nd step.} We show that for all $f,g\in L^2(\mu)$, 
	\begin{equation}\label{Eq:s.mix.2}
	\lim_{n\to\infty}\int (f\circ A_n)g\,d\mu=\int f\,d\mu\int g\,d\mu.
	\end{equation}
	
	Since $\widehat{G}$ is an orthonormal basis of $L^2(\mu)$ and we have \eqref{Eq:s.mix.1}, the proof of \eqref{Eq:s.mix.2} is routine and goes as follows. Given $f\in L^2(\mu)$, denote by $R_f$ the set of all $g\in L^2(\mu)$ such that \eqref{Eq:s.mix.2} holds. Similarly, given $g\in L^2(\mu)$, denote by $L_g$ the set of all $f\in L^2(\mu)$ such that \eqref{Eq:s.mix.2} holds. A standard argument shows that all $R_f$ and $L_g$ are closed linear subspaces of $L^2(\mu)$. Now, by the 1st step of the proof, for every $\delta\in\widehat{G}$ we have $L_{\delta}\supseteq\widehat{G}$ and so $L_{\delta}=L^2(\mu)$. This means that, for every $f\in L^2(\mu)$, $R_f\supseteq\widehat{G}$ and so $R_f=L^2(\mu)$, which finishes the 2nd step. 
	
	\emph{3rd step.} We prove the lemma.
	
	So fix Borel sets $B,C\subseteq G$. By applying \eqref{Eq:s.mix.2}
	to the characteristic functions $f=\chi_B$, $g=\chi_C$ of $B,C$, we obtain
	\begin{eqnarray*}
		\lim_{n\to\infty}\mu(A_n^{-1}(B)\cap C)&=&\lim_{n\to\infty}\int\chi_{A_n^{-1}(B)\cap C}\,d\mu=\lim_{n\to\infty}		\int(\chi_B\circ A_n)\chi_C\,d\mu\\
		&=&\int\chi_B\,d\mu\int\chi_C\,d\mu=
		\mu(B)\mu(C).
	\end{eqnarray*}
\end{proof}

Let $G$ be a compact abelian group. Recall that the action $\pi$ of $\mathbb N^*$ on $G$ is
\emph{topologically transitive} or \emph{topologically mixing} if for all nonempty open sets $U,V\subseteq G$, $A_n^{-1}(U)\cap V\neq\emptyset$ holds for some $n$ or for every sufficiently large $n$, respectively.

\begin{lemma}\label{L:mix.group}
	Let $G$ be a compact abelian group and let $\pi$ be the action of
	$\mathbb N^*$ on $G$. Then the
	following conditions are equivalent:
	\begin{enumerate}
		\item[(1)] $\pi$ is mixing with respect to the Haar measure $\mu$ on $G$,
		\item[(2)] $\pi$ is topologically mixing,
		\item[(3)] $\pi$ is topologically transitive,
		\item[(4)] $G$ is connected.
	\end{enumerate}
\end{lemma}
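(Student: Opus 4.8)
The plan is to establish the cycle of implications $(4)\Rightarrow(1)\Rightarrow(2)\Rightarrow(3)\Rightarrow(4)$. Here $(4)\Rightarrow(1)$ is exactly Lemma~\ref{s.mix}, and $(2)\Rightarrow(3)$ is immediate from the definitions. For $(1)\Rightarrow(2)$ I would invoke the standard fact that the Haar measure $\mu$ of a compact group is strictly positive on every nonempty open set. Then, given nonempty open sets $U,V\subseteq G$, we have $\mu(U)\mu(V)>0$, so applying the mixing hypothesis with $B=U$ and $C=V$ yields $\mu(A_n^{-1}(U)\cap V)\to\mu(U)\mu(V)>0$; in particular $A_n^{-1}(U)\cap V$ has positive measure, hence is nonempty, for every sufficiently large $n$, which is exactly topological mixing.

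The heart of the argument is $(3)\Rightarrow(4)$, which I would prove by contraposition. Suppose $G$ is disconnected. Then $\widehat G$ is not torsion-free by \cite[Corollary~8.5]{HofMor}, so there is a character $\chi\in\widehat G$ with $\chi\neq 1$ and $\chi^k=1$ for some integer $k\geq 2$. Its image $F:=\chi(G)$ is then a finite (cyclic) subgroup of the circle with $\abs F\geq 2$, and $\chi\colon G\to F$ is a continuous epimorphism satisfying $\chi(A_n(g))=\chi(g)^n$ for all $g\in G$ and $n\in\mathbb N^*$; thus the action of $\mathbb N^*$ on $F$ by $n$-th powers is a topological factor of $(G,\pi)$. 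Since topological transitivity passes to factors (the preimages under an onto factor map of two nonempty open sets are nonempty and open, and a point realizing transitivity downstairs is obtained by pushing forward one realizing it upstairs), it suffices to check that the power action on the nontrivial finite discrete group $F$ is not topologically transitive. But the identity $1_F$ is a fixed point of every $n$-th power map on $F$, so choosing $V=\{1_F\}$ and $U=\{h\}$ for any $h\in F\setminus\{1_F\}$ we get $A_n^{-1}(U)\cap V=\emptyset$ for every $n\in\mathbb N^*$. Hence $(G,\pi)$ is not topologically transitive, which closes the cycle.

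I do not expect a real obstacle: $(1)\Rightarrow(2)$ and $(2)\Rightarrow(3)$ are soft, and $(4)\Rightarrow(1)$ is already in hand via Lemma~\ref{s.mix}. The only step that needs a little care is $(3)\Rightarrow(4)$, namely producing the finite cyclic factor on which transitivity visibly fails and verifying that the factor map intertwines the power maps $A_n$ with the power maps of $F$ — which is precisely the information encoded in the torsion of $\widehat G$ guaranteed by disconnectedness.
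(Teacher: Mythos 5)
Your proposal is correct, and three of the four implications ($(4)\Rightarrow(1)$ via Lemma~\ref{s.mix}, $(1)\Rightarrow(2)$ via full support of Haar measure, $(2)\Rightarrow(3)$ trivially) coincide with the paper's argument. The interesting difference is in $(3)\Rightarrow(4)$. The paper argues via the identity component: it passes to the quotient $G/G_0$, which is a nontrivial compact totally disconnected group, invokes the existence of a proper open subgroup $H\subsetneq G/G_0$ \cite[Theorem~1.34]{HofMor}, and notes that $H$ is invariant under every $A_n'$, so the pair $(H,(G/G_0)\setminus H)$ witnesses nontransitivity of the factor action. You instead work on the dual side: disconnectedness gives a torsion element $\chi\in\widehat G$ \cite[Corollary~8.5]{HofMor}, whose image $F=\chi(G)$ is a nontrivial finite cyclic group, and $\chi$ intertwines the power actions since $\chi(g^n)=\chi(g)^n$; on the finite discrete factor $F$ the identity is fixed by every power map, so transitivity visibly fails. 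Both routes reduce to the same mechanism --- exhibiting, in a disconnected factor, a nonempty clopen set whose preimages under the $A_n$ never meet its complement --- and both lean on structure theory from the same reference (the paper already cites Corollary~8.5 in its auxiliary results, so your input is no heavier). Your version has the mild advantage that the obstructing factor is finite and completely explicit; the paper's has the advantage of not invoking Pontryagin duality in this step. One cosmetic remark: your parenthetical justification that transitivity passes to factors is phrased in terms of transitive points, but for the set-based definition used here the clean argument is simply that if $U,V\subseteq F$ are nonempty open with $(A_n^F)^{-1}(U)\cap V=\emptyset$ for all $n$, then $\chi^{-1}(U),\chi^{-1}(V)$ are nonempty open in $G$ and $A_n^{-1}(\chi^{-1}(U))\cap\chi^{-1}(V)=\chi^{-1}\bigl((A_n^F)^{-1}(U)\cap V\bigr)=\emptyset$; this is what you in fact use, and it is correct.
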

\begin{proof}
	Implication $(1)\Rightarrow(2)$ follows from the fact that $\mu$ has full
	support and $(4)\Rightarrow(1)$ follows from Lemma~\ref{s.mix}.
	Implication $(2)\Rightarrow(3)$ is obvious. We verify $(3)\Rightarrow(4)$
	by contradiction.
	
	So assume that $\pi$ is topologically transitive and $G$ is not
	connected. Denote by $G_0$ the identity component of $G$.
	Then $G_0$ is a closed subgroup of $G$ and the quotient group $G/G_0$
	is totally disconnected. Denote by $\pi'$ the natural action of
	$\mathbb N^*$ on $G/G_0$ and by $A_n'$ ($n\in\mathbb N^*$) the
	corresponding acting endomorphisms. Since $\pi$ is topologically transitive and factors onto $\pi'$ (via the quotient morphism $G\to G/G_0$),
	it follows that $\pi'$ is also topologically transitive. Now $G/G_0$, being nontrivial, compact and totally disconnected, has a proper
	open (hence also closed) subgroup $H\subsetneq G/G_0$ \cite[Theorem~1.34]{HofMor}. Obviously, for any $n\in\mathbb N^*$,
	we have $A'_n(H)\subseteq H$ and so $A_n'(H)\cap ((G/G_0)\setminus H)=
	\emptyset$. Since both $H$ and $(G/G_0)\setminus H$ are nonempty and open,
	this contradicts the transitivity of $\pi'$.
\end{proof}

Now we recall some useful facts from the theory of uniformly distributed
sequences in compact groups. We start with the definition of a uniformly
distributed sequence, which is due to Eckmann (see \cite{Eck} or \cite[p.~221]{KuiNie}).

\begin{definition}\label{D:equidistr}
	A sequence $(g_n)_{n=1}^{\infty}$ in a compact group $G$ is
	\emph{uniformly distributed} in $G$ if for every continuous complex
	valued function $\varphi$ on $G$ we have
	$$
	\lim_{n\to\infty}\frac{1}{n}\sum_{i=1}^n\varphi(g_i)=\int\varphi\,d\mu\,,
	$$
	where $\mu$ denotes the Haar measure on $G$.
\end{definition}

Since $\mu$ has full support, it follows that for every uniformly distributed
sequence $(g_n)_{n=1}^{\infty}$ in $G$, the set $\{g_n\,:\,n\in\mathbb N\}$ is
dense in $G$.

We shall make use of the following result due to Hartman and Ryll-Nardzewski \cite[Satz~7]{HarRyl},
cf.~\cite[p.~279]{KuiNie}.

\begin{lemma}\label{unif.distr}
	Let $G$ be a compact, connected, metrizable, abelian
	group and let $(k_n)_{n=1}^{\infty}$ be an increasing sequence
	of positive integers. Then the sequence $(g^{k_n})_{n=1}^{\infty}$
	is uniformly distributed in $G$ for $\mu-$almost every
	$g\in G$, where $\mu$ denotes the Haar measure on $G$.
\end{lemma}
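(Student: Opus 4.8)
The plan is to reduce the statement to a Fourier-analytic estimate via the classical Weyl criterion for uniform distribution in compact abelian groups, and then to use a Borel–Cantelli / maximal-inequality argument to pass from "almost every $g$ along an arithmetic subsequence of densities" to "almost every $g$ along the full sequence $(k_n)$". First I would recall the Weyl criterion: a sequence $(g_n)$ in a compact metrizable abelian group $G$ is uniformly distributed if and only if for every nontrivial character $\chi\in\widehat G\setminus\{1\}$ one has $\frac1N\sum_{n=1}^N \chi(g_n)\to 0$; since $\widehat G$ is countable (because $G$ is metrizable), it suffices to show that for each fixed nontrivial $\chi$, the set of $g$ for which $\frac1N\sum_{n=1}^N \chi(g^{k_n})\to 0$ has full Haar measure, and then intersect over the countably many $\chi$.

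So fix a nontrivial character $\chi$ and write $\varphi=\chi$, $\varphi(g^{k_n})=\chi(g)^{k_n}$. The key computation is the $L^2(\mu)$-estimate of the averages $A_N(g)=\frac1N\sum_{n=1}^N \chi(g)^{k_n}$. Expanding $\|A_N\|_{L^2}^2 = \frac1{N^2}\sum_{m,n\le N}\int_G \chi(g)^{k_n-k_m}\,d\mu(g)$ and using that $\int_G \psi\,d\mu=0$ for any nontrivial character $\psi$ (here $\psi=\chi^{k_n-k_m}$, which is trivial exactly when $k_n=k_m$ since $\widehat G$ is torsion-free by connectedness — this is where connectedness enters, exactly as in Lemma~\ref{s.mix}), only the diagonal terms survive, giving $\|A_N\|_{L^2}^2 = 1/N$. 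Along the subsequence $N=j^2$ this gives $\sum_j \|A_{j^2}\|_{L^2}^2 = \sum_j j^{-2}<\infty$, so by Borel–Cantelli (or monotone convergence applied to $\sum_j |A_{j^2}|^2$) we get $A_{j^2}(g)\to 0$ for $\mu$-a.e.\ $g$. To fill the gaps between consecutive squares one uses the trivial bound $|A_N(g)-A_{j^2}(g)|\le \frac{(j+1)^2-j^2}{j^2}=O(1/j)$ whenever $j^2\le N\le (j+1)^2$, since each summand has modulus $1$; hence $A_N(g)\to0$ for a.e.\ $g$, for this fixed $\chi$.

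Finally I would intersect over all nontrivial $\chi\in\widehat G$ (a countable intersection of full-measure sets, hence full measure), and conclude by the Weyl criterion that $(g^{k_n})_{n=1}^\infty$ is uniformly distributed in $G$ for $\mu$-a.e.\ $g$. The only genuinely delicate points are: (i) invoking the Weyl criterion in the compact-group setting, which is standard and can be cited from \cite[Ch.~4]{KuiNie}; and (ii) the torsion-freeness of $\widehat G$, which is where hypotheses "connected" and "abelian" are used, already recorded in the excerpt. The squares-subsequence trick is the standard device for upgrading an $L^2$ bound of order $1/N$ to almost-everywhere convergence, and it is the main (though routine) technical step; everything else is bookkeeping. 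Since the result is classical, I would most likely simply cite \cite[Satz~7]{HarRyl} and \cite[p.~279]{KuiNie} rather than reproduce this argument in full.
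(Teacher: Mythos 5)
Your proposal is correct. The paper itself gives no proof of this lemma: it is stated as a classical result and simply cited from Hartman--Ryll-Nardzewski \cite[Satz~7]{HarRyl} (cf.~\cite[p.~279]{KuiNie}), which is exactly the fallback you mention at the end. Your sketch is the standard argument behind that citation, and all the steps check out: the Weyl criterion reduces the claim to a single nontrivial character $\chi$ (countably many, by metrizability); the orthogonality computation $\|A_N\|_{L^2}^2=1/N$ uses torsion-freeness of $\widehat G$ (i.e.\ connectedness) to kill the off-diagonal terms, just as in Lemma~\ref{s.mix}; and the passage from $L^2$ decay to a.e.\ convergence via the subsequence $N=j^2$ together with the $O(1/j)$ interpolation bound is the standard upgrade. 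So your write-up is, if anything, more informative than the paper at this point, though citing the references as the paper does would be entirely adequate.
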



\subsection{Main result}

Let $(X,T)$ be a dynamical system. Recall that the \emph{omega-limit} of a set
$A\subseteq X$ is defined by
$$
\omega_T(A)=\bigcap_{n=0}^{\infty}\overline{\bigcup_
	{m=n}^{\infty}T^m(A)}\,.
$$
If $A\subseteq B\subseteq X$ then $\omega_T(A)\subseteq\omega_T(B)$. If $X$ is compact and $A\subseteq X$ then $T(\omega_T(A))=\omega_T(A)$ and so $\omega_T\left(\omega_T(A)\right)=
\omega_T(A)$. Given $x\in X$, we write $\omega_T(x)$ instead of $\omega_T(\{x\})$.
Recall that a system $(X,T)$ is minimal if and only if $\omega_T(x)=X$ for every $x\in X$.

Given a compact abelian group $G$ and a point $g\in G$, we shall denote by $R_g$ the rotation of $G$ by $g$.

As we already observed in the introduction, circle $\mathbb S^1$ is a homeo-product-minimal space. 
We show that much more is true.

\begin{theorem}\label{T:groups}
	Let $(X,T)$ be a minimal dynamical system given by a continuous
	map $T$ on a metrizable space $X$ and let $G$ be a compact
	connected metrizable abelian group. Then the product $(X\times G,
	T\times R_g)$ is minimal for every $g$ from a residual subset
	of full Haar measure. Consequently, every compact connected metrizable abelian group is homeo-product-minimal.
\end{theorem}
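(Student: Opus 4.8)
The goal is to show that for a minimal system $(X,T)$ and a compact connected metrizable abelian group $G$, the rotation set
$$
\mathcal{R}=\{g\in G\colon (X\times G,\,T\times R_g)\text{ is minimal}\}
$$
is residual and of full Haar measure. Once that is proven, the "consequently" clause is immediate: $\mathcal{R}$ is nonempty, so picking any $g\in\mathcal{R}$ gives the homeomorphism $R_g$ of $G$ witnessing that $G$ is homeo-product-minimal. So the whole content is the statement about the rotation parameter.

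First I would reduce minimality of $T\times R_g$ to a fiber condition via Lemma~\ref{L:fiber}: since $T\times R_g$ is a skew product over the minimal base $(X,T)$ with fiber maps all equal to $R_g$, the product is minimal iff there is some $x_0\in X$ whose whole fiber $\{x_0\}\times G$ consists of transitive points. Because $R_g$ commutes with itself and $G$ acts on the fibers by rotations, transitivity of one point $(x_0,e)$ in a fiber will propagate to the whole fiber $\{x_0\}\times G$ by applying rotations $\Id_X\times R_h$, which commute with $T\times R_g$ and act transitively on each fiber; so it suffices to find, for the given $g$, a single transitive point. In fact I would rephrase: $(X\times G, T\times R_g)$ is minimal iff $\omega_{T\times R_g}(x,e)=X\times G$ for some (hence every) $x$. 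Now fix a countable base $\{U_i\times V_j\}$ of $X\times G$, where $\{V_j\}$ is a countable base of $G$; minimality is equivalent to: for every $i,j$, the orbit of $(x,e)$ meets $U_i\times V_j$, i.e.
$$
\exists\, n\ge 0:\quad T^n(x)\in U_i\ \text{ and }\ g^n\in V_j e^{-1}=V_j.
$$
Since $(X,T)$ is point-transitive (being minimal), for a fixed $i$ there are infinitely many return times $n$ with $T^n(x)\in U_i$; call this set $N_i(x)\subseteq\mathbb{N}$, an infinite set. So the obstruction is purely about the sequence $(g^n)$: we need, for each $i,j$, that $\{g^n: n\in N_i(x)\}$ meets $V_j$.

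The key step is then a Weyl-type equidistribution argument along the infinite subsequence $N_i(x)$. Here I would invoke Lemma~\ref{unif.distr} (Hartman--Ryll-Nardzewski): for the compact connected metrizable abelian group $G$ and the increasing enumeration $(k_n)$ of $N_i(x)$, the sequence $(g^{k_n})$ is uniformly distributed in $G$ for $\mu$-a.e.\ $g\in G$, hence dense in $G$ for a.e.\ $g$. Taking a countable intersection over $i$ (and noting $j$ is then automatic since a dense set meets every $V_j$), and also over a countable dense set of base points $x$ if needed—though one transitive fiber suffices, so a single transitive $x$ is enough—we get a full-measure set $\mathcal{R}_0\subseteq\mathcal{R}$. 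This handles the "full Haar measure" half. For residuality, I would argue directly with Baire category: for fixed $i,j$, the set
$$
G_{i,j}=\{g\in G\colon \exists\, n\in N_i(x)\ \text{with}\ g^n\in V_j\}=\bigcup_{n\in N_i(x)} A_n^{-1}(V_j)
$$
is open (preimages of open $V_j$ under the continuous power maps $A_n$), and it is dense: its closure is $R_g$-invariant-flavored, but more cleanly, density follows because by Lemma~\ref{L:mix.group} (or directly Lemma~\ref{s.mix}) the $\mathbb{N}^*$-action by powers on the connected group $G$ is topologically transitive, and one can show the union over the infinite set $N_i(x)$ is still dense—alternatively, $\mathcal{R}_0\subseteq G_{i,j}$ and $\mathcal{R}_0$ has full measure hence is dense, so $G_{i,j}$ is dense open. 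Then $\mathcal{R}\supseteq\bigcap_{i,j}G_{i,j}$ is residual.

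The main obstacle I anticipate is the density/residuality bookkeeping: making sure the relevant "good" sets (the $G_{i,j}$ for the chosen transitive base point $x$) are genuinely dense open in $G$, and that a single well-chosen transitive point $x\in X$ suffices (so that $X$ need not be compact or second-countable-beyond-what-minimality-gives). The measure-theoretic half is essentially a direct citation of Lemma~\ref{unif.distr} applied to the return-time subsequence; the subtlety there is that $N_i(x)$ depends on $x$, but for a fixed $x$ there are only countably many sets $N_i(x)$, so one full-measure set works, and then any $g$ in it gives a transitive fiber. The residuality half then follows either from "full measure $\Rightarrow$ dense" applied to each $G_{i,j}$, or from the topological transitivity of the power action via Lemma~\ref{L:mix.group}; I would use whichever keeps the write-up shortest.
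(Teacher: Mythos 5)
Your proposal is correct and follows essentially the same route as the paper's proof: reduce minimality of $T\times R_g$ to transitivity of a single fiber point via the rotation/self-conjugacy propagation argument (the paper's 1st step, your Lemma~\ref{L:fiber} reduction), obtain the full-measure statement from Hartman--Ryll-Nardzewski applied to return-time sequences, and get residuality by combining openness of the hitting sets with density (via full support of Haar measure or via Lemma~\ref{L:mix.group}), exactly as in the paper's 2nd and 4th steps. The only cosmetic difference is that the paper applies Lemma~\ref{unif.distr} to a single recurrence sequence $T^{k_n}(x_0)\to x_0$ and concludes $\omega_{T\times R_g}(x_0,h_0)\supseteq\{x_0\}\times G$, whereas you apply it to countably many return-time sets $N_i(x)$; both are valid.
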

\begin{proof}
	Set
	\begin{equation}\label{Eq:Q.expr.1}
	Q=\{g\in G\colon T\times R_g \text{ is minimal}\}.
	\end{equation}
	
	\emph{1st step.} We show that 
	\begin{equation}\label{Eq:Q.expr.2}
	Q=\{g\in G\colon \text{there is } (x_0,h_0)\in X\times G \text{ with } \omega_{T\times R_g}(x_0,h_0)\supseteq \{x_0\}\times G\}.
	\end{equation}
	
	The inclusion ``$\subseteq$'' is clear. To prove the converse one, fix $(x_0,h_0)\in X\times G$ with $\omega_{T\times R_g}(x_0,h_0)
	\supseteq \{x_0\}\times G$. We claim that, in fact, $\omega_{T\times R_g}
	(x_0,h_1)\supseteq\{x_0\}\times G$ for every $h_1\in G$. To see this, fix $h_1\in G$. Since $\Id_X\times R_{h_1h_0^{-1}}$ is a topological self-conjugacy of $(X\times G,T\times R_g)$, it commutes with the operator $\omega_{T\times R_g}$ and so we have
	\begin{eqnarray*}
	\omega_{T\times R_g}(x_0,h_1)&=&\omega_{T\times R_g}((\Id_X\times R_{h_1h_0^{-1}})(x_0,h_0))=(\Id_X\times R_{h_1h_0^{-1}})(\omega_{T\times R_g}(x_0,h_0))\\
	&\supseteq&(\Id_X\times R_{h_1h_0^{-1}})(\{x_0\}\times G)=\{x_0\}\times G.
	\end{eqnarray*}

	To show that $T\times R_g$ is minimal, fix $x\in X$ and $h\in G$. Since $x_0\in\omega_T(x)$ and
	$G$ is compact, there is
	$h_1\in G$ such that $(x_0,h_1)\in\omega_{T\times R_g}(x,h)$.
	Therefore
	\begin{equation*}
	\omega_{T\times R_g}(x,h)\supseteq\omega_{T\times R_g}(x_0,h_1)
	\supseteq \{x_0\}\times G,
	\end{equation*}
	whence it follows that
	\begin{equation*}
	\omega_{T\times R_g}(x,h)\supseteq\omega_{T\times R_g}(\{x_0\}\times G)=X\times G,
	\end{equation*}
	the last equality being secured by minimality of $T$ and surjectivity of $R_g$. This shows that $T\times R_g$ is minimal.
	
	\emph{2nd step.} We show that $Q$ is of type $G_{\delta}$.
	
	To this end, fix any $(x_0,h_0)\in X\times G$. It follows from \eqref{Eq:Q.expr.1} and \eqref{Eq:Q.expr.2} that
	\begin{equation}\label{Eq:Q.expr.3}
	Q=\{g\in G\colon \omega_{T\times R_g}(x_0,h_0)=X\times G\}.
	\end{equation}
	Since $X$ is metrizable and admits
	a minimal map, it has a countable basis $\{U_n\,:\,n\in\mathbb N\}$.
	Fix also a countable basis $\{V_m\,:\,m\in\mathbb N\}$ of $G$. For $n,m,l\in\mathbb N$, set
	\begin{equation*}
	Q_{n,m}^l(x_0,h_0)=\{g\in G\,:\,(T\times R_g)^k(x_0,h_0)\in U_n\times V_m
	\text{ for some }k\geq l\}.
	\end{equation*}
	Then $Q=\bigcap_{n,m,l=1}^{\infty}Q_{n,m}^l(x_0,h_0)$ by virtue of \eqref{Eq:Q.expr.3}. Since all $Q_{n,m}^l(x_0,h_0)$ are obviously open, $Q$ is indeed of type $G_{\delta}$.
	
	\emph{3rd step.} We show that $Q$ has full Haar measure.
	
	Fix $(x_0,h_0)\in X\times G$. By minimality of $T$, there is an increasing sequence $(k_n)_{n=1}^{\infty}$
	of positive integers with $T^{k_n}(x_0)\to x_0$.
	By Lemma~\ref{unif.distr}, the sequence $(g^{k_n})_{n=1}
	^{\infty}$ is uniformly distributed in $G$ for $\mu-$almost every $g\in G$.
    We show that every such element $g$ belongs to $Q$.
	Indeed, for such $g$, the set
	$$
	\{ R_g^{k_n}(h_0)\,:\,n\in\mathbb N\}=\{g^{k_n}h_0\,:\,
	n\in\mathbb N\}
	$$
	is dense in $G$ and hence, using that $T^{k_n}(x_0)\to x_0$, we obtain
	$$
	\omega_{T\times R_g}(x_0,h_0)\supseteq\{x_0\}\times G\,.
	$$
	By virtue of \eqref{Eq:Q.expr.2}, this means that $g\in Q$.
	
	\emph{4th step.} We show that $Q$ is residual.
	
	Since we already know that $Q$ is of type $G_{\delta}$, it remains to show that it is dense. This follows from the fact that $Q$ has full Haar measure, which is well known to have full support. Nevertheless, we wish to present also another proof based on Lemma~\ref{L:mix.group}.
	
	Fix $x_0\in X$ and $h_0\in G$. To get residuality of $Q$, we show that the open sets $Q_{n,m}^l(x_0,h_0)$ are dense. So fix $n,m,l\in\mathbb N$ and a nonempty open set $V\subseteq G$.
	Since the natural action of $\mathbb N^*$ on $G$ is topologically mixing by Lemma~\ref{L:mix.group}, there is $k_0\geq l$ such that $V\cap A_k^{-1}(h_0^{-1}V_m)\neq\emptyset$ for every $k\geq k_0$. By minimality of $T$, we may fix $k\geq k_0$ with $T^k(x_0)\in
	U_n$. Also, choose $g\in V\cap A_k^{-1}(h_0^{-1}V_m)$. Then $(T\times R_g)^k
	(x_0,h_0)=(T^k(x_0),g^kh_0)\in U_n\times h_0^{-1}V_mh_0=U_n\times V_m$. Thus, $g\in V\cap Q_{n,m}^l(x_0,h_0)$, which proves the density of $Q_{n,m}^l(x_0,h_0)$.
\end{proof}

\begin{remark}\label{R:groups.rot.base}
Consider now a special case of Theorem~\ref{T:groups}, when the minimal map $T$ in the base is also a rotation of a
compact abelian group $X$ by the element $x$. Let $G$ be as in Theorem~\ref{T:groups} and fix $g\in G$.
Then we have a (well known) equivalence of the following conditions:\footnote{Conditions (i) and (ii) are equivalent obviously, the equivalence of (ii) and (iii) is well known, and conditions (iii) and (iv) are equivalent by the standard isomorphism between $\widehat{X\times G}$ and $\widehat{X}
\times\widehat{G}$.}
\begin{enumerate}
	\item[(i)] the product $T\times R_g$ is minimal,
	\item[(ii)] $(x,g)$ is a topological generator of the group $X\times G$,
	\item[(iii)] the annihilator
	$(x,g)^{\perp}=\{\chi\in\widehat{X\times G}\colon \chi(x,g)=1\}$ of $(x,g)$ in $\widehat{X\times G}$ vanishes,
	\item[(iv)] if $\delta\in\widehat{X}$ and $\gamma\in\widehat{G}$
	satisfy $\delta(x)=\gamma(g)$, then $\delta,\gamma$ are both trivial
	characters.
\end{enumerate}
\end{remark}

\begin{theorem}\label{skew.dir}
	Let $(X,T)$ be a minimal dynamical system given by a
	continuous map on a metrizable space. Assume that $Z$
	is a compact metrizable space admitting a compact connected
	abelian group $G$ of homeomorphisms whose natural action
	on $Z$ is minimal. Then the product $(X\times Z, T\times g)$
	is minimal for every $g$ from a residual subset $G'\subseteq G$ of full Haar measure on $G$.
\end{theorem}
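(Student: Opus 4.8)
The plan is to reduce Theorem~\ref{skew.dir} to Theorem~\ref{T:groups} by ``straightening'' the minimal $G$-action on $Z$ using an appropriately chosen orbit of $G$ in $Z$. Fix a point $z_0\in Z$ and consider the orbit map $\beta\colon G\to Z$, $\beta(h)=h(z_0)$. Since the natural action of $G$ on $Z$ is minimal and $G$ is compact, $\beta$ is a continuous surjection; moreover $\beta$ is equivariant in the sense that $\beta(gh)=g(\beta(h))$ for all $g,h\in G$. Equivalently, $\beta$ intertwines the left translation $R_g$ on $G$ with the homeomorphism $g$ on $Z$, so $\Id_X\times\beta$ is a factor map from the system $(X\times G,\,T\times R_g)$ onto $(X\times Z,\,T\times g)$. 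By Theorem~\ref{T:groups}, there is a residual subset $Q\subseteq G$ of full Haar measure such that $(X\times G,\,T\times R_g)$ is minimal for every $g\in Q$; since a factor of a minimal system is minimal, it follows immediately that $(X\times Z,\,T\times g)$ is minimal for every $g\in Q$. Thus we may take $G'=Q$.

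The only point that requires care is the verification that $\beta$ is genuinely surjective and continuous, which is where the hypotheses on $Z$ and $G$ enter. Continuity of $\beta$ follows from continuity of the action $G\times Z\to Z$ (or, at the level already used in the paper, from the fact that evaluation at a fixed point is continuous). Surjectivity is exactly minimality of the $G$-action together with compactness: the image $\beta(G)=G(z_0)$ is a compact, hence closed, $G$-invariant nonempty subset of $Z$, so it equals $Z$ by minimality. Equivariance is a formal computation: $\beta(R_g(h))=\beta(gh)=(gh)(z_0)=g(h(z_0))=g(\beta(h))$.

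Since residuality and full Haar measure of $G'$ are inherited directly from $Q$ in Theorem~\ref{T:groups}, there is nothing further to prove. I expect no serious obstacle here; the content of the theorem is entirely carried by Theorem~\ref{T:groups}, and the present statement is the natural ``transitive action'' upgrade obtained by pushing forward along the orbit map. One small subtlety worth a remark: the map $g\mapsto g$ on the right-hand system is the chosen homeomorphism, and one should keep the (possibly nonabelian-looking, but here abelian) bookkeeping straight so that $R_g$ on $G$ really does correspond to the action of the \emph{same} element $g$ on $Z$ under $\beta$; the equivariance identity above makes this precise, and commutativity of $G$ is not even needed for the factor-map argument, only for quoting Theorem~\ref{T:groups}.
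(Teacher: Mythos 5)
Your proposal is correct. In fact the paper itself sketches exactly this reduction in the paragraph preceding its displayed proof: there the authors note that, by compactness, the action is algebraically transitive, identify $Z$ with the homogeneous space $G/H$, apply Theorem~\ref{T:groups} to the group $G/H$, and pull the resulting residual set back along the quotient morphism $p\colon G\to Z$. Your version is the same idea organized slightly more cleanly: you apply Theorem~\ref{T:groups} to $G$ itself and push forward along the orbit map $\beta(h)=h(z_0)$, using that $\Id_X\times\beta$ is a closed (indeed, automatically dense-image-preserving) equivariant surjection, so minimality of $T\times R_g$ on $X\times G$ descends to $T\times g$ on $X\times Z$; this avoids having to argue that the action on $Z$ is \emph{equivalent} to the rotation action on $G/H$, needing only equivariance and surjectivity of $\beta$, both of which you justify correctly. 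The paper's displayed proof is instead an elementary point-chasing argument: given $(x,z)$ and a target $U\times V$, it picks $h_0\in G$ with $h_0(z)\in V$, a neighborhood $W_0$ of $h_0$ with $h(z)\in V$ for $h\in W_0$, and uses density of the $(T\times R_g)$-orbit of $(x,e)$ to land $(T^n(x),g^n)$ in $U\times W_0$. Both arguments carry the same content (everything rests on Theorem~\ref{T:groups}); yours is shorter once the factor-map machinery is granted, while the paper's avoids invoking it. The only points worth making explicit in your write-up are that $G$, being a compact subgroup of the Polish group $\Homeo(Z)$, is metrizable (so Theorem~\ref{T:groups} applies), and that the relevant notion of factor here is the elementary one (continuous equivariant surjection maps dense orbits to dense orbits), which works even though $X$ need not be compact.
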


This is just a reformulation of Theorem~\ref{T:groups}. Indeed, 
since the action of $G$ on $Z$ is transitive in the algebraic sense by compactness of $G$,
it follows that $Z$ is homeomorphic to a quotient group $G/H$ of $G$  
and, in fact, 
the action of $G$ on $Z$ is equivalent to the natural action of $G$ on $G/H$.
Thus, by Theorem~\ref{T:groups} 
(applied to the group $G/H$), there exists a residual subset $Z'\subseteq Z$ of full Haar measure
such that for every $z\in Z'$ the rotation $R_z$ on $Z$ has $T\times R_z$  minimal on $X\times Z$.
So we could now prove the theorem by setting $G'=p^{-1}(Z')$, where $p\colon G\to Z$ is the quotient morphism. (Notice that $G'$ is residual of full Haar measure, since $p$ is open and measure-preserving.) We wish to present also the following elementary proof.

\begin{proof}
	Let $\mu$ be the Haar measure on $G$.
	For $g\in G$ let $R_g$ denote the rotation of $G$ by
	$g$. By Theorem~\ref{T:groups}, the product $T\times R_g$
	is minimal for every $g$
	from a residual subset $G'\subseteq G$ of full Haar measure on $G$. We show that
	for such $g$ the product $T\times g$ is minimal. So fix
	$g\in G'$, $(x,z)\in X\times Z$
	and nonempty open sets $U\subseteq X$, $V\subseteq Z$; we show that there is a positive integer $n$ with
	$(T\times g)^n(x,z)\in U\times V$. Since the natural
	action of $G$ on $Z$ is minimal, there is $h_0\in G$
	with $h_0(z)\in V$. Choose a neighborhood $W_0$ of
	$h_0$ with the property that $h(z)\in V$ for every
	$h\in W_0$. Since $T\times R_g$ is minimal, there is
	a positive integer $n$ such that $(T\times R_g)^n(x,\Id_Z)
	\in U\times W_0$, that is, $T^n(x)\in U$ and $g^n\in W_0$.
	Consequently, $g^n(z)\in V$ and hence $(T\times g)^n(x,z)
	\in U\times V$.
\end{proof}

We wish to mention that an analogue of Theorem~\ref{T:groups} does not hold for disconnected compact abelian groups $G$; see Example~\ref{E:disc.groups} below. It follows, in particular, that to verify the homeo-product-minimality of the Cantor set $C$ (a fact that we shall prove in Section~\ref{S:CtrCtrd}), it is not sufficient to know that $C$ carries the structure of a compact abelian group and use the corresponding minimal rotations of $C$.

\begin{example}\label{E:disc.groups}
There exist a compact metrizable space $X$ and a minimal homeomorphism $T\colon X\to X$ such that for every compact disconnected abelian group $G$ and every rotation $R$ of $G$, the product $(X\times G,T\times R)$ is not minimal.
\end{example}
\begin{proof}
Let $\tor(\mathbb S^1)$ be the torsion subgroup of $\mathbb S^1$ equipped with the discrete topology and let $X=\widehat{\tor(\mathbb S^1)}$ be the dual group of $\tor(\mathbb S^1)$. Being a dual group of a discrete countable abelian group, the group $X$ is compact, metrizable and abelian. Moreover, since $\widehat{X}\cong\tor(\mathbb S^1)$ is isomorphic to a subgroup of $\mathbb T^1$, $X$ is monothetic \cite[Theorem~6]{AnzKak}. Fix a topological generator $x_0$ of $X$ and let $T$ be the (minimal) rotation of $X$ by $x_0$.

Now let $G$ be a compact disconnected abelian group, $g_0\in G$ and $R$ be the rotation of $G$ by $g_0$; we show that the product $(X\times G,T\times R)$ is not minimal. Denote by $G_0$ the identity component of $G$ and by $p$ the canonical quotient morphism $G\to G/G_0$. Then $Y=G/G_0$ is a nontrivial totally disconnected compact abelian group. Write $y_0=p(g_0)$ and let $S$ be the rotation of $Y$ by $y_0$. Clearly, the morphism $p$ is a factor map $(G,R)\to(Y,S)$. Thus, to show that the product $(X\times G,T\times R)$ is not minimal, it is sufficient to show that the product $(X\times Y,T\times S)$ is not minimal. In view of Remark~\ref{R:groups.rot.base}, it is sufficient to find characters $\delta\in \widehat{X}$ and $\gamma\in \widehat{Y}$ with $\delta(x_0)=\gamma(y_0)\neq1$.

Since the characters of $Y$ separate points, there is $\gamma\in\widehat{Y}$ with $\gamma(y_0)\neq1$. Being a dual group of a totally disconnected group $Y$, the group $\widehat{Y}$ is a torsion group. Consequently, the image $\im(\gamma)$ of $\gamma$ is a finite subgroup of $\mathbb S^1$, that is, $\im(\gamma)=\mathbb Z_k\subseteq\mathbb S^1$ for some integer $k\geq2$. Now, since $\mathbb Z_k$ is a subgroup of $\tor(\mathbb S^1)\cong\widehat{X}$, the group $X$ factors onto $\widehat{\mathbb Z_k}\cong\mathbb Z_k$. Thus, there is a character $\eta\in\widehat{X}$ with $\im(\eta)=\mathbb Z_k$. The set $U=\eta^{-1}(\gamma(y_0))$ is nonempty and open in $X$, hence $x_0^n\in U$ for some $n\in\mathbb N$. Thus, $\eta^n(x_0)=\eta(x_0^n)=\chi(y_0)$ and we may finish the proof by letting $\delta=\eta^n$.
\end{proof}


\section{Products of product-minimal spaces with suitable spaces}\label{S:new.from.old}

The purpose of this section is to prove the following theorem. Its proof is based on ideas from \cite{GW} and \cite{Di}.

\begin{theorem}\label{T:prod.PM.arc}
Let $Z$ be a compact metrizable space admitting a minimal action of an arc-wise connected topological group $G_a$. Then for every nondegenerate (homeo-)product-minimal space $Y$, the space $Y\times Z$ is also (homeo-)product-minimal.
\end{theorem}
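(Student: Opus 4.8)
The plan is to reduce the problem to the case where the product-minimal space $Y$ appears ``alone'' and to use the arc-wise connectedness of $G_a$ to build, via a path, the fiber maps needed to spread minimality across the $Z$-direction. The overall strategy follows the philosophy of \cite{GW} and \cite{Di}: one first makes $X\times Y$ minimal using that $Y$ is (homeo-)product-minimal, and then one ``rotates'' the $Z$-fibers by group elements obtained from a suitable path in $G_a$, so that the resulting skew product on $(X\times Y)\times Z$ is minimal; finally one checks this skew product can be taken to be a direct product (or has the right form) on $Y\times Z$ over the base $X$.

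Here is how I would carry it out. First, let $(X,T)$ be an arbitrary minimal system with $X$ metrizable. Since $Y$ is (homeo-)product-minimal and $Z$ admits a minimal $G_a$-action, the key is to construct a continuous map (homeomorphism) $S\colon Y\times Z\to Y\times Z$ with $(X\times Y\times Z,\ T\times S)$ minimal. Step one: apply the (homeo-)product-minimality of $Y$ to the system $(X,T)$ to get a minimal continuous map (homeomorphism) $R\colon Y\to Y$ such that $(X\times Y,\ T\times R)$ is minimal; write $(X',T')=(X\times Y,\ T\times R)$, which is a minimal metrizable system. Step two: use the minimal $G_a$-action on $Z$ together with the arc-wise connectedness of $G_a$ to produce a continuous family of homeomorphisms of $Z$ indexed by $X'$ — concretely, choose a path $\sigma\colon[0,1]\to G_a$ and transport it along orbits, or more robustly mimic the Glasner--Weiss / Dirbák construction to obtain a skew product $S'(x',z)=(T'(x'),\ \phi_{x'}(z))$ on $X'\times Z$ that is minimal, exploiting Lemma~\ref{L:fiber} (it suffices to make one whole fiber $\{x_0'\}\times Z$ consist of transitive points, and the $G_a$-action lets us hit a dense subset of $Z$ from a single point). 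Step three: observe that since $X'=X\times Y$, the composite system $(X'\times Z,\ S')$ is a system over the base $(X,T)$ with total fiber $Y\times Z$, and the induced selfmap on $Y\times Z$ in each base fiber is of the form $(y,z)\mapsto (R(y),\ \phi_{(x,y)}(z))$ — one then argues that $Y\times Z$ carries a single map $S$ (not depending on $x$) doing the job, either by taking $\phi$ independent of the $X$-coordinate in the construction, or by absorbing the $X$-dependence into a further application of product-minimality; in the homeomorphism case one keeps track that all the $\phi_{x'}$ are homeomorphisms so that $S$ is a homeomorphism. Minimality of the full product then follows by Lemma~\ref{L:fiber} applied twice (once over $X$ with fiber $Y\times Z$, after knowing it over $X\times Y$ with fiber $Z$), using compactness of $Y$ and $Z$.

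The main obstacle, and the place requiring genuine care, is Step two together with the ``no $X$-dependence'' issue in Step three: one must actually produce the fiber maps so that the combined object is a \emph{direct} product $T\times S$ on $X\times(Y\times Z)$ rather than merely a minimal skew product, and simultaneously ensure that a single $S\colon Y\times Z\to Y\times Z$ works uniformly over all of $X$. The arc-wise connectedness of $G_a$ is exactly what makes this feasible — it allows one to connect the identity of $G_a$ to a group element whose action on $Z$ realizes the required ``drift'', and by reparametrizing along a recurrence time of the minimal system $(X\times Y,T\times R)$ one can encode this drift into a \emph{fixed} homeomorphism $S$ of $Y\times Z$; this is precisely the technical heart borrowed from \cite{GW,Di}, and it is where one spends the real work. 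Everything else — metrizability bookkeeping, the reduction to one fiber via Lemma~\ref{L:fiber}, and the bookkeeping that $S$ is a homeomorphism when $Y$ is HPM and the $G_a$-action is by homeomorphisms — is routine once the construction of the fiber maps is in place.
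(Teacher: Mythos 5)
Your overall strategy coincides with the paper's: first use the (homeo-)product-minimality of $Y$ to make $(X\times Y,T\times S)$ minimal, then run a Glasner--Weiss/Dirb\'ak-type cocycle construction into $G_a$ to spread minimality across the $Z$-fibers, and you correctly isolate the crux, namely that the fiber maps must not depend on the $X$-coordinate so that the outcome is a genuine direct product of $(X,T)$ with a single map on $Y\times Z$. The problem is that you leave this crux unresolved. Of your two proposed fixes, the second (``absorbing the $X$-dependence into a further application of product-minimality'') does not work: once a minimal skew product over $X\times Y$ with genuinely $x$-dependent fiber maps has been built, there is no general device for stripping out that dependence afterwards. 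The first fix --- building the cocycle as a function of $y$ alone from the outset --- is the right one and is what the paper does, but it cannot simply be cited from \cite{GW,Di}: those constructions produce cocycles depending on the full base point, and the entire content of the paper's Lemma~\ref{L:approx.lemma} is the verification that the density step of the Baire category argument can be carried out inside the space $\overline{\Cob}$ of coboundaries with transfer functions defined on $Y$ only.

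Concretely, what is missing is the following. Given a recurrence time $n$ of $(x_0,y_0)$ under the minimal map $T\times S$, one must produce a transfer function $\vartheta=\eta\circ\varrho\colon Y\to G_a$, where $\eta$ is a path in $G_a$ from $e$ to a suitably chosen $g$, and $\varrho\colon Y\to[0,1]$ is assembled from a bump function supported near the orbit segment $S^k(y_0),\dots,S^{k+n+N-1}(y_0)$ so that $\varrho$ equals $0$ at $S^k(y_0)$, equals $1$ at $S^{k+n}(y_0)$, and satisfies $|\varrho(S(y))-\varrho(y)|\leq 1/N$ for all $y$; uniform continuity of $\eta$ then makes $\cob(\vartheta)$ uniformly close to the identity while $\cob(\xi\vartheta)^{(n)}(S^k(y_0))$ hits the prescribed target neighborhood. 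This is also precisely where the nondegeneracy of $Y$ enters (all $S$-orbits must be infinite for such a $\varrho$ to exist), a hypothesis your sketch never invokes. The remaining steps of your outline (Lemma~\ref{L:fiber} needs to be applied only once, over the base $X\times Y$ with fiber $Z$) are fine, but without the approximation lemma the proof is incomplete.
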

\begin{remark}\label{R:prod.PM.arc}
To prove Theorem~\ref{T:prod.PM.arc}, we may additionally assume that $G_a$ is a topological subgroup of the group $\Homeo(Z)$ equipped with the topology of uniform convergence.\footnote{Indeed, let $\Phi\colon G_a\times Z\to Z$ be the considered action, with the acting homeomorphisms  $\varphi_g=\Phi(g,\cdot)$.
Consider the group $H_a\subseteq \Homeo(Z)$ of all acting homeomorphisms and the map $\Psi\colon H_a\times Z\to Z$,
$(h,z)\mapsto h(z)$. 
Since $Z$ is a compact metrizable space and $\Homeo(Z)$ is equipped with the topology of uniform convergence, 
the map $g\mapsto\varphi_g$ is continuous. Therefore,  since $G_a$ is arc-wise connected, so is $H_a$. 
Moreover, $\Psi$ is continuous due to the topology of uniform convergence on $H_a$ and, clearly, it is an action of $H_a$ on $Z$. 
The orbits of $\Psi$ are the same as those of $\Phi$, therefore $\Psi$ is a minimal action.}
By letting $G$ be the closure of $G_a$ in $\Homeo(Z)$, we see that $G_a$ is a dense arc-wise connected subgroup of a Polish group $G$.\footnote{Recall that if $\varrho$ is the supremum metric on $\Homeo(Z)$ then the metric $\varrho'$, given by the rule $\varrho'(g,h)=\varrho(g,h)+\varrho(g^{-1},h^{-1})$, is complete and equivalent to $\varrho$ on $\Homeo(Z)$.} Clearly, the natural action of $G$ on $Z$ is minimal.

Let us mention that Theorem~\ref{T:prod.PM.arc} applies, for instance, to the following spaces $Z$:
\begin{itemize}
\item compact connected manifolds without boundary (see \cite{GW}),
\item compact connected Hilbert cube manifolds (see \cite{GW}),
\item homogeneous spaces of compact connected groups (see \cite{DM}),
\item countably infinite products of compact connected manifolds, infinitely many of which have nonempty boundary (see \cite{DM}).
\end{itemize}
We also recall from \cite{DM} that the class of spaces $Z$ satisfying the assumptions of Theorem~\ref{T:prod.PM.arc} is closed with respect to at most countable products.

It can be seen from the above examples that the space $Z$ itself need not be minimal. Further notice that in fact we assume that the space $Y$ is infinite, since, as we already know from Proposition~\ref{P:PM.con.comp}, a PM space is either infinite or a singleton.
\end{remark}

\begin{corollary}\label{C:prod.PM.arc}
Let $Z$ be a compact metrizable space admitting a minimal homeomorphism isotopic to the identity. Then for every nondegenerate (homeo-)product-minimal space $Y$, the space $Y\times Z$ is also (homeo-)product-minimal.
\end{corollary}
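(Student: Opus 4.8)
The plan is to reduce the corollary to Theorem~\ref{T:prod.PM.arc} by manufacturing, out of the minimal homeomorphism that is isotopic to the identity, an arc-wise connected topological group acting minimally on $Z$.

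First I would fix a minimal homeomorphism $h\colon Z\to Z$ together with an isotopy $H\colon Z\times[0,1]\to Z$ from $H_0=\operatorname{Id}_Z$ to $H_1=h$, and write $h_t=H(\cdot,t)\in\Homeo(Z)$. Since $Z$ is compact metrizable and $H$ is continuous on the compact space $Z\times[0,1]$, it is uniformly continuous, which yields that the path $\iota\colon[0,1]\to\Homeo(Z)$, $t\mapsto h_t$, is continuous for the topology of uniform convergence. Recall, as noted in the footnote to Remark~\ref{R:prod.PM.arc}, that $\Homeo(Z)$ equipped with this topology is a (Polish) topological group; in particular composition and inversion are continuous.

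Next I would let $G_a\leq\Homeo(Z)$ be the subgroup generated by $\{h_t:t\in[0,1]\}$, with the subspace topology, so that $G_a$ is itself a topological group. The key point is that $G_a$ is arc-wise connected: any $g\in G_a$ can be written as a finite word $g=h_{t_1}^{\epsilon_1}\cdots h_{t_k}^{\epsilon_k}$ with $\epsilon_i\in\{-1,1\}$, and the assignment $s\mapsto h_{st_1}^{\epsilon_1}\cdots h_{st_k}^{\epsilon_k}$ is, by continuity of composition and inversion together with the continuity of $\iota$, a continuous path inside $G_a$ joining $\operatorname{Id}_Z=h_0^{\epsilon_1}\cdots h_0^{\epsilon_k}$ (at $s=0$) to $g$ (at $s=1$). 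Moreover, since $h=h_1\in G_a$ and $h$ is minimal, every $G_a$-orbit contains the dense set $\{h^n(z):n\in\mathbb Z\}$, so the natural action of $G_a$ on $Z$ is minimal, and it is continuous since it is the evaluation action of a subgroup of $\Homeo(Z)$ on a compact metrizable space.

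Finally, applying Theorem~\ref{T:prod.PM.arc} to this arc-wise connected topological group $G_a$ acting minimally on $Z$, I conclude that $Y\times Z$ is (homeo-)product-minimal for every nondegenerate (homeo-)product-minimal space $Y$. The only mildly technical point is the verification that the subgroup generated by an arc through the identity is again arc-wise connected, but this is routine once one knows that $\Homeo(Z)$ is a topological group in the uniform topology, so I do not expect any genuine obstacle here.
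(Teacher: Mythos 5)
Your proposal is correct and follows essentially the same route as the paper: form the subgroup $G_a$ of $\Homeo(Z)$ generated by the isotopy $\{h_t\colon t\in[0,1]\}$, observe it is an arc-wise connected topological group acting minimally on $Z$ (since $h=h_1\in G_a$ is minimal), and apply Theorem~\ref{T:prod.PM.arc}. The paper's proof is identical in substance, merely stating without detail the arc-connectedness of the generated subgroup, which you verify by the standard finite-word argument.
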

\begin{proof}
Let $S$ be a minimal homeomorphism on $Z$ isotopic to the identity via isotopy $S_t$ ($0\leq t\leq 1$) and let $G_a$ be the subgroup of $\Homeo(Z)$ generated by the set $\{S_t\colon 0\leq t\leq1\}$. Then $G_a$ is an arc-wise connected topological group, whose natural action on $Z$ is minimal. Thus, the statement of the corollary follows from Theorem~\ref{T:prod.PM.arc}.
\end{proof}

\begin{example}\label{e:M-not-PM}
  In Corollary~\ref{C:prod.PM.arc} it is essential that the minimal homeomorphism on $Z$
  is isotopic to the identity; it is not sufficient to assume that $Z$ admits an arbitrary
   minimal homeomorphism. Indeed, let $Y$ be the circle and $Z$ be a DST space.
   By Theorem~\ref{T:groups}, $Y$ is an HPM space but the product $Y\times Z$ is not even a PM space. 
   In fact, $Z\times (Y\times Z)$  
   does not admit a minimal map by Theorem~\ref{T:XxXxY}.
\end{example}

Now we turn to a proof of Theorem~\ref{T:prod.PM.arc}. Let us begin by fixing some necessary notation, which we keep throughout this section. Let $X,Y,Z$ be metrizable spaces, $Y$ infinite, $Y$ and $Z$ compact, and let $T\colon X\to X$, $S\colon Y\to Y$ be continuous maps. We shall assume that the product $(X\times Y,T\times S)$ is minimal. Further, assume that $G$ is a Polish group with a dense arc-wise connected subgroup $G_a$ and suppose that $\phi=(\varphi_g)_{g\in G}$ is a minimal $G$-flow on $Z$. Denote by $e$ the neutral element of $G$.
Given a continuous map $f\colon Y\to G$, we consider the continuous maps
\begin{equation}\label{Eq:def.R}
R\colon X\times Y\times G \to X\times Y\times G, \qquad (x,y,g)\mapsto(T(x),S(y),f(y)g)
\end{equation}
and
\begin{equation}\label{Eq:def.Q}
Q\colon X\times Y\times Z\to  X\times Y\times Z, \qquad (x,y,z)\mapsto(T(x),S(y),\varphi_{f(y)}(z)).
\end{equation}
For $n\in\mathbb N$ and $y\in Y$ write
\begin{equation*}
f^{(n)}(y)=f(S^{n-1}(y))f(S^{n-2}(y))\dots f(S(y))f(y).
\end{equation*} Then
\begin{equation*}
R^n(x,y,g)=(T^n(x),S^n(y),f^{(n)}(y)g)
\end{equation*}
and
\begin{equation*}
Q^n(x,y,z)=(T^n(x),S^n(y),\varphi_{f^{(n)}(y)}(z))
\end{equation*}
for all $x\in X$, $y\in Y$, $g\in G$, $z\in Z$ and $n\in\mathbb N$.
Notice that the system $(X\times Y\times Z, Q)$ is in fact a direct product of $(X,T)$ with a (skew product) system on $Y\times Z$. The map $f$ is traditionally referred to as a \emph{cocycle} over the product system $(X\times Y,T\times S)$, but we require that it depend only on $y\in Y$. We recall the cocycle identity
\begin{equation*}
f^{(k+n)}(y)=f^{(n)}(S^k(y))f^{(k)}(y),
\end{equation*}
which holds for all $y\in Y$ and $k,n\in\mathbb N$.

We call a cocycle $f$ a \emph{coboundary} if there exists a continuous map $\xi\colon Y\to G$, called a \emph{transfer function} for $f$, such that $f(y)=\xi(S(y))\xi(y)^{-1}$ for every $y\in Y$; in this case we write $f=\cob(\xi)$, so
$$
  \cob(\xi)(y) = \xi(S(y))\xi(y)^{-1}.
$$
(Thus, a transfer function is again required to depend only on $y\in Y$.) We shall use the symbol $\overline{\Cob}$ to denote the closure of the set of all coboundaries in the space of all cocycles $f\colon Y\to G$ equipped with the topology of uniform convergence. Thus $\overline{\Cob}$ becomes a completely metrizable space under the supremum metric
\begin{equation*}
d_{\sup}(f,f')=\sup_{y\in Y}d(f(y),f'(y)),
\end{equation*}
where $d$ is a complete metric for the topology of $G$. Notice that for $f=\cob(\xi)$ we have
\begin{equation*}
f^{(n)}(y)=\cob(\xi)^{(n)}(y)=\xi(S^n(y))\xi(y)^{-1}
\end{equation*}
for all $y\in Y$ and $n\in\mathbb N$.

\begin{lemma}\label{L:approx.lemma}
Fix a continuous map $\xi\colon Y\to G$, $\varepsilon>0$, $(x_0,y_0)\in X\times Y$, a pair of nonempty open sets $W\subseteq X\times Y$, $\mathcal N\subseteq G$ and a positive integer $k$ with $(T^k(x_0),S^k(y_0))\in W$. Then there exist a continuous map $\vartheta\colon Y\to G$ and a positive integer $n$ such that
\begin{enumerate}
\item $(T^{k+n}(x_0),S^{k+n}(y_0))\in W$,
\item $\cob(\xi\vartheta)^{(n)}(S^k(y_0))\in\mathcal N$,
\item $d_{\sup}(\cob(\xi\vartheta),\cob(\xi))<\varepsilon$.
\end{enumerate}
\end{lemma}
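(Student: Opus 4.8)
The statement I want to prove is Lemma~\ref{L:approx.lemma}: given a transfer function $\xi$, a tolerance $\varepsilon$, a base point $(x_0,y_0)$, open sets $W$ and $\mathcal N$, and a time $k$ with $(T^k(x_0),S^k(y_0))\in W$, I can perturb $\xi$ to $\xi\vartheta$ (with $\vartheta$ close to the constant map $e$) and find a return time $n$ so that the orbit returns to $W$ at time $k+n$, the cocycle increment $\cob(\xi\vartheta)^{(n)}(S^k(y_0))$ lands in the prescribed open set $\mathcal N\subseteq G$, and the coboundary $\cob(\xi\vartheta)$ stays $\varepsilon$-close to $\cob(\xi)$. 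The key structural fact is that modifying $\xi$ by $\vartheta$ changes the increment over a block of length $n$ starting at a point $y$ by $\xi(S^{k+n}(y_0))\,\vartheta(S^{k+n}(y_0))\,\vartheta(S^k(y_0))^{-1}\,\xi(S^k(y_0))^{-1}$ — i.e.\ the increment is controlled by the values of $\vartheta$ only at the two endpoints $S^k(y_0)$ and $S^{k+n}(y_0)$ of the orbit segment. So the plan is: pick $n$ first (using minimality of $T\times S$ to make the orbit return near $(T^k(x_0),S^k(y_0))$, and in particular back into $W$); then choose $\vartheta$ supported near the finitely many relevant iterates, small in the $G_a$-direction, so that it nudges the endpoint values to force the increment into $\mathcal N$.

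**The steps in order.** First, I would fix a small ball $B\subseteq G$ around $e$, contained in $G_a$ and small enough that any coboundary perturbed by a $\vartheta$ taking values in $B$ stays within $\varepsilon$ of $\cob(\xi)$ in the sup metric — this uses uniform continuity of multiplication on the compact-ish pieces of $G$ and the fact that $\cob(\xi\vartheta)(y) = \xi(S(y))\vartheta(S(y))\vartheta(y)^{-1}\xi(y)^{-1}$, so the perturbation of each value of the coboundary is a product of $\xi$-values conjugating a product of two $\vartheta$-values. Second, using that $\mathcal N$ is open and nonempty, pick a target $g_\ast\in\mathcal N$; I want the increment $\cob(\xi\vartheta)^{(n)}(S^k(y_0)) = \xi(S^{k+n}(y_0))\,\vartheta(S^{k+n}(y_0))\,\vartheta(S^k(y_0))^{-1}\,\xi(S^k(y_0))^{-1}$ to equal (or be near) $g_\ast$. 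Since I will be returning close to the starting configuration, $S^{k+n}(y_0)$ is close to $S^k(y_0)$ and $\xi(S^{k+n}(y_0))$ is close to $\xi(S^k(y_0))$, so the conjugating factors roughly cancel, and I need $\vartheta(S^{k+n}(y_0))\vartheta(S^k(y_0))^{-1}$ to be approximately $\xi(S^k(y_0))^{-1} g_\ast \xi(S^k(y_0))$. Here is where arc-connectedness of $G_a$ must enter: the element $\xi(S^k(y_0))^{-1} g_\ast \xi(S^k(y_0))$ need not lie in $B$, so I cannot realize it in one shot by a small $\vartheta$. Instead I would run an iterated/telescoping argument: over a long orbit segment, pass through $W$ many times (minimality of $T\times S$ guarantees infinitely many returns), and at each return apply one more small perturbation — chaining small group elements along an arc in $G_a$ from $e$ to the desired conjugate of $g_\ast$. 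Each individual perturbation keeps the coboundary $\varepsilon$-close (the perturbations are localized to disjoint finite sets of iterates and each is $B$-small), while the accumulated increment traverses the arc. Third, once the increment is within the open set $\mathcal N$ and the return lands in $W$, I set $n$ to be the total length of the chained segment and $\vartheta$ the product of the localized bumps, and verify (1)--(3).

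**Technical care and the main obstacle.** The construction of each localized $\vartheta$ is a routine Urysohn/Tietze-type interpolation: since $Y$ is compact metrizable and the relevant iterates of $y_0$ in the block are finitely many distinct points (or can be made so, possibly after a tiny further adjustment of $n$ to avoid coincidences — more carefully, one works with small disjoint neighborhoods of the endpoints and uses that $S^k(y_0)$ and $S^{k+n}(y_0)$ can be separated since $Y$ has no isolated points, as minimality forces), one builds a continuous $\vartheta\colon Y\to B\subseteq G_a$ prescribed at those points and equal to $e$ elsewhere, using an arc in $G_a$ and a partition-of-unity / Urysohn function — this is where $G_a$ being arc-wise connected is used in an essential way, since the prescribed values must be joined to $e$ inside $G_a$ by paths to extend continuously. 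The main obstacle, as I see it, is the bookkeeping in the telescoping step: I must arrange that the successive small perturbations do not interfere (their supports in $Y$ along the orbit segment stay disjoint, which requires choosing the return times spread far enough apart) and that the conjugating $\xi$-factors at the many return points, which vary slightly, do not spoil the approximate cancellation — this needs a uniform-continuity estimate on $\xi$ and a careful choice of how fine the arc-subdivision is relative to $\varepsilon$ and to the modulus of continuity of $\xi$. Once the chaining is set up so that the accumulated increment moves along the arc from $e$ to (a conjugate of) $g_\ast$ while each step's error is summably small, landing the final increment inside the open set $\mathcal N$ and the orbit inside $W$ is straightforward, and the three conclusions of the lemma follow.
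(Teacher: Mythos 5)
There is a genuine gap in the step that is supposed to deliver condition (2), and it is created by the very identity you (correctly) put at the centre of your plan. Since $\cob(\xi\vartheta)^{(n)}(y)=(\xi\vartheta)(S^n(y))(\xi\vartheta)(y)^{-1}=\xi(S^n(y))\,\vartheta(S^n(y))\,\vartheta(y)^{-1}\,\xi(y)^{-1}$, the increment over a block depends only on the values of $\vartheta$ at the two endpoints. This kills your chaining scheme: if, as you propose, $\vartheta$ is a product of disjointly supported bumps each taking values in a small ball $B$ around $e$ (``equal to $e$ elsewhere''), then at every point of $Y$ at most one bump is active, so $\vartheta(Y)\subseteq B$ and $\vartheta(S^{k+n}(y_0))\vartheta(S^k(y_0))^{-1}\in BB^{-1}$ no matter how many returns you chain together. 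The contributions of the intermediate returns cancel; nothing accumulates; the increment stays near $\xi(S^{k+n}(y_0))\xi(S^k(y_0))^{-1}$, and condition (2) is unreachable whenever $\mathcal N$ forces the endpoint ratio to be far from $e$ (e.g.\ $\mathcal N$ a small ball far from that element). If instead you let the values of $\vartheta$ at successive return points march along the arc $\eta$ from $e$ to $g$ (so $\vartheta$ is no longer $B$-valued), then the extension recipe ``prescribed at those points and equal to $e$ elsewhere'' breaks condition (3): take $y$ in the small support of the $j$-th bump with $S(y)$ outside it; then $\vartheta(S(y))\vartheta(y)^{-1}=\eta(t_j)^{-1}$, which is not small. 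Condition (3) requires $\vartheta(S(y))\vartheta(y)^{-1}$ to be controlled for \emph{every} $y\in Y$, not just along the orbit of $y_0$, and neither variant of your construction achieves this.

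The missing idea is to make $\vartheta$ a \emph{large} perturbation whose one-step coboundary is uniformly small on all of $Y$. The paper does this with a Ces\`aro-averaging device: choose $N>1/\delta$ ($\delta$ from uniform continuity of the arc $\eta\colon[0,1]\to G_a$ joining $e$ to $g\in(g_0^{-1}\mathcal Ng_0)\cap G_a$), then $n\geq N$ with $(T^{k+n}(x_0),S^{k+n}(y_0))\in W$, a Urysohn function $\kappa\colon Y\to[0,1]$ with $\kappa(S^{k+i}(y_0))=0$ and $\kappa(S^{k+n+i}(y_0))=1$ for $i=0,\dots,N-1$, and finally $\varrho=\frac1N\sum_{i=0}^{N-1}\kappa\circ S^i$ and $\vartheta=\eta\circ\varrho$. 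The point is that $\varrho(S(y))-\varrho(y)=\frac1N\left(\kappa(S^N(y))-\kappa(y)\right)$ has absolute value at most $1/N<\delta$ for \emph{every} $y\in Y$, so $\cob(\vartheta)(y)\in\mathcal N_e$ globally (giving (3)), while $\varrho$ still climbs from $0$ at $S^k(y_0)$ to $1$ at $S^{k+n}(y_0)$, so $\vartheta(S^k(y_0))=e$ and $\vartheta(S^{k+n}(y_0))=g$ (giving (2)). Your preparatory steps (the identity neighbourhood $\mathcal N_e$ via the tube lemma, picking $g$ in $g_0^{-1}\mathcal Ng_0\cap G_a$ by density of $G_a$, shrinking $W$ so that $\xi(\pr_2(W))\subseteq\mathcal N_0$ with $\mathcal N_0g\mathcal N_0^{-1}\subseteq\mathcal N$) match the paper; it is this averaging construction that your argument lacks and that cannot be replaced by disjointly supported small bumps.
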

\begin{proof}
Consider the map
\begin{equation*}
\Phi\colon Y\times G\to \mathbb R, \qquad (y,g)\mapsto d\left(\xi(S(y))g\xi(y)^{-1},\xi(S(y))\xi(y)^{-1}\right),
\end{equation*}
where $d$ is the metric on $G$. Since $\Phi$ is continuous and takes the value $0$ on the compact set $Y\times\{e\}$, by the tube lemma there is a neighborhood $\mathcal N_e$ of $e$ in $G$ such that $\Phi(y,g)<\varepsilon$ for all $y\in Y$ and $g\in\mathcal N_e$.

Set $g_0=\xi(S^k(y_0))$. Since $G_a$ is a dense subgroup of $G$, we may choose $g\in (g_0^{-1}\mathcal Ng_0)\cap G_a$. Fix a neighborhood $\mathcal N_0$ of $g_0$ in $G$ with $\mathcal N_0g\mathcal N_0^{-1}\subseteq\mathcal N$. 
By the assumption, $S^k(y_0)\in \pr_2(W)$ and this point is mapped by $\xi$ to $g_0\in\mathcal N_0$.
By continuity of $\xi$, we may assume that $W$ is small enough so that $\xi(y)\in\mathcal N_0$ for every $y\in\pr_2(W)$.

Now, since $G_a$ is arc-wise connected and $e,g\in G_a$, there is a path $\eta\colon[0,1]\to G_a$ with $\eta(0)=e$ and $\eta(1)=g$. By compactness of $[0,1]$, the map $\eta$ is uniformly continuous and so there is $\delta>0$ such that $\eta(s)\eta(t)^{-1}\in\mathcal N_e$ for all $s,t\in[0,1]$ with $|s-t|<\delta$. Fix a positive integer $N>1/\delta$. By minimality of $T\times S$, there is $n\geq N$ such that $(T^{k+n}(x_0),S^{k+n}(y_0))\in W$. Since the space $X\times Y$ is infinite by our assumptions, all the orbits of the minimal map $T\times S$ are infinite. Thus, there is a continuous map $\kappa\colon Y\to[0,1]$ with $\kappa(S^{k+i}(y_0))=0$ and $\kappa(S^{k+n+i}(y_0))=1$ for $i=0,1,\dots,N-1$.

Consider the map $\varrho\colon Y\to [0,1]$ defined by
\begin{equation*}
\varrho=\frac{1}{N}\sum_{i=0}^{N-1}\kappa\circ S^i
\end{equation*}
and put $\vartheta=\eta\circ\varrho$. We show that the map $\vartheta\colon Y\to G_a\subseteq G$ satisfies the conditions from the lemma.

First, since $\varrho(S^k(y_0))=0$ and $\varrho(S^{k+n}(y_0))=1$, we get $\vartheta(S^k(y_0))=\eta(0)=e$ and $\vartheta(S^{k+n}(y_0))=\eta(1)=g$. Consequently,
\begin{align*}
\cob(\xi\vartheta)^{(n)}(S^k(y_0))&=\xi(S^{k+n}(y_0))\vartheta(S^{k+n}(y_0))\vartheta(S^k(y_0))^{-1}\xi(S^k(y_0))^{-1}\\
&=\xi(S^{k+n}(y_0))g\xi(S^k(y_0))^{-1}\in\mathcal N_0g\mathcal N_0^{-1}\subseteq\mathcal N,
\end{align*}
which verifies condition (2).

Second, since $|\varrho(S(y))-\varrho(y)| = (1/N) \left|\kappa(S^N(y)) - \kappa(y)\right| \leq 1/N<\delta$ for every $y\in Y$ by definition of $\varrho$, we get 
\begin{equation*}
\cob(\vartheta)(y)=\vartheta(S(y))\vartheta(y)^{-1}
=\eta(\varrho(S(y))\eta(\varrho(y))^{-1}\in\mathcal N_e
\end{equation*}
for every $y\in Y$ by our choice of $\delta$. Thus, by definition of $\mathcal N_e$, 
\begin{equation*}
d(\cob(\xi\vartheta)(y),\cob(\xi)(y))=\Phi(y,\cob(\vartheta)(y))\in\Phi(\{y\}\times\mathcal N_e)\subseteq[0,\varepsilon)
\end{equation*}
for every $y\in Y$, which verifies condition (3). Finally, condition (1) holds by our choice of $k$ and $n$ and the proof of the lemma is thus finished.
\end{proof}

\begin{lemma}\label{L:trans.coc.exist}
Given $(x_0,y_0)\in X\times Y$, there is a cocycle $f\colon Y\to G$ such that the system $(X\times Y\times G,R)$ defined by \eqref{Eq:def.R} is point-transitive with a transitive point $(x_0,y_0,e)$.
\end{lemma}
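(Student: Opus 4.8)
The plan is to realize the desired cocycle $f$ as an element of the completely metrizable space $\overline{\Cob}$ obtained via the Baire category theorem, using Lemma~\ref{L:approx.lemma} as the basic perturbation device. Since $(X\times Y,T\times S)$ is minimal, hence point-transitive, $X\times Y$ has a countable basis; as $Y$ is compact metrizable and $G$ is Polish, so does $X\times Y\times G$. I would fix a countable basis $\{O_j\}_{j\in\mathbb N}$ of $X\times Y\times G$ consisting of nonempty sets $O_j=U_j\times V_j\times\mathcal W_j$ with $U_j\subseteq X$, $V_j\subseteq Y$, $\mathcal W_j\subseteq G$ open. For a cocycle $f\colon Y\to G$ let $R_f$ denote the map $R$ of \eqref{Eq:def.R} determined by $f$, so that $R_f^n(x_0,y_0,e)=(T^n(x_0),S^n(y_0),f^{(n)}(y_0))$, and set
$$
E_j=\{\,f\in\overline{\Cob}\colon R_f^n(x_0,y_0,e)\in O_j\ \text{for some}\ n\in\mathbb N\,\}.
$$
If $f\in\bigcap_{j\in\mathbb N}E_j$, then the $R_f$-orbit of $(x_0,y_0,e)$ meets every $O_j$, hence is dense, so $(x_0,y_0,e)$ is a transitive point of $R_f$. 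Since $\overline{\Cob}$ is nonempty (it contains the trivial coboundary) and complete under $d_{\sup}$, it suffices to prove that each $E_j$ is open and dense in $\overline{\Cob}$.

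Openness is immediate: if $R_f^n(x_0,y_0,e)\in O_j$, the first two coordinates of $R_f^n(x_0,y_0,e)$ do not involve $f$, and $f\mapsto f^{(n)}(y_0)=f(S^{n-1}(y_0))\cdots f(y_0)$ is, for fixed $n$, continuous in $d_{\sup}$, so a whole $d_{\sup}$-ball about $f$ lies in $E_j$. For density I would fix $f\in\overline{\Cob}$ and $\varepsilon>0$, pick a coboundary $\cob(\xi)$ with $d_{\sup}(\cob(\xi),f)<\varepsilon/2$, and use minimality of $T\times S$ to choose $k\in\mathbb N$ with $(T^k(x_0),S^k(y_0))\in U_j\times V_j$. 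Put $b=\cob(\xi)^{(k)}(y_0)=\xi(S^k(y_0))\xi(y_0)^{-1}$ and apply Lemma~\ref{L:approx.lemma} with the data $\xi$, $\varepsilon/2$, $(x_0,y_0)$, $W=U_j\times V_j$, $\mathcal N=\mathcal W_j b^{-1}$ and $k$. Inspecting its proof, the perturbation $\vartheta=\eta\circ\varrho$ constructed there may in addition be taken to satisfy $\vartheta(y_0)=e$ (impose $\varrho(y_0)=0$ as well; this is consistent since the $S$-orbit of $y_0$ is infinite, so the finitely many points involved are distinct and an appropriate $\varrho$ exists), whence $\cob(\xi\vartheta)^{(k)}(y_0)=b$. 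The lemma then returns $\vartheta$ and $n$ with $(T^{k+n}(x_0),S^{k+n}(y_0))\in U_j\times V_j$, with $\cob(\xi\vartheta)^{(n)}(S^k(y_0))\in\mathcal N$, and with $d_{\sup}(\cob(\xi\vartheta),\cob(\xi))<\varepsilon/2$, and by the cocycle identity
$$
\cob(\xi\vartheta)^{(k+n)}(y_0)=\cob(\xi\vartheta)^{(n)}(S^k(y_0))\cdot b\in\mathcal N b=\mathcal W_j,
$$
so $R_{\cob(\xi\vartheta)}^{k+n}(x_0,y_0,e)\in O_j$. As $\cob(\xi\vartheta)\in\overline{\Cob}$ and $d_{\sup}(\cob(\xi\vartheta),f)<\varepsilon$, this proves density of $E_j$, and the lemma follows from the Baire category theorem.

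The main obstacle is the density step, and inside it the bookkeeping with the cocycle identity: Lemma~\ref{L:approx.lemma} only controls the \emph{tail} $\cob(\xi\vartheta)^{(n)}(S^k(y_0))$ of the orbit measured from the first return time $k$, so one must convert this into information about the group coordinate $\cob(\xi\vartheta)^{(k+n)}(y_0)$ of the actual orbit point, which is what forces both the normalization $\vartheta(y_0)=e$ and the choice $\mathcal N=\mathcal W_j b^{-1}$ above. An alternative to the abstract Baire argument would be a direct induction building $\xi$ as a $d_{\sup}$-limit of coboundaries $\cob(\xi_i)$, at step $i$ using Lemma~\ref{L:approx.lemma} to make the $R_{\cob(\xi_i)}$-orbit of $(x_0,y_0,e)$ visit $O_i$ while keeping the later perturbations small enough to preserve the earlier visits; that route needs an explicit modulus-of-continuity estimate for $f\mapsto f^{(n)}(y_0)$, whereas the Baire approach makes the preservation of earlier visits automatic.
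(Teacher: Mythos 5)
Your proof is correct, and it follows the same overall strategy as the paper: work in the completely metrizable space $\overline{\Cob}$, exhibit the good cocycles as a countable intersection of open dense sets, and get density from Lemma~\ref{L:approx.lemma}. The one genuine difference is in how the dense open sets are defined, and it matters for how Lemma~\ref{L:approx.lemma} gets used. The paper defines $\mathcal C_{p,q}$ by the \emph{tail} condition ``there is $n$ with $(T^{k_p+n}(x_0),S^{k_p+n}(y_0))\in W_p$ and $f^{(n)}(S^{k_p}(y_0))\in\mathcal N_q$,'' which is exactly the conclusion of Lemma~\ref{L:approx.lemma}, so that lemma is invoked as a black box; the conversion to actual orbit density of $(x_0,y_0,e)$ is postponed to the very end, where for a fixed $f$ in the intersection and a given target $W\times\mathcal N$ one simply chooses the basis element $\mathcal N_q$ inside $\mathcal N(f^{(k_p)}(y_0))^{-1}$ and applies the cocycle identity. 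You instead define $E_j$ directly by ``the orbit of $(x_0,y_0,e)$ visits $O_j$,'' which forces you to control the initial segment $\cob(\xi\vartheta)^{(k)}(y_0)$ as well as the tail, hence the extra normalization $\vartheta(y_0)=e$. That normalization is indeed attainable: in the construction of $\kappa$ one may additionally require $\kappa(S^i(y_0))=0$ for $i=0,\dots,N-1$, and these conditions are consistent with $\kappa(S^{k+i}(y_0))=0$ and $\kappa(S^{k+n+i}(y_0))=1$ because $n\ge N$ keeps the ``value $1$'' indices disjoint from the ``value $0$'' indices and the forward orbit of $y_0$ consists of distinct points; nothing else in the proof of Lemma~\ref{L:approx.lemma} is affected. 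So your route works, at the cost of reopening the auxiliary lemma; the paper's bookkeeping buys the convenience of never having to touch it.
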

\begin{proof}
The space $X\times Y$ is separable because $T\times S$ is minimal, and $G$ is separable by assumption. So we may fix bases $(W_p)_{p\in\mathbb N}$ and $(\mathcal N_q)_{q\in\mathbb N}$ for $X\times Y$ and $G$, respectively. 
For every $p\in\mathbb N$, fix a positive integer $k_p$ with $(T^{k_p}(x_0),S^{k_p}(y_0))\in W_p$. Further, given $p,q\in\mathbb N$, let $\mathcal C_{p,q}$ denote the set of all cocycles $f\in\overline{\Cob}$, such that there is a positive integer $n$ with $(T^{k_p+n}(x_0),S^{k_p+n}(y_0))\in W_p$ and $f^{(n)}(S^{k_p}(y_0))\in\mathcal N_q$. By virtue of Lemma~\ref{L:approx.lemma}, all the sets $\mathcal C_{p,q}$ are dense in $\overline{\Cob}$ and they are also clearly open in $\overline{\Cob}$. Consequently, the intersection $\bigcap_{p,q=1}^{\infty}\mathcal C_{p,q}$ is a residual subset of (the Polish space) $\overline{\Cob}$. Thus, to finish the proof, we need only to show that for every $f\in\bigcap_{p,q=1}^{\infty}\mathcal C_{p,q}$, the corresponding map $R$ defined by \eqref{Eq:def.R} is point-transitive with a transitive point $(x_0,y_0,e)$.

So fix $f\in\bigcap_{p,q=1}^{\infty}\mathcal C_{p,q}$ and nonempty open sets $W\subseteq X\times Y$, $\mathcal N\subseteq G$. Choose $p,q\in\mathbb N$ so that $W_p\subseteq W$ and $\mathcal N_q\subseteq\mathcal N(f^{(k_p)}(y_0))^{-1}$. Since $f\in\mathcal C_{p,q}$, there is a positive integer $n$ with $(T^{k_p+n}(x_0),S^{k_p+n}(y_0))\in W_p$ and $f^{(n)}(S^{k_p}(y_0))\in\mathcal N_q$. Consequently, by the cocycle identity,
\begin{align*}
R^{k_p+n}(x_0,y_0,e)&=(T^{k_p+n}(x_0),S^{k_p+n}(y_0),f^{(k_p+n)}(y_0))\\
&=(T^{k_p+n}(x_0),S^{k_p+n}(y_0),f^{(n)}(S^{k_p}(y_0))f^{(k_p)}(y_0))\\
&\in W_p\times (\mathcal N_qf^{(k_p)}(y_0))\subseteq W\times\mathcal N.
\end{align*}
This shows that the point $(x_0,y_0,e)$ is transitive for $R$.
\end{proof}

\begin{lemma}\label{L:trans.coc.min}
Let $f\colon Y\to G$ be a cocycle. If the system $(X\times Y\times G,R)$ defined by \eqref{Eq:def.R} is point-transitive, then the system $(X\times Y\times Z,Q)$ defined by \eqref{Eq:def.Q} is minimal.
\end{lemma}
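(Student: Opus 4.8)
We are given a cocycle $f\colon Y\to G$ for which the group extension $(X\times Y\times G, R)$ is point-transitive, and we must deduce that the "evaluated" system $(X\times Y\times Z,Q)$ is minimal. The natural strategy is to factor $Q$ through $R$. Indeed, the map $e\colon Z\to Z$ being replaced: observe that the $G$-flow $\phi$ on $Z$ together with the map $f$ gives a factor map from the $G$-extension to the skew product over $Z$. More precisely, fix any $z_0\in Z$ and consider the map
\begin{equation*}
\Pi_{z_0}\colon X\times Y\times G\to X\times Y\times Z,\qquad (x,y,g)\mapsto(x,y,\varphi_g(z_0)).
\end{equation*}
A direct computation using $R(x,y,g)=(T(x),S(y),f(y)g)$ and $Q(x,y,z)=(T(x),S(y),\varphi_{f(y)}(z))$ together with the flow identity $\varphi_{f(y)g}=\varphi_{f(y)}\circ\varphi_g$ shows $\Pi_{z_0}\circ R = Q\circ\Pi_{z_0}$, so $\Pi_{z_0}$ is a semiconjugacy. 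Since the $G$-action on $Z$ is transitive (being a minimal action of a group acting by homeomorphisms, $Z=G z_0$ for every $z_0$), $\Pi_{z_0}$ is surjective. Hence $(X\times Y\times Z,Q)$ is a factor of the point-transitive system $(X\times Y\times G,R)$ and is therefore itself point-transitive; in particular the image under $\Pi_{z_0}$ of a transitive point of $R$ is a transitive point of $Q$.

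**Upgrading point-transitivity to minimality.** Point-transitivity of $Q$ is not yet enough; I need minimality. Here the structure of $Q$ as a skew product over the minimal base $(X\times Y, T\times S)$ comes in, via Lemma~\ref{L:fiber}. The key point is that the set of transitive points of $Q$ is a union of whole fibers $\{(x,y)\}\times Z$. To see this, note that for a fixed $(x,y)\in X\times Y$, the homeomorphism $\Id_X\times\Id_Y\times\varphi_h$ of $X\times Y\times Z$ (for $h\in G$) conjugates $Q$ to itself — indeed $\varphi_h$ commutes with $\varphi_{f(y)}$ in the flow — and it maps the fiber over $(x,y)$ onto itself and permutes transitive points to transitive points. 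Since the $G$-action on $Z$ is transitive, the orbit $\{\varphi_h(z)\colon h\in G\}$ of any $z\in Z$ is all of $Z$; thus if one point of the fiber $\{(x,y)\}\times Z$ is transitive, so is every point of that fiber. Now take a transitive point $(x_0,y_0,z_0)$ of $Q$ (which exists by the first paragraph). Applying the preceding observation to $(x,y)=(x_0,y_0)$, the entire fiber $\{(x_0,y_0)\}\times Z$ consists of transitive points of $Q$. By Lemma~\ref{L:fiber}, applied to the minimal base $(X\times Y,T\times S)$ and the compact space $Z$, the skew product $Q$ is minimal.

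**Anticipated obstacle.** The only genuinely delicate points are (i) checking the intertwining identity $\Pi_{z_0}\circ R=Q\circ\Pi_{z_0}$, which is a short but slightly fiddly flow computation, and (ii) confirming that the relevant commuting maps $\Id\times\Id\times\varphi_h$ are genuine self-conjugacies of $Q$ — this needs $\varphi_h\circ\varphi_{f(y)}=\varphi_{f(y)}\circ\varphi_h$, which holds because $G$ acts by a flow (the $\varphi$'s form a group, hence they all commute with each other — wait, not quite: $\varphi_g\varphi_h=\varphi_{gh}$ need not equal $\varphi_{hg}$ if $G$ is nonabelian). If $G$ is nonabelian one must instead use left translations: the correct conjugating map is $(x,y,g)\mapsto(x,y,hg)$ at the level of $R$, which under $\Pi_{z_0}$ corresponds to $z_0\mapsto\varphi_h(z_0)$, i.e. it is cleaner to first establish fiberwise transitivity upstairs in $R$ (using left translations $L_h$, which obviously conjugate $R$ to itself and act transitively on the $G$-fibers since $G$ is a group) and then push down via $\Pi_{z_0}$. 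I would take that route: show directly that point-transitivity of $R$ forces minimality of $R$ by Lemma~\ref{L:fiber} combined with left translations, and then transport minimality along the factor map $\Pi_{z_0}$, since a factor of a minimal system is minimal. This sidesteps any commutativity issue and is the approach I expect to write up.
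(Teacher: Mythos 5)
Your argument has a genuine gap, rooted in one recurring error: you treat minimality of the $G$-action on $Z$ as algebraic transitivity. A minimal action only guarantees that each orbit $Gz_0$ is \emph{dense} in $Z$, not that $Gz_0=Z$ (think of the $\mathbb Z$-action generated by a single minimal homeomorphism: its orbits are countable). This breaks the proof at two places. First, $\Pi_{z_0}$ need not be surjective, so $(X\times Y\times Z,Q)$ is not literally a factor of $(X\times Y\times G,R)$; dense image still yields a transitive point for $Q$, but it does not let you ``transport minimality along the factor map,'' because nothing is then known about orbits of points outside the image. Second, your claim that transitivity of one point of a $Q$-fiber propagates to the whole fiber uses both the false identity $\{\varphi_h(z)\colon h\in G\}=Z$ and the self-conjugacy of $Q$ by $\Id_X\times\Id_Y\times\varphi_h$, which, as you yourself note, requires $\varphi_h$ to commute with every $\varphi_{f(y)}$ and fails for nonabelian $G$ (and $G$, a closed subgroup of $\Homeo(Z)$, is typically nonabelian).

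The repair you propose --- prove that $R$ itself is minimal via Lemma~\ref{L:fiber} and push minimality down --- does not work either: Lemma~\ref{L:fiber} requires the fiber to be \emph{compact}, and the fiber of $R$ is the Polish group $G$, which is not compact in this setting (the paper stresses in the introduction that compactness of the fiber is essential there). In general $R$ is only point-transitive, not minimal. The missing idea is to avoid conjugation downstairs altogether and verify transitivity of every point $(x,y,z)$ of a single fiber of $Q$ directly: given nonempty open sets $W\subseteq X\times Y$ and $\mathcal O\subseteq Z$, the set $\mathcal N=\{g\in G\colon\varphi_g(z)\in\mathcal O\}$ is open (continuity of the action) and nonempty (density of the orbit $Gz$); transitivity of $(x,y,e)$ for $R$ (obtained, as you correctly indicate, from the vertical translations commuting with $R$) then produces $n$ with $(T^n(x),S^n(y))\in W$ and $f^{(n)}(y)\in\mathcal N$, hence $Q^n(x,y,z)\in W\times\mathcal O$. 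Once the whole fiber $\{(x,y)\}\times Z$ is known to consist of transitive points, Lemma~\ref{L:fiber} (now legitimately applied, since $Z$ is compact) gives minimality of $Q$. This is exactly how the paper argues.
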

\begin{proof}
The vertical right translations of $X\times Y\times G$ by elements of $G$ are self-conjugacies of $R$ and they yield an (algebraically) transitive action of $G$ on the fibers $\{(x,y)\}\times G$ ($x\in X$, $y\in Y$). Thus, the map $R$ possesses a transitive point of the form $(x,y,e)$.

We show that the map $Q$ is minimal. Since the system $(X\times Y\times Z,Q)$ is a skew product over the minimal system $(X\times Y,T\times S)$ and the fiber $Z$ is compact, by Lemma~\ref{L:fiber} it suffices to show that the point $(x,y,z)$ is transitive for $Q$ for every $z\in Z$. So fix $z\in Z$ and nonempty open sets $W\subseteq X\times Y$, $\mathcal O\subseteq Z$. Since $\phi=(\varphi_g)_{g\in G}$ is a minimal $G$-flow on $Z$, there is a nonempty open set $\mathcal N\subseteq G$ with $\varphi_g(z)\in\mathcal O$ for every $g\in\mathcal N$. By transitivity of the point $(x,y,e)$ for $R$, there is $n\in\mathbb N$ with $R^n(x,y,e)\in W\times\mathcal N$. That is, $(T^n(x),S^n(y))\in W$ and $f^{(n)}(y)\in\mathcal N$. Hence $\varphi_{f^{(n)}(y)}(z)\in\mathcal O$, whence it follows that
\begin{equation*}
Q^n(x,y,z)=(T^n(x),S^n(y),\varphi_{f^{(n)}(y)}(z))\in W\times\mathcal O.
\end{equation*}
This shows that the point $(x,y,z)$ is transitive for $Q$.
\end{proof}

\begin{proof}[Proof of Theorem~\ref{T:prod.PM.arc}]
Let $Y$ be a (homeo-)product-minimal space. To show that the product $Y\times Z$ is also (homeo-)product-minimal, fix a metrizable space $X$ and a minimal continuous map $T\colon X\to X$. By our assumptions, the space $Y$ admits a minimal continuous map (a minimal homeomorphism) $S\colon Y\to Y$ such that the product $(X\times Y,T\times S)$ is minimal. By virtue of Lemma~\ref{L:trans.coc.exist}, there is a cocycle $f\colon Y\to G$ such that the corresponding system $(X\times Y\times G,R)$ is point-transitive. By Lemma~\ref{L:trans.coc.min} it follows that the system $(X\times Y\times Z,Q)$ is minimal. The theorem now follows from the obvious fact that the system $(X\times Y\times Z,Q)$ is a direct product of $(X,T)$ with a (skew product) system on $Y\times Z$ and the latter is a homeomorphism if $S$ is a homeomorphism.
\end{proof}


\section{Quotient spaces of products and the Klein bottle}

Our purpose in this section is to prove Theorem~\ref{T:fact.space} below, which is based on \cite[Theorem~11]{Di}. Before formulating it, we introduce some notation. Let $Z$ be a compact metrizable space and $G_a$ be a subgroup of $\Homeo(Z)$. We shall use the symbol $N(G_a)$ to denote the \emph{normalizer} of $G_a$ in $\Homeo(Z)$; recall that $h\in\Homeo(Z)$ belongs to $N(G_a)$ if and only if for every $g\in G_a$, $hgh^{-1}\in G_a$. We have obvious inclusions of groups $G_a\subseteq N(G_a)\subseteq\Homeo(Z)$.

\begin{theorem}\label{T:fact.space}
Let $\Gamma$ be an infinite compact connected metrizable abelian group and $\Lambda$ be a finite subgroup of $\Gamma$. Let $Z$ be a compact metrizable space and $G_a$ be an arc-wise connected subgroup of $\Homeo(Z)$ with a minimal natural action on $Z$. If $q\colon\Lambda\to N(G_a)$ is a morphism of groups then the orbit space $(\Gamma\times Z)/\Lambda$, obtained from $\Gamma\times Z$ by applying the diagonal action of $\Lambda$, is homeo-product-minimal.
\end{theorem}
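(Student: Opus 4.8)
The plan is to realize the required homeomorphism of $P:=(\Gamma\times Z)/\Lambda$ by lifting to the cover $\Gamma\times Z$ and building there a $\Lambda$-equivariant skew product, combining Theorem~\ref{T:groups} with the cocycle machinery of Section~\ref{S:new.from.old}. First I would pass to closures: $G:=\overline{G_a}\le\Homeo(Z)$ is a Polish group with dense arc-wise connected subgroup $G_a$ and minimal natural action on $Z$, and since conjugation in $\Homeo(Z)$ is continuous and $q(\Lambda)\subseteq N(G_a)$, also $q(\Lambda)\subseteq N(G)$; thus $\alpha_\lambda(h):=q(\lambda)hq(\lambda)^{-1}$ defines continuous automorphisms of $G$ with $\lambda\mapsto\alpha_\lambda$ a homomorphism. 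Write $p\colon\Gamma\times Z\to P$ for the (open, $|\Lambda|$-to-one) quotient map of the free action $\lambda\cdot(g,z)=(\lambda g,q(\lambda)z)$. Given a minimal $(X,T)$ with $X$ metrizable, Theorem~\ref{T:groups} applied to the infinite compact connected metrizable abelian group $\Gamma$ yields $\gamma_0\in\Gamma$ such that, with $R:=R_{\gamma_0}$, the product $(X\times\Gamma,T\times R)$ is minimal; then $R$ is a minimal rotation, so $\langle\gamma_0\rangle$ is dense and infinite, hence $\gamma_0^m\notin\Lambda$ for all $m\neq0$. Equivalently: \emph{the $R$-orbit of any point of $\Gamma$ meets each coset of $\Lambda$ at most once}, a fact I use repeatedly below.

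\textbf{Reduction to an equivariant cocycle.} Call a continuous $\psi\colon\Gamma\to G$ an \emph{$\alpha$-equivariant cocycle} if $\psi(\lambda g)=\alpha_\lambda(\psi(g))$ for all $\lambda,g$. For such $\psi$, the homeomorphism $\tilde S(g,z)=(Rg,\psi(g)(z))$ of $\Gamma\times Z$ commutes with the $\Lambda$-action --- this is exactly the identity $\psi(\lambda g)\circ q(\lambda)=q(\lambda)\circ\psi(g)$ together with $R\lambda g=\lambda Rg$ ($\Gamma$ abelian) --- so it descends to a homeomorphism $S$ of $P$ with $p\circ\tilde S=S\circ p$. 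Moreover $\tilde Q:=T\times\tilde S$ on $X\times\Gamma\times Z$ is a skew product over the minimal system $(X\times\Gamma,T\times R)$ with compact fibre $Z$, and $\Id_X\times p$ is a factor map from $(X\times\Gamma\times Z,\tilde Q)$ onto $(X\times P,T\times S)$. Hence if $\tilde Q$ is minimal then so is $(X\times P,T\times S)$, which exhibits $P$ as homeo-product-minimal. So everything reduces to: \emph{find an $\alpha$-equivariant cocycle $\psi$ for which $\tilde Q$ is minimal}.

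\textbf{Baire category in the space of equivariant cocycles.} By Lemma~\ref{L:fiber} and the proof of Lemma~\ref{L:trans.coc.min} (read with $(\Gamma,R)$ in the role of $(Y,S)$ there), $\tilde Q$ is minimal as soon as the $G$-extension $R_\psi\colon(x,g,h)\mapsto(Tx,Rg,\psi(g)h)$ of $(X\times\Gamma,T\times R)$ is point-transitive. Let $\overline{\Cob}_\Lambda$ be the closure of $\{\cob(\xi):\xi\colon\Gamma\to G\text{ continuous and }\alpha\text{-equivariant}\}$ inside the space of $\alpha$-equivariant cocycles, which is completely metrizable and separable in the supremum metric; note that $\cob(\xi)$ is $\alpha$-equivariant whenever $\xi$ is (because $R$ commutes with $\Lambda$) and that $\overline{\Cob}_\Lambda$ contains the trivial cocycle. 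Following Lemma~\ref{L:trans.coc.exist}, fix a transitive point $(x_0,g_0)$ of $T\times R$, bases $(W_p)$, $(\mathcal N_q)$ and integers $k_p$ with $(T^{k_p}x_0,R^{k_p}g_0)\in W_p$, and let $\mathcal C_{p,q}\subseteq\overline{\Cob}_\Lambda$ be the (clearly open) set of those $f$ admitting an $n$ with $(T^{k_p+n}x_0,R^{k_p+n}g_0)\in W_p$ and $f^{(n)}(R^{k_p}g_0)\in\mathcal N_q$; every $f\in\bigcap_{p,q}\mathcal C_{p,q}$ makes $R_f$ point-transitive at $(x_0,g_0,e)$. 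Thus the argument is complete once the $\mathcal C_{p,q}$ are shown \emph{dense}, i.e.\ once an $\alpha$-equivariant analogue of Lemma~\ref{L:approx.lemma} is in place: given an $\alpha$-equivariant $\xi$ and the data of that lemma, produce an $\alpha$-equivariant $\vartheta$ (so that $\xi\vartheta$ is again $\alpha$-equivariant) and an $n$ meeting its conclusions (1)--(3) with $S$ replaced by $R$.

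\textbf{The main obstacle.} This last step is the only substantive new point; everything preceding it is a routine relativization of Sections~\ref{S:pm-observations}--\ref{S:new.from.old}. The difficulty is that the transfer functions built in Lemma~\ref{L:approx.lemma} have the shape $\eta\circ\varrho$ with $\eta$ a path in $G_a$ and $\varrho$ real-valued, and cannot be $\alpha$-equivariant in that form. I would repair this as follows. Equip $\Gamma$ with a translation-invariant metric (so every power of $R$ is an isometry), and in the tube-lemma step take the neighbourhood $\mathcal N_e$ of $e$ in $G$ to be \emph{symmetric and $\Lambda$-invariant} --- possible since $\Lambda$ is finite, by replacing $\mathcal N_e$ with $\bigcap_{\lambda\in\Lambda}\alpha_\lambda(\mathcal N_e\cap\mathcal N_e^{-1})$ --- and choose $\eta,\delta,N,n$ accordingly (using minimality of $T\times R$ and that $\Gamma$ is infinite). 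Running a version in which the Urysohn function defining $\varrho$ is supported near a single finite segment of the $R$-orbit of $g_0$, one obtains $\vartheta_0=\eta\circ\varrho$ supported in a small open $V$ containing that segment, with $\vartheta_0=e$ off $V$, $\vartheta_0(R^{k}g_0)=e$ and $\vartheta_0(R^{k+n}g_0)=a$ (where $a\in G_a$ is the chosen endpoint), $\cob(\vartheta_0)$ valued in $\mathcal N_e$, and $\vartheta_0\in\mathcal N_e$ at any point of $V$ whose $R$-image or $R$-preimage lies outside $V$. Since the chosen segment and its $\Lambda$-translates are pairwise disjoint (first paragraph), $V$ may be shrunk so that $\{\lambda V\}_{\lambda\in\Lambda}$ are pairwise disjoint and $\lambda V\cap R^{-1}(\lambda'V)=\emptyset$ for $\lambda\neq\lambda'$; then put
\begin{equation*}
\vartheta(w)=\alpha_\lambda\bigl(\vartheta_0(\lambda^{-1}w)\bigr)\quad\text{for }w\in\lambda V,\qquad \vartheta(w)=e\quad\text{for }w\notin{\textstyle\bigcup_{\lambda\in\Lambda}\lambda V}.
\end{equation*}
This $\vartheta$ is well defined and continuous (near the common boundary both clauses equal $e$, since $\alpha_\lambda(e)=e$), $\alpha$-equivariant (from $\alpha_\mu\alpha_\lambda=\alpha_{\mu\lambda}$), and $\cob(\vartheta)$ lands in $\mathcal N_e$: on each pertinent pair of arguments one gets $e$, or $\alpha_\lambda$ of a value of $\cob(\vartheta_0)$, or $\alpha_\lambda$ of $\vartheta_0^{\pm1}$ at a boundary-crossing point where $\vartheta_0\in\mathcal N_e$ --- and $\alpha_\lambda(\mathcal N_e)=\mathcal N_e=\mathcal N_e^{-1}$. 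Conclusions (1)--(3) then follow as in Lemma~\ref{L:approx.lemma}, (2) because $\vartheta(R^{k}g_0)=e$ and $\vartheta(R^{k+n}g_0)=a$ ($R^{k}g_0$ lying in no $\lambda V$, $R^{k+n}g_0\in V$). The delicate part is precisely this boundary bookkeeping --- that is why the translation-invariant metric and the symmetric $\Lambda$-invariant $\mathcal N_e$ enter --- and granting it the theorem follows.
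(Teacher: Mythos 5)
Your proposal is correct and follows essentially the same route as the paper: apply Theorem~\ref{T:groups} to $\Gamma$, reduce to finding a $q$-equivariant (``invariant'') cocycle whose $G$-extension is point-transitive via a Baire category argument, prove an equivariant version of Lemma~\ref{L:approx.lemma}, and descend the resulting $\Lambda$-commuting skew product to the quotient. Your equivariant transfer function, defined directly on the pairwise disjoint $\Lambda$-translates of a small neighbourhood of the orbit segment by conjugating with $q(\lambda)$ and tapered so that the coboundary stays in a symmetric $\Lambda$-invariant identity neighbourhood, is the same construction the paper packages in Lemma~\ref{L:approx.lemma.inv} via the map $\kappa\colon\Gamma\to S$ into a star of segments and the conjugation-twisted path $\eta$.
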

\begin{remark}
Let us recall that the diagonal action of $\Lambda$ on $\Gamma\times Z$ is by means of the product homeomorphisms $R_{\lambda}\times q(\lambda)$, where $R_{\lambda}$ denotes the rotation of $\Gamma$ by $\lambda\in\Lambda$. Further, the space $Z$ is automatically connected, for it admits a minimal action of a connected group $G_a$. Finally, recall that the orbit space $(\Gamma\times Z)/\Lambda$ is a compact connected metrizable space, since the diagonal action of (the finite group) $\Lambda$ on $\Gamma\times Z$ is fixed-point free (hence also properly discontinuous), see \cite[p.~494]{Mun}.
\end{remark}

Before turning to the proof of Theorem~\ref{T:fact.space}, we discuss some corollaries of it.

\begin{theorem}\label{T:fact.groups}
Let $\Gamma$ be an infinite compact connected metrizable abelian group and $\Lambda$ be a finite subgroup of $\Gamma$. Let $Z$ be a compact connected (not necessarily abelian) metrizable group, on which the group $\Lambda$ acts by automorphisms. Then the orbit space $(\Gamma\times Z)/\Lambda$, obtained from $\Gamma\times Z$ by applying the diagonal action of $\Lambda$, is homeo-product-minimal.
\end{theorem}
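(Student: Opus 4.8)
The plan is to deduce Theorem~\ref{T:fact.groups} from Theorem~\ref{T:fact.space} by exhibiting a suitable arc-wise connected subgroup $G_a\subseteq\Homeo(Z)$ together with a morphism $q\colon\Lambda\to N(G_a)$ that reproduces the given diagonal action on $\Gamma\times Z$. The natural candidate for $G_a$ is the group of left translations $L_a\colon z\mapsto az$ by elements $a$ of the arc component $Z_a$ of the neutral element $e$ of $Z$.

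The first step is to recall, from the structure theory of compact connected groups (\cite{HofMor}; cf.~also \cite{DM}), that $Z_a$ is a \emph{dense} subgroup of $Z$. Then $G_a=\{L_a\colon a\in Z_a\}$ is a subgroup of $\Homeo(Z)$, and I would check that it is arc-wise connected: since multiplication on the compact metrizable group $Z$ is uniformly continuous, the assignment $a\mapsto L_a$ is continuous from $Z$ (hence from $Z_a$) into $\Homeo(Z)$ with the uniform topology, so any path in $Z_a$ based at $e$ is carried to a path in $G_a$ based at $\Id_Z$; as every element of $G_a$ is joined to $\Id_Z$ in this way, $G_a$ is arc-wise connected. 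The natural action of $G_a$ on $Z$ is minimal because the orbit of a point $z$ is the coset $Z_a z$, a translate of the dense set $Z_a$, hence dense in $Z$.

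For the second step, write $\alpha_\lambda\in\mathrm{Aut}(Z)\subseteq\Homeo(Z)$ for the (continuous) automorphism by which $\lambda\in\Lambda$ acts, and put $q(\lambda)=\alpha_\lambda$; then $q$ is a group morphism since $\Lambda$ acts by automorphisms. A one-line computation gives $\alpha_\lambda L_a\alpha_\lambda^{-1}=L_{\alpha_\lambda(a)}$ for every $a\in Z$, and, $\alpha_\lambda$ being a homeomorphism fixing $e$, it maps $Z_a$ onto itself; hence $L_{\alpha_\lambda(a)}\in G_a$ whenever $a\in Z_a$, so $q(\lambda)\in N(G_a)$, i.e.~$q\colon\Lambda\to N(G_a)$. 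Finally, the diagonal action of $\Lambda$ on $\Gamma\times Z$ considered in Theorem~\ref{T:fact.space}, namely $\lambda\cdot(\gamma,z)=(R_\lambda\gamma,\,q(\lambda)z)=(\lambda\gamma,\alpha_\lambda(z))$, coincides with the diagonal action in the statement of Theorem~\ref{T:fact.groups}. All hypotheses of Theorem~\ref{T:fact.space} are thereby verified ($\Gamma$ infinite compact connected metrizable abelian, $\Lambda$ finite, $Z$ compact metrizable, $G_a$ arc-wise connected with minimal natural action, $q\colon\Lambda\to N(G_a)$ a morphism), so it yields that $(\Gamma\times Z)/\Lambda$ is homeo-product-minimal. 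The only non-elementary ingredient, and hence the main point to treat with care, is the density of the arc component $Z_a$ in $Z$; everything else is the elementary normalizer computation and the matching of the two diagonal actions.
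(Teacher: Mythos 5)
Your proposal is correct and follows essentially the same route as the paper's proof: both take $G_a$ to be the group of left translations by elements of the dense identity arc-component $Z_a$ of $Z$ (citing the structure theory of compact connected groups for the density), verify minimality of its natural action and the normalizer condition via $A_\lambda L_a A_\lambda^{-1}=L_{A_\lambda(a)}$, and then invoke Theorem~\ref{T:fact.space}. Your explicit verification that $a\mapsto L_a$ carries paths in $Z_a$ to paths in $\Homeo(Z)$ is a slightly more careful rendering of the paper's identification of $G_a$ with the identity arc-component of the rotation group, but the argument is the same.
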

\begin{remark}
In this case the space $(\Gamma\times Z)/\Lambda$ is the orbit space of $\Gamma\times Z$ subject to the group of homeomorphism $R_{\lambda}\times A_{\lambda}$ ($\lambda\in\Lambda$), where $A_{\lambda}$ is the acting automorphism of $Z$ corresponding to $\lambda\in\Lambda$.
\end{remark}
\begin{proof}[Proof of Theorem~\ref{T:fact.groups}]
Let $G$ be the group of the left rotations on $Z$; recall that $G$ and $Z$ are topologically isomorphic. Let $Z_a$ denote the identity arc-component of $Z$. By \cite[Theorem~9.60(v),~p.~501]{HofMor}, $Z_a$ is dense in $Z$. Consequently, the identity arc-component $G_a$ of $G$, which consists of the left rotations of $Z$ by elements of $Z_a$, is dense in $G$. Thus, since the natural action of $G$ on $Z$ is minimal, it follows that the natural action of $G_a$ on $Z$ is also minimal.

The action of $\Lambda$ on $Z$ by automorphisms $A_{\lambda}$ ($\lambda\in\Lambda$) yields a morphism of groups $q\colon\Lambda\to\Homeo(Z)$. Clearly, if $z\in Z_a$ and $L_z$ is the left rotation of $Z$ by $z$, then for every $\lambda\in\Lambda$, $A_{\lambda}L_zA_{\lambda}^{-1}$ is the left rotation of $Z$ by the element $A_{\lambda}(z)$. Consequently, since $A_{\lambda}(z)\in Z_a$ for every $z\in Z_a$, $q$ takes its values in the normalizer $N(G_a)$ of $G_a$ in $\Homeo(Z)$. Thus, the orbit space $(\Gamma\times Z)/\Lambda$ is homeo-product-minimal by virtue of Theorem~\ref{T:fact.space}.
\end{proof}

As a special case of Theorem~\ref{T:fact.groups}, we get the following result.

\begin{theorem}\label{T:Klein}
Klein bottle $\mathbb K^2$ is a homeo-product-minimal space.
\end{theorem}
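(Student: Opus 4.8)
The plan is to realize the Klein bottle $\mathbb{K}^2$ as an orbit space of the form $(\Gamma\times Z)/\Lambda$ to which Theorem~\ref{T:fact.groups} applies. The natural choice is $\Gamma=\mathbb{S}^1$ (an infinite compact connected metrizable abelian group), $Z=\mathbb{S}^1$ (a compact connected metrizable group), and $\Lambda=\mathbb{Z}_2=\{0,1\}$ realized as the subgroup $\{0,1/2\}$ of $\mathbb{S}^1=\mathbb{R}/\mathbb{Z}$. Thus $R_\lambda$ for the nontrivial $\lambda$ is the rotation of the first circle by half a turn. The group $\Lambda$ must act on $Z=\mathbb{S}^1$ by automorphisms; I would let the nontrivial element act by $A(z)=-z$ (i.e.\ inversion $z\mapsto z^{-1}$ in multiplicative notation), which is indeed a continuous automorphism of $\mathbb{S}^1$ of order $2$.

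With these choices, the diagonal action of the nontrivial $\lambda\in\Lambda$ on $\mathbb{S}^1\times\mathbb{S}^1$ is the homeomorphism $R_\lambda\times A\colon (s,z)\mapsto (s+\tfrac12,\,-z)$, and the trivial element acts as the identity. So the orbit space is $(\mathbb{S}^1\times\mathbb{S}^1)/\langle (s,z)\sim(s+\tfrac12,-z)\rangle$. The key step is to verify that this quotient is homeomorphic to the Klein bottle. This is the standard presentation: cutting the torus along the circle $s=0$, one obtains $[0,\tfrac12]\times\mathbb{S}^1$, and the identification glues the two boundary circles $\{0\}\times\mathbb{S}^1$ and $\{\tfrac12\}\times\mathbb{S}^1$ via the orientation-reversing map $z\mapsto -z$, which is exactly the defining gluing of $\mathbb{K}^2$. (Equivalently, $\mathbb{K}^2$ is the mapping torus of the orientation-reversing involution of the circle, and the free $\mathbb{Z}_2$-action above exhibits the torus as its orientation double cover, so the quotient is $\mathbb{K}^2$.) I would spell this homeomorphism out explicitly or cite a standard reference.

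Once the homeomorphism $\mathbb{K}^2\cong(\mathbb{S}^1\times\mathbb{S}^1)/\mathbb{Z}_2$ is established, the theorem follows immediately from Theorem~\ref{T:fact.groups}: we only need that $\mathbb{Z}_2$ acts on the compact connected metrizable group $\mathbb{S}^1$ by automorphisms, which it does via $z\mapsto -z$, and that $\Gamma=\mathbb{S}^1$ is an infinite compact connected metrizable abelian group with finite subgroup $\Lambda=\mathbb{Z}_2$, which is clear. Hence $(\Gamma\times Z)/\Lambda$ is homeo-product-minimal, and therefore so is $\mathbb{K}^2$.

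The only genuinely non-trivial point is the identification of the orbit space with the Klein bottle; everything else is bookkeeping. I expect this to be a short verification rather than a real obstacle, since it is the textbook description of $\mathbb{K}^2$; the main care needed is to present the gluing data in a way that matches the diagonal-action convention of Theorem~\ref{T:fact.space} (acting by $R_\lambda\times q(\lambda)$ with $q(\lambda)$ an automorphism of $Z$), making sure the half-turn on the first factor and the inversion on the second factor are paired correctly.
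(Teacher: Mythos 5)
Your proposal is correct and is essentially identical to the paper's proof: the paper also takes $\Gamma=Z=\mathbb S^1$, $\Lambda$ the order-two subgroup of $\mathbb S^1$, lets the nontrivial element act on $Z$ by the inversion automorphism $z\mapsto z^{-1}$, identifies the quotient under the diagonal homeomorphism $(\gamma,z)\mapsto(-\gamma,z^{-1})$ with the Klein bottle, and invokes Theorem~\ref{T:fact.groups}. Your extra detail on why the quotient is $\mathbb K^2$ (cutting the torus and gluing the boundary circles by an orientation-reversing map) is the standard verification the paper leaves as a parenthetical remark.
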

\begin{proof}
We keep our notation introduced in (the proof of) Theorem~\ref{T:fact.groups}. Let $\Gamma=Z=\mathbb S^1$ and $\Lambda=\{-1,1\}$. The group $\Lambda$ acts on $\mathbb S^1$ by means of the involution automorphism $A_{-1}\colon\mathbb S^1\to\mathbb S^1$, $A_{-1}(z)=z^{-1}$. Since the quotient space $(\mathbb S^1\times\mathbb S^1)/\Lambda$ is homeomorphic to the Klein bottle (notice that the diagonal action of $\Lambda$ is now by means of the homeomorphism $(R_{-1}\times A_{-1})(\gamma,z)=(-\gamma,z^{-1})$), the latter is a homeo-product-minimal space by virtue of Theorem~\ref{T:fact.groups}.
\end{proof}

Notice that this implies that $\mathbb K^2$ admits a minimal homeomorphism, which is however well known.

\medskip

Now we turn to a proof of Theorem~\ref{T:fact.space}. Throughout the rest of this section, we shall assume that the assumptions of Theorem~\ref{T:fact.space} are fulfilled. We let $\alpha$ be a topological generator of $\Gamma$ (notice that $\Gamma$ is monothetic) and write $R_{\alpha}$ for the (minimal) rotation of $\Gamma$ by $\alpha$.
We assume that $X$ is a metrizable space and $T\colon X\to X$ is a continuous map such that the product $(X\times\Gamma,T\times R_{\alpha})$ is minimal. We also keep the notation and terminology introduced in Section~\ref{S:new.from.old} and emphasize that the role of $(Y,S)$ from Section~\ref{S:new.from.old} is now played by $(\Gamma,R_{\alpha})$. A continuous map $f\colon\Gamma\to G_a$ will be called \emph{invariant}, if 
\begin{equation}
f(\lambda\gamma)=q(\lambda)f(\gamma)q(\lambda)^{-1}
\end{equation}
for all $\gamma\in\Gamma$ and $\lambda\in\Lambda$. Notice that a (point-wise) product of invariant maps is invariant. Also, a coboundary $\cob(\xi)$ with a transfer function $\xi\colon\Gamma\to G_a$ is invariant if the map $\xi$ is invariant.

\begin{lemma}\label{L:approx.lemma.inv}
Fix a continuous invariant map $\xi\colon\Gamma\to G_a$, $\varepsilon>0$, $(x_0,\gamma_0)\in X\times\Gamma$, a pair of nonempty open sets $W\subseteq X\times\Gamma$, $\mathcal N\subseteq G_a$ and a positive integer $k$ with $(T^k(x_0),R_{\alpha}^k(\gamma_0))\in W$. Then there exist a continuous invariant map $\vartheta\colon\Gamma\to G_a$ and a positive integer $n$ such that
\begin{enumerate}
\item $(T^{k+n}(x_0),R_{\alpha}^{k+n}(\gamma_0))\in W$,
\item $\cob(\xi\vartheta)^{(n)}(R_{\alpha}^k(\gamma_0))\in\mathcal N$,
\item $d_{\sup}(\cob(\xi\vartheta),\cob(\xi))<\varepsilon$.
\end{enumerate}
\end{lemma}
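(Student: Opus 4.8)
The plan is to run the proof of Lemma~\ref{L:approx.lemma} almost verbatim, with $(Y,S)$ replaced by $(\Gamma,R_\alpha)$, while keeping every auxiliary object \emph{$\Lambda$-equivariant}. Write $c_\lambda=q(\lambda)(\cdot)q(\lambda)^{-1}$ for the conjugation automorphism of $G_a$ induced by $\lambda\in\Lambda$ (a continuous automorphism with $c_\lambda(e)=e$, since $q(\Lambda)\subseteq N(G_a)$). I will use three structural facts: since $\Gamma$ is monothetic with generator $\alpha$ and $\Lambda$ is finite, $\langle\alpha\rangle$ is infinite, so $\Lambda\cap\langle\alpha\rangle=\{e\}$ and hence the points $\mu R_\alpha^{j}(\gamma_0)$ ($\mu\in\Lambda$, $j\in\ZZZ$) are pairwise distinct; the quotient homomorphism $p\colon\Gamma\to\bar\Gamma:=\Gamma/\Lambda$ is a finite covering and $T\times\bar R$ (where $\bar R$ is the rotation of $\bar\Gamma$ by $p(\alpha)$) is a minimal factor of $T\times R_\alpha$, so the relevant orbits stay infinite; and a neighbourhood of $e$ witnessing the tube-lemma estimate can be chosen $\Lambda$-invariant. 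Concretely I first carry over, unchanged, the choices from the proof of Lemma~\ref{L:approx.lemma}: a $\Lambda$-invariant neighbourhood $\mathcal N_e$ of $e$ on which $(\gamma,h)\mapsto d(\xi(R_\alpha\gamma)h\xi(\gamma)^{-1},\xi(R_\alpha\gamma)\xi(\gamma)^{-1})$ stays below $\varepsilon$ (replace a first choice by $\bigcap_{\lambda}c_\lambda(\cdot)$, and shrink further so that $\mathcal N_e\mathcal N_e^{-1}$ is still admissible for condition~(3)); then $g_0=\xi(R_\alpha^{k}(\gamma_0))$, a point $g\in(g_0^{-1}\mathcal N g_0)\cap G_a$, a neighbourhood $\mathcal N_0\ni g_0$ with $\mathcal N_0 g\mathcal N_0^{-1}\subseteq\mathcal N$, a shrinking of $W$ with $\xi(\pr_2(W))\subseteq\mathcal N_0$, a path $\eta\colon[0,1]\to G_a$ from $e$ to $g$, a modulus $\delta>0$ with $\eta(s)\eta(t)^{-1}\in\mathcal N_e$ whenever $|s-t|<\delta$, an integer $N>1/\delta$, and finally (by minimality of $T\times R_\alpha$) an integer $n\ge N$ with $(T^{k+n}(x_0),R_\alpha^{k+n}(\gamma_0))\in W$.

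The equivariance is put into the interpolating data. First I build a \emph{$\Lambda$-invariant scalar} function $\varrho\colon\Gamma\to[0,1]$ with $\varrho(R_\alpha^{k+i}(\gamma_0))=0$, $\varrho(R_\alpha^{k+n+i}(\gamma_0))=1$ for $i=0,\dots,N-1$, and $|\varrho(R_\alpha\gamma)-\varrho(\gamma)|\le 1/N<\delta$ for all $\gamma$: take $\bar\kappa\colon\bar\Gamma\to[0,1]$ with the prescribed $0$/$1$ values on the $2N$ distinct points $\bar R^{k+i}(p(\gamma_0))$, $\bar R^{k+n+i}(p(\gamma_0))$ (Tietze), put $\kappa=\bar\kappa\circ p$ (automatically $\Lambda$-invariant), and set $\varrho=\frac1N\sum_{i=0}^{N-1}\kappa\circ R_\alpha^{i}$, exactly as in Lemma~\ref{L:approx.lemma}. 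One checks that $\varrho$ is supported inside a neighbourhood of the finite set $\mathcal P=\{\mu R_\alpha^{k+n+m}(\gamma_0)\colon\mu\in\Lambda,\ |m|\le N-1\}$, with a triangular ``bump'' attaining $1$ at $R_\alpha^{k+n}(\gamma_0)$ and value $1/N$ at the two extremal orbit points $R_\alpha^{k+n\pm(N-1)}(\gamma_0)$. Next, using that $\Lambda$ acts freely, I fix a $\Lambda$-invariant \emph{evenly covered} open set $U\supseteq\mathcal P$, $U=\bigsqcup_{\lambda\in\Lambda}U_\lambda$ with $U_\lambda=\lambda U_e$ pairwise disjoint, where the base sheet $U_e$ is a union of tiny disjoint balls around the points $R_\alpha^{k+n+m}(\gamma_0)$, $|m|\le N-1$; the balls are taken small enough that (i) $\supp(\varrho)\subseteq U$, (ii) $\varrho<\delta$ on the balls around the two extremal points, and (iii) $R_\alpha$ maps a ball around a non-extremal base point into a ball around the next base point, and maps the ball around $R_\alpha^{k+n+(N-1)}(\gamma_0)$ off $U$; together with $R_\alpha(U_\lambda)=\lambda R_\alpha(U_e)$ this gives $R_\alpha(U_\lambda)\subseteq U_\lambda\cup(\Gamma\setminus U)$ for every $\lambda$. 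Finally I define $\vartheta\equiv e$ off $U$ and $\vartheta|_{U_\lambda}=c_\lambda\circ\eta\circ\varrho$; since $\varrho$ is $\Lambda$-invariant this is well defined and continuous (it equals $e$ near each $\partial U_\lambda$, where $\varrho=0$), takes values in $G_a$, and is invariant: $\vartheta(\mu\gamma)=c_{\mu\lambda}(\eta(\varrho(\mu\gamma)))=c_\mu(c_\lambda(\eta(\varrho(\gamma))))=c_\mu(\vartheta(\gamma))$ for $\gamma\in U_\lambda$.

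It then remains to verify (1)--(3). Condition~(1) holds by the choice of $n$. For~(2): $\varrho(R_\alpha^{k}(\gamma_0))=0$ forces $\vartheta(R_\alpha^{k}(\gamma_0))=e$, and labelling the sheets so that $R_\alpha^{k+n}(\gamma_0)\in U_e$, the value $\varrho(R_\alpha^{k+n}(\gamma_0))=1$ gives $\vartheta(R_\alpha^{k+n}(\gamma_0))=\eta(1)=g$; hence $\cob(\xi\vartheta)^{(n)}(R_\alpha^{k}(\gamma_0))=\xi(R_\alpha^{k+n}(\gamma_0))\,g\,\xi(R_\alpha^{k}(\gamma_0))^{-1}\in\mathcal N_0 g\mathcal N_0^{-1}\subseteq\mathcal N$, using that $R_\alpha^{k}(\gamma_0),R_\alpha^{k+n}(\gamma_0)\in\pr_2(W)$. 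For~(3) it suffices to show $\cob(\vartheta)(\gamma)=\vartheta(R_\alpha\gamma)\vartheta(\gamma)^{-1}$ lies in a controlled enlargement of $\mathcal N_e$ for every $\gamma$: by $R_\alpha(U_\lambda)\subseteq U_\lambda\cup(\Gamma\setminus U)$ the relevant cases are (a) $\gamma,R_\alpha\gamma\in U_\lambda$, where $\cob(\vartheta)(\gamma)=c_\lambda\!\big(\eta(\varrho(R_\alpha\gamma))\eta(\varrho(\gamma))^{-1}\big)\in c_\lambda(\mathcal N_e)=\mathcal N_e$ because $|\varrho(R_\alpha\gamma)-\varrho(\gamma)|<\delta$; (b) $\gamma\in U_\lambda$, $R_\alpha\gamma\notin U$, which by (iii) can only occur with $\gamma$ in the top extremal ball, so $\varrho(\gamma)<\delta$ and $\vartheta(\gamma)\in\mathcal N_e$, $\cob(\vartheta)(\gamma)=\vartheta(\gamma)^{-1}\in\mathcal N_e^{-1}$; (c) symmetrically $\gamma\notin U$, $R_\alpha\gamma$ in the bottom extremal ball, so $\varrho(R_\alpha\gamma)<\delta$; (d) $\gamma,R_\alpha\gamma\notin U$, where $\cob(\vartheta)(\gamma)=e$. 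In each case $\cob(\vartheta)(\gamma)\in\mathcal N_e\mathcal N_e^{-1}$, which by our choice of $\mathcal N_e$ makes $d(\cob(\xi\vartheta)(\gamma),\cob(\xi)(\gamma))<\varepsilon$, establishing~(3).

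The genuinely new point, absent from Lemma~\ref{L:approx.lemma}, is that the perturbation must be \emph{equivariant} ($\vartheta(\lambda\gamma)=c_\lambda(\vartheta(\gamma))$), not merely $\Lambda$-invariant; a naive $\eta\circ\varrho$ with $\Lambda$-invariant $\varrho$ would be $\Lambda$-fixed, so one is forced to twist by $c_\lambda$ separately on each sheet $U_\lambda$, and the delicate part is checking that these twists never conflict with the small-coboundary requirement~(3). This is the main obstacle, and it is resolved by arranging --- via freeness of the $\Lambda$-action and continuity of $\varrho$ --- that $R_\alpha$ crosses between sheets or leaves $U$ only where $\varrho$ has already dropped below the uniform-continuity threshold $\delta$ of $\eta$, so that the spurious $c_\lambda$'s then act on elements already inside the $\Lambda$-invariant neighbourhood $\mathcal N_e$ of $e$. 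Once this is set up, everything else (including the eventual passage to Lemma~\ref{L:trans.coc.exist}-type statements and Theorem~\ref{T:fact.space}) is a transcription of the corresponding non-equivariant arguments.
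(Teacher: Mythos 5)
Your proof is correct and follows essentially the same route as the paper's: both construct the equivariant perturbation $\vartheta$ by conjugating a path from $e$ to $g$ by $q(\lambda)(\cdot)q(\lambda)^{-1}$ on pairwise disjoint $\Lambda$-translates of a small fundamental neighbourhood along the $R_\alpha$-orbit (using freeness of the $\Lambda$-action), and both control $\cob(\vartheta)$ via a conjugation-invariant neighbourhood of $e$ combined with uniform continuity of the path. The differences are only in bookkeeping: the paper packages the sheet-twisting into a single map $\kappa\colon\Gamma\to S$ into a star $S=\bigcup_{\lambda}\{tz_\lambda\colon t\in[0,1]\}\subseteq\mathbb C$ with a trapezoidal weight profile $l_i$ supported along the whole orbit segment from $k$ to $k+n+N$ (which yields $|\kappa(\alpha\gamma)-\kappa(\gamma)|\le 1/N$ globally and so avoids your boundary cases (b)--(d) and the enlargement $\mathcal N_e\mathcal N_e^{-1}$), whereas you define $\vartheta$ sheet-by-sheet on an evenly covered neighbourhood and obtain the invariant scalar by pulling back from $\Gamma/\Lambda$.
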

\begin{proof}
Let us begin, similarly as in the proof of Lemma~\ref{L:approx.lemma}, by choosing a neighborhood $\mathcal N_e$ of $e$ in $G_a$ with
\begin{equation*}
d(\xi(\alpha\gamma)g\xi(\gamma)^{-1},\xi(\alpha\gamma)\xi(\gamma)^{-1})<\varepsilon
\end{equation*}
for all $g\in\mathcal N_e$ and $\gamma\in\Gamma$. Set $\mathcal N_e^{inv}=\bigcap_{\lambda\in\Lambda}q(\lambda)\mathcal N_eq(\lambda)^{-1}$. Since $q$ takes its values in the normalizer $N(G_a)$ of $G_a$ in $\Homeo(Z)$, $\mathcal N_e^{inv}$ is an identity neighborhood in $G_a$. Moreover, $\mathcal N_e^{inv}$ is contained in $\mathcal N_e$ and is invariant with respect to the conjugation by the elements $q(\lambda)$ ($\lambda\in\Lambda$) (that is, we have $q(\lambda)\mathcal N_e^{inv}q(\lambda)^{-1}=\mathcal N_e^{inv}$ for every $\lambda\in\Lambda$).

Fix an arbitrary injective function $\Lambda\ni\lambda\mapsto z_{\lambda}\in\mathbb S^1$ mapping the identity of $\Lambda$ to the identity of $\mathbb S^1$. For each $\lambda\in\Lambda$ set $I_{\lambda}=\{tz_{\lambda}\colon t\in[0,1]\}\subseteq\mathbb C$ and write $S=\bigcup_{\lambda\in\Lambda}I_{\lambda}$. We shall view $S$ as a subspace of $\mathbb C$ equipped with the  standard (that is, euclidean) metric.

Fix $g\in G_a$ and a neighborhood $\mathcal N_0$ of $\xi(\alpha^k\gamma_0)$ in $G_a$ so that $\mathcal N_0g\mathcal N_0^{-1}\subseteq\mathcal N$. Without loss of generality, we may assume that $\xi(\gamma)\in\mathcal N_0$ for every $\gamma\in\Pr_2(W)$. Now choose a path $\varrho\colon[0,1]\to G_a$ with $\varrho(0)=e$ and $\varrho(1)=g$. Given $t\in[0,1]$ and $\lambda\in\Lambda$, put $\eta(tz_{\lambda})=q(\lambda)\varrho(t)q(\lambda)^{-1}$. The map $\eta\colon S\to G_a$ thus defined is continuous and, having a compact domain, it is uniformly continuous. Consequently, there is $\delta>0$ such that $\eta(z)\eta(z')^{-1}\in\mathcal N_e^{inv}$ for all $z,z'\in S$ with $|z-z'|<\delta$. Fix a positive integer $N>1/\delta$. Since the map $T\times R_{\alpha}$ is minimal, we may fix an integer $n\geq N$ with $(T^{k+n}(x_0),R_{\alpha}^{k+n}(\gamma_0))\in W$ in order to fulfill condition (1).

Since the group $\Gamma$ is infinite and $\alpha$ is its topological generator, all the powers $\alpha^i$ ($i\in\mathbb Z$) are distinct. Consequently, all the orbits of the map $R_{\alpha}$ are infinite. Further, since $\Lambda$ is a finite subgroup of $\Gamma$, the points $\lambda\alpha^i\gamma_0$ ($\lambda\in\Lambda$, $i\in\mathbb Z$) are mutually distinct. Thus, there is a closed neighborhood $B$ of $\alpha^k\gamma_0$ such that the sets  $\lambda\alpha^{ i}B$ ($\lambda\in\Lambda$, $i=0,\dots,n+N$) are mutually disjoint.

Now fix a continuous map $\sigma\colon B\to[0,1/N]$, which takes value $1/N$ at $\alpha^k\gamma_0$ and value $0$ on the 
boundary $\Bd(B)$ of $B$. Define a map $\kappa\colon\Gamma\to S$ as follows:
\begin{align*}
\kappa(\gamma) =
\begin{cases}
 l_i\sigma(\lambda^{-1}\alpha^{-i}\gamma)z_{\lambda}; & \gamma\in\lambda\alpha^iB\text{ and }i=0,\dots,n+N, \\
 0; & \text{otherwise},
\end{cases}
\end{align*}
where
\begin{align*}
l_i=
\begin{cases}
 i; & i=0,\dots,N, \\
 N; & i=N+1,\dots,n, \\
 n+N-i; & i=n+1,\dots,n+N. 
\end{cases}
\end{align*}
Since the map $\sigma$ is continuous and takes the value $0$ on the boundary of $B$, it follows that $\kappa$ is continuous. 

Now set $\vartheta=\eta\circ\kappa$. Then $\vartheta\colon\Gamma\to G_a$ is a continuous map. Since $|\kappa(\alpha\gamma)-\kappa(\gamma)|\leq1/N<\delta$ for every $\gamma\in\Gamma$, it follows from our choice of $\delta$ that $\vartheta(\alpha\gamma)\vartheta(\gamma)^{-1}=\eta(\kappa(\alpha\gamma))\eta(\kappa(\gamma))^{-1}\in\mathcal N_e^{inv}\subseteq\mathcal N_e$ for every $\gamma\in\Gamma$. Consequently,
\begin{equation*}
d(\cob(\xi\vartheta)(\gamma),\cob(\xi)(\gamma))=
d(\xi(\alpha\gamma)(\vartheta(\alpha\gamma)\vartheta(\gamma)^{-1})\xi(\gamma)^{-1},\xi(\alpha\gamma)\xi(\gamma)^{-1})<\varepsilon
\end{equation*}
for every $\gamma\in\Gamma$ by our choice of $\mathcal N_e$. This verifies that $\vartheta$ satisfies condition (3).

We show that $\vartheta$ satisfies condition (2). Indeed, since $\vartheta(\alpha^k\gamma_0)=\eta(\kappa(\alpha^k\gamma_0))=\eta(0)=e$ and $\vartheta(\alpha^{k+n}\gamma_0)=
\eta(\kappa(\alpha^{k+n}\gamma_0))=
\eta(N\sigma(\alpha^k\gamma_0))=\eta(1)=g$, we obtain
\begin{align*}
\cob(\xi\vartheta)^{(n)}(R_{\alpha}^k(\gamma_0))&=(\xi\vartheta)(R_{\alpha}^{k+n}(\gamma_0))(\xi\vartheta)(R_{\alpha}^k(\gamma_0))^{-1}\\
&=\xi(\alpha^{k+n}\gamma_0)(\vartheta(\alpha^{k+n}\gamma_0)
\vartheta(\alpha^k\gamma_0)^{-1})\xi(\alpha^k\gamma_0)^{-1}\\
&=\xi(\alpha^{k+n}\gamma_0)(ge^{-1})\xi(\alpha^k\gamma_0)^{-1}\\
&\in\mathcal N_0g\mathcal N_0^{-1}\subseteq\mathcal N
\end{align*}
by our choice of $\mathcal N_0$.

To finish the proof, it remains to verify that $\vartheta$ is an invariant map. To this end, fix $\gamma\in\Gamma$ and $\lambda\in\Lambda$; we need to check that $\vartheta(\lambda\gamma)
=q(\lambda)\vartheta(\gamma)q(\lambda)^{-1}$. It will be convenient to distinguish two cases.

\emph{Case 1.} We have $\gamma\in\lambda_0\alpha^iB$ for some $\lambda_0\in\Lambda$ and $i\in\{0,\dots,n+N\}$. Then, by definition of $\kappa$, 
\begin{equation*}
\kappa(\gamma)=l_i\sigma(\lambda_0^{-1}\alpha^{-i}\gamma)z_{\lambda_0}.
\end{equation*}
Also, $\lambda\gamma\in\lambda\lambda_0\alpha^iB$ and so 
\begin{equation*}
\kappa(\lambda\gamma)
=l_i\sigma((\lambda\lambda_0)^{-1}\alpha^{-i}\lambda\gamma)z_{\lambda\lambda_0}
=l_i\sigma(\lambda_0^{-1}\alpha^{-i}\gamma)z_{\lambda\lambda_0}.
\end{equation*}
Therefore,
\begin{equation*}
\vartheta(\gamma)=\eta(\kappa(\gamma))
=\eta(l_i\sigma(\lambda_0^{-1}\alpha^{-i}\gamma)z_{\lambda_0})
=q(\lambda_0)\varrho(l_i\sigma(\lambda_0^{-1}\alpha^{-i}\gamma))q(\lambda_0)^{-1}
\end{equation*}
and, similarly,
\begin{equation*}
\vartheta(\lambda\gamma)=\eta(\kappa(\lambda\gamma))
=\eta(l_i\sigma(\lambda_0^{-1}\alpha^{-i}\gamma)z_{\lambda\lambda_0})
=q(\lambda\lambda_0)\varrho(l_i\sigma(\lambda_0^{-1}\alpha^{-i}\gamma))q(\lambda\lambda_0)^{-1}.
\end{equation*}
Finally, since $q\colon\Lambda\to N(G_a)$ is a morphism of groups, we obtain the desired equality
\begin{equation*}
\vartheta(\lambda\gamma)
=q(\lambda\lambda_0)q(\lambda_0)^{-1}\vartheta(\gamma)
q(\lambda_0)q(\lambda\lambda_0)^{-1}
=q(\lambda)\vartheta(\gamma)q(\lambda)^{-1}.
\end{equation*}

\emph{Case 2.} The point $\gamma$ lies outside the sets $\lambda_0\alpha^iB$ ($\lambda_0\in\Lambda$, $i=0,\dots,n+N$). Then so does the point $\lambda\gamma$ and so, by definition of $\kappa$, $\kappa(\gamma)=\kappa(\lambda\gamma)=0$. Then $\vartheta(\lambda\gamma)=\vartheta(\gamma)=e$ and the desired equality 
$\vartheta(\lambda\gamma)
=q(\lambda)\vartheta(\gamma)q(\lambda)^{-1}$ is thus immediate.
\end{proof}

Set
\begin{equation*}
\Cob^{inv}=\{\cob(\xi)\colon \xi\colon\Gamma\to G_a\text{ is continuous and invariant}\}.
\end{equation*}
Write $G$ for the closure of $G_a$ in $\Homeo(Z)$ and consider the closure $\overline{\Cob^{inv}}$ of $\Cob^{inv}$ in the space of all cocycles $\Gamma\to G$ with the topology of uniform convergence (that is, with the compact-open topology). Since $G$ is a completely metrizable group, it follows that $\overline{\Cob^{inv}}$ is a completely metrizable space.

\begin{lemma}\label{L:inv.trans.coc}
Given $(x_0,\gamma_0)\in X\times \Gamma$, there is a cocycle $f\in\overline{\Cob^{inv}}$ such that the system $(X\times \Gamma\times G,R)$ defined by \eqref{Eq:def.R} is point-transitive with a transitive point $(x_0,\gamma_0,e)$.
\end{lemma}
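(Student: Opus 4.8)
The plan is to mimic the proof of Lemma~\ref{L:trans.coc.exist}, but working inside the completely metrizable space $\overline{\Cob^{inv}}$ instead of $\overline{\Cob}$, and using the ``invariant'' version of the approximation lemma, namely Lemma~\ref{L:approx.lemma.inv}, in place of Lemma~\ref{L:approx.lemma}. First I would fix countable bases $(W_p)_{p\in\mathbb N}$ of $X\times\Gamma$ and $(\mathcal N_q)_{q\in\mathbb N}$ of $G$ (both spaces are separable, the first because $T\times R_\alpha$ is minimal, the second by assumption), and for each $p$ fix a positive integer $k_p$ with $(T^{k_p}(x_0),R_\alpha^{k_p}(\gamma_0))\in W_p$. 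For $p,q\in\mathbb N$ let $\mathcal C_{p,q}$ be the set of all cocycles $f\in\overline{\Cob^{inv}}$ for which there is a positive integer $n$ with $(T^{k_p+n}(x_0),R_\alpha^{k_p+n}(\gamma_0))\in W_p$ and $f^{(n)}(R_\alpha^{k_p}(\gamma_0))\in\mathcal N_q$. These sets are clearly open in $\overline{\Cob^{inv}}$, and Lemma~\ref{L:approx.lemma.inv} shows they are dense: starting from an arbitrary invariant coboundary $\cob(\xi)$ (which by definition of $\Cob^{inv}$, hence of its closure, is the generic form of a point of $\overline{\Cob^{inv}}$), the lemma produces an invariant $\vartheta$ and an $n$ so that $\cob(\xi\vartheta)$ is $\varepsilon$-close to $\cob(\xi)$ and satisfies the two required conditions; since a pointwise product of invariant maps is invariant, $\xi\vartheta$ is invariant and $\cob(\xi\vartheta)\in\Cob^{inv}$.

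By the Baire category theorem applied to the completely metrizable space $\overline{\Cob^{inv}}$, the intersection $\bigcap_{p,q=1}^{\infty}\mathcal C_{p,q}$ is residual, hence nonempty; fix any $f$ in it. Then I would verify, exactly as in Lemma~\ref{L:trans.coc.exist}, that the point $(x_0,\gamma_0,e)$ is transitive for the corresponding map $R$. Given nonempty open $W\subseteq X\times\Gamma$ and $\mathcal N\subseteq G$, choose $p,q$ with $W_p\subseteq W$ and $\mathcal N_q\subseteq\mathcal N\,(f^{(k_p)}(\gamma_0))^{-1}$; membership $f\in\mathcal C_{p,q}$ gives $n$ with $(T^{k_p+n}(x_0),R_\alpha^{k_p+n}(\gamma_0))\in W_p$ and $f^{(n)}(R_\alpha^{k_p}(\gamma_0))\in\mathcal N_q$, and then the cocycle identity $f^{(k_p+n)}(\gamma_0)=f^{(n)}(R_\alpha^{k_p}(\gamma_0))\,f^{(k_p)}(\gamma_0)$ yields
\[
R^{k_p+n}(x_0,\gamma_0,e)=\bigl(T^{k_p+n}(x_0),R_\alpha^{k_p+n}(\gamma_0),f^{(k_p+n)}(\gamma_0)\bigr)\in W_p\times\bigl(\mathcal N_q f^{(k_p)}(\gamma_0)\bigr)\subseteq W\times\mathcal N.
\]
Hence $R$ is point-transitive with transitive point $(x_0,\gamma_0,e)$, as claimed.

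I do not expect a genuine obstacle here, since the structure is a verbatim copy of the argument for Lemma~\ref{L:trans.coc.exist}; the only points needing care are (a) confirming that $\overline{\Cob^{inv}}$ is completely metrizable, which follows from $G$ being completely metrizable (as noted in the paragraph preceding the lemma) so that the uniform-closure of any subset is a completely metrizable space, and (b) checking that Lemma~\ref{L:approx.lemma.inv} genuinely delivers density \emph{within} $\overline{\Cob^{inv}}$, i.e.\ that the approximants stay invariant coboundaries — this is exactly why Lemma~\ref{L:approx.lemma.inv} was stated with an invariant $\vartheta$, and it is the technical heart that has already been done. So the ``hard part'' has been front-loaded into the preceding lemma, and what remains is the routine Baire-category packaging described above.
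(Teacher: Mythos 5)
Your proposal is correct and is exactly the argument the paper intends: the paper's proof of this lemma is the single remark that it ``follows from Lemma~\ref{L:approx.lemma.inv} in the same way as Lemma~\ref{L:trans.coc.exist}'', and your write-up is a faithful expansion of that, including the two care points (complete metrizability of $\overline{\Cob^{inv}}$ and density of the open sets $\mathcal C_{p,q}$ via invariant perturbations). Nothing further is needed.
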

\begin{proof}
The lemma follows from Lemma~\ref{L:approx.lemma.inv} above in the same way as Lemma~\ref{L:trans.coc.exist} in Section~\ref{S:new.from.old} and so we omit the proof.
\end{proof}

\begin{proof}[Proof of Theorem~\ref{T:fact.space}]
Fix a minimal system $(X,T)$. By virtue of Theorem~\ref{T:groups}, there is $\alpha\in\Gamma$ such that the product $(X\times\Gamma,T\times R_{\alpha})$ is minimal. By Lemma~\ref{L:inv.trans.coc}, there is an invariant cocycle $f\colon\Gamma\to G$ such that the system $(X\times \Gamma\times G,R)$ defined by \eqref{Eq:def.R} is point-transitive. Consequently, the underlying system $(X\times\Gamma\times Z,Q)$ defined by \eqref{Eq:def.Q} is minimal by virtue of Lemma~\ref{L:trans.coc.min}. Recall that the latter is (topologically conjugate to) a direct product of $(X,T)$ with $(\Gamma\times Z,P)$, where $P$ is a skew product over $R_{\alpha}$ corresponding to $f$; that is
\begin{equation*}
P(\gamma,z)=(R_{\alpha}(\gamma),f(\gamma)(z))
\end{equation*}
for all $\gamma\in\Gamma$ and $z\in Z$.

We claim that $P$ commutes with the diagonal action of $\Lambda$ on $\Gamma\times Z$. To see this, fix $\lambda\in\Lambda$. Then, for $(\gamma,z)\in\Gamma\times Z$,
\begin{equation*}
\big[P\circ(R_{\lambda}\times q(\lambda))\big](\gamma,z)
=P(\lambda\gamma,q(\lambda)(z))=\left(\alpha\lambda\gamma,\big[f(\lambda\gamma)q(\lambda)\big](z)\right)
\end{equation*}
and
\begin{equation*}
\big[(R_{\lambda}\times q(\lambda))\circ P\big](\gamma,z)=(R_{\lambda}\times q(\lambda))(\alpha\gamma,f(\gamma)(z))=\left(\lambda\alpha\gamma,\big[q(\lambda)f(\gamma)\big](z)\right).
\end{equation*}
Now, since $\Gamma$ is abelian, we have $\alpha\lambda=\lambda\alpha$ and, by invariance of $f$, we get $f(\lambda\gamma)q(\lambda)=q(\lambda)f(\gamma)$. Consequently, $P\circ(R_{\lambda}\times q(\lambda))=(R_{\lambda}\times q(\lambda))\circ P$, as was to be shown.

Since $P$ commutes with the action of $\Lambda$ on $\Gamma\times Z$, it factors onto a continuous map $P/\Lambda$ on $(\Gamma\times Z)/\Lambda$. Since $P$ is a homeomorphism, it follows that so is $P/\Lambda$. Finally, since the map $T\times P$ is minimal and factors onto $T\times (P/\Lambda)$, it follows that $T\times (P/\Lambda)$ is also minimal. This shows that the space $(\Gamma\times Z)/\Lambda$ is homeo-product-minimal indeed.
\end{proof}


\section{Cantor space and cantoroids}\label{S:CtrCtrd}

Let $C$ denote the Cantor space. The first purpose of this section is to show that $C$ is a  homeo-product-minimal space. In particular, this answers in affirmative a question proposed to us by J. Kwiatkowski, whether every minimal system $(X,T)$ extends to a minimal skew product on $X\times C$.  

Recall that a map $\pi\colon X\to Y$ is called \emph{almost 1-to-1} if the points $x\in X$ for which the set $\pi^{-1}(\pi(x))$ is a singleton form a dense subset of~$X$. In this situation, if $\pi$ is surjective, we say that the space $X$ is an \emph{almost 1-to-1 extension} of $Y$. Further, if $(X,T)$ and $(Y,S)$ are dynamical systems given by continuous maps on metrizable spaces and $\pi\colon X\to Y$ is a continuous surjection with $\pi \circ T = S\circ \pi$, then $\pi$ is called a \emph{semiconjugacy} or a \emph{factor map}, the system $(Y,S)$ is a \emph{factor} of $(X,T)$ and the system $(X,T)$ is an \emph{extension} of $(Y,S)$. If $\pi$ is almost 1-to-1 then $(X,T)$ is an \emph{almost 1-to-1 extension} of $(Y,S)$.

Every factor of a minimal system is minimal. As for extensions, we have the following lemma.

\begin{lemma}\label{L:minim.ext}
	Let $X,Y$ be metrizable spaces, $(Y,S)$ be a minimal system and $(X,T)$
	be its extension via a semiconjugacy $\pi:X\to Y$.
	Assume that the following two conditions hold:
	\begin{enumerate}
		\item[(1)] the map $\pi$ is almost 1-to-1,
		\item[(2)] the map $\pi$ is closed.
	\end{enumerate}
	Then the system $(X,T)$ is minimal.
\end{lemma}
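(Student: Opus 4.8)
The plan is to show that every point of $X$ has dense orbit under $T$. Fix $x\in X$ and let $U\subseteq X$ be a nonempty open set; I want to find $n\geq 0$ with $T^n(x)\in U$. The only structure available is the factor map $\pi$, so the idea is to push the problem down to the minimal system $(Y,S)$, solve it there, and pull an answer back up using the almost 1-to-1 and closedness hypotheses.

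First I would use almost 1-to-1: since the set of points $x'\in X$ with $\pi^{-1}(\pi(x'))=\{x'\}$ is dense, I can pick such a ``singleton point'' $x_0$ inside the open set $U$. Then I would produce an open set around $x_0$ whose $\pi$-image is still open and small. Here condition (2) is the key: because $\pi$ is closed, for the point $y_0=\pi(x_0)$ whose fiber is exactly $\{x_0\}\subseteq U$, the set $V:=Y\setminus\pi(X\setminus U)$ is an open neighborhood of $y_0$ in $Y$ with the crucial property $\pi^{-1}(V)\subseteq U$. (Indeed $y_0\in V$ since $\pi^{-1}(y_0)=\{x_0\}$ misses $X\setminus U$; and if $\pi(x')\in V$ then $x'\notin X\setminus U$, i.e.\ $x'\in U$.) This is the standard trick for promoting ``closed'' to a form of openness on fibers, and it is the step I expect to be the main obstacle to state cleanly — everything else is then formal.

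Next I would invoke minimality of $(Y,S)$: the orbit of the point $\pi(x)$ is dense in $Y$, so there is $n\geq 0$ with $S^n(\pi(x))\in V$. By the semiconjugacy relation $\pi\circ T=S\circ\pi$ we have $\pi(T^n(x))=S^n(\pi(x))\in V$, hence $T^n(x)\in\pi^{-1}(V)\subseteq U$. Since $x$ and $U$ were arbitrary, every orbit is dense and $(X,T)$ is minimal.

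I should double-check one routine point used above, namely that $\pi$ is surjective (so that $Y\setminus\pi(X\setminus U)$ behaves as claimed and $\pi(x)$ is a genuine point of $Y$); this is part of the definition of a semiconjugacy/extension as given in the paper, so no work is needed there. The only genuinely substantive ingredient is the passage ``$\pi$ closed $\Rightarrow$ $\pi^{-1}(V)\subseteq U$ for a suitable neighborhood $V$ of a singleton-fiber image'', which combines hypotheses (1) and (2); once that is in hand, minimality of $(Y,S)$ finishes the argument immediately.
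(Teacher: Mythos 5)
Your proof is correct and is essentially identical to the paper's: both pick a singleton-fiber point inside the target open set, use closedness of $\pi$ to produce the open set $Y\setminus\pi(X\setminus U)$ with preimage contained in $U$, and then apply minimality of $(Y,S)$ together with the semiconjugacy relation. No gaps.
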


Let us mention that for systems with compact phase spaces the lemma is well known, see e.g.~\cite[Lemma~19]{BDHSS}. Notice also that in the general case we use that $\pi$ is closed but the proof does not need the fact that the semiconjugacy is continuous.

\begin{proof}
	Fix $x\in X$ and a nonempty open set $V\subseteq X$; we
	show that $T^n(x)\in V$ for some $n\geq1$. By condition (1),
	there is $x'\in V$ such that $\pi^{-1}(\pi(x'))=\{x'\}$. Since
	$\pi$ is a closed map, there exists a neighborhood $U$ of
	$\pi(x')$ in $Y$ (e.g.~$U=Y\setminus\pi(X\setminus V)$) with $\pi^{-1}(U)\subseteq V$. By minimality
	of $(Y,S)$, there is $n\geq1$ such that $S^n(\pi(x))
	\in U$. Therefore, $\pi(T^n(x))=S^n(\pi(x))
	\in U$ and so $T^n(x)\in\pi^{-1}(U)\subseteq V$, as desired.
\end{proof}

\begin{lemma}\label{L:closed}
	Let $X,Y,Z$ be metrizable spaces, $Y$ compact, and let $p\colon Y\to Z$ be a continuous map. Then $\pi=\Id_X\times p\colon X\times Y\to X\times Z$ is a closed map.
\end{lemma}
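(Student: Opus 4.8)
The plan is to prove that $\pi = \Id_X \times p \colon X \times Y \to X \times Z$ is closed by exploiting the compactness of $Y$ through the classical tube lemma. First I would let $F \subseteq X \times Y$ be an arbitrary closed set, and show that $\pi(F)$ is closed in $X \times Z$ by verifying that its complement is open. So I would fix a point $(x_0, z_0) \in (X \times Z) \setminus \pi(F)$ and look for a basic open neighborhood of it disjoint from $\pi(F)$.

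The key observation is that for each $y$ in the compact fiber $\{x_0\} \times Y$, the point $(x_0, y)$ cannot lie in $F$ whenever $p(y) = z_0$ (otherwise $\pi(x_0,y) = (x_0, z_0) \in \pi(F)$). More carefully, I would consider the slice $\{x_0\} \times Y$ and the closed set $F$: for every $y \in Y$ with $p(y) = z_0$ we have $(x_0, y) \notin F$, so there is a basic open box $U_y \times V_y \ni (x_0, y)$ disjoint from $F$. The set $p^{-1}(z_0) \subseteq Y$ is compact (closed in $Y$), so finitely many $V_{y_1}, \dots, V_{y_k}$ cover it; set $U = \bigcap_{i=1}^k U_{y_i}$ and $V = \bigcup_{i=1}^k V_{y_i}$, an open neighborhood of the compact set $p^{-1}(z_0)$ with $(U \times V) \cap F = \emptyset$. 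Since $Y \setminus V$ is compact and disjoint from $p^{-1}(z_0)$, its image $p(Y \setminus V)$ is a compact (hence closed) subset of $Z$ not containing $z_0$; thus $W = Z \setminus p(Y \setminus V)$ is an open neighborhood of $z_0$ with $p^{-1}(W) \subseteq V$. Shrinking $U$ if necessary (it already is a finite intersection, so it is open and contains $x_0$), I claim $(U \times W) \cap \pi(F) = \emptyset$: if $(x, z) \in U \times W$ were equal to $\pi(x, y) = (x, p(y))$ for some $(x,y) \in F$, then $p(y) = z \in W$ forces $y \in p^{-1}(W) \subseteq V$, so $(x, y) \in U \times V$, contradicting $(U \times V) \cap F = \emptyset$. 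Hence $U \times W$ is a neighborhood of $(x_0, z_0)$ missing $\pi(F)$, which proves $\pi(F)$ is closed.

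I do not anticipate a genuine obstacle here; this is the standard argument that projection along a compact factor is a closed map, adapted to the product-with-identity setting, and compactness of $Y$ is used exactly once in the covering step. The only point requiring a little care is handling the fiber $p^{-1}(z_0)$ rather than all of $Y$ — one must cover $p^{-1}(z_0)$ (not $Y$) by the $V_{y_i}$'s and then pass to the open set $W$ via $p(Y \setminus V)$, which works precisely because $p$ is continuous and $Y$ is compact. No continuity or compactness of $p$ beyond continuity is needed, and $X$ and $Z$ play purely passive roles.
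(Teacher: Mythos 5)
Your proof is correct, but it takes a different route from the paper's. The paper argues sequentially: given a sequence $((x_n,y_n))$ in a closed set $F$ with $\pi(x_n,y_n)\to(x,z)$, it uses compactness (hence sequential compactness) of the metrizable space $Y$ to extract a convergent subsequence $y_n\to y$, concludes $(x,y)\in F$ by closedness, and gets $(x,z)=\pi(x,y)\in\pi(F)$; this uses metrizability of $X\times Z$ to reduce closedness to a statement about sequences. You instead give the classical ``projection along a compact factor is closed'' argument: show the complement of $\pi(F)$ is open by covering the compact fiber $p^{-1}(z_0)$ with boxes missing $F$, and then converting the resulting tube $U\times V$ around $\{x_0\}\times p^{-1}(z_0)$ into a neighborhood $U\times W$ of $(x_0,z_0)$ via $W=Z\setminus p(Y\setminus V)$. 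Every step checks out — in particular $(U\times V)\cap F=\emptyset$ because $U\times V=\bigcup_i(U\times V_{y_i})\subseteq\bigcup_i(U_{y_i}\times V_{y_i})$, and $p^{-1}(W)\subseteq V$ by construction. Your argument is marginally longer but strictly more general: it needs only that $Z$ be Hausdorff (so that the compact set $p(Y\setminus V)$ is closed), whereas the paper's sequential argument leans on metrizability. Within the metrizable setting of the paper both proofs are equally valid.
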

\begin{proof}
	Let $F\subseteq X\times Y$ be a closed set and let $((x_n,y_n))_{n=1}^{\infty}$ be a sequence in $F$, such that $\pi((x_n,y_n))=(x_n,p(y_n))\to(x,z)$ in $X\times Z$. Then $x_n\to x$ in $X$ and, due to compactness of $Y$, we may assume that $y_n\to y$ in $Y$. Since $F$ is closed, we get $(x,y)\in F$. Then $\pi((x_n,y_n))\to\pi(x,y)$ and so $(x,z)=\pi(x,y)\in\pi(F)$.
\end{proof}

\begin{proposition}\label{P:general}
	Let $Y$ and $Z$ be compact metrizable spaces. Assume that one of the following holds.
	\begin{enumerate}
		\item [(i)]  $Z$ is product-minimal and for every minimal map $h$ on $Z$ there exists a continuous map $S$ on $Y$ such that $(Y,S)$ is an almost 1-to-1 extension of $(Z,h)$.
		\item [(ii)]  $Z$ is homeo-product-minimal and for every minimal homeomorphism $h$ on $Z$ there exists a continuous map $S$ on $Y$ such that $(Y,S)$ is an almost 1-to-1 extension of $(Z,h)$.
	\end{enumerate}
	Then $Y$ is a product-minimal space. If (ii) is true and, moreover, such a map $S$ always exists in the class of homeomorphisms, then $Y$ is even a homeo-product-minimal space.
\end{proposition}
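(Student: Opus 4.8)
The plan is to deduce the product-minimality of $Y$ from that of $Z$ by exhibiting $(X\times Y,\,T\times S)$ as an almost 1-to-1 closed extension of $(X\times Z,\,T\times h)$ and appealing to Lemmas~\ref{L:minim.ext} and~\ref{L:closed}.

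First I would treat case~(i). Fix an arbitrary minimal system $(X,T)$ with $X$ metrizable. Since $Z$ is product-minimal, there is a continuous map $h\colon Z\to Z$ such that $(X\times Z,\,T\times h)$ is minimal. The projection $X\times Z\to Z$ is a semiconjugacy of $(X\times Z,\,T\times h)$ onto $(Z,h)$, so $h$ is minimal, being (the factor map onto) a factor of a minimal system. The hypothesis of~(i), applied to the minimal map $h$, then provides a continuous map $S\colon Y\to Y$ together with an almost 1-to-1 semiconjugacy $p\colon(Y,S)\to(Z,h)$; in particular $p$ is a continuous surjection.

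Next I would consider $\pi=\Id_X\times p\colon X\times Y\to X\times Z$. One checks directly that $\pi\circ(T\times S)=(T\times h)\circ\pi$ and that $\pi$ is a continuous surjection, that is, a semiconjugacy of $(X\times Y,\,T\times S)$ onto the minimal system $(X\times Z,\,T\times h)$. Since $Y$ is compact, Lemma~\ref{L:closed} shows that $\pi$ is closed. Finally, $\pi$ is almost 1-to-1: for $(x,y)\in X\times Y$ one has $\pi^{-1}(\pi(x,y))=\{x\}\times p^{-1}(p(y))$, which is a singleton exactly when $p^{-1}(p(y))$ is, so the set of points of $X\times Y$ with singleton $\pi$-fiber equals $X\times D$, where $D\subseteq Y$ is the dense set of injectivity points of $p$; this set is dense in $X\times Y$. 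All spaces involved being metrizable, Lemma~\ref{L:minim.ext} (with $X\times Y$ as the extension, $X\times Z$ as the base, and $\pi$ as the semiconjugacy) yields that $(X\times Y,\,T\times S)$ is minimal. As $(X,T)$ was arbitrary, $Y$ is a product-minimal space.

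Case~(ii) runs along the same lines: since $Z$ is homeo-product-minimal, $h$ may be chosen to be a homeomorphism of $Z$ (automatically minimal, as above), and the hypothesis of~(ii) then supplies the continuous map $S$ and the almost 1-to-1 semiconjugacy $p$; the remainder of the argument is identical and again shows that $Y$ is product-minimal. For the final assertion, if in~(ii) the map $S$ can always be taken to be a homeomorphism, then the $S$ produced above is a homeomorphism, so $(X\times Y,\,T\times S)$ is a minimal product with invertible fiber map; hence $Y$ is homeo-product-minimal. I do not anticipate a genuine obstacle here: the only points needing a word of care are that $h$, being a factor of $T\times h$, is minimal (so that the hypotheses of~(i)/(ii) are applicable), and the bookkeeping of which coordinate plays the role of the extension versus the base when invoking Lemma~\ref{L:minim.ext}.
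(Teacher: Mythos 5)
Your proposal is correct and follows essentially the same route as the paper: exhibit $\pi=\Id_X\times p$ as a closed, almost 1-to-1 semiconjugacy from $(X\times Y,T\times S)$ onto the minimal product $(X\times Z,T\times h)$ and invoke Lemmas~\ref{L:closed} and~\ref{L:minim.ext}. The only difference is that you spell out explicitly why $\pi$ is almost 1-to-1, which the paper leaves as an immediate observation.
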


\begin{proof}
	Let $(X,T)$ be a minimal dynamical system given by a continuous map $T$ on a metrizable space $X$. By (i) or (ii) there exists, respectively, a map or a homeomorphism $h$ on $Z$ such that the product $(X\times Z, T\times h)$ is minimal. Clearly, $h$ is minimal. So, there exists a continuous map $S$ on $Y$ such that $(Y,S)$ is an almost 1-to-1 extension of $(Z,h)$ via a closed semiconjugacy $p$.\footnote{Notice that $Y$ is an almost 1-to-1 extension of $Z$ and that, by Lemma~\ref{L:minim.ext}, $(Y,S)$ is a minimal system.}	Since $p$ is almost 1-to-1, so is $\pi=\Id_X\times p$. Clearly, $\pi$ is a semiconjugacy $(X\times Y, T\times S)\to (X\times Z, T\times h)$ and, by Lemma~\ref{L:closed}, it is a closed map. Thus, by virtue of Lemma~\ref{L:minim.ext}, the product $T\times S$ is minimal on $X\times Y$. We have thus proved that $Y$ is a product-minimal space. The rest is obvious.  
\end{proof}

\begin{remark}
	In connection with Proposition~\ref{P:general}, let us mention that an almost 1-to-1 extension of a (homeo-)product-minimal space need not be product-minimal, even if it admits a minimal homeomorphism. To show an example, start with an arbitrary solenoid $Y$. By Theorem~\ref{T:groups}, $Y$ is product-minimal. However, if $X$ is a DST space constructed as an almost 1-to-1 extension of $Y$, see \cite{DST}, then $X$ admits a minimal homeomorphism but, by Proposition~\ref{P:Slovak.not.HPM}(b), $X$ is not product-minimal.
\end{remark}

\begin{theorem}\label{T:Cantor}
The Cantor space is homeo-product-minimal.
\end{theorem}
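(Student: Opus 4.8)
The plan is to apply Proposition~\ref{P:general}(ii), with $Z$ being a suitable compact connected homeo-product-minimal space already available to us and $Y = C$ the Cantor space, realized as an almost $1$-to-$1$ extension of $(Z,h)$ for every minimal homeomorphism $h$ of $Z$. The natural choice for $Z$ is the circle $\mathbb S^1$ (or, if convenient, a solenoid), which is homeo-product-minimal by Theorem~\ref{T:groups}. Thus the real content is the following purely dynamical/topological fact: \emph{for every minimal homeomorphism $h$ of $\mathbb S^1$ (i.e., every irrational rotation up to conjugacy), there is a minimal homeomorphism $S$ of the Cantor space $C$ such that $(C,S)$ is an almost $1$-to-$1$ extension of $(\mathbb S^1,h)$.}

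First I would recall the standard construction: given an irrational rotation $R_\theta$ of $\mathbb S^1$, pick a point $z_0$ and ``blow up'' its full orbit $\{R_\theta^n z_0 : n\in\mathbb Z\}$, replacing each point $R_\theta^n z_0$ by a pair of points (the two one-sided limits), i.e., pass from $\mathbb S^1$ to the ``doubled'' space obtained by cutting the circle along a countable dense orbit. Concretely, one can take the orbit closure inside $\{0,1\}^{\mathbb Z}$ of the characteristic sequence of a half-open arc $[0,\beta)$ under $R_\theta$ — a Sturmian-type system — or, equivalently, form the subshift coding orbits of $R_\theta$ with respect to the partition into two half-open arcs. The resulting minimal subshift $(C,S)$ is a Cantor system (it is infinite, has no isolated points since $R_\theta$ is minimal, and is totally disconnected as a subshift), it is minimal because $R_\theta$ is minimal, and the coding map $\pi\colon C\to\mathbb S^1$ is continuous, surjective, at most $2$-to-$1$, and exactly $1$-to-$1$ over all points not on the cut orbit — hence almost $1$-to-$1$ since the non-cut points are dense. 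Then $S$ is a homeomorphism on $C$ as the shift on a subshift, and $(C,S)$ is an almost $1$-to-$1$ extension of $(\mathbb S^1, R_\theta)$ via $\pi$. For a general minimal homeomorphism $h$ of $\mathbb S^1$, conjugate it to an irrational rotation by the Poincaré theorem and transport the construction through the conjugacy.

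With this in hand, the hypothesis (ii) of Proposition~\ref{P:general} is verified with $Z=\mathbb S^1$: $\mathbb S^1$ is homeo-product-minimal, and for every minimal homeomorphism $h$ of $\mathbb S^1$ we have produced a homeomorphism $S$ of $C$ with $(C,S)$ an almost $1$-to-$1$ extension of $(\mathbb S^1,h)$. Since such $S$ is itself a homeomorphism, the final clause of Proposition~\ref{P:general} applies and yields that $C$ is homeo-product-minimal, which is exactly the assertion of Theorem~\ref{T:Cantor}.

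The main obstacle is not conceptual but bookkeeping: one must check carefully that the coded/doubled space really is the Cantor space (no isolated points, totally disconnected, compact, metrizable), that $S$ is invertible on it, and that the factor map $\pi$ is genuinely almost $1$-to-$1$ (density of the fibers that are singletons), and that these properties survive transporting along the conjugacy for a general minimal circle homeomorphism. An alternative, perhaps cleaner, route avoiding a specific model: take \emph{any} minimal subshift $(C,S)$ that is an almost $1$-to-$1$ extension of an irrational rotation (such subshifts are classical, e.g.\ arising from Sturmian sequences or, more generally, from any minimal substitution of non-constant length with rationally independent frequencies), observe it is a Cantor system, and invoke Proposition~\ref{P:general}(ii) with $Z$ being that rotation's circle; one does not even need the full strength ``for every $h$'' here if one instead argues directly, but using Proposition~\ref{P:general} as stated keeps the exposition uniform.
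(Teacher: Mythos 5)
Your proposal is correct and follows essentially the same route as the paper: both verify hypothesis (ii) of Proposition~\ref{P:general} with $Z=\mathbb S^1$ (homeo-product-minimal by Theorem~\ref{T:groups}) and realize the Cantor space, for each irrational rotation, as a minimal almost $1$-to-$1$ extension; the paper invokes the Denjoy blow-up, while your Sturmian coding is just the symbolic incarnation of the same construction. No gaps.
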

\begin{proof}
	We use Proposition~\ref{P:general} with $Y$ being the Cantor space and $Z$ being the circle. By Theorem~\ref{T:groups}, $Z$ is homeo-product-minimal.
	Now fix a minimal homeomorphism $h$ on the circle $Z$. We may assume that it is an irrational rotation. Now apply the Denjoy blow-up technique to the rotation $h$ to obtain a minimal homeomorphism $S$ on a Cantor subset $K$ of the circle $Z$ and denote the corresponding semiconjugacy $(K,S)\to (Z,h)$ by $p$. Recall that $p$ is almost 1-to-1. Since $K$ is homeomorphic to the Cantor space $Y$, Proposition~\ref{P:general} gives that $Y$ is homeo-product-minimal.
\end{proof}

Given a compact metrizable space $Y$, consider the set
\[\Ydeg=\left\{y\in Y\colon \{y\} \text{ is a
	component of }Y \right\}
\]
of all degenerate components of $Y$. By \cite{BDHSS}, $Y$ is called a \emph{cantoroid} if it has no isolated points and $\Ydeg$ is dense in $Y$.

\begin{theorem}\label{T:cantoroid}
	Every cantoroid is product-minimal.
\end{theorem}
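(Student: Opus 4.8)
The plan is to deduce the theorem from Proposition~\ref{P:general}(i), taking $Z$ to be the Cantor space $C$ and $Y$ an arbitrary cantoroid. By Theorem~\ref{T:Cantor}, $C=Z$ is (homeo-)product-minimal, so the hypothesis on $Z$ in Proposition~\ref{P:general}(i) is satisfied. What remains is the combinatorial/topological core of the argument: given an arbitrary minimal homeomorphism $h$ on the Cantor space $C$, one must produce a continuous map $S$ on the cantoroid $Y$ together with a closed, almost 1-to-1 semiconjugacy $p\colon (Y,S)\to(C,h)$. Note that we need only handle minimal \emph{homeomorphisms} $h$ on $C$, since (by Theorem~\ref{T:Cantor}) for any minimal system $(X,T)$ we can realize the product with $C$ already via a homeomorphism; this is why the conclusion is only product-minimality, not the homeo-version.

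The construction of $p$ and $S$ is the heart of the matter and should go roughly as follows. Since $\Ydeg$ is dense in the cantoroid $Y$, collapsing each nondegenerate component of $Y$ to a point yields a quotient $\rho\colon Y\to Y/\!\!\sim$ which is a continuous surjection onto a compact metrizable space with no isolated points and with a dense set of points whose preimage under $\rho$ is a singleton (namely the images of the degenerate components); being totally disconnected, perfect and compact metrizable, $Y/\!\!\sim$ is homeomorphic to the Cantor space $C$. Thus $Y$ is itself, topologically, an almost 1-to-1 extension of $C$, via a closed map (closedness is automatic since $Y$ is compact and $Y/\!\!\sim$ is Hausdorff). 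The work is to upgrade this \emph{topological} extension to a \emph{dynamical} one: given the prescribed minimal homeomorphism $h$ on $C$, one wants to choose the identification $Y/\!\!\sim\;\cong C$ and a self-map $S$ of $Y$ so that $\rho$ intertwines $S$ with $h$. The natural strategy is to realize $Y$ abstractly as an inverse limit of finite combinatorial data over $C$: pick a nested sequence of finite clopen partitions $\mathcal P_k$ of $C$ generating the topology, and build $Y$ and $S$ by specifying, over each atom, which pieces are single points (the degenerate components, arranged to be dense) and which carry the "extra" subcontinuum structure prescribed by $Y$, making sure $S$ permutes the pieces over each $\mathcal P_k$ exactly as $h$ does. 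Equivalently and perhaps more cleanly: since $Y$ is a cantoroid, fix \emph{any} almost 1-to-1 quotient $\rho_0\colon Y\to C$ as above; then transport a minimal homeomorphism via a Denjoy-type / partition-refinement bookkeeping so that the component structure is respected. One must verify that $S$ can be taken continuous on all of $Y$ (the only delicate points are the nondegenerate components, where continuity of $S$ follows from continuity of $h$ together with the fact that $S$ maps each component homeomorphically onto another component in a way compatible with the clopen partitions), and that $\rho_0\circ S=h\circ\rho_0$.

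Once $S$ and $p=\rho_0$ are in hand, $p$ is a closed (compactness of $Y$) almost 1-to-1 semiconjugacy from $(Y,S)$ onto the minimal system $(C,h)$, so Lemma~\ref{L:minim.ext} gives that $(Y,S)$ is minimal, and Proposition~\ref{P:general}(i) then yields that $Y$ is product-minimal. The main obstacle is precisely the middle step: arranging the self-map $S$ of $Y$ so that it is simultaneously (a) continuous on the nondegenerate components, (b) a bijection of $Y$ inducing $h$ downstairs, and (c) compatible with a density-of-degenerate-components bookkeeping so that $p$ stays almost 1-to-1 — in other words, showing that the passage from the topological almost 1-to-1 structure of a cantoroid over $C$ to a \emph{dynamical} almost 1-to-1 extension realizing a prescribed $h$ can always be carried out. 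I expect this to require a careful inductive construction over refining clopen partitions of $C$, keeping track of the finitely many "non-point" atoms at each stage and of the $h$-orbit combinatorics, and it is where essentially all the real work of the proof lies.
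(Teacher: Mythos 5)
Your reduction is exactly the one the paper uses: apply Proposition~\ref{P:general} with $Z$ the Cantor space (homeo-product-minimal by Theorem~\ref{T:Cantor}) and $Y$ the cantoroid, so that everything hinges on producing, for each minimal homeomorphism $h$ of the Cantor space, a continuous $S$ on $Y$ making $(Y,S)$ an almost 1-to-1 extension of $(C,h)$. (Minor point: the condition you actually invoke --- only minimal \emph{homeomorphisms} $h$ need to be handled --- is item (ii) of Proposition~\ref{P:general}, not item (i).) The paper does not construct $S$ at all: it cites \cite[Theorem~24]{BDHSS}, which states precisely that a cantoroid admits a minimal continuous map that is an almost 1-to-1 extension of a given minimal Cantor homeomorphism. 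So the entire content you flag as ``where essentially all the real work lies'' is, in the paper, an external result, and your proposal leaves it as an unproved sketch. As a self-contained proof this is a genuine gap.

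Moreover, the sketch you give of that missing step could not be carried out as described. You require $S$ to be ``a bijection of $Y$ inducing $h$ downstairs'' that ``maps each component homeomorphically onto another component.'' A continuous bijection of a compact metrizable space is a homeomorphism, so this would force every cantoroid to admit a minimal \emph{homeomorphism}. That is false: as Remark~\ref{R:PMnotHPM} points out, a cantoroid with a unique nondegenerate component admits no minimal homeomorphism whatsoever (and this is exactly why Theorem~\ref{T:cantoroid} asserts only product-minimality, not homeo-product-minimality). Any correct construction of $S$ must therefore allow $S$ to be noninvertible --- in particular, to map nondegenerate components into, and eventually collapse them onto, degenerate ones --- and must reconcile the placement of the (at most countably many) nondegenerate components over points of $C$ with the orbit structure of $h$. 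This is the substance of the BDHSS construction, and it is qualitatively different from the component-permuting, bijective picture in your outline. The topological preliminaries in your second paragraph (the quotient of a cantoroid by its components is a Cantor set and the quotient map is closed and almost 1-to-1) are correct but do not touch this difficulty.
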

\begin{proof}
	We use Proposition~\ref{P:general} with $Y$ being a cantoroid and $Z$ being the Cantor space. Then $Z$ is homeo-product-minimal by Theorem~\ref{T:Cantor}. Fix a minimal homeomorphism $h$ on the Cantor space $Z$. By \cite[Theorem~24]{BDHSS}, the cantoroid $Y$ admits a minimal continuous map $S$ which is an almost 1-to-1 extension of $h$. Then Proposition~\ref{P:general} gives that the cantoroid $Y$ is product-minimal.
\end{proof}

\begin{remark}\label{R:PMnotHPM}
	Let us mention that a cantoroid need not be homeo-product-minimal. Say, if a cantoroid has a unique nondegenerate component then it does not admit any minimal homeomorphism whatsoever.
\end{remark}


\section{Sierpi\'nski curves on minimal connected 2-manifolds}\label{S:Sierp}

Among connected 2-manifolds with or without boundary, only the 2-torus and the Klein bottle admit minimal maps, see \cite{BOT}. By our Theorems~\ref{T:groups} and~\ref{T:Klein}, both of them are homeo-product-minimal. We are going to show that also the Sierpi\'nski curves on these 
2-manifolds are homeo-product-minimal. Let us recall definitions of these curves.

Let $M$ be a compact connected 2-manifold without boundary and 
$A \subseteq M$ be a \emph{curve}, i.e., a one-dimensional continuum. 
Then $A$ is said to be an \emph{$S$-curve on $M$} (see~\cite{Bor}) if it is 
locally connected and there exists a sequence $(D_i)_{i=1}^\infty$ of mutually 
disjoint closed discs in $M$ such that $A = M \setminus \bigcup_{i=1}^\infty \Int
D_i$. As $A$ is one-dimensional, $\bigcup_{i=1}^{\infty} \Int
D_i$ is necessarily dense in $M$. As observed in \cite[p.~82]{Bor}, the assumptions
imply that $\diam D_i \to 0$.

On the other hand, if  $(D_i)_{i=1}^\infty$ is a sequence of
mutually disjoint closed discs in $M$ with $\diam D_i \to 0$
and if the set $\bigcup_{i=1}^{\infty} D_i$ is dense in $M$ then
the set $A = M \setminus \bigcup_{i=1}^\infty \Int D_i$ is a locally connected curve
\cite[Lemma~4.1]{Bor},
hence an $S$-curve on $M$.

By~\cite{Why}, any two $S$-curves on the 2-sphere are homeomorphic; they
are in fact homeomorphic to the Sierpi\'nski carpet (Sierpi\'nski plane 
universal curve). By~\cite{Bor}, this can be generalized
to any compact connected 2-manifold without boundary: two $S$-curves $A$
on $M$ and $A'$ on $M'$ are homeomorphic if
and only if $M$ and $M'$ are homeomorphic. Due to the above facts, 
any $S$-curve on $M$ is called \emph{the Sierpi\'nski curve on $M$}.

Let $M$ be the 2-torus or the Klein bottle. To show that the Sierpi\'nski curve 
on $M$ admits a minimal homeomorphism, one can proceed as in~\cite{Nor} or~\cite{BKS}. 
This means that we start with a minimal homeomorphism $h$ on $M$, we choose one full orbit $x_i$, $i\in \mathbb Z$, and we blow up 
all points of this orbit to closed round discs $D_i$, $i\in \mathbb Z$, whose diameters tend to zero. We obtain a new homeomorphism 
on $M$ for which these discs are wandering. The interior of $D_i$ is mapped onto the interior of $D_{i+1}$.
By removing all those interiors we finally obtain the Sierpi\'nski curve $Y$ on $M$
with a minimal homeomorphism $S$ on $Y$. The system $(Y,S)$ is an almost 1-to-1 extension of the system 
$(M,h)$. The corresponding almost 1-to-1 factor map $p\colon (Y, S)\to (M, h)$ collapses the boundary circles of the discs $D_i$
to the points $x_i$, $i\in \mathbb Z$.

\begin{theorem}\label{T:Sierp}
	The Sierpi\'nski curve on the 2-torus and the Sierpi\'nski curve on the Klein bottle are homeo-product-minimal spaces.
\end{theorem}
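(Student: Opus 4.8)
The plan is to invoke Proposition~\ref{P:general}(ii) with $Z$ the underlying $2$-manifold $M$ --- the $2$-torus or the Klein bottle --- and $Y$ the Sierpi\'nski curve on $M$. The input for that proposition has two parts. First, $Z=M$ must be homeo-product-minimal: for the torus this is Theorem~\ref{T:groups} (it is a compact connected metrizable abelian group), and for the Klein bottle it is Theorem~\ref{T:Klein}. Second, for every minimal homeomorphism $h$ on $M$ one must produce a homeomorphism $S$ on $Y$ making $(Y,S)$ an almost 1-to-1 extension of $(M,h)$ via a closed semiconjugacy. Once both are in place, Proposition~\ref{P:general} yields that $Y$ is homeo-product-minimal; the closedness of the semiconjugacy is automatic since $Y$ is compact (Lemma~\ref{L:closed} with a one-point $X$, or simply because a continuous map of compact metric spaces is closed).

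For the second part I would use the Denjoy-type blow-up already recalled in the paragraph preceding the statement. Given a minimal homeomorphism $h$ on $M$, fix one full $h$-orbit $(x_i)_{i\in\mathbb Z}$; by minimality it is dense in $M$. Blow up each $x_i$ to a closed round disc $D_i$, choosing the discs pairwise disjoint with $\diam D_i\to 0$ as $\abs{i}\to\infty$; this produces a homeomorphism $\tilde h$ of $M$ for which the $D_i$ are wandering and $\tilde h(\Int D_i)=\Int D_{i+1}$. Set $Y'=M\setminus\bigcup_i\Int D_i$; it is $\tilde h$-invariant, so $S:=\tilde h|_{Y'}$ is a homeomorphism of $Y'$. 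Because the discs have null diameters and $\bigcup_i D_i\supseteq\{x_i\colon i\in\mathbb Z\}$ is dense in $M$, the set $Y'$ is an $S$-curve on $M$ in the sense recalled above (by \cite[Lemma~4.1]{Bor}), hence homeomorphic to the Sierpi\'nski curve on $M$ by the classification of \cite{Bor} extending \cite{Why}; so we may identify $(Y',S)$ with $(Y,S)$. The map $p\colon Y\to M$ collapsing each boundary circle $\Bd D_i$ to $x_i$ and equal to the identity off $\bigcup_i\Bd D_i$ is a continuous surjection with $p\circ S=h\circ p$, and it is almost 1-to-1 because its only nondegenerate fibres lie over the countable set $\{x_i\}$, so the points of $Y$ with singleton $p$-fibre are dense. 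Thus $(Y,S)$ is an almost 1-to-1 extension of $(M,h)$ by a homeomorphism, as required.

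The main obstacle is the construction of $\tilde h$, i.e., the surgery of simultaneously blowing up all points of a dense orbit to round discs while keeping the discs disjoint, their diameters null, and the map a homeomorphism with the prescribed wandering behaviour. This is, however, the well-understood Denjoy blow-up recorded in \cite{Nor} and~\cite{BKS}, and the structural facts I rely on --- that the resulting curve is an $S$-curve, hence \emph{the} Sierpi\'nski curve on $M$ regardless of $h$ and of the chosen orbit, and that the natural collapsing map is an almost 1-to-1 factor map --- are exactly what is quoted in the paragraph before the theorem. Beyond that construction the argument is a direct application of Proposition~\ref{P:general}, so I would keep the written proof short: cite the blow-up construction and then invoke the proposition.
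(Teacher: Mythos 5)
Your proposal is correct and follows exactly the paper's own route: apply Proposition~\ref{P:general}(ii) with $Z$ the underlying 2-manifold (homeo-product-minimal by Theorem~\ref{T:groups} or Theorem~\ref{T:Klein}) and obtain the required almost 1-to-1 extension $(Y,S)\to(Z,h)$ via the Denjoy-type blow-up of a dense orbit described before the theorem. No gaps; your extra detail on the blow-up and the closedness of the semiconjugacy only makes explicit what the paper leaves to the cited construction.
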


\begin{proof}
Let	$Z$ be either the 2-torus or the Klein bottle. We use Proposition~\ref{P:general} with this space $Z$ and with the Sierpi\'nski curve $Y$ on $Z$. Then $Z$ is homeo-product-minimal by Theorem~\ref{T:groups} or by Theorem~\ref{T:Klein}, respectively. Fix a minimal homeomorphism $h$ on the manifold $Z$. The blowing-up construction described above gives a minimal homeomorphism $S\colon Y\to Y$ (recall that all the Sierpi\'nski curves on $Z$ are homeomorphic) such that it is an almost 1-to-1 extension of $h$.
Then Proposition~\ref{P:general} gives that the Sierpi\'nski curve $Y$ is homeo-product-minimal. 
\end{proof}


\section{Spaces admitting minimal continuous flows}

If $(X,T)$ is a dynamical system and $A,A'\subseteq X$ put
\begin{equation*}
\Hit_T(A,A')=\{n\in\mathbb N\colon T^n(A)\cap A'\neq\emptyset\}.
\end{equation*}
Recall that a system $(X,T)$ is called \emph{topologically transitive} if the set $\Hit_T(U,U')$
is nonempty for each pair of nonempty open sets $U,U'\subseteq X$. Notice that in such a case all the sets $\Hit_T(U,U')$ are infinite. (Indeed, topological transitivity of $T$ implies that preimages of nonempty open sets are nonempty. Thus, for every $k\in\mathbb N$, $\Hit_T(U,T^{-k}(U'))\neq\emptyset$ and so $\Hit_T(U,U')$ contains an integer $n>k$.)

Analogously, given a flow $\phi=(\varphi_t)_{t\in\mathbb R}$ on a metrizable space $Y$ and sets $A,A'\subseteq Y$, we let
\begin{equation*}
\Hit_{\phi}(A,A')=\{t\in\mathbb R\colon \varphi_t(A)\cap A'\neq\emptyset\}.
\end{equation*}
Recall that if $\phi$ is minimal and $Y$ is compact then all the sets $\Hit_{\phi}(V,V')$ are syndetic 
(i.e., they have bounded gaps)
for each pair of nonempty open sets $V,V'$.

If $A=\{a\}$ is a singleton, instead of $\Hit_T(A,A')$ and $\Hit_\phi(A,A')$ we just write
$\Hit_T(a,A')$ and $\Hit_\phi(a,A')$.

\begin{proposition}\label{P:flow-trans}
	Let $Y$ be a compact metrizable space admitting a minimal continuous flow $\phi=(\varphi_t)_{t\in\mathbb R}$. Then
	\begin{enumerate}
		\item for every topologically transitive system $(X,T)$ on a second countable metrizable space $X$ and for residually many $t\in\mathbb R$, the product $(X\times Y,T\times \varphi_t)$ is topologically transitive,
		\item for every point-transitive system $(X,T)$ and for residually many $t\in\mathbb R$, the product $(X\times Y,T\times \varphi_t)$ is point-transitive.
	\end{enumerate} 
\end{proposition}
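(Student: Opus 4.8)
The plan is to establish both parts by a Baire-category argument on the parameter $t\in\mathbb R$, reducing transitivity of the product to a countable intersection of open dense conditions. First I would fix countable bases: for part~(1), since $X$ is second countable and $Y$ is compact metrizable, pick countable bases $(U_k)_{k\in\mathbb N}$ of $X$ and $(V_l)_{l\in\mathbb N}$ of $Y$; for part~(2), fix a transitive point $x_0$ of $T$ and a countable basis $(U_k\times V_l)$ of $X\times Y$. For part~(1), for each pair of indices $(k,l),(k',l')$ consider
\begin{equation*}
G_{(k,l),(k',l')}=\{t\in\mathbb R\colon \exists\, n\in\mathbb N \text{ with } T^n(U_k)\cap U_{k'}\neq\emptyset \text{ and } \varphi_{nt}(V_l)\cap V_{l'}\neq\emptyset\}.
\end{equation*}
Each such set is open in $t$ (the conditions are open and only finitely many integers $n$ are involved near a given $t$), and if all of them are dense then their countable intersection is residual, and any $t$ in this intersection makes $T\times\varphi_t$ topologically transitive because $(U_k\times V_l)\times(U_{k'}\times V_{l'})$ is a basis of pairs of open sets in $(X\times Y)\times(X\times Y)$.

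The heart of the matter is proving density of each $G_{(k,l),(k',l')}$. Here I would use that $(X,T)$ is topologically transitive, so (as recalled in the paragraph preceding the proposition) the hitting set $\Hit_T(U_k,U_{k'})$ is infinite, say $n_1<n_2<\cdots$. Simultaneously, since $\phi$ is a minimal flow on a compact space, $\Hit_\phi(V_l,V_{l'})$ is syndetic, hence has bounded gaps, say gaps at most $L$. Given a nonempty open interval $(a,b)\subseteq\mathbb R$, I must find $t\in(a,b)$ and some $n_j$ with $n_j t\in\Hit_\phi(V_l,V_{l'})$. Choosing $n_j$ large enough that $n_j(b-a)>L$, the interval $n_j\cdot(a,b)=(n_j a,n_j b)$ has length exceeding $L$ and so meets the syndetic set $\Hit_\phi(V_l,V_{l'})$; pick $s$ in this intersection and set $t=s/n_j\in(a,b)$. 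Then $T^{n_j}(U_k)\cap U_{k'}\neq\emptyset$ and $\varphi_{n_j t}(V_l)=\varphi_s(V_l)$ meets $V_{l'}$, so $t\in G_{(k,l),(k',l')}$. This proves density and finishes part~(1).

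For part~(2) the scheme is the same but with the transitive point $x_0$ built in. Define
\begin{equation*}
H_{k,l}=\{t\in\mathbb R\colon \exists\, n\in\mathbb N \text{ with } T^n(x_0)\in U_k \text{ and } \varphi_{nt}(y_0)\in V_l\}
\end{equation*}
for a fixed but arbitrary $y_0\in Y$ (this is legitimate by minimality of $\phi$, which gives density of every orbit, so it suffices to make $(x_0,y_0)$ transitive for the product). Again each $H_{k,l}$ is open. For density, fix $(a,b)$; since $x_0$ is transitive, $\{n\colon T^n(x_0)\in U_k\}$ is infinite, and since $\phi$ is minimal on the compact space $Y$, $\{t\colon\varphi_t(y_0)\in V_l\}=\Hit_\phi(y_0,V_l)$ is syndetic; repeat the interval-dilation argument verbatim to land $t$ in $(a,b)\cap H_{k,l}$. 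Then any $t\in\bigcap_{k,l}H_{k,l}$ makes $(x_0,y_0)$ a transitive point of $T\times\varphi_t$, so the product is point-transitive. The main obstacle, and the only genuinely non-formal step, is the coordination of the two hitting sets via dilation of the parameter interval: one needs the infinitude of $\Hit_T$ to supply arbitrarily large dilation factors $n$, and the syndeticity (bounded gaps) of $\Hit_\phi$ to guarantee that a sufficiently dilated interval necessarily meets it; everything else is the standard packaging of a residual-set argument.
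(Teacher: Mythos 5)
Your proof is correct and follows essentially the same route as the paper's: a Baire-category argument over the parameter $t$, with openness of the relevant sets coming from openness of $\Hit_\phi(V,V')$, and density coming from combining the infinitude of $\Hit_T$ (to get arbitrarily large dilation factors $n$) with the syndeticity of $\Hit_\phi$ (so that the dilated interval must meet it). The only cosmetic difference is that the paper's sets demand infinitely many hitting times (so they are dense $G_\delta$'s rather than open dense sets), which changes nothing in substance.
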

\begin{proof} 
	(1)
	Fix a topologically transitive system $(X,T)$ and a countable basis $(U_p)_{p\in\mathbb N}$ of $X$. Let $(V_r)_{r\in\mathbb N}$ be a countable basis of $Y$. Given $p,r\in\mathbb N$, write $W_{p}^r=U_p\times V_r$. Clearly, the sets $W_{p}^r$ form a (countable) basis of $X\times Y$. For $p,q,r,s\in\mathbb N$ let
\begin{equation*}
A_{pq}^{rs}=\{t\in\mathbb N\colon (T\times\varphi_t)^k(W_{p}^r)\cap W_q^s\neq\emptyset\text{ for infinitely many }k\in\mathbb N\}.
\end{equation*}
Notice that $(T\times\varphi_t)^k(W_{p}^r)\cap W_q^s\neq\emptyset$ if and only if $k\in\Hit_T(U_p,U_q)$ and $kt\in\Hit_{\phi}(V_r,V_s)$. Let $(k_n)_{n=1}^{\infty}$ be the list of all elements of the infinite set $\Hit_T(U_p,U_q)$. Then
\begin{equation*}
A_{pq}^{rs}=\bigcap_{m=1}^{\infty}\bigcup_{n=m}^{\infty}\frac{1}{k_n}\Hit_{\phi}(V_r,V_s).
\end{equation*}
Since the set $\Hit_{\phi}(V_r,V_s)$ is syndetic by compactness of $Y$, it follows that the set $\bigcup_{n=m}^{\infty}\frac{1}{k_n}\Hit_{\phi}(V_r,V_s)$ is dense in $\mathbb R$ for every $m\in\mathbb N$. Moreover, these unions are obviously open sets. Consequently, $A_{pq}^{rs}$ is a dense $G_{\delta}$ set for all $p,q,r,s\in\mathbb N$. Finally, notice that the product $T\times\varphi_t$ is transitive if and only if $t\in\bigcap_{p,q,r,s\in\mathbb N}A_{pq}^{rs}$.

(2)
Fix a point-transitive system $(X,T)$ with a transitive point $x$ and a countable basis $(U_p)_{p\in\mathbb N}$ of $X$. Let $(V_r)_{r\in\mathbb N}$ be a countable basis of $Y$. Given $p,r\in\mathbb N$, write $W_{p}^r=U_p\times V_r$. Clearly, the sets $W_p^r$ form a (countable) basis of $X\times Y$. Fix $y\in Y$ and for $p,r\in\mathbb N$ let
\begin{equation*}
A_{p}^{r}=\{t\in\mathbb N\colon (T\times\varphi_t)^k(x,y)\in W_{p}^r\text{ for infinitely many }k\in\mathbb N\}.
\end{equation*}
Notice that $(T\times\varphi_t)^k(x,y)\in W_{p}^r$ if and only if $k\in\Hit_T(x,U_p)$ and $kt\in\Hit_{\phi}(y,V_r)$. Let $(k_n)_{n=1}^{\infty}$ be the list of all elements of the infinite set $\Hit_T(x,U_p)$. Then
\begin{equation*}
A_{p}^{r}=\bigcap_{m=1}^{\infty}\bigcup_{n=m}^{\infty}\frac{1}{k_n}\Hit_{\phi}(y,V_r).
\end{equation*}
Since the set $\Hit_{\phi}(y,V_r)$ is syndetic by compactness of $Y$, it follows that the set $\bigcup_{n=m}^{\infty}\frac{1}{k_n}\Hit_{\phi}(y,V_r)$ is dense in $\mathbb R$ for every $m\in\mathbb N$. Moreover, these unions are obviously open sets. Consequently, $A_{p}^{r}$ is a dense $G_{\delta}$ set for all $p,r\in\mathbb N$. Finally, notice that $(x,y)$ is a transitive point for the product $T\times\varphi_t$ if and only if $t\in\bigcap_{p,r\in\mathbb N}A_{p}^{r}$.
\end{proof}

Given a flow $\phi=(\varphi_t)_{t\in\mathbb R}$ on a compact metrizable space $Y$, we shall denote by $Z(\phi)$ the centralizer of the set $\{\varphi_t\colon t\in\mathbb R\}$ in the group $\Homeo(Y)$ and call it the \emph{centralizer} of $\phi$. Thus
\begin{equation*}
Z(\phi)=\{\psi\in\Homeo(Y)\colon \psi\circ\varphi_t=\varphi_t\circ\psi\text{ for every }t\in\mathbb R\}.
\end{equation*}

\begin{theorem}\label{T:flow.centr}
	Let $Y$ be a compact metrizable space. Assume that $Y$ admits a minimal continuous flow $\phi=(\varphi_t)_{t\in\mathbb R}$, whose centralizer $Z(\phi)$ in $\Homeo(Y)$ acts transitively on $Y$ in the algebraic sense. Then for every minimal system $(X,T)$ and residually many $t\in\mathbb R$, the product $(X\times Y,T\times \varphi_t)$ is minimal. Consequently, $Y$ is homeo-product-minimal.
\end{theorem}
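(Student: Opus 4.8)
\medskip

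The plan is to upgrade point-transitivity of the product to minimality by exploiting the symmetry encoded in the centralizer $Z(\phi)$. First, the given minimal system $(X,T)$ is point-transitive, and a transitive point has a countable dense orbit, so $X$ is separable and hence second countable; thus Proposition~\ref{P:flow-trans}(2) applies and produces a residual set $\mathcal{R}\subseteq\mathbb{R}$ such that for every $t\in\mathcal{R}$ the product $(X\times Y,T\times\varphi_t)$ is point-transitive.

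Next I would fix $t\in\mathcal{R}$ and a transitive point $(x_0,y_0)$ of $F:=T\times\varphi_t$. The key observation is that for every $\psi\in Z(\phi)$ the homeomorphism $\Id_X\times\psi$ of $X\times Y$ commutes with $F$, because $\psi$ commutes with $\varphi_t$ by the definition of the centralizer; consequently $\Id_X\times\psi$ maps transitive points of $F$ to transitive points of $F$. Hence $(x_0,\psi(y_0))$ is a transitive point of $F$ for every $\psi\in Z(\phi)$, and since $Z(\phi)$ acts transitively on $Y$ in the algebraic sense we get $\{\psi(y_0):\psi\in Z(\phi)\}=Y$, so the entire fiber $\{x_0\}\times Y$ consists of transitive points of $F$.

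Since $F(x,y)=(T(x),\varphi_t(y))$ is a skew product over the minimal base $(X,T)$ with compact fiber $Y$, Lemma~\ref{L:fiber} (applied with the distinguished point $x_0$) now gives that $F$ is minimal; as $t\in\mathcal{R}$ was arbitrary, this proves the first assertion, and the ``consequently'' clause follows by choosing any $t$ in the nonempty residual set $\mathcal{R}$, since then $\varphi_t\in\Homeo(Y)$ and $T\times\varphi_t$ is minimal. The only ingredient beyond the cited results is this self-conjugacy argument with the maps $\Id_X\times\psi$; the point to get right is that it is precisely the abundance of commuting homeomorphisms in $Z(\phi)$ that turns transitivity of a single point into transitivity of a whole fiber, after which Lemma~\ref{L:fiber} does the rest, so I do not expect a serious obstacle.
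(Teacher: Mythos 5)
Your proposal is correct and rests on the same key idea as the paper's proof, namely using the self-conjugacies $\Id_X\times\psi$ with $\psi\in Z(\phi)$ to propagate transitivity across a whole fiber, after invoking Proposition~\ref{P:flow-trans}(2) for the residual set of times. The only (cosmetic) difference is in the concluding step: you pass through Lemma~\ref{L:fiber} via the fiber $\{x_0\}\times Y$ of transitive points, whereas the paper argues directly that any nonempty closed invariant set $M$, translated by all $\psi\in Z(\phi)$, covers $X\times Y$ and hence must equal $X\times Y$; both routes are valid and essentially equivalent.
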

\begin{proof}
	By Proposition~\ref{P:flow-trans}(2) we have a residual set $R$ of times $s\in\mathbb R$ with
	$T\times \varphi_s$ point-transitive. We show that for all $s\in R$ the map $T\times \varphi_s$ is 
	even minimal (so the situation is similar to that when one works with a minimal flow and its transitive resp. minimal time-$t$ maps; see e.g.~\cite{Eg} or \cite{Fa}). So fix $s\in R$ along with a nonempty closed $(T\times\varphi_s)$-invariant set $M\subseteq X\times Y$; we need to show that $M=X\times Y$.
	For every $\psi\in Z(\phi)$ put $M_{\psi}=(\Id_X\times\psi)(M)$. Since $\Id_X\times\psi$ is a conjugacy 	$(M,T\times \varphi_s)\to(M_{\psi},T\times \varphi_s)$, it follows that $M_{\psi}$ is a nonempty closed $(T\times\varphi_s)$-invariant subset of $X\times Y$ for every $\psi\in Z(\phi)$. By compactness of $Y$, the projection $X\times Y\to X$ is a closed map and so all the sets $M_{\psi}$ have full projections onto $X$ by minimality of $T$. Since $Z(\phi)$ acts transitively on $Y$, it follows that $\bigcup_{\psi\in Z(\phi)}M_{\psi}=X\times Y$. Now choose a transitive point $(x,y)$ for $T\times\varphi_s$ and take $\psi_0\in Z(\phi)$ with $(x,y)\in M_{\psi_0}$. Then, by $(T\times\varphi_s)$-invariance of $M_{\psi_0}$, the dense orbit of $(x,y)$ is contained in the closed set $M_{\psi_0}$. Thus $M_{\psi_0}=X\times Y$, whence it follows that $M=X\times Y$.
\end{proof}

\begin{remark}\label{R:homogeneous}
	Notice that every space $Y$ satisfying the assumptions of Theorem~\ref{T:flow.centr} is a homogeneous continuum.
\end{remark}

\begin{theorem}\label{T:flow.centr.gp}
	Let $G$ be a compact connected metrizable abelian group and let $q\colon\mathbb R\to G$ be a topological morphism with a dense image. Then for every minimal system $(X,T)$ and residually many $t\in\mathbb R$, the product of $(X,T)$ with the rotation of $G$ by $q(t)$ is minimal.
\end{theorem}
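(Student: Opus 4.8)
The plan is to exhibit the rotations $R_{q(t)}$ as the time-$t$ maps of a minimal continuous flow on $G$ and then to apply Theorem~\ref{T:flow.centr} verbatim. First I would define $\phi=(\varphi_t)_{t\in\mathbb R}$ on $G$ by $\varphi_t=R_{q(t)}$. The assignment $(t,g)\mapsto q(t)g$ is continuous because $q$ is a continuous homomorphism and the group multiplication is continuous, so each $\varphi_t$ is a homeomorphism of $G$ depending continuously on $t$; and the flow identity $\varphi_{s+t}=\varphi_s\circ\varphi_t$ follows at once from $q(s+t)=q(s)q(t)$. Thus $\phi$ is a continuous $\mathbb R$-flow on the compact metrizable space $G$.

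Next I would check minimality of $\phi$. The $\phi$-orbit of the neutral element $e$ is precisely the image $q(\mathbb R)$, which is dense in $G$ by hypothesis. For an arbitrary $g\in G$, the $\phi$-orbit of $g$ is $q(\mathbb R)\,g=R_g\big(q(\mathbb R)\big)$, and since $R_g$ is a homeomorphism this set is again dense. Hence every $\phi$-orbit is dense, i.e., $\phi$ is a minimal flow.

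Then I would verify the centralizer hypothesis of Theorem~\ref{T:flow.centr}. Since $G$ is abelian, for every $g\in G$ the rotation $R_g$ commutes with every $\varphi_t=R_{q(t)}$, so $R_g\in Z(\phi)$. The subgroup $\{R_g\colon g\in G\}$ of $Z(\phi)$ already acts transitively (indeed simply transitively) on $G$ in the algebraic sense, so a fortiori $Z(\phi)$ does. Therefore Theorem~\ref{T:flow.centr} applies and yields, for every minimal system $(X,T)$, a residual set of $t\in\mathbb R$ for which $(X\times G,\,T\times\varphi_t)$ is minimal. As $\varphi_t$ is exactly the rotation of $G$ by $q(t)$, this is the desired conclusion.

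I expect no real obstacle: the statement is a direct specialization of Theorem~\ref{T:flow.centr}, and the only points requiring attention are the (routine) verifications that the $R_{q(t)}$ genuinely form a continuous minimal flow and that the abelian group of all rotations of $G$ furnishes the transitive centralizer demanded by that theorem. (The degenerate case $G=\{e\}$ is trivial and may be dismissed in one line if one prefers not to rely on the singleton case of Theorem~\ref{T:flow.centr}.)
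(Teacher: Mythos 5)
Your proposal is correct and follows exactly the paper's argument: it realizes the rotations $R_{q(t)}$ as the time-$t$ maps of the minimal equicontinuous flow on $G$ generated by $q$, observes that the centralizer contains all rotations and hence acts transitively, and then invokes Theorem~\ref{T:flow.centr}. The additional routine verifications you spell out (continuity and minimality of the flow, the degenerate case) are exactly the points the paper treats as immediate.
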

\begin{proof}
Consider the standard equicontinuous flow $\phi=(\varphi_t)_{t\in\mathbb R}$ on $G$ generated by $q$; thus, $\varphi_t$ is the rotation of $G$ by $q(t)$ for every $t\in\mathbb R$. Since $q$ has a dense image, the flow $\phi$ is minimal. Moreover, the centralizer $Z(\phi)$ of $\phi$ contains the group of all rotations of $G$ and so it acts on $G$ transitively in the algebraic sense. Thus, the theorem follows from Theorem~\ref{T:flow.centr}.
\end{proof}

\begin{remark}
	Since every compact connected metrizable abelian group $G$ admits a topological morphism  $q\colon \mathbb R\to G$ with a dense image (i.e., is solenoidal, see, e.g., \cite[Theorem~16]{AnzKak}), Theorem~\ref{T:flow.centr.gp} gives yet another proof of the fact that such groups $G$ are homeo-product-minimal spaces. It shows that for every minimal system $(X,T)$ there exists a rotation $R_g$ of $G$ by $g$ such that the product $T\times R_g$ is minimal and, in fact, such $g$ can be chosen in the set $q(\mathbb R)$, which is a subset of the identity path-component of $G$.
\end{remark}


\section{Topological manifolds}\label{S:mnflds}

Our aim in this section is to prove Theorems~\ref{T:manif.homeo-PM} and~\ref{T:manif.homeo-PM.Lie} below. They are based on ideas from \cite{FH}. 

\begin{theorem}\label{T:manif.homeo-PM}
Let $Y$ be a compact connected manifold without boundary admitting a free action of $\mathbb S^1$. Then for every minimal system $(X,T)$ there is a homeomorphism $S\colon Y\to Y$ isotopic to the identity such that the product $(X\times Y,T\times S)$ is minimal. Consequently, $Y$ is a homeo-product-minimal space.
\end{theorem}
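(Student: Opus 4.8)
The plan is to reduce this statement to Corollary~\ref{C:prod.PM.arc} (or, equivalently, to Theorem~\ref{T:prod.PM.arc} applied with the arc-wise connected group generated by the flow lines of the $\mathbb{S}^1$-action). The only thing that is not already available is a single \emph{minimal} homeomorphism $S$ on $Y$ that is isotopic to the identity. Once such an $S$ is produced, Corollary~\ref{C:prod.PM.arc} immediately gives that $Y$ is homeo-product-minimal, and the ``for every minimal system $(X,T)$ there is a homeomorphism $S$ isotopic to the identity'' refinement follows by unwinding the proofs of Theorem~\ref{T:prod.PM.arc} and Corollary~\ref{C:prod.PM.arc}: there the homeomorphism on $Y\times Z$ whose product with $T$ is minimal is built as a skew product whose fiber maps are homeomorphisms of $Z$ lying in the arc-wise connected group $G_a$, hence it is isotopic to the identity. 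Since here $Z$ plays the role of $Y$ itself and ``$Y$'' in the corollary is degenerate, we actually want to apply the machinery with $Z=Y$ and $Y$ a single point; but a cleaner route is to invoke Fathi--Herman~\cite{FH} directly to get the minimal $S$ isotopic to the identity and then quote Corollary~\ref{C:prod.PM.arc}.

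So the first step is: using the free $\mathbb{S}^1$-action $\Phi\colon \mathbb{S}^1\times Y\to Y$, the quotient map $Y\to Y/\mathbb{S}^1$ is a principal circle bundle over a compact manifold $B$ (of dimension $\dim Y-1$), possibly with boundary. By the Fathi--Herman theorem on genericity of minimal homeomorphisms on manifolds admitting free circle actions~\cite{FH}, there is a minimal homeomorphism $S_0$ on $Y$; moreover such an $S_0$ can be taken of the form ``composition of a fiberwise rotation with a fiber-preserving homeomorphism close to the identity'' — in particular isotopic to the identity, the isotopy being obtained by scaling the rotation parameter down to $0$ and by the standard fact that a homeomorphism sufficiently close to the identity on a compact manifold is isotopic to the identity (or, in the smooth setting used in \cite{FH}, by connectedness of $\mathrm{Diff}_0$). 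Let $S_0$ denote this minimal homeomorphism isotopic to the identity.

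The second step is routine: let $S_{0,t}$, $0\le t\le 1$, be an isotopy from $\Id_Y$ to $S_0$, and let $G_a\subseteq\Homeo(Y)$ be the subgroup generated by $\{S_{0,t}\colon 0\le t\le 1\}$, which is arc-wise connected and has $S_0\in G_a$, so its natural action on $Y$ is minimal (it contains the minimal homeomorphism $S_0$). Apply Theorem~\ref{T:prod.PM.arc} with $Z=Y$, $G_a$ as above, and with ``$Y$'' there the one-point space — or, to stay within the stated hypotheses (which ask $Y$ nondegenerate), simply repeat the proof of Corollary~\ref{C:prod.PM.arc}: given a minimal $(X,T)$, Lemmas~\ref{L:trans.coc.exist} and~\ref{L:trans.coc.min} (with the roles there of $X\times Y$ played by $X$ and of $Z$ played by $Y$) produce a cocycle $f\colon \{*\}\to G_a$, i.e.\ simply an element $g\in G_a$, for which the skew product reduces to $T\times\varphi_g$ and is minimal; and $\varphi_g\in G_a$ is isotopic to the identity. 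This yields the required $S=\varphi_g$ and hence homeo-product-minimality of $Y$.

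The main obstacle is the \textbf{first} step: one needs a minimal homeomorphism of $Y$ that is \emph{isotopic to the identity}. Mere existence of \emph{some} minimal homeomorphism of a manifold admitting a free circle action is the content of Fathi--Herman, but one must check that their construction (perturbation of a fiberwise rotation in a principal $\mathbb{S}^1$-bundle) can be arranged inside the identity isotopy class; this is where the free $\mathbb{S}^1$-action is used in an essential way, since the fiberwise rotations by angle $\theta$ give an explicit path from $\Id_Y$ to a homeomorphism $S_0$ obtained by an arbitrarily small further perturbation, and small perturbations on a compact manifold are isotopic to the identity. Everything after that is a direct appeal to the already-proved Theorem~\ref{T:prod.PM.arc} / Corollary~\ref{C:prod.PM.arc}.
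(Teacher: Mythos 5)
Your reduction to Theorem~\ref{T:prod.PM.arc} / Corollary~\ref{C:prod.PM.arc} does not work, and the gap is exactly the point that forces the paper to give a separate (and much longer) proof in Section~\ref{S:mnflds}. In the cocycle machinery of Section~\ref{S:new.from.old}, the cocycle $f$ and the transfer functions $\xi,\vartheta$ are required to depend only on the fiber coordinate $y\in Y$, where $Y$ is a \emph{nondegenerate} HPM space already sitting in the product. If you collapse that factor to a point, every cocycle is a constant $g\in G$ and every coboundary is $\cob(\xi)=\xi\xi^{-1}=e$, so $\overline{\Cob}=\{e\}$ and the residual-set argument of Lemma~\ref{L:trans.coc.exist} produces only the trivial cocycle; the resulting ``skew product'' is $T\times\Id_Y$, which is not minimal. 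Concretely, the construction in Lemma~\ref{L:approx.lemma} needs to prescribe $\vartheta(S^k(y_0))=e$ and $\vartheta(S^{k+n}(y_0))=g\neq e$ at distinct points of an infinite orbit in $Y$, which is impossible when $Y$ is a singleton. Moreover, quoting Corollary~\ref{C:prod.PM.arc} directly only yields that $Y'\times Y$ is homeo-product-minimal for some auxiliary nondegenerate HPM space $Y'$; it says nothing about $Y$ itself. Note also that the mere existence of a minimal homeomorphism on $Y$ isotopic to the identity cannot suffice as input for a soft argument: by Proposition~\ref{P:Slovak.not.HPM} there are spaces with minimal homeomorphisms that are not product-minimal, so some quantitative abundance of minimal homeomorphisms must be established.

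What the paper actually does is a self-contained Baire category argument inside $\Homeo_a(Y)$: it forms the conjugation-invariant family $\mathcal H=\{\psi^{-1}\circ\varphi_z\circ\psi\colon\psi\in\Homeo_a(Y),\ z\in\mathbb S^1\}$ of conjugates of the circle-action homeomorphisms, shows via Lemma~\ref{unif.distr} (uniform distribution of $(\xi^{k_n})$ along return times of $T$) and the auxiliary Lemmas~\ref{L:aux.hit} and~\ref{L:aux.hit.inv} (which produce identity-isotopic $\psi$ commuting with finite subgroups of $\mathbb S^1$ and spreading a given open set across all $\mathbb S^1$-orbits) that each set $\mathcal M_{U,V}=\mathcal H_{U,V}\cap\overline{\mathcal H}$ is open and dense in the Polish space $\overline{\mathcal H}$, and then picks $S$ in the residual intersection. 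Your first step (a minimal homeomorphism isotopic to the identity exists on $Y$) is a special case of the theorem with $X$ a point, so taking it as the starting input does not by itself advance the proof; the essential content is the disjointness from an \emph{arbitrary} minimal base $(X,T)$, and that is not obtainable from Theorem~\ref{T:prod.PM.arc}.
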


Before proving this theorem, we discuss some consequences of it.

\begin{remark}\label{R:manif.homeo-PM}
It follows from the theorem that all odd-dimensional spheres are homeo-product-minimal spaces. Indeed, recall that $\mathbb S^{2n-1}=\{(z_1,\dots,z_n)\in\mathbb C^n\colon \sum_{j=1}^n|z_j|^2=1\}$ admits a free action of $\mathbb S^1$, given by the rule $z(z_1,\dots,z_n)=(zz_1,\dots,zz_n)$.
\end{remark}

The following theorem easily follows from Theorem~\ref{T:manif.homeo-PM}.

\begin{theorem}\label{T:manif.homeo-PM.Lie}
	Let $Y$ be a compact connected manifold without boundary and $G$ be a nontrivial compact connected Lie group. Assume that $Y$ admits a free action of $G$. Then for every minimal system $(X,T)$ there is a homeomorphism $S\colon Y\to Y$ isotopic to the identity such that the product $(X\times Y,T\times S)$ is minimal. Consequently, $Y$ is a homeo-product-minimal space.
\end{theorem}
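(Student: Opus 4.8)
The plan is to reduce Theorem~\ref{T:manif.homeo-PM.Lie} to Theorem~\ref{T:manif.homeo-PM} by producing, inside the given free action of the nontrivial compact connected Lie group $G$ on $Y$, a free action of the circle $\mathbb S^1$. First I would recall that a nontrivial compact connected Lie group contains a nontrivial torus, and in particular a closed one-parameter subgroup isomorphic to $\mathbb S^1$: indeed, the maximal torus of $G$ is nontrivial since $\dim G\ge 1$, and any nontrivial torus $\mathbb T^k$ contains an embedded circle $\iota\colon\mathbb S^1\hookrightarrow G$ as a closed subgroup. Restricting the given $G$-action $\Phi\colon G\times Y\to Y$ along $\iota$ yields a continuous action $\Psi\colon\mathbb S^1\times Y\to Y$, $\Psi(z,y)=\Phi(\iota(z),y)$.

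The key point is that this restricted $\mathbb S^1$-action is still \emph{free}. This is immediate: if $z\in\mathbb S^1$ and $y\in Y$ satisfy $\Psi(z,y)=y$, then $\Phi(\iota(z),y)=y$, and freeness of $\Phi$ forces $\iota(z)$ to be the identity of $G$, hence $z$ is the identity of $\mathbb S^1$ since $\iota$ is injective. Thus $Y$ is a compact connected manifold without boundary admitting a free action of $\mathbb S^1$, and Theorem~\ref{T:manif.homeo-PM} applies verbatim: for every minimal system $(X,T)$ there is a homeomorphism $S\colon Y\to Y$ isotopic to the identity with $(X\times Y,T\times S)$ minimal. The final assertion that $Y$ is then homeo-product-minimal is just the definition, since $S$ can be chosen in $\Homeo(Y)$.

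I do not expect a serious obstacle here; the only mildly technical ingredient is the standard Lie-theoretic fact that a nontrivial compact connected Lie group admits an embedded circle subgroup, which one can cite from any reference on compact Lie groups (via the maximal torus, or directly via a nonzero element of the Lie algebra generating a closed or, after passing to its closure, a toral subgroup containing a circle). The rest is the elementary observation that restriction of a free action to a subgroup is free. Hence the proof is essentially a two-line deduction from Theorem~\ref{T:manif.homeo-PM}.

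\begin{proof}
Since $G$ is a nontrivial compact connected Lie group, it contains a closed subgroup isomorphic to the circle; fix a topological embedding $\iota\colon\mathbb S^1\hookrightarrow G$ onto such a subgroup. (For instance, a maximal torus of $G$ is a nontrivial torus $\mathbb T^k$, $k\ge 1$, and any such torus contains an embedded circle as a closed subgroup.) Let $\Phi\colon G\times Y\to Y$ be the given free action of $G$ on $Y$, and define $\Psi\colon\mathbb S^1\times Y\to Y$ by $\Psi(z,y)=\Phi(\iota(z),y)$. Then $\Psi$ is a continuous action of $\mathbb S^1$ on $Y$. It is free: if $\Psi(z,y)=y$ for some $z\in\mathbb S^1$ and $y\in Y$, then $\Phi(\iota(z),y)=y$, so by freeness of $\Phi$ the element $\iota(z)$ is the identity of $G$, and since $\iota$ is injective, $z$ is the identity of $\mathbb S^1$. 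Thus $Y$ is a compact connected manifold without boundary admitting a free action of $\mathbb S^1$, and Theorem~\ref{T:manif.homeo-PM} yields, for every minimal system $(X,T)$, a homeomorphism $S\colon Y\to Y$ isotopic to the identity with $(X\times Y,T\times S)$ minimal. In particular, $Y$ is homeo-product-minimal.
\end{proof}
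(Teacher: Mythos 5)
Your proposal is correct and is essentially identical to the paper's own proof: both extract a circle subgroup of $G$ from a nontrivial maximal torus, note that the restricted action remains free, and then invoke Theorem~\ref{T:manif.homeo-PM}. No issues.
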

\begin{proof}[Proof of Theorem~\ref{T:manif.homeo-PM.Lie}]
	Since every nontrivial compact connected Lie group has a nontrivial maximal torus (see e.g.~\cite[Theorem~6.30,~p.~212]{HofMor}), $G$ contains a subgroup $H$ topologically isomorphic to $\mathbb S^1$. Moreover, $H$ acts (continuously and) freely on $Y$, since $G$ does. Therefore, the theorem follows from Theorem~\ref{T:manif.homeo-PM} above.
\end{proof}

\begin{remark}\label{R:manif.homeo-PM.Lie2}
Theorems~\ref{T:manif.homeo-PM} and \ref{T:manif.homeo-PM.Lie} speak about the same class of spaces $Y$. Indeed, a compact connected manifold $Y$ without boundary admits a free action of $\mathbb S^1$ if and only if it admits a free action of a nontrivial compact connected Lie group $G$. (One implication is trivial, the other one follows from the proof of Theorem~\ref{T:manif.homeo-PM.Lie}.)
\end{remark}

\begin{remark}\label{R:manif.homeo-PM.Lie}
As an immediate corollary of Theorem~\ref{T:manif.homeo-PM.Lie} we see that all compact connected Lie groups are homeo-product-minimal spaces (and hence they admit minimal homeomorphisms, as already noticed in \cite[Example~3.9(b)]{FH}). In particular, the following matrix groups are homeo-product-minimal: special orthogonal groups $\operatorname{SO}(n)$, spin groups  $\operatorname{Spin}(n)$, compact symplectic groups $\operatorname{Sp}(n)$, unitary groups $\operatorname{U}(n)$ and special unitary groups $\operatorname{SU}(n)$. In connection with Remark~\ref{R:manif.homeo-PM}, recall also that $\mathbb S^{2n-1}$ admits the structure of a Lie group if and only if $n=1$ or $n=2$ (see \cite[Corollary~9.59,~p.~497]{HofMor}).
\end{remark}

In the remaining part of this section we are going to prove Theorem~\ref{T:manif.homeo-PM}.

\subsection{Auxiliary results}

Given topological spaces $Y,Z$, a continuous map $F\colon[0,1]\times Y\to Z$ and $t\in[0,1]$, we shall write $F_t$ for the map $Y\to Z$, given by the rule $F_t(y)=F(t,y)$ ($y\in Y$). We recall that the map $F$ is called a homotopy between $F_0$ and $F_1$. If all the maps $F_t$ ($t\in[0,1]$) are topological embeddings then we call $F$ a homotopy of embeddings.
If $Y=Z$ and all the maps $F_t$ ($t\in[0,1]$) are homeomorphisms then $F$ is called an isotopy between $F_0$ and $F_1$. Finally, if $Y$ is a subspace of $Z$ and $z_0\in Y$ is such that $F_t(z_0)=z_0$ for every $t\in[0,1]$ then we say that the homotopy $F$ fixes $z_0$.

In the proof of Lemma~\ref{L:aux.hit} below we shall use the following result, which is a special case of \cite[Corollary~1.2]{EdwKir}.

\begin{theorem}[\cite{EdwKir}]\label{T:Edw.Kir.ext}
	Let $Z$ be a compact connected manifold without boundary, $C\subseteq Z$ be a closed set and $U\subseteq Z$ be an open set containing $C$. Let $\Phi\colon[0,1]\times U\to Z$ be a homotopy of embeddings with $\Phi_0=\Id_U$. Then there is an isotopy $\Psi\colon[0,1]\times Z\to Z$ such that $\Psi_0=\Id_Z$ and $\Psi(t,z)=\Phi(t,z)$ for all $t\in[0,1]$ and $z\in C$.
\end{theorem}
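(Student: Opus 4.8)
The plan is to obtain Theorem~\ref{T:Edw.Kir.ext} directly from \cite[Corollary~1.2]{EdwKir}, which is the topological isotopy extension theorem. In its form there one is given an arbitrary manifold $M$ without boundary, a closed set $C$ contained in an open set $U\subseteq M$, and a homotopy of embeddings $\Phi\colon[0,1]\times U\to M$ with $\Phi_0=\Id_U$, and one obtains an ambient isotopy $\Psi\colon[0,1]\times M\to M$ with $\Psi_0=\Id_M$ and $\Psi_t|_C=\Phi_t|_C$ for all $t$ (in fact one may in addition require each $\Psi_t$ to equal the identity outside a prescribed neighbourhood of $C$, a refinement we do not need here). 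Taking $M=Z$ gives exactly the statement. The only points to record in the reduction are that $Z$, being a compact connected manifold without boundary, is an admissible $M$, and that the closed subset $C$ of the compact space $Z$ is automatically compact; thus the proof is essentially a one-line appeal to \cite{EdwKir}.

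Since this is the real geometric input, I would also indicate, for the reader who wants it, how Corollary~1.2 is deduced from the Edwards--Kirby local deformation theorem (the local contractibility of the homeomorphism group), which is the heart of \cite{EdwKir}. That theorem says, roughly, that for a closed set $C'$ in an open set $U'\subseteq Z$ and a neighbourhood $W'$ of $C'$, there is a neighbourhood of the inclusion $U'\hookrightarrow Z$ in the space of embeddings $U'\to Z$ with the compact-open topology, together with a continuous assignment, on that neighbourhood, of an ambient homeomorphism of $Z$ that agrees with the given embedding on $C'$, is the identity off $W'$, and is the identity when the given embedding is the inclusion. To get Corollary~1.2 from this one fixes a shrinking chain of neighbourhoods of $C$, cuts $[0,1]$ into short subintervals, and on each subinterval $[t_{i-1},t_i]$ applies the local deformation theorem to the embeddings $\Psi_{t_{i-1}}^{-1}\circ\Phi_t$ (which at the left endpoint equal the inclusion near $C$, hence stay close to it throughout a short enough subinterval), obtaining an ambient isotopy $K_{i,\cdot}$ of $Z$ with $K_{i,t_{i-1}}=\Id_Z$ and $K_{i,t}|_C=\Psi_{t_{i-1}}^{-1}\circ\Phi_t|_C$; one then sets $\Psi_t=\Psi_{t_{i-1}}\circ K_{i,t}$ on that subinterval, so that $\Psi_t|_C=\Phi_t|_C$, and concatenates over $i$.

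The genuinely delicate step here --- and the reason one simply cites \cite{EdwKir} in practice --- is the quantitative bookkeeping: keeping each $\Psi_{t_{i-1}}^{-1}\circ\Phi_t$ inside the domain of the local deformation operator, propagating the stronger conclusion ``$\Psi_{t_i}$ agrees with $\Phi_{t_i}$ near $C$'' (and not merely on $C$) from one subinterval to the next, and ensuring the subdivision terminates. These are precisely the points handled carefully in \cite{EdwKir}, and for the present paper it suffices to record the one-line reduction of the first paragraph and defer everything else to the reference.
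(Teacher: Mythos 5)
Your proposal is correct and takes essentially the same route as the paper: the paper likewise disposes of this statement by a direct appeal to \cite[Corollary~1.2]{EdwKir}, observing (as you do) that the closed set $C$ in the compact space $Z$ is automatically compact, so the hypotheses of the cited result are met with $M=Z$. The only point the paper spells out that you pass over is a short remark that the cited result a priori yields only a homotopy of embeddings $\Psi_t\colon Z\to Z$, and that each $\Psi_t$ is in fact a homeomorphism since $\Psi_t(Z)$ is closed by compactness of $Z$, open by invariance of domain, and hence equal to $Z$ by connectedness --- an immediate upgrade which your reduction asserts implicitly by describing the extension as an ambient isotopy.
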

\begin{remark}
	In the mentioned result from \cite{EdwKir}, the resulting map $\Psi$ is a homotopy of embeddings; that is, $\Psi_t\colon Z\to Z$ is a topological embedding for every $t\in[0,1]$. However, it follows from our assumptions on $Z$ that all $\Psi_t$ are homeomorphisms. Indeed, the set $\Psi_t(Z)$ is closed in $Z$ by compactness of $Z$ and it is also open in $Z$ by the invariance of domain theorem. Hence $\Psi_t(Z)=Z$ by connectedness of $Z$ and $\Psi_t\colon Z\to Z$ is thus a homeomorphism.
\end{remark}

We shall make use of the following well known result on homogeneity of connected manifolds without boundary
(cf.~\cite[p.~142]{GP}).

\begin{lemma}[Isotopy lemma]\label{L:manif.hom}
	Let $M$ be a (not necessarily compact) connected manifold without boundary. Given $x,y\in M$, there is an isotopy $F\colon[0,1]\times M\to M$ with $F_0=\Id_M$ and $F_1(x)=y$.
\end{lemma}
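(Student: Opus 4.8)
The plan is to prove the Isotopy Lemma by the standard ``walk along a path and push'' argument, reducing the global statement to a local one in a single chart. First I would observe that the binary relation on $M$ defined by ``$x\sim y$ if there is an isotopy $F\colon[0,1]\times M\to M$ with $F_0=\Id_M$ and $F_1(x)=y$'' is an equivalence relation: reflexivity is witnessed by the constant isotopy, symmetry by reversing time (i.e.\ replacing $F_t$ by $F_{1-t}\circ F_1^{-1}$, which is again an isotopy starting at the identity and sending $y$ to $x$), and transitivity by concatenating two isotopies (with the usual reparametrization of $[0,1]$ so that the concatenation is continuous; here one uses that the first isotopy ends at a homeomorphism, so composing the second isotopy with it keeps everything a homeomorphism of $M$).

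Since $M$ is connected it is path-connected (being a manifold, hence locally path-connected), so it suffices to show that each equivalence class is open; then the classes partition $M$ into disjoint open sets and connectedness forces a single class. Thus the heart of the matter is the \emph{local} statement: for each $x\in M$ there is a neighbourhood $V$ of $x$ such that every $y\in V$ satisfies $x\sim y$. To get this, take a chart $\varphi\colon U\to\RRR^n$ around $x$ with $\varphi(x)=0$, and pick a small ball $B=\varphi^{-1}(B(0,1))$ with $\overline{B}\subseteq U$. For a target point $y$ with $\varphi(y)=v$, $\|v\|<1/2$ say, one builds in $\RRR^n$ a compactly supported isotopy (a ``bump'' isotopy) that equals the translation by $v$ near $0$ and equals the identity outside $B(0,1)$: explicitly, choose a smooth (or merely continuous) function $\beta\colon\RRR^n\to[0,1]$ that is $1$ on $B(0,1/2)$ and $0$ outside $B(0,3/4)$, and set $G_t(u)=u+t\,\beta(u)\,v$; for $\|v\|$ small enough this is a homeomorphism of $\RRR^n$ for every $t\in[0,1]$ (it is a small perturbation of the identity, or one checks injectivity directly), it is the identity outside the ball, and $G_1(0)=v$. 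Transporting $G_t$ back through $\varphi$ and extending by the identity on $M\setminus U$ gives an isotopy of $M$ from $\Id_M$ to a homeomorphism carrying $x$ to $y$, so $x\sim y$ for all such $y$; this exhibits an open neighbourhood of $x$ inside its class.

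The main obstacle — really the only point requiring care — is the verification that the bump map $G_t$ is a homeomorphism of $\RRR^n$ for all $t\in[0,1]$, i.e.\ controlling the size of $\|v\|$ relative to the Lipschitz constant of $\beta$ so that $u\mapsto u+t\beta(u)v$ stays injective and proper. This is a routine estimate (if $\|v\|\cdot\mathrm{Lip}(\beta)<1$ then the map is injective, and being the identity near infinity it is proper, hence a homeomorphism onto $\RRR^n$ by invariance of domain plus connectedness), and it forces us to restrict to $y$ in a sufficiently small $\varphi$-ball — which is exactly what ``openness of the class'' needs, so no generality is lost. Everything else (the equivalence-relation bookkeeping, continuity of concatenations, passing from path-connectedness to a single class via connectedness) is formal. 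I would not belabour the smooth-versus-topological distinction: the construction works verbatim in the topological category, which is what the paper uses it for in Lemma~\ref{L:aux.hit}.
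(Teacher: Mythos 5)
Your argument is correct. Note that the paper does not prove this lemma at all: it is stated as a well-known homogeneity result and attributed to Guillemin--Pollack, so there is no in-paper proof to compare against. Your proof is the standard one, and it is the right one for the paper's purposes: the cited source works in the smooth category (pushing a point along the flow of a compactly supported vector field), whereas the paper applies the lemma to \emph{topological} manifolds in Lemma~\ref{L:aux.hit}, so a purely topological argument like yours --- an explicit bump homeomorphism $G_t(u)=u+t\beta(u)v$ in a chart, extended by the identity, followed by the openness-of-equivalence-classes/connectedness bookkeeping --- is exactly what is needed. All the steps check out: the injectivity estimate $\|v\|\cdot\operatorname{Lip}(\beta)<1$, surjectivity of $G_t$ via invariance of domain plus properness (identity near infinity), and the gluing of the chart isotopy with the identity on $M\setminus U$. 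Two cosmetic remarks: the phrase ``or merely continuous'' for $\beta$ should be dropped, since your injectivity argument genuinely uses a Lipschitz bound (any smooth or piecewise-linear radial bump works); and the appeal to path-connectedness is superfluous --- openness of the classes together with connectedness of $M$ already forces a single class.
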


\begin{lemma}\label{L:aux.hit}
	Let $Z$ be a compact connected manifold without boundary admitting a free action of $\mathbb S^1$; fix such an action. Then for every $z_0\in Z$ and every nonempty open set $W\subseteq Z$ there is a homeomorphism $\sigma\colon Z\to Z$ with the following properties:
	\begin{enumerate}
		\item[(a)] $\sigma$ is isotopic to the identity via an isotopy fixing $z_0$,
		\item[(b)] $\sigma(W)$ intersects each orbit of the action of $\mathbb S^1$ in $Z$.
	\end{enumerate}
\end{lemma}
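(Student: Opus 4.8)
The idea is to use the $\mathbb S^1$-action to spread $W$ around the manifold. Since the action is free, every orbit is a circle, and we want to find a homeomorphism $\sigma$, isotopic to the identity through an isotopy fixing $z_0$, such that $\sigma(W)$ meets every such circle. First I would reduce to a local problem: because $Z$ is compact, finitely many translates of $W$ under the $\mathbb S^1$-action cover $Z$; more precisely, since for each $z\in Z$ the set $\mathbb S^1\cdot W$ is an open neighborhood of the orbit of $z$, and $Z$ is covered by the orbits, compactness gives finitely many points $w_1,\dots,w_k\in W$ and arcs $\alpha_1,\dots,\alpha_k\subseteq\mathbb S^1$ (or just finitely many group elements) so that the sets $\alpha_j\cdot w_j$ — actually it is cleaner to say: there are finitely many open sets $V_1,\dots,V_m$, each of the form (small open set in $W$) such that the saturations $\mathbb S^1\cdot V_i$ cover $Z$. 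Equivalently, picking one small ball $B\subseteq W$ around some point, the saturation $\mathbb S^1\cdot B$ is open and nonempty; the point is just that a single small ball's saturation need not be everything, so I will instead aim to move finitely many disjoint small balls inside $W$ to a configuration whose saturation is all of $Z$.

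The cleaner route, and the one I would pursue, is this. Pick finitely many points $p_1,\dots,p_N\in Z$ such that the saturated sets $\mathbb S^1\cdot U_j$ cover $Z$, where $U_j$ is a small open ball around $p_j$; by compactness and freeness of the action this is possible (each orbit has a saturated neighborhood, extract a finite subcover). Now I want a homeomorphism $\sigma$ carrying $W$ onto a set containing points $q_1,\dots,q_N$ with $q_j\in\mathbb S^1\cdot U_j$ — in fact it suffices that $\sigma(W)\supseteq\{q_1,\dots,q_N\}$ where $q_j\in U_j$, since then $\sigma(W)$ meets $\mathbb S^1\cdot U_j\supseteq$ all orbits, wait — I need $\sigma(W)$ to meet \emph{every} orbit, and every orbit passes through some $U_j$, so it is enough that $\sigma(W)$ meets every $U_j$. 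So the goal becomes: find $\sigma$, isotopic to $\Id_Z$ fixing $z_0$, with $\sigma(W)\cap U_j\neq\emptyset$ for each $j=1,\dots,N$. To achieve this, shrink $W$: choose $N$ pairwise disjoint small closed balls $D_1,\dots,D_N$ inside $W$, with $W$ also chosen small enough to avoid $z_0$ (if $z_0\in W$ one can still work since we only need to move a sub-ball); then using the Isotopy Lemma~\ref{L:manif.hom} repeatedly on $M=Z\setminus\{z_0\}$ (a connected manifold without boundary), I can find an ambient isotopy of $Z\setminus\{z_0\}$ — hence of $Z$ fixing $z_0$ — carrying the center of $D_j$ into $U_j$ for every $j$ simultaneously. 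The simultaneous version: apply the isotopy lemma to move a finite set of points to another finite set of points in a connected manifold, which follows by an easy induction removing already-placed points (a connected manifold of dimension $\ge 2$ minus finitely many points is still connected; in dimension $1$, i.e. $Z=\mathbb S^1$, the statement is trivial directly). Actually, to get the embeddings-extension machinery rather than bare point-transitivity, I would phrase the construction of the moving map as a homotopy of embeddings of a neighborhood of $D_1\cup\dots\cup D_N$ starting at the identity, and invoke Theorem~\ref{T:Edw.Kir.ext} (with $C=D_1\cup\dots\cup D_N$ and $z_0$ added as an isolated point of $C$ where the homotopy is constant) to extend it to an ambient isotopy $\Psi$ of $Z$ with $\Psi_0=\Id_Z$, $\Psi_t(z_0)=z_0$; then $\sigma=\Psi_1$ works, because $\sigma(W)\supseteq\sigma(D_j)$ which meets $U_j$.

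Let me restate the middle step more carefully, since it is the technical heart. Having fixed disjoint closed balls $D_j\subseteq W\setminus\{z_0\}$ and target open balls $U_j$, I build a homotopy of embeddings $\Phi\colon[0,1]\times \mathcal U\to Z$, where $\mathcal U\supseteq D_1\cup\dots\cup D_N\cup\{z_0\}$ is a suitable open set, with $\Phi_0=\Id_{\mathcal U}$, with $\Phi_t$ fixing $z_0$ for all $t$, and with $\Phi_1(D_j)\subseteq U_j$. Such a $\Phi$ is assembled from $N$ "finger moves" along disjoint arcs in $Z\setminus\{z_0\}$ (disjointness of the arcs is arranged because $\dim Z$ can be $1$ only if $Z=\mathbb S^1$, handled separately, and for $\dim Z\ge 2$ a connected manifold minus finitely many points and finitely many arcs is still path-connected so the arcs can be chosen disjoint and avoiding $z_0$ — a standard general-position argument). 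Applying Theorem~\ref{T:Edw.Kir.ext} with $C=D_1\cup\dots\cup D_N\cup\{z_0\}$ and $U=\mathcal U$ yields an isotopy $\Psi$ of $Z$ with $\Psi_0=\Id_Z$ and $\Psi|_{[0,1]\times C}=\Phi|_{[0,1]\times C}$; in particular $\Psi_t(z_0)=z_0$ for all $t$, so $\sigma=\Psi_1$ is isotopic to the identity via an isotopy fixing $z_0$ (property (a)), and $\sigma(W)\supseteq\Psi_1(D_j)=\Phi_1(D_j)\subseteq U_j$ meets $U_j$ for every $j$, hence meets every orbit (property (b)), since the orbits are covered by $\bigcup_j\mathbb S^1\cdot U_j$ and each orbit through a point of $U_j$ necessarily contains that point.

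\textbf{Main obstacle.} The delicate point is the compactness-reduction: I must show that finitely many saturated open balls $\mathbb S^1\cdot U_j$ cover $Z$ and that the $U_j$ can be taken small enough (and the $D_j\subseteq W$ disjoint) while keeping everything away from $z_0$ — plus arranging the connecting arcs to be disjoint and to miss $z_0$. This is routine but requires care, especially the low-dimensional case $Z=\mathbb S^1$, where "free $\mathbb S^1$-action" forces $Z=\mathbb S^1$ acting on itself by rotation, every orbit is all of $Z$, and the statement is immediate with $\sigma=\Id$. For $\dim Z\ge 2$ the general-position choice of disjoint arcs avoiding a point and the finitely many balls is standard. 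A secondary point worth double-checking is the interaction of Theorem~\ref{T:Edw.Kir.ext} with the requirement that the isotopy fix $z_0$: this is handled by putting $z_0$ into the closed set $C$ on which $\Psi$ is forced to agree with $\Phi$, and ensuring $\Phi$ is constant at $z_0$; one just needs $z_0$ to lie in the open set $U$ on which $\Phi$ is defined, which is arranged by choosing $\mathcal U$ to contain a small ball around $z_0$ disjoint from the $D_j$.
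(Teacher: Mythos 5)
Your overall strategy (disjoint pieces inside $W$, a homotopy of embeddings built from finger moves along disjoint arcs avoiding $z_0$, and an application of Theorem~\ref{T:Edw.Kir.ext} with $z_0$ placed in the closed set $C$ to get an ambient isotopy fixing $z_0$) matches the paper's 3rd and 4th steps, and that part is sound. But your reduction of property (b) contains a genuine gap.

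You reduce (b) to: ``$\sigma(W)\cap U_j\neq\emptyset$ for each $j$,'' where the $U_j$ are small open balls whose saturations $\mathbb S^1\cdot U_j$ cover $Z$. This does not imply that $\sigma(W)$ meets every orbit. Given an orbit $O$, the covering tells you only that $O\cap U_j\neq\emptyset$ for some $j$; you also know $\sigma(W)\cap U_j\neq\emptyset$. But $U_j$ is an open $n$-ball meeting uncountably many distinct orbits (the orbits are $1$-dimensional circles), so the two nonempty subsets $O\cap U_j$ and $\sigma(W)\cap U_j$ may very well be disjoint. Your closing justification, ``each orbit through a point of $U_j$ necessarily contains that point,'' is true but establishes nothing: what you would need is that the particular points of $\sigma(W)$ landing in $U_j$ lie on \emph{every} orbit passing through $\mathbb S^1\cdot U_j$, and a single small ball $\sigma(D_j)\subseteq U_j$ cannot do that.

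What is missing is the structural input the paper uses: by freeness, $p\colon Z\to B=Z/\mathbb S^1$ is a locally trivial $\mathbb S^1$-bundle, and the paper arranges for $\sigma(W)$ to \emph{contain} finitely many local cross-sections $C_j=\varphi_{i_j}(K_j\times\{w_j\})$ (closed $(n-1)$-discs transverse to the fibers) with $\bigcup_j p(C_j)=B$; then $p(\sigma(W))\supseteq B$ forces $\sigma(W)$ to meet every fiber. To make $\sigma(W)$ contain these sections, the isotopy is run in the opposite direction from yours: the sections (thickened to disjoint open $n$-discs $U_j$) are squeezed and slid \emph{into} $W$ by a homotopy of embeddings $F_t$, and $\sigma$ is obtained by extending the reversed homotopy $F_{1-t}\circ f^{-1}$ via Theorem~\ref{T:Edw.Kir.ext}, so that $\sigma(W)\supseteq F_0(C)=C$. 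Your version moves small balls from $W$ out to the $U_j$, which only yields the too-weak conclusion $\sigma(W)\cap U_j\neq\emptyset$. Repairing your argument essentially requires importing the bundle/cross-section picture and reversing the direction of the isotopy, at which point it becomes the paper's proof.
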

\begin{proof}
	We may assume that $Z$ has topological dimension $n\geq2$ (otherwise $Z$ is homeomorphic to $\mathbb S^1$ and we may take $\sigma=\Id_Z$). Write $B=Z/\mathbb S^1$ and denote by $p$ the quotient map $Z\to B$. It follows from the assumptions of the lemma that $Z$ is a fiber bundle with the base $B$, projection $p$ and fiber $\mathbb S^1$ (see e.g.~\cite[Theorem~5.8,~p.~88]{Bre}). That is, there is a finite open cover $V_1,\dots,V_N$ of $B$ such that for every $i=1,\dots,N$ there exists a homeomorphism $\varphi_i\colon V_i\times\mathbb S^1\to p^{-1}(V_i)$ with $p\varphi_i(b,w)=b$ for all $b\in V_i$ and $w\in\mathbb S^1$. It follows, in particular, that $B$ is a compact connected manifold without boundary of topological dimension $n-1$.
	
	\emph{1st step.} We show that there exist a positive integer $m$, indices $i_1,\dots,i_m\in\{1,\dots,N\}$, closed topological $(n-1)$-dimensional discs $K_1,\dots,K_m\subseteq B$, open topological $(n-1)$-dimensional discs $Y_1,\dots,Y_m\subseteq B$ and points $w_1,\dots,w_m\in\mathbb S^1$ such that
	\begin{itemize}
		\item $K_j\subseteq Y_j\subseteq\overline{Y_j}\subseteq V_{i_j}$ for every $j=1,\dots,m$,
		\item $B=\bigcup_{j=1}^mK_j$,
		\item the sets $D_j=\varphi_{i_j}(\overline{Y_j}\times\{w_j\})$ ($j=1,\dots,m$) are mutually disjoint and contained in $Z\setminus\{z_0\}$.
	\end{itemize}
	
	Let us begin by choosing, for every $i=1,\dots,N$, a closed set $Q_i\subseteq V_i$ in such a way that $B=\bigcup_{i=1}^NQ_i$. In our construction of the required objects we shall proceed inductively, using the discs $K_j$ to cover step-by-step each of the closed sets $Q_i$.
	
	The 0-th step of the induction is ``empty''; just put $l_0=0$. In the induction step, fix 
	$k\in\{1,\dots,N\}$ and assume that the discs $Q_1,\dots,Q_{k-1}$ are already covered, meaning that
	 there are a nonnegative integer $l=l_{k-1}$, indices $i_1,\dots,i_l\in\{1,\dots,N\}$, closed topological $(n-1)$-dimensional discs $K_1,\dots,K_l\subseteq B$, open topological $(n-1)$-dimensional discs $Y_1,\dots,Y_l\subseteq B$ and points $w_1,\dots,w_l\in\mathbb S^1$ such that
	\begin{itemize}
		\item $K_j\subseteq Y_j\subseteq\overline{Y_j}\subseteq V_{i_j}$ for every $j=1,\dots,l$,
		\item $\bigcup_{j=1}^lK_j\supseteq\bigcup_{i=1}^{k-1}Q_i$,
		\item the sets $D_j=\varphi_{i_j}(\overline{Y_j}\times\{w_j\})$ ($j=1,\dots,l$) are mutually disjoint and contained in $Z\setminus\{z_0\}$.
	\end{itemize}
	
	Since each $D_j$ ($j=1,\dots,l$) intersects every orbit of $\mathbb S^1$ in at most one point, the open subset $U=Z\setminus(\bigcup_{j=1}^lD_j\cup\{z_0\})$ of $Z$ projects onto $B$ via $p$. Consequently, the open subset $\varphi_k^{-1}(p^{-1}(V_k)\cap U)$ of $V_k\times\mathbb S^1$ projects onto $V_k$ via $\pr_1$. It follows that for every $b\in Q_k$ there exist a closed topological $(n-1)$-dimensional disc $K^{(b)}$, an open topological $(n-1)$-dimensional disc $Y^{(b)}$ and a point $w^{(b)}\in\mathbb S^1$ such that
	\begin{itemize}
		\item $b\in\Int(K^{(b)})\subseteq K^{(b)}\subseteq Y^{(b)}\subseteq\overline{Y^{(b)}}\subseteq V_k$,
		\item $\overline{Y^{(b)}}\times\{w^{(b)}\}\subseteq\varphi_k^{-1}(p^{-1}(V_k)\cap U)$.
	\end{itemize}
	
	Since the set $Q_k$ is compact, there is a finite family of points $b_1,\dots,b_p$ such that $Q_k\subseteq\bigcup_{t=1}^pK^{(b_t)}$. We may clearly assume that the points $w^{(b_t)}$ ($t=1,\dots,p$) are chosen in such a way that the sets $\overline{Y^{(b_t)}}\times\{w^{(b_t)}\}$ are mutually disjoint. Now put $l_k=l+p$ and for every $j=l+1,\dots,l+p$ put $i_j=k$, $K_j=K^{(b_{j-l})}$, $Y_j=Y^{(b_{j-l})}$ and $w_j=w^{(b_{j-l})}$. Then
	\begin{itemize}
		\item $K_j\subseteq Y_j\subseteq\overline{Y_j}\subseteq V_{i_j}$ for every $j=1,\dots,l+p$,
		\item $\bigcup_{j=1}^{l+p}K_j\supseteq\bigcup_{i=1}^kQ_i$,
		\item the sets $D_j=\varphi_{i_j}(\overline{Y_j}\times\{w_j\})$ ($j=1,\dots,l+p$) are mutually disjoint and contained in $Z\setminus\{z_0\}$.
	\end{itemize}
	This verifies the induction step and finishes the first step of the proof.
	
	\emph{2nd step.} Set $C_j=\varphi_{i_j}(K_j\times\{w_j\})$ for $j=1,\dots,m$. By the first step of the proof, the sets $C_j$ are mutually disjoint closed topological $(n-1)$-dimensional discs in $Z$ contained in $Z\setminus\{z_0\}$. We show that there is a family of mutually disjoint open sets (in fact, open topological $n$-dimensional discs) $U_1,\dots,U_m\subseteq Z$ such that $C_j\subseteq U_j$ for $j=1,\dots,m$ and $\dist(z_0,\bigcup_{j=1}^mU_j)>0$.
	
	It follows from the properties of the objects constructed in the first step that we may let $U_j=\varphi_{i_j}(Y_j\times J_j)$ for $j=1,\dots,m$, where $J_j$ is a sufficiently small open interval in $\mathbb S^1$ containing $w_j$.
	
	\emph{3rd step.} Let $U=\bigcup_{j=1}^mU_j$. We show that there exist an open neighborhood $U_0$ of $z_0$ in $Z$, disjoint from $U$, and a homotopy of embeddings $F_t\colon U_0\cup U\to Z$ ($t\in[0,1]$) such that $F_0=\Id_{U_0\cup U}$, $F_1(U)\subseteq W$ and $F_t|_{U_0}=\Id_{U_0}$ for every $t\in[0,1]$.
	
	Fix $z_j\in C_j$ for every $j=1,\dots,m$. Since $z_1,\dots,z_m\in Z$ are distinct points in a connected manifold without boundary $Z\setminus\{z_0\}$ of dimension at least $2$, we can join each $z_j$ with an element of $W$ by an arc $A_j\subseteq Z\setminus\{z_0\}$. We may clearly assume that the arcs $A_j$ ($j=1,\dots,m$) are mutually disjoint.
	Consequently, there exist disjoint connected open sets $A_j\subseteq M_j\subseteq Z$ and a neighborhood $U_0$ of $z_0$ in $Z$ disjoint from $\bigcup_{j=1}^mM_j$.
	
	We shall now describe the desired homotopy of embeddings $F_t$ ($t\in[0,1]$). First, as required, we let $U_0$ remain fixed during the homotopy. Given $j\in\{1,\dots,m\}$, we begin by squeezing $U_j$ within itself into a subset $U_j'$ of $M_j$ (this is possible, since $U_j$ is an open topological $n$-dimensional disc intersecting $M_j$). Further, using that $M_j$ is a connected manifold (being a connected open subset of a manifold $Z$), we may apply the Isotopy lemma and let $U_j'$ slide within $M_j$ to a set $U_j''$ intersecting $W$. Finally, we squeeze (the open topological $n$-dimensional disc) $U_j''$ within itself into a subset $U_j'''$ of $W$. The homotopy thus described is well defined, since the sets $U_0,U_1,\dots,U_m$ are disjoint and open in $Z$, and it is a homotopy of embeddings by disjointness of the sets $\bigcup_{t\in[0,1]}F_t(U_j)$ ($j=0,\dots,m$).
	
	\emph{4th step.} We prove the statement of the lemma.
	
	Set $U^*=\bigcup_{j=1}^mU_j'''$. Then $U^*\subseteq W$ and $U^*=\bigcup_{j=1}^mF_1(U_j)=F_1(U)$. Since $F_1\colon U_0\cup U\to Z$ is an embedding with the image $U_0\cup U^*$, its restriction $f\colon U_0\cup U\to U_0\cup U^*$ is a homeomorphism. Set $\Phi_t=F_{1-t}\circ f^{-1}$ for $t\in[0,1]$. The maps $\Phi_t$ thus defined constitute a homotopy of embeddings of $U_0\cup U^*$ into $Z$ and $\Phi_0=F_1\circ f^{-1}=\Id_{U_0\cup U^*}$. Set $C=\bigcup_{j=1}^mC_j$ and $C^*=\bigcup_{j=1}^m F_1(C_j)$; then $C\subseteq U$ and $C^*=F_1(C)=f(C)\subseteq f(U)=U^*$. By virtue of Theorem~\ref{T:Edw.Kir.ext}, the embeddings $\Phi_t|_{\{z_0\}\cup C^*}$ can be extended to homeomorphisms $\Psi_t\colon Z\to Z$, which constitute an isotopy with $\Psi_0=\Id_Z$.
	
	We finish the proof by showing that the map $\sigma=\Psi_1$ satisfies conditions (a) and (b) from the lemma. First, since $F_t(z_0)=z_0$ for every $t\in[0,1]$, condition (a) follows from our definition of $\sigma$ and the isotopy $\Psi_t$ ($t\in[0,1]$). To verify condition (b), notice that
	\begin{equation*}
	\sigma(W)\supseteq\sigma(U^*)\supseteq\sigma(C^*)=\Psi_1(C^*)=\Phi_1(C^*)
	=F_0(f^{-1}(C^*))=F_0(C)=C=\bigcup_{j=1}^mC_j.
	\end{equation*}
	Consequently,
	\begin{equation*}
	p(\sigma(W))\supseteq p\Big(\bigcup_{j=1}^mC_j\Big)=\bigcup_{j=1}^mp(C_j)
	=\bigcup_{j=1}^mK_j=B,
	\end{equation*}
	as was to be shown.
\end{proof}

Before turning to the next auxiliary result, let us recall some basic facts concerning fundamental groups. If $Z$ is a connected manifold with a base point $z_0$, we denote its fundamental group by $\pi_1(Z,z_0)$ or, briefly, by $\pi_1(Z)$. Further, if $X$ is a connected manifold with a base point $x_0$ and $f\colon X\to Z$ is a continuous map with $f(x_0)=z_0$, we write $f_*$ for the induced morphism $\pi_1(X)\to\pi_1(Z)$. Given a connected manifold $Y$ with a base point $y_0$ and a covering map $p\colon Y\to Z$ with $p(y_0)=z_0$, we recall that, by the Lifting lemma (cf. \cite[Lemma~79.1, p.~478]{Mun}), the following conditions are equivalent:
\begin{itemize}
\item there is a continuous map $g\colon X\to Y$ with $g(x_0)=y_0$ and $f=p\circ g$,
\item $f_*\pi_1(X)\subseteq p_*\pi_1(Y)$.
\end{itemize}
We also recall that such a map $g$, if it exists, is unique.

\begin{lemma}\label{L:aux.hit.inv}
Let $Y$ be a compact connected manifold without boundary admitting a free action $A$ of $\mathbb S^1$. Then for every finite subgroup $H$ of $\mathbb S^1$ and every nonempty open set $V\subseteq Y$ there is a homeomorphism $\psi\colon Y\to Y$ with the following properties:
\begin{enumerate}
\item $\psi$ is isotopic to the identity,
\item $\psi$ commutes with the induced action of $H$ on $Y$,
\item $\psi(V)$ intersects each $A$-orbit of $\mathbb S^1$ in $Y$.
\end{enumerate}
\end{lemma}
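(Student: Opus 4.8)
The plan is to reduce the statement to the non-equivariant Lemma~\ref{L:aux.hit} by passing to the quotient of $Y$ by $H$. Let $q\colon Y\to Z:=Y/H$ be the quotient map. Since $H$ is a finite cyclic group and the action $A$ of $\mathbb{S}^1$ (hence of $H$) on $Y$ is free, the action of $H$ on $Y$ is properly discontinuous; therefore $q$ is a regular covering map with deck group $H$, and $Z$ is again a compact connected manifold without boundary. Because $\mathbb{S}^1$ is abelian, the action $A$ descends to an action of $\mathbb{S}^1$ on $Z$ with kernel exactly $H$, so $Z$ carries a free action of $\mathbb{S}^1/H\cong\mathbb{S}^1$. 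Finally, since $H\subseteq\mathbb{S}^1$, every $\mathbb{S}^1$-orbit $\mathcal O\subseteq Y$ satisfies $q^{-1}(q(\mathcal O))=\mathcal O$; hence $q$ induces a bijection between the $\mathbb{S}^1$-orbits in $Y$ and the $\mathbb{S}^1/H$-orbits in $Z$.

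Next I would set $W=q(V)$, a nonempty open subset of $Z$ (as $q$ is an open surjection), pick any point $z_0\in Z$, and apply Lemma~\ref{L:aux.hit} to the manifold $Z$ with its free $\mathbb{S}^1/H$-action: this produces a homeomorphism $\sigma\colon Z\to Z$ together with an isotopy $(\sigma_t)_{t\in[0,1]}$ from $\sigma_0=\Id_Z$ to $\sigma_1=\sigma$ such that $\sigma(W)$ meets every $\mathbb{S}^1/H$-orbit in $Z$. The key step is then to lift this isotopy through $q$. The homotopy $G\colon[0,1]\times Y\to Z$, $G(t,y)=\sigma_t(q(y))$, satisfies $G(0,\cdot)=q$, which is covered by $\Id_Y$; so by the homotopy lifting property of the covering $q$ there is a unique continuous $\Psi\colon[0,1]\times Y\to Y$ with $\Psi_0=\Id_Y$ and $q\circ\Psi_t=\sigma_t\circ q$ for all $t$. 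For each $t$ the map $\Psi_t$ lifts the homeomorphism $\sigma_t\circ q$ through $q$, hence is injective and a local homeomorphism; thus $\Psi_t(Y)$ is open, and it is also closed by compactness of $Y$, so $\Psi_t(Y)=Y$ by connectedness and $\Psi_t$ is a homeomorphism. Consequently $\psi:=\Psi_1$ is a homeomorphism of $Y$ and $\Psi$ is an isotopy from $\Id_Y$ to $\psi$, which gives condition~(1).

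Condition~(2) would follow from uniqueness of lifts: for a fixed $h\in H$ with acting homeomorphism $A_h$ we have $q\circ A_h=q$, so $(t,y)\mapsto A_h^{-1}\Psi_t(A_h y)$ is again a lift of $G$ which equals $\Id_Y$ at $t=0$; by uniqueness it coincides with $\Psi$, i.e.\ $\Psi_t\circ A_h=A_h\circ\Psi_t$ for every $t$ and every $h\in H$, and in particular $\psi$ commutes with the induced action of $H$. Condition~(3) would then follow from the orbit correspondence established above: given an $\mathbb{S}^1$-orbit $\mathcal O\subseteq Y$, the set $q(\mathcal O)$ is an $\mathbb{S}^1/H$-orbit, so it meets $\sigma(W)$ in some point $z$; since $q\circ\psi=\sigma\circ q$ we have $\sigma(W)=\sigma(q(V))=q(\psi(V))$, hence $z=q(\psi(v))$ for some $v\in V$, and then $\psi(v)\in q^{-1}(z)\subseteq q^{-1}(q(\mathcal O))=\mathcal O$, so $\psi(V)\cap\mathcal O\neq\emptyset$.

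I expect the only delicate point to be the lifting step, but both of its features are routine: the $H$-equivariance of $\Psi$ is an immediate consequence of the uniqueness of homotopy lifts, and the fact that each $\Psi_t$ is a homeomorphism uses nothing beyond compactness and connectedness of $Y$ together with invariance of domain. The remaining ingredients --- proper discontinuity of the $H$-action, the manifold structure on $Y/H$, and the openness of $q$ --- are standard.
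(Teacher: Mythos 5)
Your proposal follows the same overall strategy as the paper: pass to $Z=Y/H$, apply Lemma~\ref{L:aux.hit} to the descended free circle action on $Z$, and lift the resulting isotopy back to $Y$, obtaining $H$-equivariance from uniqueness of lifts. Your setup (freeness of the descended $\mathbb S^1/H$-action, the orbit correspondence $q^{-1}(q(\mathcal O))=\mathcal O$, the identity $\sigma(W)=q(\psi(V))$) and your treatment of conditions (2) and (3) are correct; indeed your equivariance argument -- that $(t,y)\mapsto A_h^{-1}\Psi_t(A_hy)$ is another lift of the same homotopy with the same initial condition $\Id_Y$ -- is a bit slicker than the paper's, which first proves $\psi(\zeta y_0)=\zeta y_0$ by a path-homotopy argument and only then invokes uniqueness of lifts of $\sigma\circ p$. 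The technical difference is in the lifting step: the paper uses the $\pi_1$-lifting criterion with base points (checking $\sigma_*=\Id_{\pi_1(Z)}$, $F_*=\Id_{\pi_1(Z)}$), whereas you use the homotopy lifting property of the covering starting from $\Id_Y$ at $t=0$, which makes the existence of the lift automatic. Both routes are legitimate.

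The one step you should tighten is the assertion that each $\Psi_t$ is injective ``since it lifts the homeomorphism $\sigma_t\circ q$'': the map $\sigma_t\circ q$ is $\card H$-to-one, not a homeomorphism, so injectivity of its lift across $q$ is not automatic (if $\Psi_t(y)=\Psi_t(y')$ you only get $q(y)=q(y')$, i.e.\ $y'=A_h(y)$ for some $h\in H$). Two quick fixes, both within your framework: (i) establish the $H$-equivariance $\Psi_t\circ A_h=A_h\circ\Psi_t$ first -- your uniqueness argument already gives it for every $t$ -- and then $\Psi_t(y)=\Psi_t(A_hy)=A_h(\Psi_t(y))$ forces $h=e$ by freeness of the $H$-action; or (ii) note that $\Psi_t$ is a morphism between two $\card H$-sheeted coverings of $Z$, hence itself a covering map with one sheet. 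The paper sidesteps this by also lifting the inverse isotopy $F_t^{-1}\circ p$ and using uniqueness of lifts to conclude that the two lifts are mutually inverse.
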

\begin{proof}
If $\zeta\in\mathbb S^1$, we will denote by $\varphi_\zeta$ the corresponding acting homeomorphism of $A$ on $Y$. However, symbols $A(\zeta,y)=\varphi_\zeta(y)$ and $\zeta y$ will be used interchangeably.
Let $\pi\colon Y\to Y/\mathbb S^1$, $p\colon Y\to Y/H$ be the canonical quotient maps and $q\colon\mathbb S^1\to\mathbb S^1$ be the quotient morphism with kernel $\ker(q)=H$ (that is, $q(z)=z^{\card H}$). Since the induced action of the finite group $H$ on $Y$ is free, the quotient map $p\colon Y\to Y/H$ is a covering map \cite[Theorem~81.5 and Exercise~4 on p.~493]{Mun}. The original action $A$ of $\mathbb S^1$ on $Y$ then descends via $p$ and $q$ to an action $A'$ of $\mathbb S^1$ on $Y/H$
\begin{equation*}
A'\colon\mathbb S^1\times Y/H\to Y/H,\quad (q(\zeta),p(y))\mapsto p(\zeta y)
\end{equation*}
for $y\in Y$ and $\zeta\in\mathbb S^1$, see Figure~\ref{Fig:cd-action-A'}. 
\begin{figure}[!htb]\centering
	\begin{minipage}{0.49\textwidth}
		\begin{center}
			$\begin{CD}
				\mathbb S^1\times Y     @>A>>  Y\\
				@Vq\times pVV        @VVpV\\
				\mathbb S^1\times Y/H     @>A'>>  Y/H
			\end{CD}$
			\caption{Action $A'$}\label{Fig:cd-action-A'}		
		\end{center}
	\end{minipage}
		\begin {minipage}{0.49\textwidth}
		\begin{center}
			$\begin{CD}
			Y     @>p>>  Z=Y/H\\
			@V\pi{}VV        @VV\pi'V\\
			Y/\mathbb S^1     @>g>>  Z/\mathbb S^1
			\end{CD}$
			\caption{Homeomorphism $g$}\label{Fig:cd-homeo-g}		
		\end{center}
\end{minipage}
\end{figure}
We shall now proceed in five steps.

\emph{1st step.} Write $Z=Y/H$. We show that $Z$ and $A'$ satisfy the assumptions of Lemma~\ref{L:aux.hit}.

First, since $p\colon Y\to Z$ is a covering map, the space $Z$ is a compact connected manifold without boundary. Since the original action $A$ of $\mathbb S^1$ on $Y$ is free and $\ker(q)=H$, it follows that the descended action $A'$ of $\mathbb S^1$ on $Z$ is also free. Moreover, if $\pi'\colon Z\to Z/\mathbb S^1$ denotes the canonical quotient map from $Z$ to the orbit space of $A'$, then there is a homeomorphism $g\colon Y/\mathbb S^1\to Z/\mathbb S^1$ with $g\circ\pi=\pi'\circ p$, see Figure~\ref{Fig:cd-homeo-g}.

\emph{2nd step.} We define the desired homeomorphism $\psi\colon Y\to Y$.

Fix $y_0\in Y$ and write $z_0=p(y_0)$; we shall use $y_0$ and $z_0$ as base points of the spaces $Y$ and $Z$, respectively. Set $W=p(V)$. Since the map $p$, being a covering map, is open, it follows that $W$ is a nonempty open subset of $Z$. Thus, by using the first step of the proof along with Lemma~\ref{L:aux.hit}, we find a homeomorphism $\sigma\colon Z\to Z$ with the following properties:
\begin{enumerate}
\item[(a)] $\sigma$ is isotopic to the identity via an isotopy $F$ fixing $z_0$,
\item[(b)] $\sigma(W)$ intersects each $A'$-orbit of $\mathbb S^1$ in $Z$.
\end{enumerate}

Consider the morphisms $p_*\colon\pi_1(Y)\to\pi_1(Z)$ and $\sigma_*\colon\pi_1(Z)\to\pi_1(Z)$, induced by $p$ and $\sigma$, respectively. By virtue of (a), we have $\sigma_*=\Id_{\pi_1(Z)}$ and, by functoriality of the induced morphism, $(p \circ \sigma)_*=p_*\circ \sigma_*=p_*$. So, 
by the Lifting lemma, $\sigma$ lifts uniquely across $p$ to a homeomorphism $\psi\colon Y\to Y$ with $\psi(y_0)=y_0$; that is, we have $p\circ\psi=\sigma\circ p$, see Figure~\ref{Fig:cd-psi}. 
\begin{figure}[!htb]\centering
	\begin{minipage}{0.49\textwidth}
		\begin{center}
			\begin{tikzcd}
			(Y,y_0)\ \arrow[r,dashed]{}{\psi}
			  \arrow[d,"p" left] 
	        & \ (Y,y_0)
	          \arrow[d,"p"]
			\\
			(Z,z_0)\ 
			  \arrow[r]{}{\sigma=F_1} 
			& \ (Z,z_0)
			\end{tikzcd}	
			\caption{$\sigma\circ p$ lifts to $\psi$}\label{Fig:cd-psi}
		\end{center}
	\end{minipage}
	\begin {minipage}{0.49\textwidth}
	\begin{center}
			\begin{tikzcd}
				([0,1]\times Y, (0,y_0))\ \arrow[r,dashed]{}{G}
				  \arrow[d,"\Id_{[0,1]}\times p" left] 
		        & \ (Y,y_0)
		          \arrow[d,"p"]
				\\
				([0,1]\times Z, (0,z_0))\ 
				  \arrow[r]{}{F} 
				& \ (Z,z_0)
			\end{tikzcd}
			\caption{$F\circ (\Id_{[0,1]}\times p)$ lifts to $G$}\label{Fig:cd-G}		
	\end{center}
	\end{minipage}
\end{figure}

\emph{3rd step.} We show that $\psi$ satisfies condition (1).

We shall use $(0,z_0)$ and $(0,y_0)$ as base points of $[0,1]\times Z$ and $[0,1]\times Y$, respectively. Under the natural identifications $\pi_1([0,1]\times Y)=\pi_1(Y)$ and $\pi_1([0,1]\times Z)=\pi_1(Z)$, we have $F_*=\Id_{\pi_1(Z)}$ and $(\Id_{[0,1]}\times p)_*=p_*$. It follows from the Lifting lemma that $F$ lifts uniquely across $p$ to a continuous map $G\colon[0,1]\times Y\to Y$ with $G(0,y_0)=y_0$; that is, we have $F\circ(\Id_{[0,1]}\times p)=p\circ G$, see Figure~\ref{Fig:cd-G}. 
\begin{figure}[!htb]\centering
	\begin {minipage}{0.49\textwidth}
	\begin{center}
			\begin{tikzcd}
			(Y,y_0)\ \arrow[r,dashed]{}{G_t}
			  \arrow[d,"p" left] 
	        & \ (Y,y_0)
	          \arrow[d,"p"]
			\\
			(Z,z_0)\ 
			  \arrow[r]{}{F_t} 
			& \ (Z,z_0)
			\end{tikzcd}	
			\caption{$F_t\circ p$ lifts to $G_t$}\label{Fig:cd-Gt}		
	\end{center}
	\end{minipage}
	\begin {minipage}{0.49\textwidth}
	\begin{center}
			\begin{tikzcd}
			(Y,y_0)\ \arrow[r,dashed]{}{\Gamma_t}
			  \arrow[d,"p" left] 
	        & \ (Y,y_0)
	          \arrow[d,"p"]
			\\
			(Z,z_0)\ 
			  \arrow[r]{}{F_t^{-1}} 
			& \ (Z,z_0)
			\end{tikzcd}	
			\caption{$F_t^{-1}\circ p$ lifts to $\Gamma_t$}\label{Fig:cd-Gt-inv}		
	\end{center}
	\end{minipage}
\end{figure}

We are going to show that $G$ is an isotopy from $\Id_Y$ to $\psi$. 

Since $F(t,z_0)=z_0$ for every $t\in[0,1]$ and $G(0,y_0)=y_0$, we also have $G(t,y_0)=y_0$ for every $t\in[0,1]$. It follows, using commutative diagram in Figure~\ref{Fig:cd-G}, that the diagram in Figure~\ref{Fig:cd-Gt} commutes for every $t$.
Now we use the uniqueness part of the Lifting lemma. For $t=0$, $F_0\circ p$ lifts to $G_0$, but trivially also
to $\Id_Y$. Hence $G_0=\Id_Y$. For $t=1$, $F_1\circ p$ lifts to $G_1$ and, by Figure~\ref{Fig:cd-psi}, 
also to $\psi$. Hence $G_1=\psi$.

Since the maps $F_t$ ($t\in[0,1]$) are homeomorphisms on $Z$ isotopic to the identity, we have $(F_t)_*=\Id_{\pi_1(Z)}$ for every $t$. By functoriality of the induced morphism, $(F_t^{-1})_*=\Id_{\pi_1(Z)}$.
Hence, by the Lifting lemma, $F_t^{-1}\circ p$ lifts to some continuous map $\Gamma_t$, see Figure~\ref{Fig:cd-Gt-inv}. Gluing together the diagrams in Figures~\ref{Fig:cd-Gt}	and \ref{Fig:cd-Gt-inv} in two ways, we get that $p$ lifts to both $\Gamma_t\circ G_t$ and $G_t\circ\Gamma_t$, and trivially also to $\Id_Y$. By  uniqueness, $\Gamma_t\circ G_t = G_t\circ\Gamma_t = \Id_Y$. Thus every $G_t$ is a homeomorphism.

\emph{4th step.} We show that $\psi$ satisfies condition (2).

First we show that $\psi(\zeta y_0)=\zeta y_0$ for every $\zeta\in H$. So fix $\zeta\in H$ along with a path $\gamma\colon[0,1]\to Y$ from $y_0$ to $\zeta y_0$. Then $p\circ(\psi\circ\gamma)=\sigma\circ(p\circ\gamma)$ by definition of $\psi$. Since $\sigma_*=\Id_{\pi_1(Z)}$, it follows that the paths $p\circ(\psi\circ\gamma)$ and $p\circ\gamma$ are path-homotopic. Moreover, $(\psi\circ\gamma)(0)=\psi(y_0)=y_0=\gamma(0)$, hence the paths $\psi\circ\gamma$ and $\gamma$ are also path-homotopic. In particular, $\psi\circ\gamma$ and $\gamma$ have the same endpoint. Thus, $\psi(\zeta y_0)=(\psi\circ\gamma)(1)=\gamma(1)=\zeta y_0$, as was to be shown.

We show that $\psi$ commutes with the induced action of $H$ on $Y$. To this end, fix $\zeta\in H$ and consider the corresponding acting homeomorphism $\varphi_{\zeta}$ on $Y$. Then, by definition of $p$, $p\circ\varphi_{\zeta}^{-1}=p\circ\varphi_{\zeta}=p$ and so, using also the definition of $\psi$,
\begin{equation*}
p\circ(\varphi_{\zeta}^{-1}\circ\psi\circ\varphi_{\zeta})
=p\circ\psi\circ\varphi_{\zeta}=\sigma\circ p\circ\varphi_{\zeta}=\sigma\circ p.
\end{equation*}
Moreover, by using our claim from the preceding paragraph, we obtain
\begin{equation*}
(\varphi_{\zeta}^{-1}\circ\psi\circ\varphi_{\zeta})(y_0)=\varphi_{\zeta}^{-1}(\psi(\zeta y_0))=\zeta^{-1}(\zeta y_0)=y_0.
\end{equation*}
In summary, $\varphi_{\zeta}^{-1}\circ\psi\circ\varphi_{\zeta}$ is a continuous lift of $\sigma$ across $p$ mapping $y_0$ to $y_0$. Hence $\varphi_{\zeta}^{-1}\circ\psi\circ\varphi_{\zeta}=\psi$ and so $\psi$ commutes with $\varphi_{\zeta}$ indeed.

\emph{5th step.} We finish the proof by showing that $\psi$ satisfies condition (3).

Fix $y\in Y$ and set $z=p(y)$. Since the set $\sigma(W)$ intersects the $A'$-orbit of $z$, there is $\xi\in\mathbb S^1$ with $\xi z\in\sigma(W)$. Choose $\varrho\in\mathbb S^1$ with $\xi=q(\varrho)$. Then
\begin{equation*}
p(\varrho y)=A'(q(\varrho), p(y))=A'(\xi,z)=\xi z\in\sigma(W)=\sigma(p(V))=p(\psi(V))
\end{equation*}
and so there is $\zeta\in H$ with $\zeta(\varrho y)\in\psi(V)$. It follows that $(\zeta\varrho)y\in\psi(V)$ and $\psi(V)$ thus intersects the orbit of $y$ under the action of $\mathbb S^1$ on $Y$.
\end{proof}

\subsection{Proof of Theorem~\ref{T:manif.homeo-PM}}
First, we recall that $\Homeo(Y)$ is a Polish group with the topology of uniform convergence (of homeomorphisms and their inverses). Since the group $\Homeo(Y)$ is locally contractible \cite[Theorem~1]{Cer}, hence locally arc-wise connected, its identity arc-component $\Homeo_a(Y)$ is an open (hence closed) subgroup of $\Homeo(Y)$ and so it is also a Polish group.

\begin{proof}[Proof of Theorem~\ref{T:manif.homeo-PM}]
Fix a minimal system $(X,T)$. Fix a free action of $\mathbb S^1$ on $Y$ and denote the acting homeomorphisms of this action by $\varphi_z$ ($z\in\mathbb S^1$). Set
\begin{equation*}
\mathcal H=\{\psi^{-1}\circ\varphi_z\circ\psi\colon\psi\in\Homeo_a(Y)\text{ and }z\in\mathbb S^1\}.
\end{equation*}
Since $\varphi_z\in\Homeo_a(Y)$ for every $z\in\mathbb S^1$ by arc-connectedness of $\mathbb S^1$, we have $\mathcal H\subseteq\Homeo_a(Y)$. Consequently, $\overline{\mathcal H}\subseteq\Homeo_a(Y)$. Given nonempty open sets $U\subseteq X$ and $V\subseteq Y$, we set
\begin{equation*}
\mathcal H_{U,V}=\left\{\sigma\in\Homeo_a(Y)\colon\bigcup_{n=1}^{\infty}(T\times\sigma)^{-n}(U\times V)=X\times Y\right\}.
\end{equation*}
We shall now proceed in eight steps.

\emph{1st step.} Given $\psi\in\Homeo_a(Y)$ and nonempty open sets $U\subseteq X$ and $V\subseteq Y$, we show that
\begin{equation*}
\psi\mathcal H\psi^{-1}=\mathcal H\hspace{3mm}\text{and}\hspace{3mm}\psi\mathcal H_{U,V}\psi^{-1}=\mathcal H_{U,\psi(V)}.
\end{equation*}

The first equality follows immediately by definition of $\mathcal H$. To verify the second equality, fix $\sigma\in\Homeo_a(Y)$. Then
\begin{align*}
\bigcup_{n=1}^{\infty}(T\times(\psi\circ\sigma\circ\psi^{-1}))^{-n}(U\times \psi(V))&=\bigcup_{n=1}^{\infty}T^{-n}(U)\times\psi(\sigma^{-n}(V))\\
&=\bigcup_{n=1}^{\infty}(\Id_X\times\psi)\left((T\times\sigma)^{-n}(U\times V)\right)\\
&=(\Id_X\times\psi)\left(\bigcup_{n=1}^{\infty}(T\times\sigma)^{-n}(U\times V)\right).
\end{align*}
Since $\Id_X\times\psi$ is a homeomorphism on $X\times Y$, it follows that $\psi\circ\sigma\circ\psi^{-1}\in\mathcal H_{U,\psi(V)}$ if and only if $\sigma\in\mathcal H_{U,V}$. Thus, $\psi\mathcal H_{U,V}\psi^{-1}=\mathcal H_{U,\psi(V)}$, as was to be shown.

\emph{2nd step.} Let $U\subseteq X$ and $V\subseteq Y$ be nonempty open sets. We show that the set $\mathcal H_{U,V}$ is open in $\Homeo_a(Y)$.

Fix $\sigma_0\in\mathcal H_{U,V}$ and choose $x\in X$. By compactness of $Y$, there is $N\in\mathbb N$ with $\{x\}\times Y\subseteq\bigcup_{n=1}^N(T\times\sigma_0)^{-n}(U\times V)$. Let $\mathcal U$ be the set of all $\sigma\in\Homeo_a(Y)$ with $\{x\}\times Y\subseteq\bigcup_{n=1}^N(T\times\sigma)^{-n}(U\times V)$. Then $\sigma_0\in\mathcal U$ and $\mathcal U$ is an open subset of $\Homeo_a(Y)$ due to compactness of $Y$. Therefore, it suffices to show that $\mathcal U\subseteq\mathcal H_{U,V}$.

So let $\sigma\in\mathcal U$. By the tube lemma there is a neighborhood $U'$ of $x$ in $X$ with $U'\times Y\subseteq\bigcup_{n=1}^N(T\times\sigma)^{-n}(U\times V)$. By minimality of $T$, we have $X=\bigcup_{m=1}^{\infty}T^{-m}(U')$. Consequently,
\begin{align*}
\bigcup_{n=1}^{\infty}(T\times\sigma)^{-n}(U\times V)&\supseteq\bigcup_{m=1}^{\infty}(T\times\sigma)^{-m}\left(\bigcup_{n=1}^N(T\times\sigma)^{-n}(U\times V)\right)
\supseteq\bigcup_{m=1}^{\infty}(T\times\sigma)^{-m}(U'\times Y)\\
&=\left(\bigcup_{m=1}^{\infty}T^{-m}(U')\right)\times Y=X\times Y,
\end{align*}
whence it follows that $\sigma\in\mathcal H_{U,V}$.

\emph{3rd step.} Let $V'\subseteq Y$ be an open subset of $Y$ intersecting each orbit of $\mathbb S^1$ in $Y$. Assume that $\xi\in\mathbb S^1$ and an increasing sequence $(k_n)_{n=1}^{\infty}$ of positive integers are such that the set $\{\xi^{k_n}\colon n\in\mathbb N\}$ is dense in $\mathbb S^1$. We show that there exists $N\in\mathbb N$ with $Y=\bigcup_{n=1}^N(\varphi_{\xi})^{-k_n}(V')$.

By compactness of $Y$, it suffices to show that $Y=\bigcup_{n=1}^{\infty}(\varphi_{\xi})^{-k_n}(V')$. So let $y\in Y$. Since $V'$ intersects the orbit of $y$ under the action of $\mathbb S^1$,  there is $\zeta\in\mathbb S^1$ with $\varphi_{\zeta}(y)\in V'$. Since the set $\{\xi^{k_n}\colon n\in\mathbb N\}$ is dense in $\mathbb S^1$, there is $n\in\mathbb N$ with $\xi^{k_n}$ close enough to $\zeta$ so that $\varphi_{\xi^{k_n}}(y)\in V'$. Then $y\in(\varphi_{\xi^{k_n}})^{-1}(V')=(\varphi_{\xi})^{-k_n}(V')$. Thus, $\bigcup_{n=1}^{\infty}(\varphi_{\xi})^{-k_n}(V')=Y$, as was to be shown.

\emph{4th step.} Let $U\subseteq X$, $V'\subseteq Y$ be open sets. Let $x\in U$ and suppose that $V'$ intersects each orbit of $\mathbb S^1$ in $Y$. Let $\xi\in\mathbb S^1$ and assume that there is an increasing sequence of positive integers $(k_n)_{n=1}^{\infty}$ such that
\begin{itemize}
\item $T^{k_n}(x)\in U$ for every $n\in\mathbb N$, and
\item the set $\{\xi^{k_n}\colon n\in\mathbb N\}$ is dense in $\mathbb S^1$.
\end{itemize}
We show that $\varphi_{\xi}\in\mathcal H_{U,V'}\cap\mathcal H$.

First, we have $\varphi_{\xi}\in\mathcal H$ by definition of $\mathcal H$. To show that $\varphi_{\xi}\in\mathcal H_{U,V'}$, use the third step of the proof to find $N\in\mathbb N$ with $Y=\bigcup_{n=1}^N(\varphi_{\xi})^{-k_n}(V')$ and set $U'=\bigcap_{n=1}^NT^{-k_n}(U)$. Then $x\in U'$ and so $U'$ is a nonempty open subset of $X$. Thus, by minimality of $T$, $X=\bigcup_{m=1}^{\infty}T^{-m}(U')$. Consequently,
\begin{align*}
\bigcup_{n=1}^{\infty}(T\times\varphi_{\xi})^{-n}(U\times V')&\supseteq\bigcup_{m=1}^{\infty}(T\times\varphi_{\xi})^{-m}\left(\bigcup_{n=1}^N(T\times\varphi_{\xi})^{-k_n}(U\times V')\right)\\
&\supseteq\bigcup_{m=1}^{\infty}(T\times\varphi_{\xi})^{-m}\left(\bigcup_{n=1}^NU'\times(\varphi_{\xi})^{-k_n}(V')\right)\\
&=\bigcup_{m=1}^{\infty}(T\times\varphi_{\xi})^{-m}(U'\times Y)\\
&=\left(\bigcup_{m=1}^{\infty}T^{-m}(U')\right)\times Y=X\times Y,
\end{align*}
which shows that $\varphi_{\xi}\in\mathcal H_{U,V'}$ indeed.

\emph{5th step.} Let $U\subseteq X$, $V\subseteq Y$ be nonempty open sets and let 
$\psi\mathcal H_a(Y)$. Then 
\[
  \overline{\mathcal H_{U,\psi(V)}\cap\mathcal H}
  =\psi\overline{\mathcal H_{U,V}\cap\mathcal H}\psi^{-1}.
\]
Indeed, by the first step we have  
$\overline{\mathcal H_{U,\psi(V)}\cap\mathcal H} 
= \overline{\left(\psi\mathcal H_{U,V}\psi^{-1}\right)\cap\mathcal (\psi H\psi^{-1})}$.
Since the map $h\mapsto \psi h\psi^{-1}$ is a homeomorphism of $\mathcal H(Y)$, this set is equal to
$\overline{\psi\left(\mathcal H_{U,V}\cap\mathcal H\right)\psi^{-1}}
=\psi\overline{\mathcal H_{U,V}\cap\mathcal H}\psi^{-1}$.

\emph{6th step.} Let $U\subseteq X$, $V\subseteq Y$ be nonempty open sets. We show that $\varphi_z\in\overline{\mathcal H_{U,V}\cap\mathcal H}$ for every $z\in\mathbb S^1$.

Since $\tor(\mathbb S^1)$ is dense in $\mathbb S^1$, it is sufficient to verify that $\varphi_z\in\overline{\mathcal H_{U,V}\cap\mathcal H}$ for every torsion element $z\in\mathbb S^1$. So fix $z\in\tor(\mathbb S^1)$. By virtue of Lemma~\ref{L:aux.hit.inv}, there is a homeomorphism  $\psi\in\Homeo_a(Y)$ with the following properties:
\begin{itemize}
\item $\psi$ commutes with $\varphi_z$,
\item the set $\psi(V)$ intersects each orbit of $\mathbb S^1$ in $Y$.
\end{itemize}

Fix $x\in U$ and use minimality of $T$ to find an increasing sequence of positive integers $(k_n)_{n=1}^{\infty}$ with $T^{k_n}(x)\in U$ for every $n\in\mathbb N$. Denote by $A$ the set of all $\xi\in\mathbb S^1$ such that the sequence $(\xi^{k_n})_{n=1}^{\infty}$ is uniformly distributed in $\mathbb S^1$. By the fourth step of the proof (applied to $V'=\psi(V)$), we have $\varphi_{\xi}\in\mathcal H_{U,\psi(V)}\cap\mathcal H$ for every $\xi\in A$. Since the set $A$ is dense in $\mathbb S^1$ by virtue of Lemma~\ref{unif.distr}, we get $\varphi_z\in\overline{\mathcal H_{U,\psi(V)}\cap\mathcal H}$. By the fifth step of the proof, this means that
$\varphi_z\in\psi\overline{\mathcal H_{U,V}\cap\mathcal H}\psi^{-1}.$
Consequently, since $\psi$ commutes with $\varphi_z$,
\begin{equation*}
\varphi_z=\psi^{-1}\circ\varphi_z\circ\psi\in\overline{\mathcal H_{U,V}\cap\mathcal H},
\end{equation*}
as was to be shown.

\emph{7th step.} Given nonempty open sets $U\subseteq X$ and $V\subseteq Y$, set $\mathcal M_{U,V}=\mathcal H_{U,V}\cap\overline{\mathcal H}$. We show that the set $\mathcal M_{U,V}$ is open and dense in $\overline{\mathcal H}$.

The openness follows from the second step of the proof. We verify the density by showing that $\mathcal H\subseteq\overline{\mathcal M_{U,V}}$. To this end, fix $\psi\in\Homeo_a(Y)$ and $z\in\mathbb S^1$; we verify that $\psi^{-1}\circ\varphi_z\circ\psi\in\overline{\mathcal M_{U,V}}$. By using the sixth and the fifth steps of the proof, we obtain
\begin{equation*}
\varphi_z\in\overline{\mathcal H_{U,\psi(V)}\cap\mathcal H}=\psi\overline{\mathcal H_{U,V}\cap\mathcal H}\psi^{-1}\subseteq\psi\overline{\mathcal M_{U,V}}\psi^{-1},
\end{equation*}
whence it follows that $\psi^{-1}\circ\varphi_z\circ\psi\in\overline{\mathcal M_{U,V}}$ indeed.

\emph{8th step.} We prove the theorem.

Fix countable bases $(U_n)_{n=1}^{\infty}$ and $(V_m)_{m=1}^{\infty}$ of $X$ and $Y$, respectively,
and set $\mathcal M=\bigcap_{n,m=1}^{\infty}\mathcal M_{U_n,V_m}$. By the preceding step of the proof, all the sets $\mathcal M_{U_n,V_m}$ are open and dense in $\overline{\mathcal H}$ and so $\mathcal M$ is a dense $G_{\delta}$ subset of $\overline{\mathcal H}$. Since $\overline{\mathcal H}$ is a completely metrizable space, we infer that $\mathcal M\neq\emptyset$. Finally, since the product $T\times S$ is obviously minimal for every $S\in\mathcal M$ and $\mathcal M\subseteq\Homeo_a(Y)$, the proof is finished.
\end{proof}

\section{Smooth manifolds}\label{S:thmB-smooth-manifolds}

As we mentioned in the previous section, our Theorem~\ref{T:manif.homeo-PM} is based on (and its proof basically follows the line of that of) \cite[Th\'eor\`eme~1]{FH}. Of course, there are some essential distinctions in the proofs of these two theorems. This is caused, firstly, by the fact that we work in the topological category rather than in the smooth one and, secondly, by the fact that instead of skew products we consider direct products, so we need our minimal homeomorphism be disjoint from a given minimal map in the base. Although our interest in this paper is in the topological category, we deem it opportune to mention that smooth analogues of our Theorems~\ref{T:manif.homeo-PM} and~\ref{T:manif.homeo-PM.Lie} are true. In fact, we have the following theorem.

\begin{theorem}\label{T:manif.diff-PM.Lie}
Let $Y$ be a smooth compact connected manifold without boundary and $G$ be a nontrivial compact connected Lie group. Assume that $G$ acts smoothly and freely on $Y$. Then for every minimal system $(X,T)$ there is a smooth diffeomorphism $S\colon Y\to Y$ isotopic to the identity such that the product $(X\times Y,T\times S)$ is minimal.
\end{theorem}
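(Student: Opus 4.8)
\textbf{Proof strategy for Theorem~\ref{T:manif.diff-PM.Lie}.} The plan is to run the same machinery used to prove Theorem~\ref{T:manif.homeo-PM}, but entirely within the smooth category, invoking smooth analogues of each topological ingredient. First, as in the proof of Theorem~\ref{T:manif.homeo-PM.Lie}, since $G$ is a nontrivial compact connected Lie group it contains a maximal torus which in turn contains a circle subgroup $H\cong\mathbb S^1$, and the restriction of the given action endows $Y$ with a smooth free action of $\mathbb S^1$. So it suffices to prove the statement under the hypothesis that $\mathbb S^1$ acts smoothly and freely on $Y$. I would then replace $\Homeo(Y)$ throughout by the group $\operatorname{Diff}^\infty(Y)$ of smooth diffeomorphisms of $Y$, equipped with the $C^\infty$ topology, in which it is a Polish (indeed, Fréchet–Lie) group; its identity arc-component $\operatorname{Diff}^\infty_a(Y)$ — which here coincides with the identity path-component and is an open subgroup — is again Polish and completely metrizable, so the Baire category argument of the eighth step goes through verbatim once the individual steps are re-established smoothly.

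The core of the work is to reprove Lemmas~\ref{L:aux.hit} and~\ref{L:aux.hit.inv} in the smooth setting, producing diffeomorphisms $\sigma,\psi$ smoothly isotopic to the identity. For Lemma~\ref{L:aux.hit}, the fiber-bundle structure $Z\to B=Z/\mathbb S^1$ is smooth, the local trivializations $\varphi_i$ are diffeomorphisms, and the combinatorial covering construction of Step~1 is purely topological and unchanged; the discs $C_j$, $U_j$ can be taken to be smoothly embedded. The isotopy-extension input (Theorem~\ref{T:Edw.Kir.ext}, from \cite{EdwKir}) must be replaced by the classical smooth isotopy extension theorem (Palais; see, e.g., \cite[Chapter~8]{Hir}), which extends an ambient smooth isotopy of a compact submanifold of $Z$; the homotopy-of-embeddings built in Steps~3–4 by "squeezing" open discs and sliding along arcs is carried out with smooth embeddings (smoothing the arcs $A_j$, using smooth bump functions for the squeezing), and the smooth Isotopy Lemma \ref{L:manif.hom} is available (again \cite{Hir} or \cite{GP}). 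For Lemma~\ref{L:aux.hit.inv}, the quotient $p\colon Y\to Y/H$ by the free action of the finite group $H$ is a smooth covering map, the descended action $A'$ on $Z=Y/H$ is smooth and free, and the Lifting Lemma lifts smooth maps to smooth maps (a lift of a smooth map across a smooth covering is automatically smooth); thus the diffeomorphism $\psi$ obtained by lifting $\sigma$ is smooth, the lifted isotopy $G$ is a smooth isotopy, and the $H$-equivariance argument of Step~4 is unaffected. Finally, the equidistribution input (Lemma~\ref{unif.distr}) is about the group $\mathbb S^1$ and is category-independent, so Steps~3–7 of the proof of Theorem~\ref{T:manif.homeo-PM} transfer directly, and Step~8 yields a dense $G_\delta$ subset $\mathcal M\subseteq\overline{\mathcal H}\subseteq\operatorname{Diff}^\infty_a(Y)$ of diffeomorphisms $S$, each isotopic to the identity, with $T\times S$ minimal.

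The main obstacle I expect is not any single deep theorem but the bookkeeping needed to verify that every geometric construction can be performed smoothly and that the smooth versions of the two key lemmas really do produce diffeomorphisms \emph{smoothly} isotopic to the identity — in particular, checking that the isotopy extension theorem applies in the non-compact ambient pieces $M_j$ exactly as the topological one did, and that the "squeeze an open disc into itself" moves can be realized by compactly-supported smooth isotopies. A secondary point to get right is that $\operatorname{Diff}^\infty(Y)$ with the $C^\infty$ topology is a Baire space and that the sets $\mathcal H_{U,V}$ remain open there (Step~2): openness only used the tube lemma and compactness of $Y$ together with continuity of the evaluation, all of which hold in the $C^\infty$ topology, so no new difficulty arises. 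Given all this, the theorem follows; I would simply remark in the text that the proof is "the proof of Theorem~\ref{T:manif.homeo-PM} carried out in the smooth category, using Palais' smooth isotopy extension theorem in place of Theorem~\ref{T:Edw.Kir.ext} and the fact that lifts of smooth maps across smooth coverings are smooth," rather than rewriting all eight steps.
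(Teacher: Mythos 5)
Your proposal is correct, and its overall architecture (reduction to a free $\mathbb S^1$-action via a maximal torus, the ambient Polish group $\operatorname{Diff}^\infty_a(Y)$ in place of $\Homeo_a(Y)$, the sets $\mathcal H_{U,V}$ and the conjugation-invariance/density/Baire-category scheme of Steps 1--8) coincides with the paper's. The one substantive divergence is how the key geometric input is secured: the paper does not redo Lemmas~\ref{L:aux.hit} and~\ref{L:aux.hit.inv} in the smooth category at all, but instead cites \cite[Corollaire~4.12]{FH}, which is precisely the smooth hitting-and-commuting statement you set out to reprove (the paper's topological Lemma~\ref{L:aux.hit.inv} is itself modeled on that result of Fathi and Herman). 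Your route --- rebuilding the disc-covering, squeezing and sliding constructions with smooth embeddings, replacing the Edwards--Kirby deformation theorem by Palais' smooth isotopy extension theorem applied to the compact discs $C_j$, and using that lifts of smooth maps across smooth coverings are smooth --- is viable and would make the argument self-contained, at the cost of re-deriving a result already available in \cite{FH}; you correctly identify the only delicate points, namely that the squeeze-and-slide moves must be realized as genuinely smooth (in both variables) compactly supported isotopies so that the smooth isotopy extension theorem applies, and that openness of the sets $\mathcal H_{U,V}$ survives the passage to the $C^\infty$ topology. So your proof would be longer but reaches the same conclusion by the same Baire-category endgame; the paper's sketch simply outsources the hard geometric lemma to \cite{FH}.
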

\begin{proof}[Sketch of the proof]
As explained in the proof of Theorem~\ref{T:manif.homeo-PM.Lie}, we may restrict our attention to the case $G=\mathbb S^1$. To prove the theorem in this particular case, it is sufficient to combine the proofs of Theorem~\ref{T:manif.homeo-PM} and \cite[Th\'eor\`eme~1]{FH}. We shall therefore omit the details and present only a sketch of the proof.

Fix a minimal system $(X,T)$. Let $\mathcal D(Y)$ be the Polish group of all smooth diffeomorphisms $Y\to Y$ equipped with the $C^{\infty}$-topology and $\mathcal D_a(Y)$ be the open (hence closed, hence Polish) subgroup of $\mathcal D(Y)$ formed by the diffeomorphisms isotopic to the identity. Denoting the acting diffeomorphisms of $\mathbb S^1$ by $\varphi_z$ ($z\in\mathbb S^1$), we set
\begin{equation*}
\mathcal D=\left\{\psi^{-1}\circ\varphi_z\circ\psi\colon\psi\in\mathcal D_a(Y)\text{ and }z\in\mathbb S^1\right\}\subseteq\mathcal D_a(Y).
\end{equation*}
Further, for each pair of nonempty open sets $U\subseteq X$ and $V\subseteq Y$, write
\begin{equation*}
\mathcal D_{U,V}=\left\{\sigma\in\mathcal D_a(Y)\colon\bigcup_{n=1}^{\infty}(T\times\sigma)^{-n}(U\times V)=X\times Y\right\}.
\end{equation*}
Then all the sets $\mathcal D_{U,V}$ are open in $\mathcal D_a(Y)$. By using \cite[Corollaire~4.12]{FH} in place of our Lemma~\ref{L:aux.hit.inv}, we find that $\varphi_z\in\overline{\mathcal D_{U,V}\cap\mathcal D}$ for every $z\in\mathbb S^1$ and infer from this observation that all the sets $\mathcal M_{U,V}=\mathcal D_{U,V}\cap\overline{\mathcal D}$ are open and dense in $\overline{\mathcal D}\subseteq\mathcal D_a(Y)$. Now fix countable bases $(U_n)_{n=1}^{\infty}$ and $(V_m)_{m=1}^{\infty}$ of $X$ and $Y$, respectively, and set $\mathcal M=\bigcap_{n,m=1}^{\infty}\mathcal M_{U_n,V_m}$. Being a dense $G_{\delta}$ subset of the completely metrizable space $\overline{\mathcal D}$, the set $\mathcal M$ is nonempty. We may therefore finish the proof by choosing $S\in\mathcal M$.
\end{proof}


\section{Examples and counterexamples related to minimal direct products}\label{S:examples}

In this section we discuss examples mentioned in the introduction.

\begin{example}\label{e:1}
	There exist nonminimal continua $X,Z$ such that
	the product $X\times Z$ is minimal.
\end{example}
\begin{proof}
	We show that the cylinder $X=\mathbb S^1\times I$ and
	the Hilbert cube $Z=\mathbb H$ satisfy the required conditions.
	
	First, by \cite{BOT}, the only compact connected two-manifolds (with
	or without boundary) which admit a minimal map are torus
	$\mathbb T^2$ and Klein bottle $\mathbb K^2$. Hence the
	cylinder $\mathbb S^1\times I$ is a nonminimal space. Further, the Hilbert cube has the fixed point property and so it is also a nonminimal space. Finally, the product $\left(\mathbb S^1\times I\right)\times\mathbb H$
	admits a minimal homeomorphism by virtue of \cite[p.~323]{GW}.
\end{proof}

\begin{example}\label{e:2}
	There exists a minimal compact metrizable space $X$ such that
	the product $X\times X$ admits a minimal skew product but
	does not admit any minimal direct product.
\end{example}
\begin{proof}
    It is sufficient to take $X$ consisting of two points. As a less trivial example,
	let $X=\mathbb S^1_0\cup \mathbb S^1_1$ be a union of two disjoint circles.
	Then $X$ admits a minimal map. Moreover, every minimal (in fact,
	every surjective) self-map of $X$ permutes the two
	building circles of $X$. Therefore the product $T\times S$
	of any pair of minimal maps $T,S$ on $X$ has
	$(\mathbb S^1_0\times\mathbb S^1_0)\cup (\mathbb S^1_1\times\mathbb S^1_1)$
	as an invariant subset. Consequently, such $T\times S$ can not
	be minimal.
	
	We show that the product $X\times X$ admits a minimal
	skew-product $F=(T,S_x)$. To simplify the
	description of $F$ let $\mathbb S^1_0=\{0\}\times\mathbb S^1$
	and $\mathbb S^1_1=\{1\}\times\mathbb S^1$. Now take
	a rotation $R_{\alpha}$ of $\mathbb S^1$ by an irrational
	$\alpha$ and let
	$T:X\to X$ be given by $T(0,z)=(1,z)$ and $T(1,z)=
	(0,R_\alpha(z))$ for $z\in\mathbb S^1$. Obviously, $T$ is
	a minimal map. Further, take a real number $\beta$
	such that $1,\alpha,\beta$ are linearly independent
	over the rationals and set $S_x(i,\xi)=(i,\xi)$
	for $x\in\mathbb S^1_0$, $\xi\in\mathbb S^1$ and
	$i=0,1$, and $S_x(i,\xi)=(1-i,R_\beta(\xi))$ for
	$x\in\mathbb S^1_1$, $\xi\in\mathbb S^1$ and $i=0,1$.
	One checks easily that $F=(T,S_x)$ is minimal.
\end{proof}

\begin{example}\label{e:3}
	There exist minimal continua $X,Z$ such that $X\times Z$
	admits a minimal direct product and every skew-product on $X\times Z$ is a direct product.
\end{example}
\begin{proof}
	Let $X=\mathbb S^1$ be the circle and let $Z$ be the
	pseudo-circle. Clearly, $X$ is a minimal space. It
	follows from \cite{Han} that $Z$ is also a minimal space.
	The product $X\times Z$ thus admits a minimal direct
	product by Theorem~\ref{T:groups}. Now we show that
	every skew product $F=(T,S_x)$ on $X\times Z$
	is in fact a direct product. To see this observe that
	the components of arc-wise connectedness of $Z$ are
	the singletons. Therefore, the components of arc-wise
	connectedness of $C(Z)$ are also singletons. Since $X$
	is arc-wise connected, 
	the set $\{S_x\colon x\in X\}$ is arc-wise connected in $C(Z)$. Hence
	all the maps $S_x\in C(Z)$ are equal. It follows that $F$
	is a direct product.
\end{proof}

\begin{remark}\label{R:e:3}
	In the previous example, if we do not insist that $Z$ be a continuum, one can use the Cantor set
	instead and apply a similar argument using
	connectedness instead of arc-wise connectedness.
\end{remark}

\begin{remark}
	Although the product space $X\times Z$ from Example~
	\ref{e:3} or Remark~\ref{R:e:3} does not admit a minimal skew-product which is not a direct product,
	one can show that the space $Z\times X$
	\emph{does} admit a skew-product $F=(g,f_z)$ which is not
	a direct product (this follows from \cite[Theorem~1]{GW}).
\end{remark}

\begin{example}\label{e:factorwise-rigid}
	There exist continua $X,Y$ such that all the spaces $X,Y,X\times Y$ admit minimal homeomorphisms and every homeomorphism on $X\times Y$ takes the form of a direct product.
\end{example}

\begin{proof}
	Let $Y$ be an arbitrary continuum supporting a minimal homeomorphism $S$ and not containing any arc; say, $Y$ can be the pseudo-circle. Let $Z$ be a solenoid. By Theorem~\ref{T:groups}, there is a homeomorphism (in fact, a rotation) $R\colon Z\to Z$ such that the product $(Z\times Y,R\times S)$ is minimal. Fix a DST space $X$ derived from $(Z,R)$ (for details, see Section~\ref{Sec:the.Slovak}) and let $T\colon X\to X$ be the homeomorphism, which is an almost 1-1 extension of $R$. The product $T\times S$ is a minimal homeomorphism, being an almost 1-1 extension of the minimal homeomorphism $R\times S$. We show that each homeomorphism on $X\times Y$ has the form of a direct product.
	
	So let $F\colon X\times Y\to X\times Y$ be a homeomorphism. Recall that $X$ is not path-connected, but it contains a dense path-component $\alpha$. Given $y\in Y$, the set $F(\alpha\times\{y\})\subseteq X\times Y$ is path-connected. Since $Y$ has degenerate path-components, it follows that there is $y'\in Y$ with $F(\alpha\times\{y\})\subseteq X\times\{y'\}$. Since $\alpha$ is dense in $X$, we infer that $F(X\times\{y\})\subseteq X\times\{y'\}$. Consequently, $F$ is a skew product over $Y$. That is, there exist a homeomorphism $h\colon Y\to Y$ and a family of homeomorphisms $g_y\colon X\to X$ with $F(x,y)=(g_y(x),h(y))$ for all $x\in X$ and $y\in Y$. Recall that $Y$ is connected, $\Homeo(X)$ is discrete and $g_y\in\mathcal{H}(X)$ depend continuously on $y\in Y$. Hence all the maps $g_y$ coincide and so $F$ is indeed a direct product on $X\times Y$.
\end{proof}

\begin{example}\label{E:skew.direct.fiber}
	There exists a compact connected manifold $Y$ such that for every compact minimal system $(X,T)$, the product $X\times Y$ admits a minimal skew product $(T,g_x)$ and admits a minimal direct product $T\times S$, but all the direct products $T\times g_x$ ($x\in X$) are nonminimal.
\end{example}
\begin{proof}
	Let $Y=\mathbb S^3$. Recall that $Y$ carries the structure of a (compact connected) nonabelian Lie group. Let $G$ be the group of the rotations on $Y$. Then, by \cite[p.~323]{GW}, $X\times Y$ admits a minimal skew product $(T,g_x)$ with $g_x\in G$ for every $x\in X$ and it admits a minimal direct product $T\times S$ by virtue of Theorem~B(9). However, since $Y$ is nonabelian, all the rotations $g\in G$ are nonminimal (if $g$ were minimal, then the full orbit $\{g^n(e)\colon n\in\mathbb Z\}$ of the neutral element $e$ of $Y$ would be a dense abelian subgroup of $Y$, hence also $Y$ would be abelian) and hence the products $T\times g$ are not minimal.
\end{proof}


\section{DST spaces}\label{Sec:the.Slovak}

As mentioned in the introduction, those Slovak spaces 
which have been constructed in \cite[Section~4]{DST}
are said to be DST spaces. Fix such a space $X$.
To show that $X$ can serve as a counterexample required by
Theorem~A(1), we are going first to describe its topological structure.
We also introduce notation which will be used throughout 
the rest of this  paper.\footnote{We warn the reader that our notation differs from  that used in \cite{DST}.
	Instead of the notation $X,\tilde{F}, T, \tilde{T}$ used in \cite{DST}
	we are going to write $X_d,X,T_d,T$, respectively. Here the lower index $d$
	can be read as ``down'', since the system $(X_d,T_d)$ will be a factor of
	$(X,T)$.}

\subsection{Description of the topology of the space $X$}\label{SS:Slovak}
We are going to describe some properties of the DST space $X$
(for more details the reader is referred to \cite{DST}).

First basic fact is that $X$ is a subset of
$X_d\times[0,1]$, where $X_d$ is a
\emph{generalized solenoid}
\begin{equation}\label{EQ:Td}
X_d = (C\times [0,1]) / _{(y,1)\sim (h(y),0)},
\end{equation}
with $C$ being a Cantor set and $h:C\to C$ being a minimal homeomorphism.
The continuum $X_d$ has uncountably many composants, each of them is dense in $X_d$ and is
a continuous injective image of the real line.
The only nondegenerate proper subcontinua of $X_d$ 
are arcs.\footnote{The solenoids, as well as the circle, 
	are compact connected metrizable abelian, hence monothetic, groups. 
	On the other hand, by \cite{Ha}, 
	a nondegenerate continuum is a solenoid if and only if it is indecomposable, 
	homogeneous and all of its proper subcontinua are arcs.
	Therefore, if a generalized solenoid is not a solenoid, then it is not homogeneous
	and so it is not a topological group.\label{ftn:generalized-solenoid}}

The DST space $X$ is the closure of the graph of a (discontinuous) function
from $X_d$ to $[0,1]$. Denote by $\pi\colon X\to X_d$ the natural projection.
It is an almost $1$-$1$ map and the only nondegenerate point inverses
are arcs $W_n$ ($n\in\ZZZ$), with
\begin{equation}\label{EQ:Wn-diam}
\lim_{n\to\pm\infty}\diam(W_n) =  0.
\end{equation}

The space $X$ has uncountably many composants, each of them being dense in $X$.
A single one of them, denote it by $\gamma$,
is not path connected. Its path components are $C_n$ ($n\in\ZZZ$), where
each $C_n$ is homeomorphic to the graph of
$\sin(1/x)$, $x\in(0,1]$;
see Figure~\ref{Fig:gamma}.
Moreover, for every $n$, we have
\begin{equation}\label{EQ:Cn-closure}
\overline{C}_n = C_n\sqcup W_{n+1} \subseteq C_n\sqcup C_{n+1},
\qquad
\overline{C}_n \cap \overline{C}_{n+1} = W_{n+1},
\end{equation}
where $\sqcup$ denotes the disjoint union
and $\overline{C}_n$ stands for the closure of the set ${C_n}$ in $X$
(note that $\overline{C}_n\cap \overline{C}_m$ is nonempty if and only if
$\abs{m-n}\le 1$).
The family of all the other composants of $X$ will be denoted by $\AAa$;
every composant $\alpha\in\AAa$ is a continuous injective image of the real line.

\begin{figure}[ht!]
	\includegraphics{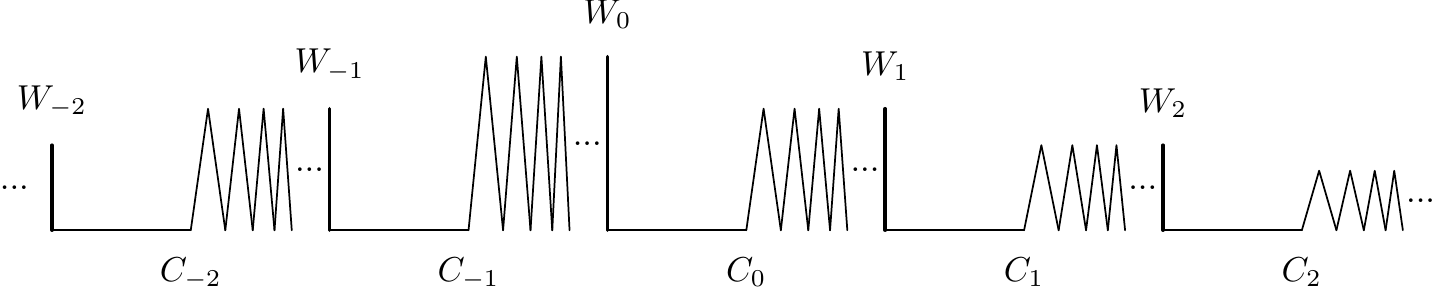}
	\caption{The composant $\gamma$}
	\label{Fig:gamma}
\end{figure}

There are special minimal homeomorphisms
$T_d\colon X_d\to X_d$ and $T\colon X\to X$ such that
$\pi\colon (X,T)\to (X_d,T_d)$ is a factor map (i.e., $\pi\circ T = T_d\circ \pi$)
and for every $n\in\ZZZ$ it holds that
\begin{equation}\label{EQ:F(Cn)}
T(C_n)=C_{n+1},\qquad
T(W_n)=W_{n+1}.
\end{equation}
Since both $T$ and $T^{-1}$ are minimal,
in view of \eqref{EQ:F(Cn)} we have that for every $n_0\in\ZZZ$, the disjoint unions
\begin{equation}\label{EQ:Wn-union-dense}
\bigsqcup_{n\ge n_0} W_{n}
\quad\text{and}\quad
\bigsqcup_{n\le n_0} W_{n}
\qquad\text{are dense in}\quad
X.
\end{equation}
Now we describe how $T_d$ has been constructed in \cite{DST}.
Start with
the suspension flow $\phi=(\phi_t)_{t\in\RRR}$ on $X_d$; i.e.
$\phi_t\colon X_d\to X_d$ is defined by
\begin{equation}\label{EQ:phit}
\phi_t(y,s) =
\left(
h^{\lfloor t+s \rfloor}(y), \{t+s\}
\right),
\end{equation}
where $\lfloor \cdot \rfloor$ and $\{\cdot\}$ denote the integer and the fractional part
of a real number, respectively.
Then $T_d=\phi_{t_0}$, where $t_0\ne 0$ is such that $\phi_{t_0}$
is a minimal homeomorphism on $X_d$;
since the suspension flow $\phi$ is minimal, such $t_0$ does exist
(see, e.g.,~\cite{Eg,Fa}).

\subsection{Some facts on $X\times X$}\label{SS:XxX}

For pairs of integers $m,n$
put
\begin{equation}\label{EQ:Cmn}
C_{m,n}=C_m\times C_n
\quad\text{and}\quad
W_{m,n}=W_m\times W_n.
\end{equation}
Notice that $C_{m,n}$ is homeomorphic to a (closed) quadrant of the plane and
$W_{m,n}$ is homeomorphic to the square.

The path components of $X\times X$, being products of the path components of $X$, are
of four types:
\begin{itemize}
	\item $\alpha\times\beta$, where $\alpha,\beta\in\AAa$;
	\item $C_m\times\alpha$, where $m\in\ZZZ$ and $\alpha\in\AAa$;
	\item $\alpha\times C_n$, where $n\in\ZZZ$ and $\alpha\in\AAa$;
	\item $C_m\times C_n$, where $m,n\in\ZZZ$.
\end{itemize}
Note that every path component of the first type is dense in $X\times X$,
while all the other path components are nowhere dense in $X\times X$.

Given maps $f,g\colon Y\to Z$, a point $y\in Y$ is called a \emph{point of coincidence} of the pair $f,g$ if $f(y)=g(y)$. We write
$\Coin(f,g)$ for the set of all points of coincidence of the pair $f,g$.

\begin{lemma}\label{L:point-of-coincidence}
	Let $Y$ and $Z$ be topological spaces, $Y$ having the fixed point property.
	Let $f, g\colon Y\to Z$ be continuous maps, $g$ being a homeomorphism.
	Then $f$ and $g$ have a point of coincidence in $Y$.
\end{lemma}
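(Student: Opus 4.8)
The statement asserts that if $Y$ has the fixed point property, $f,g\colon Y\to Z$ are continuous, and $g$ is a homeomorphism, then $\Coin(f,g)\ne\emptyset$. The natural idea is to transport the problem back to $Y$ via $g^{-1}$. Namely, consider the composite $g^{-1}\circ f\colon Y\to Y$, which is a continuous selfmap of $Y$. By the fixed point property of $Y$, there exists $y_0\in Y$ with $(g^{-1}\circ f)(y_0)=y_0$. Applying $g$ to both sides gives $f(y_0)=g(y_0)$, i.e. $y_0\in\Coin(f,g)$.

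That is essentially the whole argument; the only thing to be careful about is that $g^{-1}\circ f$ is well-defined as a map $Y\to Y$, which uses precisely that $g$ is a homeomorphism (so $g^{-1}\colon Z\to Y$ exists and is continuous, hence $g^{-1}\circ f$ is a continuous map from $Y$ to $Y$). One should also note that $g$ need not be surjective onto all of $Z$ for the argument to work as stated — but in this paper's usage $g$ will be a homeomorphism of $Z$ onto itself, so $g^{-1}$ is globally defined; if one wanted the cleanest statement one would phrase $g$ as a homeomorphism onto its image, in which case $f$ would need to land in that image, but I expect the paper intends $g\colon Y\to Z$ with $Z$ such that $g$ is onto, or simply $Y=Z$-type applications. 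In any case, no obstacle arises: the proof is a one-line reduction to the fixed point property.

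\begin{proof}
Since $g$ is a homeomorphism, the map $g^{-1}\circ f\colon Y\to Y$ is well-defined and continuous. As $Y$ has the fixed point property, there is $y_0\in Y$ with $(g^{-1}\circ f)(y_0)=y_0$. Applying $g$ to both sides yields $f(y_0)=g(y_0)$, so $y_0\in\Coin(f,g)$ and in particular $\Coin(f,g)\ne\emptyset$.
\end{proof}

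There is genuinely no hard part here; the lemma is a routine observation recorded for later reference (it will presumably be applied with $Y$ a Hilbert cube or a disc-like factor having the fixed point property, and $f,g$ two skew-product fiber maps, to force a coincidence that obstructs minimality of $X\times X$).
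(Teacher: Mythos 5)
Your proof is correct and is exactly the paper's argument: form $g^{-1}\circ f\colon Y\to Y$, apply the fixed point property, and translate the fixed point into a coincidence point. Nothing further is needed.
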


\begin{proof}
	The map $g^{-1} \circ f\colon Y\to Y$ is continuous and so it has a
	fixed point $y_0$. Thus $(g^{-1} \circ f) (y_0) = y_0$, whence
	$f(y_0) = g (y_0)$.
\end{proof}

For integers $a,b$ put $T_d^{a\times b} = T_d^a\times T_d^b$ and $T^{a\times b} = T^a\times T^b$.

\begin{lemma}\label{L:coin-F-Tab}
	Let $F\colon X\times X \to X\times X$ be a continuous map. If $m,n$ and $a,b$ are integers such that
	$F({W}_{m,n})\subseteq {W}_{m+a,n+b}$ then $\Coin(F, T^{a\times b})\cap {W}_{m,n} \ne\emptyset$.
\end{lemma}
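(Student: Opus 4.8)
The plan is to reduce the claim to Lemma~\ref{L:point-of-coincidence} applied to a suitable restriction. First I would observe that the square $W_{m,n}=W_m\times W_n$ is homeomorphic to $[0,1]^2$, hence is a compact space with the fixed point property. The map $T^{a\times b}=T^a\times T^b$ is a homeomorphism of $X\times X$, and by \eqref{EQ:F(Cn)} together with the fact that the $W_k$ are exactly the nondegenerate fibers of $\pi$ permuted cyclically (more precisely $T(W_k)=W_{k+1}$, so $T^a(W_m)=W_{m+a}$ and $T^b(W_n)=W_{n+b}$), we get $T^{a\times b}(W_{m,n})=W_{m+a}\times W_{n+b}=W_{m+a,n+b}$. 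Thus the restriction $g:=T^{a\times b}|_{W_{m,n}}\colon W_{m,n}\to W_{m+a,n+b}$ is a homeomorphism onto $W_{m+a,n+b}$.

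Next, by the hypothesis $F(W_{m,n})\subseteq W_{m+a,n+b}$, the restriction $f:=F|_{W_{m,n}}\colon W_{m,n}\to W_{m+a,n+b}$ is a continuous map into the \emph{same} target space $W_{m+a,n+b}$. Now $g^{-1}\circ f\colon W_{m,n}\to W_{m,n}$ is a continuous self-map of a space with the fixed point property (a square), so it has a fixed point $y_0\in W_{m,n}$; equivalently $f(y_0)=g(y_0)$, i.e. $F(y_0)=T^{a\times b}(y_0)$ with $y_0\in W_{m,n}$. This is exactly the assertion that $\Coin(F,T^{a\times b})\cap W_{m,n}\neq\emptyset$. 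Formally, one applies Lemma~\ref{L:point-of-coincidence} with $Y=W_{m,n}$ (which has the fixed point property), $Z=W_{m+a,n+b}$, the continuous map $f=F|_{W_{m,n}}$ and the homeomorphism $g=T^{a\times b}|_{W_{m,n}}$ (a homeomorphism of $W_{m,n}$ onto $W_{m+a,n+b}$, which is the form of homeomorphism the lemma needs).

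There is no serious obstacle here; the only point requiring a word of care is that Lemma~\ref{L:point-of-coincidence} as stated asks $g$ to be a homeomorphism $Y\to Z$ (not merely onto its image), so I must note that $T^{a\times b}$ maps $W_{m,n}$ \emph{onto} $W_{m+a,n+b}$ and not just into it — this follows because $T$ and $T^{-1}$ are both minimal homeomorphisms and $T(W_k)=W_{k+1}$ forces $T^a(W_m)=W_{m+a}$ exactly (and likewise in the second coordinate). Everything else is immediate from \eqref{EQ:F(Cn)} and the fact that each $W_k$, being an arc (by the description of $\pi$ in Subsection~\ref{SS:Slovak}), makes $W_{m,n}$ homeomorphic to a square and hence a space with the fixed point property.
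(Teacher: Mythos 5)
Your proposal is correct and follows exactly the paper's own argument: observe that $W_{m,n}$ is homeomorphic to a square and hence has the fixed point property, note that $T^{a\times b}(W_{m,n})=W_{m+a,n+b}$ via \eqref{EQ:F(Cn)}, and apply Lemma~\ref{L:point-of-coincidence} to the restrictions. Your extra remark about surjectivity of the restricted homeomorphism is a sensible bit of care but introduces nothing beyond what the paper's proof implicitly uses.
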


\begin{proof}
	The space $W_{m,n}$, being homeomorphic to the square, has the fixed point property. Since 
	$T^{a\times b}({W}_{m,n})= {W}_{m+a,n+b}$ and $T^{a\times b}$ is a homeomorphism, it is sufficient to use 
	Lemma~\ref{L:point-of-coincidence}.
\end{proof}

\begin{remark}\label{R:Dyer}
	It is of some interest to mention that, analogously, $\Coin(F, T^{a\times b})\cap \overline{C}_{m,n} \ne\emptyset$,
	provided that $F({\overline{C}}_{m,n})\subseteq {\overline{C}}_{m+a,n+b}$. Indeed, the closure $\overline{C}_{m,n}$,
	being the product of arc-like continua $\overline{C}_m$ and $\overline{C}_n$, has the fixed point property by~\cite{Dy}.	
\end{remark}

\begin{lemma}\label{L:coincidence-of-maps}
	Let $Y,Z$ be compact metric spaces and $\varphi, \tau \colon Y\to Z$ be continuous maps.
	Assume that there are nowhere dense sets $A_n\subseteq Y$ ($n\in\NNN$)
	with diameters converging to zero
	such that their union is dense in $Y$
	and, for every $n\in\mathbb N$, $A_n\cap\Coin(\varphi,\tau)\ne\emptyset$. Then $\varphi=\tau$.
\end{lemma}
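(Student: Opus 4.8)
The statement is a density/continuity argument: two continuous maps $\varphi,\tau\colon Y\to Z$ on compact metric spaces are shown to coincide by verifying that they agree on a dense set. The plan is to extract, for each $n$, a point of coincidence lying in $A_n$, and then use that the diameters of the $A_n$ go to zero to show the set of all such coincidence points is dense in $Y$; continuity of $\varphi,\tau$ and the Hausdorff property of $Z$ (metric) then force $\varphi=\tau$ everywhere.

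First I would record the elementary observation that $\Coin(\varphi,\tau)=\{y\in Y\colon \varphi(y)=\tau(y)\}$ is closed in $Y$, since it is the preimage of the diagonal of $Z\times Z$ under the continuous map $y\mapsto(\varphi(y),\tau(y))$, and the diagonal is closed because $Z$ is metric (hence Hausdorff). Next, fix an arbitrary nonempty open set $U\subseteq Y$; I want to show $U\cap\Coin(\varphi,\tau)\neq\emptyset$. Since $\bigcup_n A_n$ is dense in $Y$, there is some index $n$ and a point $a\in A_n\cap U$. This alone is not enough, because the chosen coincidence point in $A_n$ need not lie in $U$. So instead I would pick a point $a\in U$ lying in the dense union, choose $\epsilon>0$ with the open ball $B(a,\epsilon)\subseteq U$, and then use that $\diam(A_n)\to 0$ together with density of $\bigcup_n A_n$: there are infinitely many $n$ for which $A_n$ meets $B(a,\epsilon/2)$, and among those, all but finitely many satisfy $\diam(A_n)<\epsilon/2$, hence $A_n\subseteq B(a,\epsilon)\subseteq U$ for such $n$. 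Fix one such $n$. By hypothesis $A_n\cap\Coin(\varphi,\tau)\neq\emptyset$, so $U$ contains a point of coincidence. This proves $\Coin(\varphi,\tau)$ is dense in $Y$.

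Finally, $\Coin(\varphi,\tau)$ is both closed and dense in $Y$, hence equals $Y$, i.e.\ $\varphi=\tau$.

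The only mildly delicate point — the ``main obstacle'', such as it is — is the combinatorial step of locating an index $n$ with $A_n\subseteq U$: one must combine density of the union with the shrinking-diameter condition, rather than merely using density (which would only place \emph{some} point of $\bigcup A_n$ in $U$, not an entire $A_n$). Concretely, if $A_n\cap B(a,\epsilon/2)\neq\emptyset$ and $\diam(A_n)<\epsilon/2$, then every point of $A_n$ is within $\epsilon$ of $a$, so $A_n\subseteq B(a,\epsilon)\subseteq U$; and such an $n$ exists because infinitely many $A_n$ meet the ball $B(a,\epsilon/2)$ (density) while only finitely many have diameter $\geq\epsilon/2$. Once this is in place the rest is immediate. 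Note that compactness of $Y$ and $Z$ is not really needed beyond ensuring we are in a metric setting; what is used is that $Z$ is Hausdorff (so $\Coin$ is closed) and that $Y$ carries a metric (so ``diameters converging to zero'' makes sense and balls form a neighborhood basis).
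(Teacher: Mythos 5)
Your proof is correct and follows essentially the same route as the paper's: show that every nonempty open set contains some $A_n$ entirely (hence meets $\Coin(\varphi,\tau)$), conclude that $\Coin(\varphi,\tau)$ is dense, and finish by continuity. One small imprecision: you justify the claim that infinitely many $A_n$ meet $B(a,\epsilon/2)$ by ``density'' alone, but density of $\bigcup_n A_n$ does not suffice for this (a single $A_n$ could in principle account for all of the density in the ball); you also need the nowhere-density of the $A_n$, so that no finite subfamily $A_{n_1},\dots,A_{n_k}$ can be dense in the open ball. The paper handles exactly this point by observing that $B(y,\varepsilon/2)\setminus\bigcup_{i=1}^{n_0}\overline{A_i}$ is a nonempty open set and therefore meets some $A_n$ with $n>n_0$; inserting that one line makes your argument complete.
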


\begin{proof}
It is sufficient to show that each nonempty open set in $Y$ contains one of the sets $A_n$. Indeed, if this is the case then the set $\Coin(\varphi,\tau)$ is dense in $Y$, which means that $\varphi=\tau$.

So fix an open ball $B(y,\varepsilon)$ in $Y$ and choose $n_0\in\mathbb N$ so that $\diam(A_n)<\varepsilon/2$ for every $n> n_0$. Since $A_1,\dots,A_{n_0}$ are nowhere dense in $Y$, the open set $B(y,\varepsilon/2)\setminus\bigcup_{i=1}^{n_0}\overline{A_i}$ is nonempty, hence it intersects $A_n$ for some $n>n_0$. Then $A_n\subseteq B(y,\varepsilon)$, as was to be shown.
\end{proof}

Since $\pi\colon (X,T)\to (X_d,T_d)$ is an almost~$1$-$1$ factor map, we have the following lemma.

\begin{lemma}\label{L:pi-x-pi}
	For all $a,b\in\ZZZ$,
	$\pi\times\pi\colon (X\times X, T^{a\times b})\to(X_d\times X_d, T_d^{a\times b})$
	is an almost~$1$-$1$ factor map.
\end{lemma}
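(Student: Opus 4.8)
\textbf{Proof proposal for Lemma~\ref{L:pi-x-pi}.}
The plan is to reduce everything to the single-factor statement, already recorded in the excerpt, that $\pi\colon (X,T)\to (X_d,T_d)$ is an almost~$1$-$1$ factor map whose only nondegenerate point-inverses are the arcs $W_n$ ($n\in\ZZZ$), together with the intertwining relation $\pi\circ T=T_d\circ\pi$. First I would verify the semiconjugacy property for the map $\pi\times\pi$ with respect to the skew powers. Since $\pi\circ T=T_d\circ\pi$, iterating gives $\pi\circ T^a=T_d^a\circ\pi$ for every $a\in\ZZZ$ (using that both $T$ and $T_d$ are homeomorphisms, so this also holds for negative exponents), and hence
\[
(\pi\times\pi)\circ T^{a\times b}
=(\pi\times\pi)\circ(T^a\times T^b)
=(T_d^a\circ\pi)\times(T_d^b\circ\pi)
=(T_d^a\times T_d^b)\circ(\pi\times\pi)
=T_d^{a\times b}\circ(\pi\times\pi),
\]
so $\pi\times\pi$ is a continuous semiconjugacy from $(X\times X,T^{a\times b})$ onto $(X_d\times X_d,T_d^{a\times b})$ (surjectivity of $\pi\times\pi$ is immediate from surjectivity of $\pi$).

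Next I would check the almost~$1$-$1$ property. A point $(u,v)\in X\times X$ has $(\pi\times\pi)^{-1}(\pi(u),\pi(v))=\pi^{-1}(\pi(u))\times\pi^{-1}(\pi(v))$ a singleton precisely when both $\pi^{-1}(\pi(u))$ and $\pi^{-1}(\pi(v))$ are singletons, i.e.\ when $u$ and $v$ each lie outside $\bigcup_{n\in\ZZZ} W_n$. Let $D=\{x\in X\colon \pi^{-1}(\pi(x))=\{x\}\}$; by hypothesis $D$ is dense in $X$. Then the set of points over which $\pi\times\pi$ is injective contains $D\times D$, which is dense in $X\times X$ because a product of two dense sets is dense in the product. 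Hence $\pi\times\pi$ is almost~$1$-$1$.

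That is really all there is to it; no step here is a genuine obstacle, which is presumably why the paper states it as a one-line consequence. If I wanted to be maximally careful, the only point deserving a sentence is that the two properties ``semiconjugacy'' and ``almost $1$-$1$'' are exactly the two clauses in the definition of an almost~$1$-$1$ factor map as set up earlier in the excerpt, so once both are verified the proof is complete; I would simply write ``This follows at once from the fact that $\pi$ is an almost $1$-$1$ factor map: the intertwining relation $\pi\circ T^a=T_d^a\circ\pi$ gives the semiconjugacy, and the set $D\times D$ of points over which $\pi\times\pi$ is injective is dense since $D$ is dense in $X$.''
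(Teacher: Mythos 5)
Your argument is correct and is exactly the (unwritten) argument the paper intends: the lemma is stated as an immediate consequence of $\pi$ being an almost $1$-$1$ factor map, and your two verifications — the intertwining relation $\pi\circ T^{a}=T_d^{a}\circ\pi$ (valid for negative $a$ since both maps are homeomorphisms) and the density of $D\times D$ in $X\times X$ — are precisely what is needed. Nothing is missing.
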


\subsection{Direct products $T^{a\times b}\colon X\times X \to X\times X$
	are not minimal}

Recall that our aim is to show that $X\times X$ does not admit a minimal
map.\footnote{Note that the space $X_d\times X_d$, though it is in general not a 
	topological group, does admit a minimal homeomorphism. In fact, $X_d$ admits a minimal 
	continuous flow defined by \eqref{EQ:phit}, and so  $X_d\times X_d$ also admits a 
	minimal continuous flow by 
	\cite[Theorem~25]{Di}. By passing to an appropriate time $t$-map,  we get a minimal 
	homeomorphism on $X_d\times X_d$ by~\cite{Eg,Fa}.}
In this subsection we show that the particular homeomorphisms $T^{a\times b}$ on $X\times X$
are not minimal.

\begin{lemma}\label{L:Td-ab-nonminimal}
	The map $T_d^{a\times b}$ is not minimal on $X_d\times X_d$ for any $a,b\in\ZZZ$.
\end{lemma}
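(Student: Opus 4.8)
The plan is to exploit the fact that $T_d$ is a time-$t_0$ map of the suspension flow $\phi$ on $X_d$, and that a suspension flow over a Cantor minimal system carries a nontrivial continuous ``height'' or ``cocycle'' structure that obstructs minimality of products. More concretely, recall from \eqref{EQ:phit} that $X_d = (C\times[0,1])/_{(y,1)\sim(h(y),0)}$ and that $T_d=\phi_{t_0}$. First I would observe that the flow $\phi$ admits a continuous $\RRR/\ZZZ$-valued first integral in a weak sense: the second coordinate $s$ gives a well-defined continuous map $\beta\colon X_d\to \RRR/\ZZZ$ (namely $\beta(y,s)=s \bmod 1$) such that $\beta(\phi_t(x)) = \beta(x)+t \bmod 1$ for all $x\in X_d$ and $t\in\RRR$. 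In particular $\beta(T_d(x)) = \beta(x)+t_0 \bmod 1$, i.e.\ $\beta$ is a factor map from $(X_d,T_d)$ onto the rotation $(\RRR/\ZZZ, R_{t_0})$ by $t_0$.

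The key step is then: the map $T_d^{a\times b}=T_d^a\times T_d^b$ on $X_d\times X_d$ factors, via $\beta\times\beta$, onto $R_{t_0}^a\times R_{t_0}^b = R_{at_0}\times R_{bt_0}$ on the torus $(\RRR/\ZZZ)^2$. A rotation $R_{at_0}\times R_{bt_0}$ of the $2$-torus is minimal if and only if $1, at_0, bt_0$ are rationally independent; but for \emph{any} integers $a,b$ (not both zero — and the case $a=b=0$ is trivially non-minimal since then $T_d^{a\times b}=\Id$), the three numbers $1,at_0,bt_0$ are rationally dependent, because $b\cdot(at_0) - a\cdot(bt_0) = 0$ is a nontrivial integer relation (nontrivial as long as $(a,b)\ne(0,0)$). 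Hence $R_{at_0}\times R_{bt_0}$ is never minimal on the torus, so it has a proper nonempty closed invariant set, whose preimage under $\beta\times\beta$ is a proper nonempty closed $T_d^{a\times b}$-invariant subset of $X_d\times X_d$. Therefore $T_d^{a\times b}$ is not minimal. (If $a=b=0$ this argument degenerates, but then $T_d^{a\times b}=\Id_{X_d\times X_d}$ is obviously non-minimal since $X_d$ is nondegenerate; one could also note that if exactly one of $a,b$ is zero the argument still applies since then, say, $at_0=0$ and $\{1,0,bt_0\}$ is rationally dependent, or more simply $T_d^{a\times b}$ has the invariant set $\{x_0\}\times X_d$.)

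I expect the main (and essentially only) obstacle to be the verification that $\beta\colon X_d\to\RRR/\ZZZ$ is genuinely well-defined and continuous on the quotient $X_d$ — one must check compatibility with the identification $(y,1)\sim(h(y),0)$, which holds precisely because $1\equiv 0 \bmod 1$ — and the bookkeeping that $\beta$ intertwines $\phi_t$ with translation by $t$ using the formula \eqref{EQ:phit}, where the integer-part shift in the first coordinate is invisible to $\beta$. Everything after that is elementary: the classical criterion for minimality of torus rotations plus the trivial integer relation $b(at_0)-a(bt_0)=0$. A mild alternative, avoiding even the torus, is to push forward the arc-length/time parametrization directly: any orbit of $\phi$ meeting a single ``page'' $C\times\{s_0\}$ transversally shows the closure of a $T_d$-orbit need not be all of $X_d\times X_d$ once one lands in the diagonal-type relation; but the factor-onto-a-non-minimal-torus-rotation argument is cleaner and I would present that one.
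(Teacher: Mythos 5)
Your proposal is correct and follows essentially the same route as the paper: the paper also observes that $T_d=\phi_{t_0}$ factors onto the circle rotation $s\mapsto s+t_0$ via the second coordinate, so that $T_d^{a\times b}$ factors onto the torus rotation $(s,s')\mapsto(s+at_0,s'+bt_0)$, which is not minimal. Your explicit justification via the integer relation $b(at_0)-a(bt_0)=0$ is a correct filling-in of the detail the paper leaves implicit.
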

\begin{proof}
	It follows from \eqref{EQ:phit} that the map $T_d=\phi_{t_0}\colon X_d\to X_d$ is
	an extension of the rotation $s\mapsto t_0+s$ of  circle $\RRR/\ZZZ$,
	the corresponding factor map being the projection onto the second coordinate.
	Consequently, the map $T_d^{a\times b}$ is not minimal, since its factor
	$(s,s')\mapsto (at_0+s,bt_0+s')$ on the torus is not minimal.
\end{proof}

\begin{proposition}\label{P:T-ab-nonminimal}
	The map $T^{a\times b}$ is not minimal on $X\times X$ for any $a,b\in\ZZZ$.
\end{proposition}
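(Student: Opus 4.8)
The plan is to combine Lemma~\ref{L:Td-ab-nonminimal} with the structural results on the almost~$1$-$1$ factor map $\pi\times\pi$, but with a twist: a direct appeal to ``a factor of a minimal system is minimal'' goes the wrong way, since $T_d^{a\times b}$ being nonminimal on the factor $X_d\times X_d$ does \emph{not} immediately force $T^{a\times b}$ to be nonminimal on the extension $X\times X$. (Almost $1$-$1$ extensions of nonminimal systems can fail to be minimal, but they need not be, and in any case the issue is that minimality could fail only on the thin fibers.) So instead I would argue directly that $T^{a\times b}$ has a proper nonempty closed invariant subset.

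First I would use Lemma~\ref{L:Td-ab-nonminimal} to fix a proper nonempty closed $T_d^{a\times b}$-invariant set $M_d\subsetneq X_d\times X_d$. Then $M=(\pi\times\pi)^{-1}(M_d)$ is a nonempty closed subset of $X\times X$, and it is $T^{a\times b}$-invariant because $\pi\times\pi$ is a semiconjugacy (Lemma~\ref{L:pi-x-pi}): indeed $(\pi\times\pi)(T^{a\times b}(M))=T_d^{a\times b}((\pi\times\pi)(M))\subseteq T_d^{a\times b}(M_d)\subseteq M_d$, so $T^{a\times b}(M)\subseteq M$. The only thing to check is that $M$ is a \emph{proper} subset, i.e. that $M_d$ does not contain the whole image of $\pi\times\pi$; but $\pi\times\pi$ is surjective (being a factor map), so $(\pi\times\pi)^{-1}(M_d)=X\times X$ would force $M_d=X_d\times X_d$, contradicting properness of $M_d$. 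Hence $M\subsetneq X\times X$ is a proper nonempty closed $T^{a\times b}$-invariant set, so $T^{a\times b}$ is not minimal on $X\times X$.

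I expect the main (indeed, essentially the only) subtlety to be the direction of the implication: one must resist the temptation to phrase the argument as ``$T^{a\times b}$ is an extension of the nonminimal $T_d^{a\times b}$, hence nonminimal,'' which is not a valid general principle, and instead observe the elementary fact that the full preimage under a factor map of a proper invariant closed set is again a proper invariant closed set. Everything else is routine: surjectivity and the intertwining relation for $\pi\times\pi$ come from Lemma~\ref{L:pi-x-pi}, and the concrete nonminimal invariant set downstairs was already produced in the proof of Lemma~\ref{L:Td-ab-nonminimal} (the preimage, under the toral factor map $(s,s')\mapsto(s,s')$, of any proper closed invariant set of the nonminimal toral rotation $(s,s')\mapsto(at_0+s,bt_0+s')$).
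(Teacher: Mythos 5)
Your proof is correct and is essentially the paper's: the paper simply notes that minimality is preserved by passing to factors and cites Lemmas~\ref{L:pi-x-pi} and~\ref{L:Td-ab-nonminimal}. The ``subtlety'' you flag, however, is not one. The principle you declare invalid --- that an extension of a nonminimal system is nonminimal --- is valid in full generality: it is exactly the contrapositive of ``every factor of a minimal system is minimal,'' and the pull-back argument you give (preimage under a factor map of a proper nonempty closed invariant set is again proper, nonempty, closed and invariant, using surjectivity and the intertwining relation) is precisely the standard proof of that principle. What is genuinely \emph{not} a general principle is the reverse direction --- an extension of a minimal system need not be minimal --- but that direction is not needed here, and the almost~$1$-$1$ property of $\pi\times\pi$ plays no role in this proposition.
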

\begin{proof}
	Since minimality is preserved by passing to factors, it suffices to use
	Lemmas~\ref{L:pi-x-pi} and \ref{L:Td-ab-nonminimal}.
\end{proof}


\section{Prelude to the proof of Theorem~A: Homeomorphisms on $X\times X$}

We are going to prove a part of Theorem~A(1), namely that if $X$ is a DST space then
$X\times X$ does not admit minimal
homeomorphisms; this is done in Theorem~\ref{T:TA.homeo} below. This section could be omitted because in later sections we prove Theorem~A(1) 
in full generality, without referring to Theorem~\ref{T:TA.homeo}.
However, we have three reasons for considering first the special case of homeomorphisms. 
First, this particular case is much shorter than the general one. 
Second, we want to illustrate some of the methods we shall use in the proof of Theorem~A(1). 
Third, in order to prove the nonexistence of minimal homeomorphisms on $X\times X$, 
we describe in full details the elements of the homeomorphism group of $X\times X$,
which is a result of independent interest.

\begin{theorem}\label{T:TA.homeo}
	Let $X$ be a DST space and $T\colon X\to X$ be the minimal homeomorphism from Subsection~\ref{SS:Slovak}. Given a homeomorphism $F\colon X\times X\to X\times X$, there exist $a,b\in\mathbb Z$ such that
	\begin{equation}\label{EQ:homeo}
	F=T^{a\times b}
	\quad\text{or}\quad
	F=R\circ T^{b\times a},
	\end{equation}
	where $R$ is the reflection sending $(x_1,x_2)$ to $(x_2,x_1)$. Consequently, $F$ is not minimal.
\end{theorem}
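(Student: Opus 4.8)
The strategy is to exploit the rigidity of the DST space $X$ to pin down homeomorphisms of $X\times X$ as products (possibly composed with the flip), and then invoke Proposition~\ref{P:T-ab-nonminimal} to conclude nonminimality. The starting point is the observation that $X$ is indecomposable with a single composant $\gamma$ that fails to be path-connected (its path components $C_n$ being topologist's sine curves), while every other composant $\alpha\in\AAa$ is a continuous injective image of $\RRR$. First I would show that any homeomorphism $F$ of $X\times X$ must respect, up to a possible transposition of factors, the product decomposition. The key is that the path components of $X\times X$ come in the four types listed in Subsection~\ref{SS:XxX}, and a homeomorphism permutes path components and preserves which ones are dense. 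A path component $\alpha\times\beta$ with $\alpha,\beta\in\AAa$ is dense, so its image is a dense path component, hence again of the form $\alpha'\times\beta'$; this already forces $F$ to map the ``grid'' $\bigcup_{m,n}(C_m\times C_n)\cup\bigcup_{m}(C_m\times X_\AAa)\cup\dots$ of nowhere dense path components to itself. More precisely, the only path components whose closure meets infinitely many other path components in a controlled way are the $C_m\times C_n$; I would use the closure relations \eqref{EQ:Cn-closure} (and their products) to show that $F$ must send the distinguished ``sine-curve square grid'' $\{C_{m,n}\}$ to itself, and that the induced action on the index set $\ZZZ\times\ZZZ$ is either $(m,n)\mapsto(m+a,n+b)$ or $(m,n)\mapsto(n+b,m+a)$ for fixed integers $a,b$ — the shift structure being forced by the adjacency pattern $\overline{C}_n\cap\overline{C}_{n+1}=W_{n+1}$ together with the fact that the two ``ends'' of each $C_n$ are topologically distinguishable.

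Having fixed the combinatorial type of $F$, after possibly composing with the reflection $R$ we may assume $F(C_{m,n})\subseteq C_{m+a,n+b}$ for all $m,n$, hence also $F(W_{m,n})\subseteq W_{m+a,n+b}$ by taking closures and using \eqref{EQ:Cn-closure}. Now I would bring in the factor map $\pi\times\pi\colon(X\times X,T^{a\times b})\to(X_d\times X_d,T_d^{a\times b})$ from Lemma~\ref{L:pi-x-pi}. The plan is to show $F=T^{a\times b}$ by proving that $F$ and $T^{a\times b}$ agree on a dense set. By Lemma~\ref{L:coin-F-Tab}, the hypothesis $F(W_{m,n})\subseteq W_{m+a,n+b}$ gives $\Coin(F,T^{a\times b})\cap W_{m,n}\neq\emptyset$ for every $m,n$. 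The squares $W_{m,n}$ are nowhere dense in $X\times X$, have diameters tending to zero (by \eqref{EQ:Wn-diam}), and their union is dense — indeed each of the unions in \eqref{EQ:Wn-union-dense} is dense, so in particular $\bigcup_{m,n}W_{m,n}$ is dense. Hence Lemma~\ref{L:coincidence-of-maps} applies and yields $F=T^{a\times b}$, completing the identification \eqref{EQ:homeo}. Finally, whichever of the two cases holds, $F$ is conjugate (via $R$, which is a self-homeomorphism of $X\times X$) to a map of the form $T^{a\times b}$, which is not minimal by Proposition~\ref{P:T-ab-nonminimal}; and a conjugate of a nonminimal map is nonminimal. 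This gives the last sentence of the theorem.

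The main obstacle I anticipate is the first step: rigorously extracting from the path-component structure that $F$ must carry the grid $\{C_{m,n}\}$ to itself \emph{with the shift action}, rather than merely permuting these components in some disorderly way or mixing the two factors inconsistently across different components. This requires a careful bookkeeping argument: one must show $F$ cannot send $C_m\times\alpha$ to something of the form $C_{m'}\times C_{n'}$ or $\alpha'\times C_{n'}$ inconsistently, using that the closure $\overline{C_m\times\alpha}$ meets exactly two path components of ``one-sided grid type'' while $\overline{C_{m,n}}$ has a different incidence pattern; and one must show the shift $n\mapsto n+a$ on the $\ZZZ$-indexed chain of $C_n$'s is forced because the chain is bi-infinite, the adjacencies $W_{n+1}$ link consecutive terms only, and the diameters $\diam(W_n)\to 0$ as $n\to\pm\infty$ break any order-reversing symmetry of the chain (an order-reversing bijection would have to send the two tails to each other, which is fine, but combined with the requirement of being a shift on a chain it still lands in the stated family after allowing the flip). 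I would treat this as a ``structure of $\Homeo(X\times X)$'' lemma, proved by the case analysis sketched above, and then the rest of the argument is the short coincidence-set computation. The whole scheme closely parallels (and is a warm-up for) the more delicate analysis needed for arbitrary continuous maps in the later sections, the essential simplification here being that a homeomorphism is invertible and hence cannot collapse path components, which is what makes the combinatorics manageable.
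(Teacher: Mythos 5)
Your overall strategy coincides with the paper's: classify the path components of $X\times X$ into the four types, use the rigidity of this decomposition to force $F$ to act on the grid $\{C_{m,n}\}$ by a shift of indices (up to the flip $R$), deduce $F(W_{m,n})\subseteq W_{m+a,n+b}$, and then identify $F$ with $T^{a\times b}$ via Lemmas~\ref{L:coin-F-Tab} and~\ref{L:coincidence-of-maps}. (The paper singles out the components $C_{m,n}$ by the fixed point property of their closures --- products of arc-like continua, via Dyer's theorem --- rather than by counting incidences of closures, but your alternative is workable, and your sketch of the shift/consistency bookkeeping matches the paper's injectivity argument in spirit.) Two steps as written are, however, incorrect. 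First, the diameters of the full family $\{W_{m,n}\}_{(m,n)\in\mathbb Z^2}$ do \emph{not} tend to zero under any enumeration: $\diam(W_m\times W_n)\geq\max(\diam W_m,\diam W_n)$, so for instance all the sets $W_{0,n}$ have diameter bounded below by the positive number $\diam W_0$. To apply Lemma~\ref{L:coincidence-of-maps} one must extract a sequence $((m_i,n_i))_i$ with $|m_i|,|n_i|\to\infty$ (so that \eqref{EQ:Wn-diam} gives shrinking diameters) whose squares still have dense union, which \eqref{EQ:Wn-union-dense} permits; this selection is exactly what the paper carries out.

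Second, and more seriously, your concluding conjugation argument fails: since $T^{b\times a}\circ R=R\circ T^{a\times b}$, one computes $R^{-1}\circ(R\circ T^{b\times a})\circ R=T^{b\times a}\circ R=R\circ T^{a\times b}$, which is again of ``flip'' type and not a direct product. Thus $R\circ T^{b\times a}$ is \emph{not} conjugate via $R$ to any $T^{c\times d}$, and Proposition~\ref{P:T-ab-nonminimal} cannot be invoked the way you propose. The correct route (the paper's) is to square the map: $(R\circ T^{b\times a})^2=T^{a\times b}\circ T^{b\times a}=T^{(a+b)\times(a+b)}$ is a direct product, hence nonminimal by Proposition~\ref{P:T-ab-nonminimal}, and since $X\times X$ is a continuum, minimality of $F$ would force minimality of $F^2$ --- a contradiction. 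With these two repairs your argument agrees with the paper's proof.
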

\begin{remark}\label{R:TA.homeo}
Notice that for all $a,b\in\mathbb Z$, $T^{a\times b}\circ R=R\circ T^{b\times a}$, hence the group $\Homeo(X\times X)$ is not abelian. In fact, $\Homeo(X\times X)$ is isomorphic to a semi-direct product
\begin{equation}\label{Eq:H.iso.gp}
\mathcal H(X\times X)\cong\mathbb Z_2\ltimes\mathbb Z^2.
\end{equation}
Indeed, $N=\{T^{a\times b}\colon a,b\in\mathbb Z\}$ is a normal subgroup of $\mathcal H(X\times X)$ isomorphic to $\mathbb Z^2$, $H=\{\Id_{X\times X},R\}$ is a subgroup of $\mathcal H(X\times X)$ isomorphic to $\mathbb Z_2$, $H\cap N=\{\Id_{X\times X}\}$ and $\mathcal H(X\times X)=HN$. These facts together yield the isomorphism \eqref{Eq:H.iso.gp}.
\end{remark}
\begin{proof}
	The nonminimality of $F$ follows from the first statement of the theorem. Indeed, if $F=T^{a\times b}$ or $F=R\circ T^{b\times a}$ for some $a,b\in\mathbb Z$ then $F^2=T^{c\times d}$ for some $c,d\in\mathbb Z$ and, 
	by Proposition~\ref{P:T-ab-nonminimal},
	$F^2$ is not minimal. Consequently, $F$ is not minimal 
	by connectedness of $X\times X$.

	Recall that the space $X\times X$ has path components of four types:
	\begin{equation*}
	\alpha\times\beta,\ C_m\times\alpha,\ \alpha\times C_n,\ C_m\times C_n
	\end{equation*}
	with $\alpha,\beta\in\mathcal A$ and $m,n\in\mathbb Z$, and the closures of these path components are, in respective order,
	\begin{equation*}
	X\times X,\ \overline{C}_m\times X,\ X\times\overline{C}_n,\ \overline{C}_m\times\overline{C}_n.
	\end{equation*}
	We also recall the notation $C_{m,n}=C_m\times C_n$ from \eqref{EQ:Cmn}.
	
	It follows from our discussion in Remark~\ref{R:Dyer} that the path components of the fourth type, $C_{m,n}$ ($m,n\in\mathbb Z$), are the only ones with closures possessing the fixed point property. Since $F$ is a homeomorphism, it follows that there is a bijection $\varphi$ on $\mathbb Z\times\mathbb Z$ such that
	\begin{equation}\label{EQ:phi-homeo}
	F(C_{m,n})=C_{\varphi(m,n)}
	\qquad\text{for all }m,n\in\mathbb Z.
	\end{equation}
	We also notice that $F(C_{m,n})=C_{m',n'}$ is equivalent to $F(\overline{C}_{m,n})=\overline{C}_{m',n'}$.
	
	Now $F$ induces a permutation on the collection of the path components of $X\times X$ of the second and the third type; in particular, for all $m\in\mathbb Z$ and $\alpha\in\mathcal A$ there is a unique pair $n\in\mathbb Z$, $\beta\in\mathcal A$ such that $F(C_m\times\alpha)=C_n\times\beta$ or $F(C_m\times\alpha)=\beta\times C_n$. In the first case we have $F(\overline{C}_m\times X)=\overline{C}_n\times X$ and, in the second case, $F(\overline{C}_m\times X)=X\times\overline{C}_n$. This observation implies that $n$ depends only on $m$ and not on $\alpha$, allowing us to define a function $\psi\colon\mathbb Z\ni m\mapsto n\in\mathbb Z$. We claim that $F$ either preserves the type or reverses it; to be precise, we assert that there are the following two possibilities.
	\begin{enumerate}
		\item[(1)] For each pair $m\in\mathbb Z$, $\alpha\in\mathcal A$ there is $\beta\in\mathcal A$ with $F(C_m\times\alpha)=C_{\psi(m)}\times\beta$.
		\item[(2)] For each pair $m\in\mathbb Z$, $\alpha\in\mathcal A$ there is $\beta\in\mathcal A$ with $F(C_m\times\alpha)=\beta\times C_{\psi(m)}$.
	\end{enumerate}
	To see this, assume, on the contrary, that $F(C_m\times\alpha)=C_{\psi(m)}\times\beta$ and $F(C_{m'}\times\alpha')=\beta'\times C_{\psi(m')}$ for some $m,m'\in\mathbb Z$ and $\alpha,\beta,\alpha',\beta'\in\mathcal A$. Then, by passing to closures, we obtain
	\begin{equation*}
	\begin{split}
	F((\overline{C}_m\cap\overline{C}_{m'})\times X)&=F(\overline{C}_m\times X)\cap F(\overline{C}_{m'}\times X)=(\overline{C}_{\psi(m)}\times X)\cap(X\times\overline{C}_{\psi(m')})=\overline{C}_{(\psi(m),\psi(m'))}\\
	&=F(\overline{C}_{\varphi^{-1}(\psi(m),\psi(m'))}),
	\end{split}
	\end{equation*}
	which is in contradiction with the injectivity of $F$.
	
	We now handle cases (1) and (2) separately.
	
	{\bf Case (1).} Now the map $F$ induces a permutation on the family $C_m\times\alpha$ ($m\in\mathbb Z$, $\alpha\in\mathcal A$). Since $F(\overline{C}_m\times X)=\overline{C}_{\psi(m)}\times X$ for every $m\in\mathbb N$, the injectivity of $F$ yields that $\psi$ is injective. Given $m\in\mathbb Z$ and $\alpha\in\mathcal A$, we have $\beta\in\mathcal A$ with $F(C_{m+1}\times\alpha)=C_{\psi(m+1)}\times\beta$ and $F(\overline{C}_m\times X)=\overline{C}_{\psi(m)}\times X$. Since the sets $C_{m+1}\times\alpha$ and $\overline{C}_m\times X$ intersect, their intersection being the product $W_{m+1}\times\alpha$, it follows that the sets $C_{\psi(m+1)}\times\beta$ and $\overline{C}_{\psi(m)}\times X$ also intersect. Consequently, $C_{\psi(m+1)}\cap\overline{C}_{\psi(m)}\neq\emptyset$, which yields $\psi(m+1)=\psi(m)+1$ by injectivity of $\psi$. We thereby conclude by finding $a\in\mathbb Z$ such that $\psi(m)=m+a$ for every $m\in\mathbb Z$. 
	
	Further, $F$ induces a permutation on the family $\alpha\times C_n$ ($\alpha\in\mathcal A$, $n\in\mathbb Z$). By applying the same argument as in the preceding paragraph, we find $b\in\mathbb Z$ such that for each pair $\alpha\in\mathcal A$, $n\in\mathbb Z$, there is $\beta\in\mathcal A$ with $F(\alpha\times C_n)=\beta\times C_{n+b}$. Consequently, $F(X\times\overline{C}_n)=X\times\overline{C}_{n+b}$ for every $n\in\mathbb Z$.
	
	Combining our results from the preceding two paragraphs with the definition of the map $\varphi$, we obtain, for all $m,n\in\mathbb Z$,
	\begin{equation*}
	\begin{split}
	\overline{C}_{\varphi(m,n)}&=F(\overline{C}_{m,n})=F((\overline{C}_m\times X)\cap (X\times\overline{C}_n))=F(\overline{C}_m\times X)\cap F(X\times\overline{C}_n)\\
	&=(\overline{C}_{m+a}\times X)\cap(X\times\overline{C}_{n+b})=\overline{C}_{m+a,n+b}.
	\end{split}
	\end{equation*}
	Thus, $\varphi(m,n)=(m+a,n+b)$ and so $F(C_{m,n})=C_{m+a,n+b}$ for all $m,n\in\mathbb Z$.
	
	Now let $m,n\in\mathbb Z$. Then
	\begin{equation*}
	\begin{split}
	F(W_m\times X)&=F((\overline{C}_{m-1}\times X)\cap(\overline{C}_m\times X))=F(\overline{C}_{m-1}\times X)\cap F(\overline{C}_m\times X)\\
	&=(\overline{C}_{m+a-1}\times X)\cap(\overline{C}_{m+a}\times X)=W_{m+a}\times X
	\end{split}
	\end{equation*}
	and, similarly, $F(X\times W_n)=X\times W_{n+b}$. Consequently, with the notation $W_{m,n}=W_m\times W_n$ from \eqref{EQ:Cmn},
	\begin{equation}\label{Eq:F.W.m.n}
	F(W_{m,n})=W_{m+a,n+b}
	\qquad\text{ for all }m,n\in\mathbb Z.
	\end{equation}
	Now, since $X\times X$ has a countable basis, it follows from \eqref{EQ:Wn-union-dense} that there is a sequence $((m_i,n_i))_{i\geq 1}$ in $\ZZZ^2$ such that the union of $W_{m_i,n_i}$ is dense in $X\times X$ and the diameters of  $W_{m_i,n_i}$ converge to zero. Since the sets $W_{m_i,n_i}$ are nowhere dense in $X\times X$, we may use \eqref{Eq:F.W.m.n} and apply Lemmas~\ref{L:coin-F-Tab} and \ref{L:coincidence-of-maps} to obtain $F=T^{a\times b}$.
	
	{\bf Case (2).} Now the homeomorphism $R\circ F$ fits into case (1). Thus, $R\circ F=T^{b\times a}$ for some $a,b\in\mathbb Z$, which yields $F=R\circ T^{b\times a}$.
\end{proof}

\begin{remark}[Extension of Theorem~\ref{T:TA.homeo} to $X^N$]\label{R:homeo-case-XN}
	The method used in the proof of Theorem~\ref{T:TA.homeo} can be directly generalized to prove that none of the cubes $X^N$ ($N\ge 3$) admits a minimal homeomorphism. As a matter of fact, if $F$ is a homeomorphism on $X^N$ then an analogue of \eqref{EQ:homeo} holds true; that is, either $F$ is a direct product of iterates of $T$ or else it is a composition of such a product with a nontrivial permutation of coordinates on $X^N$. Consequently, an appropriate iterate of $F$ is a direct product of iterates of $T$, hence it is not minimal
	(otherwise, its factor onto the first two coordinates would also be minimal, contradicting Proposition~\ref{P:T-ab-nonminimal}).
	By connectedness of $X^N$ it follows that neither $F$ is minimal. Let us also mention that an analogue of the isomorphism \eqref{Eq:H.iso.gp} from Remark~\ref{R:TA.homeo} is true. In this case the group $\Homeo(X^N)$ is isomorphic to a semi-direct product $S_N\ltimes\mathbb Z^N$, where $S_N$ stands for the symmetric group of the set $\{1,\dots,N\}$.
\end{remark}


\section{Towards the proof of Theorem~A: Continuous surjections on $X\times X$}\label{SS:XxX-surjection}

Now we turn to the study of continuous surjective maps on $X\times X$. The results obtained in this section will be used in our proof of Theorem~A(1) in Section~\ref{S:XxX-minimal}. We would like to bring the reader's attention to the fact that the methods of this section can be used to obtain analogous results for continuous surjective maps on spaces of the form $X\times X\times Y$, where $Y$ is an arbitrary path connected compact metrizable space. In fact, the statements and the proofs of all the results in this section remain true, verbatim, after adding a factor $Y$ to all the subsets of $X\times X$. (This observation will be useful to us in the proof of Theorem~\ref{T:XxXxY} in Section~\ref{Sec:other.min}.) However, for the sake of simplicity of the notation, we will formulate and prove the results of this section for continuous surjective maps on $X\times X$.

Throughout this section we assume that $F\colon X\times X \to X\times X$ is
a fixed continuous surjective map. We keep the notation introduced in Section~\ref{Sec:the.Slovak}. Let us also recall, see Subsection~\ref{SS:XxX}, that the space $X\times X$ has path components of four types:
\begin{equation*}
\alpha\times\beta,\ C_m\times\alpha,\ \alpha\times C_n,\ C_m\times C_n
\end{equation*}
with $\alpha,\beta\in\mathcal A$ and $m,n\in\mathbb Z$.

\begin{lemma}\label{L:F(alphaxbeta)}
	For all $\alpha,\beta\in\AAa$ there are $\alpha',\beta'\in\AAa$ such that
	\begin{equation*}
	F(\alpha\times\beta) \subseteq \alpha'\times\beta'.
	\end{equation*}
Consequently, $F((X\setminus\gamma)\times(X\setminus\gamma))\subseteq(X\setminus\gamma)\times(X\setminus\gamma)$.
\end{lemma}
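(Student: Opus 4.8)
The plan is to use two properties of the path component $\alpha\times\beta$ of $X\times X$: it is path-connected and it is dense. First I would observe that each $\alpha\in\AAa$, being a continuous injective image of $\RRR$, is path-connected, so the product $\alpha\times\beta$ is path-connected whenever $\alpha,\beta\in\AAa$. Since $F$ is continuous, $F(\alpha\times\beta)$ is then a path-connected subset of $X\times X$, and any path-connected subset lies in a single path component; so $F(\alpha\times\beta)\subseteq P$ for some path component $P$ of $X\times X$.

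It remains to identify $P$ among the four types listed in Subsection~\ref{SS:XxX}. Here I would use density: since $\overline{\alpha\times\beta}=X\times X$ and $F$ is continuous and surjective, $X\times X=F(X\times X)=F(\overline{\alpha\times\beta})\subseteq\overline{F(\alpha\times\beta)}$, so $F(\alpha\times\beta)$, and hence $P$, is dense in $X\times X$. But the path components of the second, third and fourth types are nowhere dense in $X\times X$ (as recalled in Subsection~\ref{SS:XxX}), and a nowhere dense set cannot contain a dense one. Therefore $P$ is of the first type, i.e.\ $P=\alpha'\times\beta'$ for some $\alpha',\beta'\in\AAa$, which gives $F(\alpha\times\beta)\subseteq\alpha'\times\beta'$.

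For the final assertion I would use that $X$ is indecomposable, so its composants are pairwise disjoint and $X=\gamma\sqcup\bigsqcup_{\alpha\in\AAa}\alpha$; in particular $X\setminus\gamma=\bigcup_{\alpha\in\AAa}\alpha$, whence $(X\setminus\gamma)\times(X\setminus\gamma)=\bigcup_{\alpha,\beta\in\AAa}\alpha\times\beta$. Applying the first part to each summand yields $F(\alpha\times\beta)\subseteq\alpha'\times\beta'\subseteq(X\setminus\gamma)\times(X\setminus\gamma)$, and taking the union over all $\alpha,\beta\in\AAa$ gives the claim. I do not expect a real obstacle here; the only points needing care are the bookkeeping of which path components are dense versus nowhere dense (already settled in Subsection~\ref{SS:XxX}) and the elementary fact that a path-connected set lies in a single path component.
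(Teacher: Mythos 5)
Your proof is correct and follows essentially the same route as the paper's: path-connectedness of $\alpha\times\beta$ forces the image into a single path component, and density of $\alpha\times\beta$ together with surjectivity of $F$ rules out all but the first type of path component, the remaining types being nowhere dense. The concluding step via $(X\setminus\gamma)\times(X\setminus\gamma)=\bigcup_{\alpha,\beta\in\AAa}\alpha\times\beta$ is also exactly the paper's argument.
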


\begin{proof}
	To verify the first statement, observe that a continuous image of a path connected set is path connected, and
	every continuous surjection
	maps dense sets onto dense sets.
	Since only the path components of the first type are dense,
	such $\alpha'$ and $\beta'$ necessarily exist.
	
	The second statement follows from the first one, since $\bigcup_{\alpha,\beta\in\mathcal A}\alpha\times\beta=(X\setminus\gamma)\times(X\setminus\gamma)$.
\end{proof}

Put
\begin{equation}\label{EQ:Dij}
\begin{split}
	D_{11} &=\   \{m\in\ZZZ\colon\  F(C_m\times \alpha)\subseteq {C}_k\times X \text{ for some }k\in\ZZZ \text{ and } \alpha\in\AAa\},
	\\
	D_{12} &=\   \{m\in\ZZZ\colon\  F(C_m\times \alpha)\subseteq X\times {C}_k \text{ for some }k\in\ZZZ \text{ and } \alpha\in\AAa\},
	\\
	D_{21} &=\   \{m\in\ZZZ\colon\ F(\alpha\times C_m)\subseteq {C}_k\times X \text{ for some }k\in\ZZZ \text{ and } \alpha\in\AAa\},
	\\
	D_{22} &=\   \{m\in\ZZZ\colon\  F(\alpha\times C_m)\subseteq X\times {C}_k \text{ for some }k\in\ZZZ \text{ and } \alpha\in\AAa\}.
\end{split}
\end{equation}

We are going to show that if $m\in D_{ij}$ then the corresponding $k\in\ZZZ$ does not depend on $\alpha$ and so it is unique.

\begin{lemma}\label{L:F(Cmxalpha)}
	Let $m,k\in\ZZZ$ and $\alpha\in\AAa$.
	\begin{enumerate}
		\item\label{IT:L:F(Cmxalpha):1}
		If $F(C_m\times \alpha)\subseteq C_k\times X$,
		then $F(C_m\times \beta)\subseteq C_k\times X$ for every $\beta\in\AAa$.
		\item\label{IT:L:F(Cmxalpha):2}
		If $F(C_m\times \alpha)\subseteq X\times C_k$,
		then $F(C_m\times \beta)\subseteq X\times C_k$ for every $\beta\in\AAa$.
		\item\label{IT:L:F(Cmxalpha):3}
		If $F(\alpha\times C_m)\subseteq C_k\times X$,
		then $F(\beta\times C_m)\subseteq C_k\times X$ for every $\beta\in\AAa$.
		\item\label{IT:L:F(Cmxalpha):4}
		If $F(\alpha\times C_m)\subseteq X\times C_k$,
		then $F(\beta\times C_m)\subseteq X\times C_k$ for every $\beta\in\AAa$.
	\end{enumerate}
\end{lemma}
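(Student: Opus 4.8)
\textbf{Proof strategy for Lemma~\ref{L:F(Cmxalpha)}.}

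The plan is to prove the four statements simultaneously by exploiting the way the path components $C_m\times\alpha$ and $\alpha\times C_n$ sit inside $X\times X$: each such component, although nowhere dense, is dense in a ``slab'' $\overline C_m\times X$ or $X\times\overline C_n$, and different choices of $\alpha\in\AAa$ give path components whose union $\bigcup_{\alpha\in\AAa}(C_m\times\alpha)$ is dense in that slab (since $\bigcup_{\alpha\in\AAa}\alpha = X\setminus\gamma$ is dense in $X$). I will treat part~(\ref{IT:L:F(Cmxalpha):1}) in detail; the remaining three are obtained by the obvious symmetry (swapping the two coordinates, via $R$, and/or swapping domain-slabs with target-slabs).

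First I would fix $m,k$ and $\alpha$ with $F(C_m\times\alpha)\subseteq C_k\times X$. Passing to closures and using continuity of $F$, this gives $F(\overline{C_m\times\alpha})\subseteq\overline{C_k\times X}=\overline C_k\times X$. Now $\overline{C_m\times\alpha}\supseteq C_m\times\overline\alpha = C_m\times X$, because each composant $\alpha\in\AAa$ is dense in $X$; hence $F(C_m\times X)\subseteq\overline C_k\times X$, and passing to closures once more, $F(\overline C_m\times X)\subseteq\overline C_k\times X$. In particular, for every $\beta\in\AAa$ we get $F(C_m\times\beta)\subseteq\overline C_k\times X$. It remains to upgrade $\overline C_k$ to $C_k$. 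The set $C_m\times\beta$ is path connected, so $F(C_m\times\beta)$ is path connected; hence it is contained in a single path component of $X\times X$, which must be of the form (path component of $X$) $\times$ (path component of $X$). Since its first-coordinate projection lies in $\overline C_k = C_k\sqcup W_{k+1}$ and $W_{k+1}$ is \emph{not} a path component of $X$ (it is an arc meeting both $C_k$ and $C_{k+1}$), the only path component of $X$ contained in $\overline C_k$ is $C_k$ itself. Therefore the first-coordinate projection of $F(C_m\times\beta)$ is contained in $C_k$, i.e.\ $F(C_m\times\beta)\subseteq C_k\times X$, as desired.

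The main point to get right — and the only place where one must be slightly careful — is the last step, where one argues that a path-connected subset of $\overline C_k\times X$ whose projection is not contained in a single point cannot ``straddle'' the arc $W_{k+1}$; this is exactly the structure recorded in~\eqref{EQ:Cn-closure}, namely that $\overline C_k\cap\overline C_{k+1}=W_{k+1}$ and that the path components of $\gamma$ are precisely the $C_n$'s. Concretely, if $F(C_m\times\beta)$ is contained in the path component $P_1\times P_2$ of $X\times X$, then $P_1$ is a path component of $X$ with $P_1\subseteq\overline C_k$; the only such $P_1$ is $C_k$ (an arc inside $W_{k+1}$ is not a full path component of $X$, since points of $W_{k+1}$ lie in the path component $C_{k+1}$ or in no path component structure extending $\gamma$ beyond the $C_n$'s). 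Once this is settled, parts~(\ref{IT:L:F(Cmxalpha):2})--(\ref{IT:L:F(Cmxalpha):4}) follow by composing with the reflection $R$ and interchanging the roles of the two factors, so no new ideas are needed there. I expect the verification of this ``no straddling'' observation, together with a clean bookkeeping of closures, to be the only real content; everything else is routine use of density of composants and of the fact that continuous surjections preserve density and continuous images preserve path-connectedness.
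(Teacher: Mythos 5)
The first half of your argument is exactly the paper's: density of $\alpha$ gives $F(C_m\times X)\subseteq\overline{C}_k\times X$, and path-connectedness of $F(C_m\times\beta)$ confines it to a single path component of $X\times X$. The problem is the final step, precisely the one you flag as "the only place where one must be slightly careful." You argue: the first-coordinate projection of $F(C_m\times\beta)$ lies in a path component $P_1$ of $X$ and also in $\overline{C}_k$, and since the only path component of $X$ \emph{contained in} $\overline{C}_k$ is $C_k$, we must have $P_1=C_k$. This is a non sequitur: you only know that the projection is contained in $P_1\cap\overline{C}_k$, not that $P_1\subseteq\overline{C}_k$. By \eqref{EQ:Cn-closure} we have $\overline{C}_k=C_k\sqcup W_{k+1}$ with $W_{k+1}\subseteq C_{k+1}$, so $P_1$ may perfectly well be $C_{k+1}$, with the projection contained in the arc $W_{k+1}=C_{k+1}\cap\overline{C}_k$. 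Nothing in your argument excludes $F(C_m\times\beta)\subseteq W_{k+1}\times X\subseteq C_{k+1}\times X$; the "no straddling" observation resolves nothing here because the dangerous alternative does not straddle $W_{k+1}$ at all — it sits inside it.

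The paper closes this gap with an extra idea you are missing: from the first step one only gets the dichotomy $F(C_m\times\beta)\subseteq C_k\times X$ or $F(C_m\times\beta)\subseteq C_{k+1}\times X$, and the second alternative is then killed by symmetry. Namely, if $F(C_m\times\beta)\subseteq C_{k+1}\times X$, then running the same first step with the roles of $\alpha$ and $\beta$ interchanged forces $F(C_m\times\alpha)$ to lie in $C_{k+1}\times X$ or $C_{k+2}\times X$, contradicting the hypothesis $F(C_m\times\alpha)\subseteq C_k\times X$ since $C_k$, $C_{k+1}$, $C_{k+2}$ are pairwise disjoint. You need to add this (or an equivalent) argument; your reduction of parts (2)--(4) to part (1) via the reflection $R$ is fine.
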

\begin{proof} We prove only \eqref{IT:L:F(Cmxalpha):1}; the other statements are proved similarly.
	So let $F(C_m\times \alpha)\subseteq C_k\times X$.
	The fact that $\alpha$ is dense
	gives $F(C_m\times X)\subseteq \overline{C}_k\times X$.
	Consequently, by \eqref{EQ:Cn-closure},
	$F(C_m\times X)\subseteq ({C}_k\times X) \sqcup ({C}_{k+1}\times X)$.
	Now let $\beta\in\AAa$. Then the first projection of
	$F(C_m\times \beta)$ is a path connected subset of $C_k\sqcup C_{k+1}$ and so it is contained either in $C_k$ or in $C_{k+1}$. Consequently,
	$F(C_m\times \beta)$ is a subset of either $C_k\times X$
	or $C_{k+1}\times X$.
	
	We claim that $F(C_m\times \beta)$ is in fact a subset of $C_k\times X$.
	Suppose, on the contrary, that $F(C_m\times \beta)\subseteq C_{k+1}\times X$.
	By switching the role of $\alpha$ and $\beta$, we infer from the preceding paragraph that
	$F(C_m\times \alpha)$ is a subset of either $C_{k+1}\times X$
	or $C_{k+2}\times X$. This contradicts our assumption on $\alpha$ in
	\eqref{IT:L:F(Cmxalpha):1}.
\end{proof}

For $i,j\in\{1,2\}$ and $m\in D_{ij}$, denote the corresponding $k$ from the definition of $D_{ij}$ by $\psi_{ij}(m)$. By Lemma~\ref{L:F(Cmxalpha)}, $\psi_{ij}$ is a well defined function $D_{ij}\to\mathbb Z$; we denote its range by $R_{ij}$.

\begin{lemma}\label{L:psi(m+1)}
	Let $i,j\in\{1,2\}$.
	\begin{enumerate}
		\item\label{IT:L:psi(m+1):2} If $m\in D_{ij}$ then
		also $m+1\in D_{ij}$ and
		$\psi_{ij}(m+1) - \psi_{ij}(m) \in \{0,1\}$;
		hence the function $\psi_{ij}$ is nondecreasing.
		\item\label{IT:L:psi(m+1):1} Either $D_{ij}=\ZZZ$ or $D_{ij}=\emptyset$
		or $D_{ij}=[m_0,\infty)\cap\ZZZ$ for some $m_0$.
	\end{enumerate}
\end{lemma}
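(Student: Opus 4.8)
The plan is to prove both assertions for $D_{11}$ and $\psi_{11}$; the three cases $D_{12},D_{21},D_{22}$ then follow by the same argument after swapping the roles of the two Cartesian factors in the domain (moving $C_m$ to the second factor) and/or in the range, and invoking the corresponding item of Lemma~\ref{L:F(Cmxalpha)}. The mechanism behind everything is the ``chain'' geometry of the pieces $C_n$: from \eqref{EQ:Cn-closure} one reads off $W_{m+1}\subseteq\overline{C}_m$ and $W_{m+1}\subseteq C_{m+1}$, so $\overline{C}_m\cap C_{m+1}\ne\emptyset$; this is precisely what lets information about $F$ on $C_m\times\beta$ propagate one step to the right, onto $C_{m+1}\times\beta$.

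First I would record the two facts underlying this section: $F$ carries each path component of $X\times X$ into a unique path component, and the path components of $X\times X$ are exactly the four types ($\alpha\times\beta$, $C_m\times\alpha$, $\alpha\times C_n$, $C_m\times C_n$) listed before Lemma~\ref{L:F(alphaxbeta)}. Now fix $m\in D_{11}$ and set $k=\psi_{11}(m)$. By Lemma~\ref{L:F(Cmxalpha)}\eqref{IT:L:F(Cmxalpha):1} we have $F(C_m\times\beta)\subseteq C_k\times X$ for every $\beta\in\AAa$, and since $X\setminus\gamma=\bigcup_{\beta\in\AAa}\beta$ is dense in $X$, passing to closures yields $F(\overline{C}_m\times X)\subseteq\overline{C}_k\times X$. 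Next, fix $\beta\in\AAa$ and let $P'$ be the path component containing the path-connected set $F(C_{m+1}\times\beta)$; picking any $p\in W_{m+1}$ and $b\in\beta$, the point $F(p,b)$ lies in $P'$ and, because $p\in C_{m+1}\cap\overline{C}_m$, also in $\overline{C}_k\times X$. Hence $P'$ contains a point whose first coordinate lies in $\overline{C}_k=C_k\sqcup W_{k+1}\subseteq C_k\cup C_{k+1}$; inspecting the four types of path components (those of the form $\alpha'\times\beta'$ and $\alpha'\times C_n$ have all first coordinates in $X\setminus\gamma$, which is disjoint from $\gamma$), this forces $P'=C_j\times\alpha'$ or $P'=C_j\times C_n$ with $j\in\{k,k+1\}$. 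In either case $F(C_{m+1}\times\beta)\subseteq C_j\times X$, so $m+1\in D_{11}$ and, by well-definedness of $\psi_{11}$, $\psi_{11}(m+1)=j\in\{\psi_{11}(m),\psi_{11}(m)+1\}$. This is \eqref{IT:L:psi(m+1):2}, and summing the increments shows $\psi_{11}$ is nondecreasing.

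For \eqref{IT:L:psi(m+1):1}, observe that \eqref{IT:L:psi(m+1):2} says precisely that $D_{11}$ is closed under $m\mapsto m+1$; a nonempty subset of $\ZZZ$ with this property is either unbounded below, hence all of $\ZZZ$, or has a least element $m_0$ and then equals $[m_0,\infty)\cap\ZZZ$. I do not expect a genuine obstacle here --- the argument is essentially bookkeeping layered on top of Lemma~\ref{L:F(Cmxalpha)} and the chain structure \eqref{EQ:Cn-closure}. The one place requiring care is the ``localization'' in the second paragraph: one must check that the closure argument really does pin the relevant coordinate of $F(C_{m+1}\times\beta)$ down to two consecutive $C$'s (this is exactly what the disjointness of $X\setminus\gamma$ from $\gamma$ delivers), and that in the variants $D_{12},D_{21},D_{22}$ it is the correct coordinate --- second-in-range, or $C_m$ in the second domain factor, or both --- that is being localized.
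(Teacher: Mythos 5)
Your proof is correct and follows essentially the same route as the paper's: both exploit that $W_{m+1}=\overline{C}_m\cap C_{m+1}$ forces the path component containing $F(C_{m+1}\times\beta)$ to meet $\overline{C}_k\times X\subseteq (C_k\cup C_{k+1})\times X$, which by the classification of path components pins it down to $C_k\times X$ or $C_{k+1}\times X$. The only cosmetic difference is that the paper works with a single $\alpha$ and the inclusion $F(W_{m+1}\times\alpha)\subseteq\overline{C}_k\times X$ directly, whereas you first upgrade to $F(\overline{C}_m\times X)\subseteq\overline{C}_k\times X$ via Lemma~\ref{L:F(Cmxalpha)}; both are valid.
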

\begin{proof}
	Let $i=j=1$; the other cases are similar.
	Let $m\in D_{11}$ and put
	$k=\psi_{11}(m)$; that is, $F(C_m\times\alpha)\subseteq C_k\times X$ for some
	$\alpha\in\AAa$.
	Since $W_{m+1}\subseteq \overline{C}_{m}$,
	we get $F(W_{m+1}\times \alpha)\subseteq \overline{C}_k\times X\subseteq (C_k\times X)\sqcup(C_{k+1}\times X)$.
	In view of the inclusion $W_{m+1}\subseteq C_{m+1}$, this means that there is a path component $P$ of $X$ such that $F(C_{m+1}\times\alpha)$ intersects one of the sets $C_k\times P$, $C_{k+1}\times P$.
	Consequently, due to path connectedness, $F(C_{m+1}\times\alpha)$ is a subset
	of either $C_k\times P\subseteq C_k\times X$ or $C_{k+1}\times P\subseteq C_{k+1}\times X$.
	Thus $m+1\in D_{11}$ and $\psi_{11}(m+1)\in\{k,k+1\}$. This verifies statement~\eqref{IT:L:psi(m+1):2}. Statement~\eqref{IT:L:psi(m+1):1} follows from \eqref{IT:L:psi(m+1):2} obviously.
\end{proof}

\begin{lemma}\label{L:psi-surjective}
	Let $j\in\{1,2\}$. Then		
	$R_{1j}\cup R_{2j}=\ZZZ$ and at least one of
	$D_{1j}$, $D_{2j}$ is equal to $\ZZZ$.
\end{lemma}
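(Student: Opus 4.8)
The plan is to fix $j\in\{1,2\}$ and prove the two assertions in order: first the surjectivity of the target indices, $R_{1j}\cup R_{2j}=\ZZZ$, and then deduce from it, together with Lemma~\ref{L:psi(m+1)}, that one of $D_{1j},D_{2j}$ is all of $\ZZZ$. Throughout I will work with the set $B=(\gamma\times X)\cup(X\times\gamma)$, which is exactly $(X\times X)\setminus\big((X\setminus\gamma)\times(X\setminus\gamma)\big)$. By the second assertion of Lemma~\ref{L:F(alphaxbeta)} we have $F\big((X\setminus\gamma)\times(X\setminus\gamma)\big)\subseteq(X\setminus\gamma)\times(X\setminus\gamma)$, hence $F^{-1}(B)\subseteq B$; and the path components of $X\times X$ that lie inside $B$ are precisely those of the forms $C_m\times\alpha$, $\alpha\times C_n$ and $C_m\times C_n$, of which only the last kind constitutes a countable family.

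For the surjectivity of indices I fix $k\in\ZZZ$ and treat $j=1$ (the case $j=2$ is verbatim the same with $\beta\times C_k$ in place of $C_k\times\beta$). For each $\beta\in\AAa$ the set $F^{-1}(C_k\times\beta)$ is nonempty since $F$ is surjective, is contained in $B$ since $C_k\times\beta\subseteq B$, and is a union of whole path components of $X\times X$: if $F(p)\in C_k\times\beta$ then $F$ maps the path component of $p$ into the path component of $F(p)$, which is exactly $C_k\times\beta$ (path components of a product are products of path components). Choosing one such path component $P(\beta)$, it lies in $B$ and hence has one of the forms $C_m\times\alpha$, $\alpha\times C_n$, $C_m\times C_n$; moreover $\beta\mapsto P(\beta)$ is injective, since $F(P(\beta))\subseteq C_k\times\beta$ forces $P(\beta)\ne P(\beta')$ whenever $\beta\ne\beta'$. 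As $\AAa$ is uncountable while there are only countably many path components of the form $C_m\times C_n$, some $\beta_0$ yields $P(\beta_0)$ of the form $C_m\times\alpha$ or $\alpha\times C_n$. In the first case $F(C_m\times\alpha)\subseteq C_k\times\beta_0\subseteq C_k\times X$, so $m\in D_{11}$ and $\psi_{11}(m)=k$, whence $k\in R_{11}$; in the second case $k\in R_{21}$ in the same way. Since $k$ was arbitrary, $R_{11}\cup R_{21}=\ZZZ$.

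To finish, I will invoke Lemma~\ref{L:psi(m+1)}: if $D_{ij}\ne\ZZZ$ then $D_{ij}$ is either empty or of the form $[m_0,\infty)\cap\ZZZ$, and on it $\psi_{ij}$ is nondecreasing; in both cases $R_{ij}=\psi_{ij}(D_{ij})$ is empty or bounded below. Hence, if neither $D_{1j}$ nor $D_{2j}$ were equal to $\ZZZ$, the set $R_{1j}\cup R_{2j}$ would be bounded below, contradicting $R_{1j}\cup R_{2j}=\ZZZ$. Therefore at least one of $D_{1j},D_{2j}$ equals $\ZZZ$.

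The crux of the argument, and the reason I organize it around cardinality rather than around point-chasing or closures, is that $C_k\times X$, $X\times C_k$ and the path components of the form $C_m\times C_n$ are all nowhere dense in $X\times X$, so Baire-category or closure arguments give no leverage; using the entire uncountable family of targets $\{C_k\times\beta:\beta\in\AAa\}$ at once is precisely what forces one of the preimages to escape the countable pool of $C_m\times C_n$'s. The two points that should be checked carefully are that $F^{-1}(C_k\times\beta)$ is genuinely a union of path components — this rests on path components of a product being products of path components and on $F$ sending path-connected sets to path-connected sets — and that the functions $\psi_{ij}$ are nondecreasing, which is the content of Lemma~\ref{L:psi(m+1)}.
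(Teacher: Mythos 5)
Your proof is correct and follows essentially the same route as the paper's: the same cardinality argument (uncountably many disjoint targets $C_k\times\beta$ versus only countably many path components of type $C_m\times C_n$, with the type $\alpha\times\beta$ excluded via Lemma~\ref{L:F(alphaxbeta)}), followed by the same boundedness-from-below contradiction using Lemma~\ref{L:psi(m+1)}. Your repackaging of the exclusion step as the invariance $F^{-1}(B)\subseteq B$ is only a cosmetic variation.
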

\begin{proof}
	We assume that $j=1$; the other case is analogous.
	To prove that $R_{11}\cup R_{21}=\ZZZ$, fix $k\in\ZZZ$.
	We need to show that there exist $m\in\ZZZ$ and $\alpha\in\AAa$ such that
	$F(C_m\times \alpha)\subseteq C_k\times X$ or
	$F(\alpha\times C_m)\subseteq C_k\times X$.
	We use a cardinality argument.
	The set $C_k\times X$ contains uncountably many path-components $C_k\times\beta$
	($\beta\in\AAa$). By surjectivity of $F$,
	for every $\beta\in\mathcal A$ there is a path component of $X\times X$
	which is mapped by $F$ into $C_k\times\beta$.
	Thus there are uncountably many path components of $X\times X$ which are mapped by $F$ into $C_k\times X$. In view of Lemma~\ref{L:F(alphaxbeta)}, none of them is of the form
	$\alpha\times{\beta}$ ($\alpha,{\beta}\in\AAa$).
	Further, there are only countably many path components of the form
	$C_m\times C_n$ ($m,n\in\ZZZ$).
	Hence, necessarily, a path component of the form $C_m\times \alpha$ or
	$\alpha\times C_m$ (with $\alpha\in\mathcal A$) is mapped to $C_k\times X$.
	
	If both $D_{11}$ and $D_{21}$ are different from $\ZZZ$ then, by
	Lemma~\ref{L:psi(m+1)}\eqref{IT:L:psi(m+1):1},
	their union is bounded from below. Hence, by
	Lemma~\ref{L:psi(m+1)}\eqref{IT:L:psi(m+1):2}, the union of
	$R_{11}$ and $R_{21}$ is also bounded from below, which is in contradiction with the first statement of the lemma.
\end{proof}

\begin{lemma}\label{L:psi-2possib}
	Let $j\in\{1,2\}$. Then		
	one of $D_{1j}$, $D_{2j}$ is $\ZZZ$ and the other one is empty.
\end{lemma}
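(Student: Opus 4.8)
The plan is to upgrade Lemma~\ref{L:psi-surjective}, which already guarantees that at least one of $D_{1j},D_{2j}$ equals $\ZZZ$, by showing that whichever of the two is not $\ZZZ$ is forced to be empty. Interchanging the two coordinate factors of $X\times X$ (formally, replacing $F$ by $F\circ R$ with $R(x_1,x_2)=(x_2,x_1)$, which is again a continuous surjection) swaps the families $D_{1j}$ and $D_{2j}$, so it suffices to prove the implication: if $D_{1j}=\ZZZ$ then $D_{2j}=\emptyset$. I will carry this out for $j=1$; the case $j=2$ is identical. So assume $D_{11}=\ZZZ$ and, towards a contradiction, that $D_{21}\ne\emptyset$, and fix some $m^{*}\in D_{21}$.

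First I would record the ``first–coordinate confinement'' coming from the two hypotheses. From $D_{11}=\ZZZ$, Lemma~\ref{L:F(Cmxalpha)}\eqref{IT:L:F(Cmxalpha):1} gives $F(C_{m'}\times\alpha)\subseteq C_{\psi_{11}(m')}\times X$ for every $m'\in\ZZZ$ and every $\alpha\in\AAa$; since every composant of $X$ is dense in $X$, passing to closures exactly as in the proof of Lemma~\ref{L:F(Cmxalpha)} yields the left inclusion below, and the same reasoning applied to $m^{*}\in D_{21}$ via Lemma~\ref{L:F(Cmxalpha)}\eqref{IT:L:F(Cmxalpha):3} yields the right one:
\[
  F(C_{m'}\times X)\subseteq\overline{C}_{\psi_{11}(m')}\times X\quad(m'\in\ZZZ),
  \qquad
  F(X\times C_{m^{*}})\subseteq\overline{C}_{\psi_{21}(m^{*})}\times X.
\]
Intersecting these along the nonempty set $C_{m'}\times C_{m^{*}}\subseteq(C_{m'}\times X)\cap(X\times C_{m^{*}})$ shows that $\overline{C}_{\psi_{11}(m')}\cap\overline{C}_{\psi_{21}(m^{*})}\ne\emptyset$, hence $\lvert\psi_{11}(m')-\psi_{21}(m^{*})\rvert\le1$ by~\eqref{EQ:Cn-closure}, and this holds for \emph{every} $m'\in\ZZZ$. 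Therefore the range $R_{11}$ of $\psi_{11}$ is contained in a three-element set, i.e.\ bounded.

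Now I would close the argument with Lemma~\ref{L:psi-surjective}: since $R_{11}\cup R_{21}=\ZZZ$ and $R_{11}$ is bounded, $R_{21}$ is cofinite, in particular unbounded below. But $\psi_{21}$ is nondecreasing on $D_{21}$ (Lemma~\ref{L:psi(m+1)}\eqref{IT:L:psi(m+1):2}), so its range can be unbounded below only if $D_{21}$ itself is unbounded below; by Lemma~\ref{L:psi(m+1)}\eqref{IT:L:psi(m+1):1} the only such possibility is $D_{21}=\ZZZ$. In that case the intersection estimate of the previous paragraph applies with $m^{*}$ replaced by an arbitrary $m''\in\ZZZ$, giving $\lvert\psi_{11}(m')-\psi_{21}(m'')\rvert\le1$ for all $m',m''\in\ZZZ$; fixing $m'$ makes $R_{21}$ bounded, contradicting that it is unbounded below. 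This contradiction shows $D_{21}=\emptyset$, and the lemma follows.

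The proof is short and almost entirely bookkeeping with the functions $\psi_{ij}$; the one place that deserves a little care is the concluding chain of implications, where Lemma~\ref{L:psi-surjective} does the real work. Specifically, from the boundedness of $R_{11}$ one must first use the monotonicity of $\psi_{21}$ to rule out the half-line shape $D_{21}=[m_0,\infty)\cap\ZZZ$ (for which $R_{21}\subseteq[\psi_{21}(m_0),\infty)$ would be bounded below, contrary to being cofinite), and only then dispatch the remaining case $D_{21}=\ZZZ$ by a second application of the intersection estimate. Everything else uses only path-connectedness, density of composants, and the already-established structure of the $D_{ij}$ and $\psi_{ij}$.
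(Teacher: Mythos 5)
Your proof is correct and rests on the same key mechanism as the paper's: intersecting the two confinements along $C_{m'}\times C_{m^*}$ to get $\lvert\psi_{11}(m')-\psi_{21}(m^*)\rvert\le 1$, and playing the resulting boundedness of the ranges against $R_{11}\cup R_{21}=\ZZZ$ from Lemma~\ref{L:psi-surjective}. Your detour through cofiniteness, monotonicity and the shape of $D_{21}$ is sound but unnecessary: since $m^*$ ranges over all of $D_{21}$, the same estimate with $m'$ fixed already bounds $R_{21}$, so both ranges are bounded at once and the contradiction is immediate, which is exactly how the paper argues.
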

\begin{proof}
	Again, we assume that $j=1$. 
	By Lemma~\ref{L:psi-surjective}, at least one of 
	$D_{11}$ and $D_{21}$ equals $\ZZZ$.
	Suppose that both are nonempty. To get a contradiction, fix $m\in D_{11}$ and $n\in D_{21}$. Put $k=\psi_{11}(m)$ and $l=\psi_{21}(n)$.
	Then $F(C_m\times X)\subseteq \overline{C}_k\times X$
	and $F(X\times C_n)\subseteq \overline{C}_l\times X$.
	Consequently, $F(C_m\times C_n) \subseteq (\overline{C}_k\cap\overline{C}_l)\times X$
	and so $\abs{k-l}\le 1$. Thus we have proved that
	\begin{equation*}
	\abs{\psi_{11}(m)-\psi_{21}(n)} \le 1
	\qquad\text{for all }
	m\in D_{11} \text{ and } n\in D_{21}.
	\end{equation*}
	It follows that both $\psi_{11}$ and $\psi_{21}$ are bounded, which contradicts the equality $R_{11}\cup R_{21}=\mathbb Z$ from Lemma~\ref{L:psi-surjective}.
\end{proof}

\begin{lemma}\label{L:psi-row-possib}
	Let $i\in\{1,2\}$. Then		
	one of $D_{i1}$, $D_{i2}$ is $\ZZZ$ and the other one is empty.
\end{lemma}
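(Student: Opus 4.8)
The plan is to derive this from Lemma~\ref{L:psi-2possib} by eliminating two forbidden configurations. Applying Lemma~\ref{L:psi-2possib} once with $j=1$ and once with $j=2$, we know that exactly one of $D_{11},D_{21}$ is $\ZZZ$ and the other empty, and likewise exactly one of $D_{12},D_{22}$ is $\ZZZ$ and the other empty. This leaves four combinations for the quadruple $(D_{11},D_{21},D_{12},D_{22})$. Two of them already give the desired conclusion for both $i=1$ and $i=2$; the remaining two are ``$D_{11}=D_{12}=\ZZZ$'' and ``$D_{21}=D_{22}=\ZZZ$''. Note that the latter is, by Lemma~\ref{L:psi-2possib}, the same as ``$D_{11}=D_{12}=\emptyset$'', so it suffices to prove that $D_{i1}=D_{i2}=\ZZZ$ is impossible for each $i$; the case $i=2$ is symmetric to $i=1$ (interchange the two coordinates), so I will concentrate on ruling out $D_{11}=D_{12}=\ZZZ$.

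Assume $D_{11}=D_{12}=\ZZZ$. Then $\psi_{11}$ and $\psi_{12}$ are defined on all of $\ZZZ$, and arguing exactly as at the start of the proof of Lemma~\ref{L:F(Cmxalpha)} (density of each $\alpha\in\AAa$ in $X$), one gets $F(C_m\times X)\subseteq\overline{C}_{\psi_{11}(m)}\times X$ and $F(C_m\times X)\subseteq X\times\overline{C}_{\psi_{12}(m)}$, hence $F(C_m\times X)\subseteq\overline{C}_{\psi_{11}(m)}\times\overline{C}_{\psi_{12}(m)}$ for every $m\in\ZZZ$. By \eqref{EQ:Cn-closure} we have $\overline{C}_k\subseteq C_k\sqcup C_{k+1}\subseteq\gamma$ for every $k$, so each $\overline{C}_{\psi_{11}(m)}\times\overline{C}_{\psi_{12}(m)}\subseteq\gamma\times\gamma$, and therefore
\[
F(\gamma\times X)=\bigcup_{m\in\ZZZ}F(C_m\times X)\subseteq\gamma\times\gamma .
\]

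On the other hand, Lemma~\ref{L:psi-2possib} forces $D_{21}=D_{22}=\emptyset$. Consequently, for every $\alpha\in\AAa$ and $n\in\ZZZ$ the path-connected set $F(\alpha\times C_n)$ must lie in a path component of $X\times X$ of the first type, $\alpha'\times\beta'$: it cannot be of type $C_k\times\beta'$ or $C_k\times C_\ell$ (that would put $n\in D_{21}$), nor of type $\beta'\times C_\ell$ (that would put $n\in D_{22}$). Hence $F(\alpha\times C_n)\subseteq(X\setminus\gamma)\times(X\setminus\gamma)$; together with $F(\alpha\times\beta)\subseteq(X\setminus\gamma)\times(X\setminus\gamma)$ from Lemma~\ref{L:F(alphaxbeta)}, and since $\alpha\times X=\bigsqcup_{n}(\alpha\times C_n)\sqcup\bigsqcup_{\beta\in\AAa}(\alpha\times\beta)$, this yields $F((X\setminus\gamma)\times X)\subseteq(X\setminus\gamma)\times(X\setminus\gamma)$. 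Combining the two conclusions with surjectivity of $F$ gives
\[
X\times X=F(X\times X)\subseteq(\gamma\times\gamma)\cup\big((X\setminus\gamma)\times(X\setminus\gamma)\big),
\]
which is absurd, because any point $(x_0,c_0)$ with $x_0\in X\setminus\gamma$ and $c_0\in\gamma$ belongs to neither set. This contradiction rules out $D_{11}=D_{12}=\ZZZ$, and by the symmetric argument it rules out $D_{21}=D_{22}=\ZZZ$; the lemma follows.

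The argument is mostly bookkeeping on top of Lemmas~\ref{L:F(alphaxbeta)}, \ref{L:F(Cmxalpha)} and \ref{L:psi-2possib}; the one place where a little care is needed is the claim that $D_{21}=D_{22}=\emptyset$ really forces $F(\alpha\times C_n)$ into a \emph{first-type} path component (in particular, ruling out that it slips into some $C_k\times C_\ell$, which is hidden inside both $C_k\times X$ and $X\times C_\ell$). I expect that to be the only subtle point; the decisive idea is simply to split $X\times X$ as $(\gamma\times X)\cup((X\setminus\gamma)\times X)$ and observe that under the hypothesis $D_{11}=D_{12}=\ZZZ$ these two pieces are sent into the two ``disjointly positioned'' sets $\gamma\times\gamma$ and $(X\setminus\gamma)\times(X\setminus\gamma)$, which cannot cover $X\times X$.
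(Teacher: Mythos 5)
Your proof is correct and follows essentially the same route as the paper's: assume $D_{11}=D_{12}=\ZZZ$, deduce $D_{21}=D_{22}=\emptyset$ from Lemma~\ref{L:psi-2possib}, show $F(\gamma\times X)\subseteq\gamma\times\gamma$, and contradict surjectivity. The only cosmetic difference is at the end, where the paper exhibits a specific component $C_k\times\beta$ that no path component can be mapped into, while you show the full image avoids $(X\setminus\gamma)\times\gamma$; both are valid instances of the same idea.
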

\begin{proof}
       By virtue of Lemma~\ref{L:psi-2possib}, it suffices to exclude the cases $D_{11}=D_{12}=\mathbb Z$ and $D_{21}=D_{22}=\mathbb Z$. We handle the first case; the other one is handled similarly. So assume that $D_{11}=D_{12}=\ZZZ$. Then, by Lemma~\ref{L:psi-2possib}, $D_{21}=D_{22}=\emptyset$.
	
	In view of Lemma~\ref{L:F(Cmxalpha)} and by definitions of $D_{11}$ and $D_{12}$,	
	for all $m\in\ZZZ$ and $\alpha\in \AAa$
	we have $F(C_m\times\alpha) \subseteq  C_{\psi_{11}(m)}\times C_{\psi_{12}(m)}$,
	and so
	$F(C_m\times X) \subseteq  \overline{C}_{\psi_{11}(m)}\times
	\overline{C}_{\psi_{12}(m)} \subseteq \gamma\times\gamma$. Since $m$ was arbitrary, it follows that
	\begin{equation}\label{EQ:P:Dpsi=Z:1}
	F(\gamma\times X) 
	=F\left(\bigcup_{m\in\mathbb Z}C_m\times X\right)=\bigcup_{m\in\mathbb Z}F(C_m\times X)
	\subseteq \gamma\times\gamma.
	\end{equation}
	
	Now fix $k\in\ZZZ$ and $\beta\in\AAa$.
	By surjectivity of $F$, there is a path component $P$ of $X\times X$, which is mapped into $C_k\times \beta$.
	In view of Lemma~\ref{L:F(alphaxbeta)} and \eqref{EQ:P:Dpsi=Z:1}, $P$ is of the form $P=\alpha\times C_m$ for some $m\in\ZZZ$ and $\alpha\in \AAa$. Thus $F(\alpha\times C_m)\subseteq C_k\times\beta$ and so $m\in D_{21}=\emptyset$, a contradiction.
\end{proof}

\begin{proposition}\label{P:Dpsi=Z}
	Exactly one of the following two possibilities is true:
	\begin{enumerate}[leftmargin=4\parindent]
		\item[\caseI]
		$D_{11}=D_{22}=\ZZZ$ and $D_{12}=D_{21}=\emptyset$,
		hence
		$R_{11}=R_{22}=\ZZZ$ and $R_{12}=R_{21}=\emptyset$;
		\item[\caseII]
		$D_{11}=D_{22}=\emptyset$ and $D_{12}=D_{21}=\ZZZ$,
		hence
		$R_{11}=R_{22}=\emptyset$ and $R_{12}=R_{21}=\ZZZ$.
	\end{enumerate}
\end{proposition}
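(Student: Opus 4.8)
The plan is to combine the three lemmas just proved and observe that they pin the four sets $D_{ij}$ down to exactly the two listed configurations, essentially via a $2\times2$ permutation-matrix argument.

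First I would note that Lemma~\ref{L:psi-2possib}, applied with $j=1$ and then with $j=2$, already tells us that every $D_{ij}$ is either $\ZZZ$ or $\emptyset$, and that in each of the two ``columns'' $\{D_{11},D_{21}\}$ and $\{D_{12},D_{22}\}$ exactly one set is $\ZZZ$ and the other is empty. Then Lemma~\ref{L:psi-row-possib}, applied with $i=1$ and $i=2$, gives the same dichotomy along the two ``rows'' $\{D_{11},D_{12}\}$ and $\{D_{21},D_{22}\}$. Hence the set of pairs $(i,j)$ with $D_{ij}=\ZZZ$ meets each row and each column of the $2\times2$ index array in exactly one point; that is, it is the support of a $2\times2$ permutation matrix. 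There being precisely two such matrices, we obtain exactly the two alternatives: $D_{11}=D_{22}=\ZZZ$, $D_{12}=D_{21}=\emptyset$ (case \caseI), or $D_{12}=D_{21}=\ZZZ$, $D_{11}=D_{22}=\emptyset$ (case \caseII). They are mutually exclusive since no $D_{ij}$ can be both $\ZZZ$ and $\emptyset$.

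Finally, I would read off the assertions on the ranges $R_{ij}$. When $D_{ij}=\emptyset$ we have $R_{ij}=\emptyset$ trivially. When $D_{ij}=\ZZZ$, its column partner $D_{i'j}$ (with $i'\neq i$) is empty by the previous paragraph, so $R_{i'j}=\emptyset$, and then the equality $R_{1j}\cup R_{2j}=\ZZZ$ from Lemma~\ref{L:psi-surjective} forces $R_{ij}=\ZZZ$. This yields exactly the stated ``hence'' clauses in both cases. I do not anticipate any real difficulty in this last step — the proposition is a purely combinatorial consequence of Lemmas~\ref{L:psi-surjective}, \ref{L:psi-2possib} and \ref{L:psi-row-possib}, all the substantive work (the path-connectedness, closure and cardinality arguments) having already been carried out in establishing those lemmas.
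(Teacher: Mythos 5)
Your proposal is correct and follows essentially the same route as the paper: the domain claims are obtained by combining the column dichotomy of Lemma~\ref{L:psi-2possib} with the row dichotomy of Lemma~\ref{L:psi-row-possib}, and the range claims then follow from $R_{1j}\cup R_{2j}=\ZZZ$ in Lemma~\ref{L:psi-surjective}. The paper's proof is just a terser version of your permutation-matrix argument.
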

\begin{proof}
	The claims on the domains follow from Lemmas~\ref{L:psi-2possib} and \ref{L:psi-row-possib}.
	For the claims on the ranges, use Lemma~\ref{L:psi-surjective}.
\end{proof}

\begin{lemma}\label{L:F(gammaxgamma)}
	$F(\gamma\times\gamma)=F^{-1}(\gamma\times\gamma)=\gamma\times\gamma$.
\end{lemma}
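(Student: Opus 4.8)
The plan is to leverage Proposition~\ref{P:Dpsi=Z}, which already isolates the two ways in which $F$ can interact with the exceptional composant $\gamma=\bigcup_{m\in\ZZZ}C_m$. Recall that, by \eqref{EQ:Cn-closure}, $\overline{C}_m\subseteq C_m\cup C_{m+1}\subseteq\gamma$, and that the path components of $X$ are precisely the sets $C_m$ ($m\in\ZZZ$) together with the composants $\alpha\in\AAa$; consequently every path connected subset of $X$ lies in a single $C_m$ or in a single $\alpha\in\AAa$. I will show that in either case of Proposition~\ref{P:Dpsi=Z} one has both
\[
  F(\gamma\times\gamma)\subseteq\gamma\times\gamma
  \qquad\text{and}\qquad
  F\big((X\times X)\setminus(\gamma\times\gamma)\big)\subseteq (X\times X)\setminus(\gamma\times\gamma),
\]
after which surjectivity of $F$ upgrades both inclusions to equalities and also yields $F^{-1}(\gamma\times\gamma)=\gamma\times\gamma$.

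Consider case~\caseI, so $D_{11}=D_{22}=\ZZZ$ and $D_{12}=D_{21}=\emptyset$ (case~\caseII being analogous, with the two coordinates interchanged). From $D_{11}=\ZZZ$ and Lemma~\ref{L:F(Cmxalpha)} we get $F(C_m\times\alpha)\subseteq C_{\psi_{11}(m)}\times X$ for all $m\in\ZZZ$ and $\alpha\in\AAa$; since $\alpha$ is dense in $X$ and $\overline{C}_{\psi_{11}(m)}\subseteq\gamma$, passing to closures gives $F(C_m\times X)\subseteq\gamma\times X$, hence $F(\gamma\times X)\subseteq\gamma\times X$. The symmetric argument using $D_{22}=\ZZZ$ gives $F(X\times\gamma)\subseteq X\times\gamma$, and intersecting yields $F(\gamma\times\gamma)\subseteq(\gamma\times X)\cap(X\times\gamma)=\gamma\times\gamma$. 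For the complement, write it as $\big((X\setminus\gamma)\times(X\setminus\gamma)\big)\cup\big(\gamma\times(X\setminus\gamma)\big)\cup\big((X\setminus\gamma)\times\gamma\big)$. The first piece is sent into $(X\setminus\gamma)\times(X\setminus\gamma)$ by Lemma~\ref{L:F(alphaxbeta)}. For the second piece it suffices to treat a path component $C_m\times\alpha$: since $m\notin D_{12}$, the set $\pr_2(F(C_m\times\alpha))$ is path connected and not contained in any $C_k$, hence lies in some $\beta\in\AAa$, so $F(C_m\times\alpha)\subseteq X\times\beta$, which is disjoint from $\gamma\times\gamma$. The third piece is handled the same way, using $D_{21}=\emptyset$.

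Finally, with $A=\gamma\times\gamma$ and $B=(X\times X)\setminus A$ we have $F(A)\subseteq A$, $F(B)\subseteq B$, and $F(A)\cup F(B)=F(X\times X)=X\times X=A\cup B$; since $A$ and $B$ are disjoint, this forces $F(A)=A$ and $F(B)=B$, while the inclusion $F(B)\subseteq B$ also gives $F^{-1}(A)\subseteq A$, hence $F^{-1}(A)=A$. The substantive work here is already contained in Proposition~\ref{P:Dpsi=Z} — it is there that one decides which coordinate of $F$ ``records'' $\gamma$ — so the only real care needed is the simultaneous bookkeeping of the two coordinate directions and the handling of the mixed path components $C_m\times\alpha$ and $\alpha\times C_n$, where the vanishing of the off-diagonal domains $D_{12},D_{21}$ (resp.\ $D_{11},D_{22}$) is exactly what forces the relevant projection into $\AAa$.
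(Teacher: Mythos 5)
Your proof is correct and follows essentially the same route as the paper's: both arguments rest on Proposition~\ref{P:Dpsi=Z} to show that $\gamma\times\gamma$ and its complement are each mapped into themselves (the paper phrases the complement step as the disjointness of $F(C_m\times\alpha)$ from $\gamma\times\gamma$, since otherwise some $m$ would lie in $D_{11}\cap D_{12}$, while you use the vanishing of $D_{12}$ and $D_{21}$ case by case — the same fact), and then conclude by surjectivity. No gaps.
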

\begin{proof}
	Fix $\alpha\in\AAa$.
	If $F(\gamma\times\alpha)$ intersects $\gamma\times\gamma$ then, by a path connectedness argument, there are integers
	$m,k,l$ with $F(C_m\times\alpha)\subseteq C_k\times C_l$.
	Hence $m\in D_{11}\cap D_{12}$, which contradicts Proposition~\ref{P:Dpsi=Z}.
	Thus $F(\gamma\times\alpha)$ is disjoint from $\gamma\times\gamma$.
	Analogously, $F(\alpha\times\gamma)$ is disjoint from $\gamma\times\gamma$. Consequently, by taking unions over $\alpha\in\mathcal A$, we infer that
	\begin{equation}\label{Eq:gmxgm1}
	F\big(\gamma\times(X\setminus\gamma)\big)\cap(\gamma\times\gamma)=F\big((X\setminus\gamma)\times\gamma\big)\cap(\gamma\times\gamma)=\emptyset.
	\end{equation}
	Further, by Lemma~\ref{L:F(alphaxbeta)},
	\begin{equation}\label{Eq:gmxgm2}
	F((X\setminus\gamma)\times(X\setminus\gamma))\subseteq
	(X\setminus\gamma)\times(X\setminus\gamma).
	\end{equation}
	Thus, by virtue of \eqref{Eq:gmxgm1} and \eqref{Eq:gmxgm2}, $F^{-1}(\gamma\times\gamma)\subseteq\gamma\times\gamma$.
	
	We prove that $F(\gamma\times\gamma)\subseteq \gamma\times\gamma$.
	To this end, fix $m,n\in\ZZZ$.
	Assume that we are in Case~\caseI{} from Proposition~\ref{P:Dpsi=Z}. 
	Since $m\in D_{11}$ and $n\in D_{22}$, we have $F(C_m\times X)\subseteq\overline{C}_{\psi_{11}(m)}\times X$ and $F(X\times C_n)\subseteq X\times\overline{C}_{\psi_{22}(n)}$, hence $F(C_m\times C_n)\subseteq \overline{C}_{\psi_{11}(m)} \times
	\overline{C}_{\psi_{22}(n)}\subseteq \gamma\times\gamma$.
	In Case~\caseII{} we similarly get
	$F(C_m\times C_n)\subseteq \overline{C}_{\psi_{21}(n)} \times
	\overline{C}_{\psi_{12}(m)}\subseteq \gamma\times\gamma$.
	In any case, $F(C_m\times C_n)\subseteq\gamma\times\gamma$ for all $m,n\in\mathbb Z$, which verifies that $F(\gamma\times\gamma)\subseteq \gamma\times\gamma$.
	
	To summarize, for $A=\gamma\times\gamma$ we have proved $F^{-1}(A)\subseteq A$
	and $F(A)\subseteq A$. Consequently, $F^{-1}(A)=A$ and, by surjectivity of $F$, $F(A)=A$.
\end{proof}


\section{Proof of Theorem~A(1): Nonexistence of minimal maps on $X\times X$}\label{S:XxX-minimal}
We still assume that $F\colon X\times X\to X\times X$ is a continuous surjection. Later in this section we will suppose
that $F$ is a minimal map. 
(To get a contradiction, we will show that then $F^2$ is a direct product, 
$F^2=T^{c\times d}$
for some integers $c\ne 0\ne d$. 
Then Proposition~\ref{P:T-ab-nonminimal} will be used.)

We distinguish two cases.


\subsection{Case~\caseI{} from Proposition~\ref{P:Dpsi=Z}}\label{SS:XxX-minimal-11}
Assume that \caseI{} is true.
Then for all $m,n\in\ZZZ$ and $\alpha\in\AAa$,
\begin{equation}\label{EQ:case1a}
F(C_m\times\alpha)\subseteq C_{\psi_{11}(m)}\times X,
\quad
F(\alpha\times C_n)\subseteq X\times C_{\psi_{22}(n)}.
\end{equation}
By Proposition~\ref{P:Dpsi=Z}, $\varphi=\psi_{11}\times\psi_{22}$ is a surjective selfmap of
$\ZZZ\times\ZZZ$. It follows from \eqref{EQ:case1a}, by passing to closures,
that the map $\varphi$ has the property
\begin{equation}\label{EQ:case1b}
F(C_{m,n})\subseteq \overline{C}_{\varphi(m,n)}
\end{equation}
for all $m,n\in\ZZZ$ (we use the notation from \eqref{EQ:Cmn}).

\begin{lemma}\label{L:F(WxW)}
	Let a continuous surjection $F$ satisfy \caseI{} from Proposition~\ref{P:Dpsi=Z}.
	Let $m,n\in\ZZZ$ be such that
	\begin{equation}\label{EQ:phi(m,n)}
	\varphi(m+1,n+1)=\varphi(m,n) + (1,1).
	\end{equation}
	Then $F(W_{m+1,n+1}) \subseteq W_{\varphi(m+1,n+1)}$.
\end{lemma}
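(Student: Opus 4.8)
The plan is to deduce this two–dimensional inclusion from its two one–dimensional ``shadows'', namely $F(W_{m+1}\times X)\subseteq W_{\psi_{11}(m+1)}\times X$ and $F(X\times W_{n+1})\subseteq X\times W_{\psi_{22}(n+1)}$, and then intersect them. First I would record two trivial reformulations of the hypothesis: since $\varphi=\psi_{11}\times\psi_{22}$, condition \eqref{EQ:phi(m,n)} says exactly that $\psi_{11}(m+1)=\psi_{11}(m)+1$ and $\psi_{22}(n+1)=\psi_{22}(n)+1$ (the extreme case allowed by Lemma~\ref{L:psi(m+1)}\eqref{IT:L:psi(m+1):2}); and by \eqref{EQ:Cn-closure} one has $W_{m+1}=\overline C_m\cap\overline C_{m+1}$, and likewise $W_{\psi_{11}(m+1)}=\overline C_{\psi_{11}(m)}\cap\overline C_{\psi_{11}(m+1)}$, $W_{\psi_{22}(n+1)}=\overline C_{\psi_{22}(n)}\cap\overline C_{\psi_{22}(n+1)}$.

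The key step I would isolate is the claim that $F(\overline C_j\times X)\subseteq\overline C_{\psi_{11}(j)}\times X$ for every $j\in\ZZZ$. To prove it, fix any $\alpha\in\AAa$; then $C_j\times\alpha$ is dense in $\overline C_j\times X$ (the closure of a product is the product of the closures, and $\overline\alpha=X$ since $\alpha$ is a composant of $X$), while \eqref{EQ:case1a} gives $F(C_j\times\alpha)\subseteq C_{\psi_{11}(j)}\times X$, so by continuity of $F$ we get $F(\overline C_j\times X)\subseteq\overline{F(C_j\times\alpha)}\subseteq\overline C_{\psi_{11}(j)}\times X$. Applying this with $j=m$ and $j=m+1$, and using $F(A\cap B)\subseteq F(A)\cap F(B)$ together with the descriptions of $W_{m+1}$ and $W_{\psi_{11}(m+1)}$ as intersections of closures, I obtain
\[
 F(W_{m+1}\times X)=F\big((\overline C_m\cap\overline C_{m+1})\times X\big)\subseteq\big(\overline C_{\psi_{11}(m)}\cap\overline C_{\psi_{11}(m+1)}\big)\times X=W_{\psi_{11}(m+1)}\times X .
\]
The same argument with the two coordinates interchanged (using the second inclusion in \eqref{EQ:case1a} and the $\psi_{22}$–version of the above) yields $F(X\times W_{n+1})\subseteq X\times W_{\psi_{22}(n+1)}$.

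Finally I would intersect the two: $F(W_{m+1}\times W_{n+1})\subseteq F(W_{m+1}\times X)\cap F(X\times W_{n+1})\subseteq\big(W_{\psi_{11}(m+1)}\times X\big)\cap\big(X\times W_{\psi_{22}(n+1)}\big)=W_{\psi_{11}(m+1)}\times W_{\psi_{22}(n+1)}=W_{\varphi(m+1,n+1)}$, which is the assertion. I do not expect a genuine obstacle; the only thing needing care is the bookkeeping of closures --- using that $W_{m+1}$ lies in the intersection $\overline C_m\cap\overline C_{m+1}$ of the closures of two consecutive path components, so that \eqref{EQ:case1a} constrains $F$ on it from ``both sides'', and observing that the hypothesis $\varphi(m+1,n+1)=\varphi(m,n)+(1,1)$ is exactly what forces the resulting intersection of images to shrink to the single arc $W_{\psi_{11}(m+1)}$ in the first coordinate (and $W_{\psi_{22}(n+1)}$ in the second).
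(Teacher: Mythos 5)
Your proof is correct and is essentially the paper's argument: the paper writes $W_{m+1,n+1}=\overline{C}_{m,n}\cap\overline{C}_{m+1,n+1}$ and applies $F(\overline{C}_{m,n})\subseteq\overline{C}_{\varphi(m,n)}$ (which is exactly your closure/density step, recorded as \eqref{EQ:case1b}) to both terms of the intersection, whereas you perform the same intersection-of-closures computation one coordinate at a time before intersecting the two slabs. The bookkeeping is sound and the hypothesis \eqref{EQ:phi(m,n)} is used in the same way in both versions, so there is nothing to fix.
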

\begin{proof}
	By \eqref{EQ:Cn-closure} and \eqref{EQ:Cmn},
	$W_{m+1,n+1}=\overline{C}_{m,n}\cap
	\overline{C}_{m+1,n+1}$.
	Then the assumption and \eqref{EQ:case1b} yield
	$F(W_{m+1,n+1})
	\subseteq \overline{C}_{\varphi(m,n)} \cap \overline{C}_{\varphi(m+1,n+1)}
	= W_{\varphi(m+1,n+1)}$.
\end{proof}

For $m\in\mathbb Z$ set
\begin{equation*}
I_m^+=\mathbb Z\cap[m,\infty)\hspace{3mm}\text{ and }\hspace{3mm}I_m^-=\mathbb Z\cap(-\infty,m].
\end{equation*}
For a nonzero integer $a$, by $I^{\sgn(a)}_{m}$ we will mean $I_m^+$ or $I_m^-$ depending on whether $a>0$ or $a<0$.
Further, for nonzero integers $a,b$ denote
\begin{equation*}
E_{a,b} = \{(m,n)\in\ZZZ^2\colon \varphi(m,n)= (m,n) + (a,b)\}.
\end{equation*}
The reason why it is useful to consider the sets $E_{a,b}$
lies in the following two lemmas.

\begin{lemma}\label{L:psi-translation}
	Let a minimal map $F$ satisfy \caseI{} from Proposition~\ref{P:Dpsi=Z}.
	Then there are integers $a,m_a$ and $b,n_b$ such that $a\ne 0 \ne b$ and
	\begin{equation*}
	\psi_{11}(m)=m+a,
	\qquad
	\psi_{22}(n)=n+b
	\end{equation*}
	for all integers $m\in I^{\sgn(a)}_{m_a}$ and $n\in I^{\sgn(b)}_{n_b}$.
	Hence
	\begin{equation*}
	E_{a,b}\supseteq I^{\sgn(a)}_{m_a}\times I^{\sgn(b)}_{n_b}.
	\end{equation*}
\end{lemma}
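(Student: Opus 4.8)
The plan is to exploit the interaction between the combinatorial self-map $\varphi=\psi_{11}\times\psi_{22}$ of $\mathbb{Z}^2$ and the minimality of $F$, building on the monotonicity facts from Lemma~\ref{L:psi(m+1)}. Recall that in Case~\caseI{} both $\psi_{11}$ and $\psi_{22}$ are nondecreasing surjections $\mathbb{Z}\to\mathbb{Z}$ with consecutive increments in $\{0,1\}$. The first step is to observe that such a function $\psi\colon\mathbb{Z}\to\mathbb{Z}$, being nondecreasing, surjective, and with increments in $\{0,1\}$, is \emph{eventually a translation in each direction}: there exist integers $a_+$ and $m_+$ with $\psi(m)=m+a_+$ for all $m\ge m_+$, and similarly $\psi(m)=m+a_-$ for all $m\le m_-$, where a priori $a_+$ and $a_-$ may differ. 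This is because the difference $\psi(m)-m$ is itself nonincreasing in one sense once the increments settle; more carefully, if $\psi(m+1)-\psi(m)=0$ for infinitely many $m\to+\infty$ then surjectivity of $\psi$ on $\mathbb{Z}$ forces $\psi(m+1)-\psi(m)=1$ for all large $m$ (otherwise the range misses values), and symmetrically as $m\to-\infty$. So the ``eventual translation'' behavior is automatic from Lemma~\ref{L:psi(m+1)} alone, with no dynamics needed yet.

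The heart of the matter — and the step where minimality of $F$ enters — is to show that the eventual translation \emph{number} is nonzero: $a_+\ne 0$ (and, after a similar argument, $b_+\ne 0$), and in fact that the relevant ray of $\mathbb{Z}$ sits inside $E_{a,b}$. Here I would argue by contradiction. Suppose $\psi_{11}(m)=m+a_+$ for $m\ge m_+$ with $a_+$ arbitrary; by \eqref{EQ:case1a} we have $F(C_m\times\alpha)\subseteq C_{m+a_+}\times X$ for all such $m$ and all $\alpha\in\mathcal{A}$, hence $F(\overline{C}_m\times X)\subseteq\overline{C}_{m+a_+}\times X$. Now use \eqref{EQ:Wn-union-dense}: the union $\bigsqcup_{m\ge n_0}W_m$ is dense in $X$, so $\bigsqcup_{m\ge n_0}(W_m\times X)$ is dense in $X\times X$; and $W_m\times X\subseteq\overline{C}_{m-1}\times X$. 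If the translation number relating the first coordinates on the ``$+\infty$ side'' were to produce a set that fails to be carried forward cofinally — this is where I need the precise bookkeeping — then the $F$-orbit of a suitable point would be trapped in $\bigcup_{m\ge M}(\overline{C}_m\times X)$ for a fixed $M$ that is \emph{not} all of $X\times X$, since $\bigcup_{m\ge M}\overline{C}_m\ne X$ (its complement contains, e.g., all the composants in $\mathcal{A}$, which are nonempty). That contradicts minimality. The same run with the $-\infty$ side and with $\psi_{22}$ in place of $\psi_{11}$ gives $b\ne0$ and the analogous ray. Assembling the four one-sided statements and relabeling $a=a_+$ (resp.\ $a=a_-$) and similarly for $b$ according to sign, we get integers $a\ne0\ne b$ and thresholds $m_a,n_b$ with $\psi_{11}(m)=m+a$ on $I^{\sgn(a)}_{m_a}$ and $\psi_{22}(n)=n+b$ on $I^{\sgn(b)}_{n_b}$, whence $E_{a,b}\supseteq I^{\sgn(a)}_{m_a}\times I^{\sgn(b)}_{n_b}$ by definition of $E_{a,b}$ and of $\varphi$.

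The main obstacle I anticipate is making the ``trapped orbit'' argument airtight: one must be careful that $F$ only maps $\overline{C}_m\times X$ \emph{into} $\overline{C}_{\psi_{11}(m)}\times X$ (an inclusion, from \eqref{EQ:case1b}), not onto, and that $\psi_{11}$ is merely nondecreasing, so a forward orbit could in principle drift in a way that is not a clean translation on the transient part. The resolution should be to pass to a point whose first coordinate already lies in some $W_m$ with $m$ in the eventual-translation range, use that $F$ then shifts the $\overline{C}$-index by exactly $a$ there, iterate, and note that if $a>0$ the orbit's first-coordinate $\overline{C}$-index tends to $+\infty$, so the orbit cannot be dense in $X\times X$ (it avoids, say, $\overline{C}_0\times X$ eventually) — again contradicting minimality — unless the threshold can be pushed to $-\infty$ as well, i.e.\ unless $\psi_{11}$ is \emph{globally} a translation; but that is exactly what is being claimed on the relevant ray, and the two-sided case is handled by running the argument toward $-\infty$. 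Once the one-sided translation statements are secured, the conclusion $E_{a,b}\supseteq I^{\sgn(a)}_{m_a}\times I^{\sgn(b)}_{n_b}$ is immediate, and Lemma~\ref{L:F(WxW)} will then be applicable on this ray in the subsequent development.
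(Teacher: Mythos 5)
Your proposal has two genuine gaps, and both occur exactly at the places where the paper's proof has to use minimality.

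First, your opening claim that the ``eventual translation'' behaviour of $\psi_{11}$ is automatic from Lemma~\ref{L:psi(m+1)} alone is false. A nondecreasing surjection of $\ZZZ$ with increments in $\{0,1\}$ need not be a translation on any ray: $\psi(m)=\lfloor m/2\rfloor$ is such a map, yet $\psi(m)-m\to-\infty$ as $m\to+\infty$ and $\to+\infty$ as $m\to-\infty$. (Your sub-argument ``if the increment is $0$ infinitely often toward $+\infty$, surjectivity forces the increment to be $1$ for all large $m$'' fails on the same example.) All that monotonicity gives is that $g=\psi_{11}-\Id_{\mathbb Z}$ is nonincreasing with steps in $\{0,-1\}$; to get stabilization on a ray you need $g$ to be bounded there, and that is precisely where dynamics must enter.

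Second, the dynamical input you propose --- the ``trapped orbit'' argument --- does not work. The set $\bigcup_{m\ge M}(\overline{C}_m\times X)$ is \emph{dense} in $X\times X$: by \eqref{EQ:Wn-union-dense} the union $\bigsqcup_{m\ge M}W_m$ is already dense in $X$, and $W_m\subseteq C_m$ by \eqref{EQ:Cn-closure}. So an orbit contained in that union, or an orbit whose $\overline{C}$-index drifts to $+\infty$ and hence eventually avoids the nowhere dense set $\overline{C}_0\times X$, can perfectly well be dense; no contradiction with minimality arises. The fact you actually need is that $\psi_{11}$ has \emph{no fixed point}: if $\psi_{11}(k)=k$ for a single $k$, then by \eqref{EQ:case1a}, taking the union over $\alpha\in\AAa$ and passing to closures, the single set $\overline{C}_k\times X$ is a nonempty proper closed $F$-invariant subset of $X\times X$, contradicting minimality. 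Once $g$ is known never to vanish, everything follows from its monotonicity: if $g$ is positive somewhere, it can never reach $0$ (its steps are $0$ or $-1$), so it is bounded below by $1$ on a right ray and stabilizes there at some $a\ge1$; if $g$ is nowhere positive, it is everywhere $\le-1$ and, being nonincreasing in $m$, it stabilizes at some $a\le-1$ as $m\to-\infty$. This is the paper's proof; your proposal never isolates the fixed-point observation and substitutes for it an argument that is invalid.
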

\begin{proof}
	We prove only the claim for $\psi_{11}$; the claim for $\psi_{22}$ is proved analogously.
	First realize that  $\psi_{11}$ has no fixed point due to minimality of $F$
	(indeed, if $\psi_{11}(k)=k$ then the closed set $\overline{C}_k\times X$ would be $F$-invariant
	by \eqref{EQ:case1a}), hence $\psi_{11}-\Id_{\mathbb Z}$ does not vanish.
	Further, by
	Lemma~\ref{L:psi(m+1)}\eqref{IT:L:psi(m+1):2}, $\psi_{11}$ is nondecreasing,
	$(\psi_{11}-\Id_\ZZZ)$ is nonincreasing and
	\begin{equation}\label{Eq:jmp.1}
	(\psi_{11}-\Id_{\mathbb Z})(m+1)-(\psi_{11}-\Id_{\mathbb Z})(m) \ \in\  \{-1,0\}
	\end{equation}
	for every $m\in\mathbb Z$.
	
	Assume first that there is $m_1$ such that $\psi_{11}(m_1)>m_1$, i.e.~$(\psi_{11}-\Id_\ZZZ)(m_1)>0$.
	Since $\psi_{11}-\Id_\ZZZ$ is nonincreasing, satisfies \eqref{Eq:jmp.1} and does not vanish, we clearly have that
	there are integers $a>0$ and $m_a$ such that
	$(\psi_{11}-\Id_\ZZZ)(m)=a$, i.e.~$\psi_{11}(m)=m+a$,
	for every $m\ge m_a$.
	
	If there is no such $m_1$ then $\psi_{11}<\Id_\ZZZ$.
	Then $\psi_{11}-\Id_\ZZZ$ is negative and nonincreasing.
	Hence there are integers $a<0$ and $m_a$ such that 
	$(\psi_{11}-\Id_\ZZZ)(m)=a$, i.e.~$\psi_{11}(m)=m+a$,
	for all $m\le m_a$.
\end{proof}

\begin{lemma}\label{L:Eab}
	Let a continuous surjection $F$ satisfy \caseI{} from Proposition~\ref{P:Dpsi=Z}.
	Let $a,b$ be nonzero integers. 
	If $m,n$ are such that both $(m,n)$ and $(m+1,n+1)$ belong to $E_{a,b}$,
	then we have \eqref{EQ:phi(m,n)} and
	$\Coin(F,T^{a\times b}) \cap W_{m+1,n+1}\ne\emptyset$.
\end{lemma}
\begin{proof}
	The first claim follows from the definition of $E_{a,b}$.
	So, by Lemma~\ref{L:F(WxW)},
	$F(W_{m+1,n+1})\subseteq W_{\varphi(m+1,n+1)} = W_{(m+1,n+1)+(a,b)}$.
	Now use Lemma~\ref{L:coin-F-Tab}.
\end{proof}

\begin{lemma}\label{L:F-is-Tab}
	Let a minimal map $F$ satisfy \caseI{} from Proposition~\ref{P:Dpsi=Z}.
	Then $F=T^{a\times b}$ for some integers
	$a\ne 0 \ne b$.
\end{lemma}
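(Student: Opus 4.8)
The plan is to combine Lemma~\ref{L:psi-translation}, which gives a ``tail'' of $\ZZZ^2$ on which $\varphi$ acts as the translation by $(a,b)$, with Lemma~\ref{L:Eab}, which produces points of coincidence of $F$ and $T^{a\times b}$ on the squares $W_{m+1,n+1}$, and finally with the coincidence-forcing Lemma~\ref{L:coincidence-of-maps}. First I would invoke Lemma~\ref{L:psi-translation} to fix nonzero integers $a,b$ and indices $m_a,n_b$ with
$$
E_{a,b}\ \supseteq\ I^{\sgn(a)}_{m_a}\times I^{\sgn(b)}_{n_b}.
$$
This quarter-lattice is ``closed under the diagonal shift'' in the sense that whenever $(m,n)$ lies in it, so does $(m+1,n+1)$ when moving in the direction of $\sgn(a)=\sgn(b)$ — or, if $\sgn(a)\neq\sgn(b)$, one replaces the diagonal $(m,n)\mapsto(m+1,n+1)$ by the anti-diagonal and adjusts Lemmas~\ref{L:F(WxW)} and~\ref{L:Eab} accordingly (the combinatorics is the same: $W_{m+1,n+1}$ is the intersection of the closures of two ``adjacent'' quadrants, and this remains true along the anti-diagonal because $\overline C_{m}\cap\overline C_{m+1}=W_{m+1}$ regardless of orientation). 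Applying Lemma~\ref{L:Eab} to each consecutive pair along the appropriate diagonal inside $I^{\sgn(a)}_{m_a}\times I^{\sgn(b)}_{n_b}$ then yields
$$
\Coin(F,T^{a\times b})\cap W_{m+1,n+1}\ \neq\ \emptyset
$$
for a cofinite (in the diagonal direction) family of pairs $(m,n)$.

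Next I would package these squares into the hypotheses of Lemma~\ref{L:coincidence-of-maps}. The relevant sets are $A_i=W_{m_i+1,n_i+1}$ for a suitable sequence $(m_i,n_i)$ running along the diagonal tail; each $W_{m_i,n_i}=W_{m_i}\times W_{n_i}$ is nowhere dense in $X\times X$, and by \eqref{EQ:Wn-diam} their diameters tend to zero as $m_i,n_i\to\pm\infty$. The one point that needs a short argument is density of $\bigcup_i A_i$ in $X\times X$: this follows from \eqref{EQ:Wn-union-dense}, since for any fixed orientation $\bigsqcup_{m}W_m$ is dense in $X$, hence $\bigl(\bigsqcup_m W_m\bigr)\times\bigl(\bigsqcup_n W_n\bigr)$ is dense in $X\times X$, and by the separability of $X\times X$ one can extract a countable subfamily of the $W_{m,n}$ along the diagonal tail whose union is still dense while diameters shrink to $0$. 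With $\varphi=F$ and $\tau=T^{a\times b}$ (both continuous maps of the compact metric space $X\times X$), Lemma~\ref{L:coincidence-of-maps} now forces $F=T^{a\times b}$.

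Finally I would note $a\neq0\neq b$: this is already part of the conclusion of Lemma~\ref{L:psi-translation}, which derives it from minimality of $F$ (a fixed point of $\psi_{11}$ or $\psi_{22}$ would give a proper closed invariant set of the form $\overline C_k\times X$ or $X\times\overline C_k$ via \eqref{EQ:case1a}). The main obstacle I anticipate is purely bookkeeping: matching the orientation conventions in Lemmas~\ref{L:F(WxW)} and~\ref{L:Eab} — which are stated for the diagonal step $(m,n)\mapsto(m+1,n+1)$ with $\varphi(m+1,n+1)=\varphi(m,n)+(1,1)$ — to whichever of the four quadrant-orientations $(\sgn a,\sgn b)$ actually occurs, and verifying that in every case the square $W_{(m,n)+(1,1)\text{-step}}$ really is the intersection of the closures of the two relevant $C_{\cdot,\cdot}$ quadrants. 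Once the orientation is fixed this is immediate from \eqref{EQ:Cn-closure}, but it must be written out carefully. Everything else is a direct citation of results already established.
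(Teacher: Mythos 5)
Your proposal follows exactly the paper's route: Lemma~\ref{L:psi-translation} supplies the quadrant inside $E_{a,b}$, Lemma~\ref{L:Eab} supplies coincidence points of $F$ and $T^{a\times b}$ in the squares $W_{m+1,n+1}$, and Lemma~\ref{L:coincidence-of-maps} finishes. Two remarks on the details you flag as delicate. First, the orientation worry is a red herring: no anti-diagonal variant of Lemmas~\ref{L:F(WxW)} and~\ref{L:Eab} is ever needed. What Lemma~\ref{L:Eab} requires is only that both $(m,n)$ and $(m+1,n+1)$ lie in $E_{a,b}$, and the set $J$ of such pairs is the intersection of the quadrant $I^{\sgn(a)}_{m_a}\times I^{\sgn(b)}_{n_b}$ with its translate by $(-1,-1)$, which is again a quadrant for every choice of signs of $a$ and $b$ (e.g.\ for $a>0>b$ it is $I^{+}_{m_a}\times I^{-}_{n_b-1}$). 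This is exactly how the paper phrases it. Second, be careful with ``a sequence running along the diagonal tail'': the $(m_i,n_i)$ must be drawn from the full two-dimensional quadrant $J$, not from a single diagonal line $\{(m_0+k,n_0+k)\}$. The union $\bigcup_k W_{m_0+k}\times W_{n_0+k}$ over one diagonal need not be dense in $X\times X$, since density of a union of products requires, for each basic box $U\times V$, a \emph{single} index $k$ with $W_{m_0+k}$ meeting $U$ and $W_{n_0+k}$ meeting $V$ simultaneously; over the full quadrant the two coordinates can be chosen independently, which is what \eqref{EQ:Wn-union-dense} delivers. Your own density justification (product of the two dense half-line unions, then extraction of a countable subfamily with $|m_i|,|n_i|\to\infty$) implicitly uses the full quadrant, so the argument is sound once that phrase is read correctly.
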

\begin{proof}
	Take $a\ne 0 \ne b$ from Lemma~\ref{L:psi-translation}.
	Let $J$ be the set of all $(m,n)\in\ZZZ\times\ZZZ$ such that
	both $(m,n)$ and $(m+1,n+1)$ belong to $E_{a,b}$.
	By Lemma~\ref{L:psi-translation}, the set $J$ contains a quadrant
	in $\ZZZ\times\ZZZ$.
	Further, the sets $W_{m+1,n+1}$ ($(m,n)\in J$) are nowhere dense
	in $X\times X$ (because they are closed and are subsets of the set $\gamma\times\gamma$ with empty interior).	
	Since $X\times X$ has a countable basis, using \eqref{EQ:Wn-union-dense}
	we can inductively construct a sequence $((m_i,n_i))_{i\geq 1}$ from $J$
	such that the union of $W_{m_i+1,n_i+1}$ is dense and both $\abs{m_i}$ and $\abs{n_i}$ are increasing in $i$.
	By \eqref{EQ:Wn-diam}, the diameters of  $W_{m_i+1,n_i+1}$ converge to zero.
	Now Lemmas~\ref{L:Eab} and \ref{L:coincidence-of-maps} yield that $F=T^{a\times b}$.
\end{proof}

\subsection{Case~\caseII{} from Proposition~\ref{P:Dpsi=Z}}
Assume that \caseII{} is true.
Then, for all $m,n\in\ZZZ$ and $\alpha\in\AAa$,
\begin{equation*}
F(C_m\times\alpha)\subseteq X\times C_{\psi_{12}(m)},
\qquad
F(\alpha\times C_n)\subseteq C_{\psi_{21}(n)}\times X.
\end{equation*}
Due to Lemma~\ref{L:F(gammaxgamma)}, we in fact have
\begin{equation*}
F(C_m\times\alpha)\subseteq (X\setminus\gamma)\times C_{\psi_{12}(m)},
\qquad
F(\alpha\times C_n)\subseteq C_{\psi_{21}(n)}\times (X\setminus\gamma).
\end{equation*}
These two inclusions imply that for all $m,n\in\ZZZ$ and $\alpha\in\AAa$,
\begin{equation}\label{EQ:case2-for-F2}
F^2(C_m\times\alpha)\subseteq C_{\psi_{21}(\psi_{12}(m))}\times X
\quad\text{and}\quad
F^2(\alpha\times C_n)\subseteq X\times C_{\psi_{12}(\psi_{21}(n))}.
\end{equation}
So $F^2$ satisfies \caseI{} from Proposition~\ref{P:Dpsi=Z}.
Moreover, $F^2$ is also minimal since $F$ is minimal and $X\times X$ is a continuum.
Thus, applying Lemma~\ref{L:F-is-Tab} to $F^2$, we immediately obtain the following lemma.

\begin{lemma}\label{L:case2}
	Let a minimal map $F$ satisfy \caseII{} from Proposition~\ref{P:Dpsi=Z}.
	Then $F^2=T^{a'\times b'}$ for some integers $a'\ne 0 \ne b'$.
\end{lemma}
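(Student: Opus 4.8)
\textbf{Proof plan for Lemma~\ref{L:case2}.} The statement is essentially immediate once the right pieces are in place, and the plan is simply to verify that the two hypotheses needed to invoke Lemma~\ref{L:F-is-Tab} are satisfied by the map $F^2$. First I would record that $F^2$ is a continuous surjection on $X\times X$ (a composition of continuous surjections is a continuous surjection). Second, I would verify that $F^2$ is minimal: since $F$ is minimal and $X\times X$ is a continuum (being the product of two continua), a standard fact---namely, that an iterate $G^k$ of a minimal map $G$ on a connected space is again minimal---applies; this is exactly the connectedness argument already used in Theorem~\ref{T:TA.homeo} and Remark~\ref{R:homeo-case-XN}, so I would just cite it. Third, I would observe that $F^2$ satisfies Case~\caseI{} from Proposition~\ref{P:Dpsi=Z}: this is precisely the content of the inclusions \eqref{EQ:case2-for-F2}, which show that $D_{11}=D_{22}=\mathbb Z$ and $D_{12}=D_{21}=\emptyset$ for the map $F^2$.

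With these three points established, the proof concludes in one line: apply Lemma~\ref{L:F-is-Tab} to $F^2$ in place of $F$ to obtain integers $a'\neq 0\neq b'$ with $F^2=T^{a'\times b'}$. I expect no genuine obstacle here---all the real work was done in the preceding lemmas (the structural analysis of $D_{ij}$ and $\psi_{ij}$ culminating in Proposition~\ref{P:Dpsi=Z}, and the coincidence-point machinery of Section~\ref{SS:XxX} feeding into Lemma~\ref{L:F-is-Tab}). The only mild subtlety worth spelling out is why \caseII{} for $F$ forces \caseI{} for $F^2$ rather than \caseII{}: composing two ``coordinate-swapping'' maps yields a ``coordinate-preserving'' map, which is exactly what \eqref{EQ:case2-for-F2} encodes, and it is worth noting that one uses Lemma~\ref{L:F(gammaxgamma)} (so that $F$ maps path components of type $C_m\times\alpha$ into $(X\setminus\gamma)\times C_k$, keeping the second factor inside a path component of $X$) to make the composition land in a single path component of the correct type.

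Thus the proof is a three-sentence affair: $F^2$ is a minimal continuous surjection by connectedness of $X\times X$; it satisfies \caseI{} by \eqref{EQ:case2-for-F2}; hence $F^2=T^{a'\times b'}$ with $a'\neq0\neq b'$ by Lemma~\ref{L:F-is-Tab}.
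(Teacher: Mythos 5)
Your proposal is correct and follows essentially the same route as the paper: the paper likewise derives the inclusions \eqref{EQ:case2-for-F2} (using Lemma~\ref{L:F(gammaxgamma)} to keep the image inside $(X\setminus\gamma)\times C_{\psi_{12}(m)}$ and $C_{\psi_{21}(n)}\times(X\setminus\gamma)$ so that the composition lands in the right type of path component), notes that $F^2$ is minimal by connectedness of $X\times X$, and then applies Lemma~\ref{L:F-is-Tab} to $F^2$. You have correctly identified the one genuine subtlety (why \caseII{} for $F$ yields \caseI{} for $F^2$), so nothing is missing.
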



\subsection{Proof of Theorem~A(1)}

We are finally ready to prove that the product of minimal spaces need not be minimal. 

\begin{theorem}\label{T:thmA1}
Every DST space $X$ admits a minimal homeomorphism, but $X\times X$ does not admit any minimal continuous map.
\end{theorem}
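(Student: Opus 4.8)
The plan is to assemble the pieces already established in Sections~\ref{Sec:the.Slovak}--\ref{S:XxX-minimal}. The existence of a minimal homeomorphism on $X$ is immediate: $X$ is a DST space, hence a Slovak space, and by definition (and by the results of \cite{DST} recalled in Subsection~\ref{SS:Slovak}) the map $T$ is a minimal homeomorphism on $X$. So the entire content of the theorem is the negative statement, that $X\times X$ admits no minimal continuous map.

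First I would argue by contradiction: suppose $F\colon X\times X\to X\times X$ is a minimal continuous map. Then $F$ is surjective, so the structural analysis of Section~\ref{SS:XxX-surjection} applies; in particular, by Proposition~\ref{P:Dpsi=Z}, exactly one of the two cases \caseI{} or \caseII{} holds for $F$. In Case~\caseI, Lemma~\ref{L:F-is-Tab} gives $F=T^{a\times b}$ for some integers $a\ne 0\ne b$, and then Proposition~\ref{P:T-ab-nonminimal} says $T^{a\times b}$ is not minimal, a contradiction. In Case~\caseII, I would pass to $F^2$: since $X\times X$ is a continuum (a product of two continua) and $F$ is minimal, $F^2$ is also minimal; by Lemma~\ref{L:case2} we have $F^2=T^{a'\times b'}$ with $a'\ne 0\ne b'$, and again Proposition~\ref{P:T-ab-nonminimal} shows $F^2$ is not minimal, contradicting the minimality of $F$. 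Either way we reach a contradiction, so no minimal continuous map on $X\times X$ exists.

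There is essentially no remaining obstacle here — the theorem is a clean corollary of the machinery developed in the preceding three sections, and the proof amounts to bookkeeping: invoke Proposition~\ref{P:Dpsi=Z} to split into the two cases, then cite Lemma~\ref{L:F-is-Tab} (resp.\ Lemma~\ref{L:case2} applied to $F^2$) together with Proposition~\ref{P:T-ab-nonminimal}. The one point worth stating explicitly is the use of minimality of $F$ passing to $F^2$: on a connected space any iterate of a minimal map is again minimal, which is why $F^2$ inherits minimality in Case~\caseII. Of course the real work — the analysis of which path components of $X\times X$ are mapped where, the nondecreasing ``index'' functions $\psi_{ij}$, and the coincidence-point arguments forcing $F$ (or $F^2$) to coincide with a direct product of iterates of $T$ on a dense union of nowhere dense squares — has already been carried out; the present statement simply closes the loop.

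Thus the proof I would write is short: \emph{Existence of a minimal homeomorphism on $X$ is immediate from the definition of a DST space. For the rest, assume $F\colon X\times X\to X\times X$ is a minimal continuous map; it is surjective, so by Proposition~\ref{P:Dpsi=Z} it falls under \caseI{} or \caseII. In the first case $F=T^{a\times b}$ with $a,b\ne 0$ by Lemma~\ref{L:F-is-Tab}, contradicting Proposition~\ref{P:T-ab-nonminimal}. In the second case, $F^2$ is minimal (as $X\times X$ is connected) and equals $T^{a'\times b'}$ with $a',b'\ne 0$ by Lemma~\ref{L:case2}, again contradicting Proposition~\ref{P:T-ab-nonminimal}. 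Hence $X\times X$ admits no minimal continuous map.}
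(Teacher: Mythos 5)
Your proposal is correct and follows essentially the same route as the paper: the paper's proof likewise assumes a minimal $F$ exists, invokes Lemmas~\ref{L:F-is-Tab} and~\ref{L:case2} to conclude that $F^2=T^{c\times d}$ for some integers $c,d$, and derives a contradiction from Proposition~\ref{P:T-ab-nonminimal}. The only cosmetic difference is that the paper phrases both cases uniformly in terms of $F^2$, whereas you contradict directly with $F=T^{a\times b}$ in Case~\caseI; this changes nothing of substance.
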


\begin{proof}
	Let $X$ and $T$ be a DST space and a minimal homeomorphism $X\to X$
	from Subsection~\ref{SS:Slovak}.
	Assume that $X\times X$ admits a minimal map $F$.
	By Lemmas~\ref{L:F-is-Tab} and \ref{L:case2}, the minimal map $F^2$ is of the form
	$T^{c\times d}$ for some integers $c,d$.
	This contradicts Proposition~\ref{P:T-ab-nonminimal}.
\end{proof}

Contrary to Remark~\ref{R:homeo-case-XN}, the following question is open.

\begin{problem}
   Let $X$ be a DST space.
   Is it true that for $N\ge 3$ the space $X^N$ does not admit any minimal continuous map?
\end{problem}


\section{Other minimal spaces with nonminimal squares. Proof of Theorem~A(2,3)}\label{Sec:other.min}

The following result is a strengthening of Theorem~A(1). 

\begin{theorem}\label{T:XxXxY}
   Let $X$ be a DST space and let $Y$ be a path-connected continuum.  
   Then the space $X\times X\times Y$ is not minimal.   
\end{theorem}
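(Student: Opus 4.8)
The plan is to mimic the structure of the proof of Theorem~\ref{T:thmA1}, but now carrying along the additional path-connected factor $Y$. As observed in the opening paragraph of Section~\ref{SS:XxX-surjection}, every statement and proof in that section remains valid \emph{verbatim} after appending a factor $Y$ to all the relevant subsets of $X\times X$; the only feature of these subsets that was ever used is their path-connectedness (or lack thereof) and the fact that path components of the first type are dense while the others are nowhere dense. Since $Y$ is a path-connected continuum, the product of a path component of $X\times X$ with $Y$ has exactly the same density/path-connectedness behaviour inside $X\times X\times Y$ as the original path component had inside $X\times X$. So the first step is to restate the machinery of Section~\ref{SS:XxX-surjection} for a continuous surjection $F\colon X\times X\times Y\to X\times X\times Y$: one defines the sets $D_{ij}$ exactly as in \eqref{EQ:Dij} but with $C_m\times\alpha$ replaced by $C_m\times\alpha\times Y$ and so on, and obtains the analogue of Proposition~\ref{P:Dpsi=Z} (and of Lemma~\ref{L:F(gammaxgamma)}).

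Next I would adapt Section~\ref{S:XxX-minimal}. In Case~\caseI{} one again gets maps $\psi_{11},\psi_{22}\colon\mathbb Z\to\mathbb Z$ and inclusions $F(C_{m,n}\times Y)\subseteq\overline{C}_{\varphi(m,n)}\times Y$. The key place where a genuine change is needed is the coincidence argument: in Lemma~\ref{L:coin-F-Tab} one uses that $W_{m,n}$ has the fixed point property because it is homeomorphic to a square. Now the relevant set is $W_{m,n}\times Y$, and in general an arbitrary path-connected continuum $Y$ need \emph{not} have the fixed point property, so $W_{m,n}\times Y$ need not have it either. This is the main obstacle. The fix is that we do not need the full strength of Lemma~\ref{L:coin-F-Tab}: to run Lemma~\ref{L:coincidence-of-maps} we only need a point of coincidence of $F$ and $T^{a\times b}\times\Id_Y$ inside $W_{m+1,n+1}\times Y$ for \emph{some} choices, and in fact we may compose with the projection $X\times X\times Y\to X\times X$. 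Concretely, once we know $F(W_{m+1,n+1}\times Y)\subseteq W_{\varphi(m+1,n+1)}\times Y$ with $\varphi(m+1,n+1)=(m+1,n+1)+(a,b)$, consider $\mathrm{pr}_{12}\circ F\colon W_{m+1,n+1}\times Y\to W_{(m+1,n+1)+(a,b)}$; precomposing with the homeomorphism $(T^{a\times b})^{-1}\times\Id$ onto $W_{m+1,n+1}\times Y$ and then with any fixed section $W_{m+1,n+1}\to W_{m+1,n+1}\times Y$ would still leave us needing a fixed point of a self-map of $W_{m+1,n+1}$, which \emph{does} have the fixed point property. Working this out gives $\mathrm{pr}_{12}(F(w))=T^{a\times b}(\mathrm{pr}_{12}(w))$ for a point $w$ in each such $W_{m+1,n+1}\times Y$, hence — applying the $Y$-augmented version of Lemma~\ref{L:coincidence-of-maps} to $\mathrm{pr}_{12}\circ F$ and $T^{a\times b}\circ\mathrm{pr}_{12}$ — the identity $\mathrm{pr}_{12}\circ F=T^{a\times b}\circ\mathrm{pr}_{12}$ on all of $X\times X\times Y$. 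In other words $(T^{a\times b}\circ\mathrm{pr}_{12},X\times X)$ is a factor of $(F,X\times X\times Y)$.

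From there the contradiction is immediate. If $F$ were minimal on $X\times X\times Y$, then in Case~\caseI{} the argument above produces the factor $T^{a\times b}$ on $X\times X$ with $a\ne 0\ne b$ (the hypotheses of Lemma~\ref{L:psi-translation} still go through: $\psi_{11}$ and $\psi_{22}$ have no fixed points by minimality of $F$, since a fixed point would yield an invariant set $\overline{C}_k\times X\times Y$), and this $T^{a\times b}$ is not minimal by Proposition~\ref{P:T-ab-nonminimal}, contradicting that factors of minimal systems are minimal. In Case~\caseII{}, exactly as in Lemma~\ref{L:case2}, $F^2$ falls into Case~\caseI{} and is minimal (as $X\times X\times Y$ is a continuum, being a product of continua), so the same contradiction applies to $F^2$. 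Hence $X\times X\times Y$ admits no minimal continuous map. I would present this as: first the ``$Y$-augmented Section~\ref{SS:XxX-surjection}'' remark, then the modified coincidence lemma (the heart of the matter), then a short wrap-up reproducing the Case~\caseI{}/Case~\caseII{} dichotomy and invoking Proposition~\ref{P:T-ab-nonminimal}.
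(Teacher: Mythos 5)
Your proposal follows the paper's proof essentially step for step: the $Y$-augmented version of Section~\ref{SS:XxX-surjection}, the observation that $W_{m,n}\times Y$ may lack the fixed point property, the fix via the first-two-coordinates projection and the fixed point property of $W_{m,n}$ itself (your ``section'' is exactly the paper's device of freezing the $Y$-coordinate and studying $F_y(x,x')=\pi_{12}\circ F(x,x',y)$), the conclusion that $F$ (resp.\ $F^2$ in Case~\caseIIy{}) is a skew product over some $T^{a\times b}$ with $a\neq0\neq b$, and the contradiction with Proposition~\ref{P:T-ab-nonminimal} since a factor of a minimal system is minimal.

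One imprecision in the last globalization step: you cannot apply a ``$Y$-augmented version of Lemma~\ref{L:coincidence-of-maps}'' with the sets $W_{m_i+1,n_i+1}\times Y$, because those sets do not have diameters tending to zero (the $Y$-factor keeps them uniformly thick), so a coincidence point in each of them does not make the coincidence set dense in $X\times X\times Y$. The correct finish — which your section trick already sets up — is to fix $y\in Y$, note that the coincidence points produced by the fixed point argument can all be taken in the slice $X\times X\times\{y\}$, and apply the \emph{original} Lemma~\ref{L:coincidence-of-maps} on $X\times X$ to the maps $F_y$ and $T^{a\times b}$; since $a,b$ do not depend on $y$, this yields $\pi_{12}\circ F=T^{a\times b}\circ\pi_{12}$ for every $y$, i.e.\ the desired skew product structure.
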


The proof of this result is analogous to that of Theorem~\ref{T:thmA1}.
Basically one only needs to replace all products of two sets in Sections~\ref{SS:XxX-surjection} and \ref{S:XxX-minimal} by the corresponding products of three sets, the third factor being $Y$.
There are only few places in the proof where the argument is slightly different.
The main difference is in the proof of (generalization of) Lemma~\ref{L:Eab}; there we cannot
use Lemma~\ref{L:coin-F-Tab} (and Lemma~\ref{L:point-of-coincidence}) since the space $W_{m,n}\times Y$ need not have the fixed point property. In the rest of this section we outline the main steps of the proof of Theorem~\ref{T:XxXxY}.

Put $Z=X\times X\times Y$. 
Recall that the path components of $X\times X$   
were described in the beginning of Subsection~\ref{SS:XxX}.
Using that notation, the path components of $Z$ are of four types:
$\alpha\times \beta\times Y$, $C_m\times\alpha\times Y$, $\alpha\times C_n\times Y$, and 
$C_m\times C_n\times Y$ ($m,n\in\ZZZ$, $\alpha\in\AAa$). Only those of type $\alpha\times \beta\times Y$ are dense in $Z$.

Let $F\colon Z\to Z$ be a continuous surjection. Then the analogues of all the results from 
Section~\ref{SS:XxX-surjection} hold; the only difference is that 
if we had there a subset $A\times B\subseteq X\times X$ 
then now it is replaced by the subset $A\times B\times Y$ of $Z$.
In particular, this applies to the definitions of the sets $D_{ij}$
from \eqref{EQ:Dij} and we have the following.

\begin{proposition}[analogue of Proposition~\ref{P:Dpsi=Z}]\label{P:Dpsi=Z*}
	For a continuous surjection $F\colon Z\to Z$, 
	exactly one of the following two possibilities is true:
	\begin{enumerate}[leftmargin=4\parindent]
	\item[\caseIy]
		There are surjections $\psi_{11}^*,\psi_{22}^*\colon\ZZZ\to\ZZZ$ such that,
		for all $m,n\in\ZZZ$ and $\alpha\in\AAa$,
		\begin{equation*}
			F(C_m\times\alpha\times Y)  \subseteq  C_{\psi_{11}^*(m)}\times X\times Y,
			\quad
			F(\alpha\times C_n\times Y)\subseteq X\times C_{\psi_{22}^*(n)}\times Y.
		\end{equation*}
	\item[\caseIIy]
		There are surjections $\psi_{12}^*,\psi_{21}^*\colon\ZZZ\to\ZZZ$ such that,
		for all $m,n\in\ZZZ$ and $\alpha\in\AAa$,
		\begin{equation*}
			F(C_m\times\alpha\times Y)  \subseteq  X\times C_{\psi_{12}^*(m)}\times Y,
			\quad
			F(\alpha\times C_n\times Y)\subseteq C_{\psi_{21}^*(n)}\times X\times Y.
		\end{equation*}
	\end{enumerate}
\end{proposition}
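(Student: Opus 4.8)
The plan is to transfer, essentially verbatim, the chain of lemmas in Section~\ref{SS:XxX-surjection} that culminates in Proposition~\ref{P:Dpsi=Z}, replacing every subset $A\subseteq X\times X$ by $A\times Y$. First I would record the analogue of Lemma~\ref{L:F(alphaxbeta)}: since $Y$ is a path-connected continuum and $F$ is a continuous surjection, a path component $\alpha\times\beta\times Y$ is both path-connected and dense, so its image is contained in a single path component of the same (dense) type; hence $F$ maps $(X\setminus\gamma)\times(X\setminus\gamma)\times Y$ into itself. Then I would define $D_{ij}^*$ exactly as in \eqref{EQ:Dij}, with the third factor $Y$ appended everywhere, and prove the analogue of Lemma~\ref{L:F(Cmxalpha)}: the argument there only uses density of $\alpha$, path-connectedness of the relevant sets, and the closure relations \eqref{EQ:Cn-closure}; appending $Y$ changes nothing, since a path-connected subset of $(C_k\sqcup C_{k+1})\times X\times Y$ projecting to $C_k\sqcup C_{k+1}$ in the first coordinate still lies over a single $C_k$.

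Next I would reproduce Lemmas~\ref{L:psi(m+1)}--\ref{L:psi-row-possib}. The monotonicity statement (analogue of Lemma~\ref{L:psi(m+1)}) again uses only the inclusion $W_{m+1}\subseteq\overline{C}_m$ and path-connectedness, so it carries over. The cardinality argument in Lemma~\ref{L:psi-surjective} is the key one: for each $k$, the set $C_k\times X\times Y$ contains uncountably many path components $C_k\times\beta\times Y$; by surjectivity each receives a path component of $Z$, and only countably many path components of $Z$ have the form $C_m\times C_n\times Y$ while none of type $\alpha\times\beta\times Y$ can map into $C_k\times X\times Y$; hence uncountably many path components of type $C_m\times\alpha\times Y$ or $\alpha\times C_m\times Y$ map into it, forcing $R_{1j}^*\cup R_{2j}^*=\mathbb Z$ and (via the monotonicity and the ``bounded below'' dichotomy) that one of $D_{1j}^*,D_{2j}^*$ equals $\mathbb Z$. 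Lemmas~\ref{L:psi-2possib} and~\ref{L:psi-row-possib} then go through word for word, using the closure intersection $\overline{C}_k\cap\overline{C}_l=\emptyset$ for $|k-l|>1$ and the analogue of \eqref{EQ:P:Dpsi=Z:1} (with $F(\gamma\times X\times Y)\subseteq\gamma\times\gamma\times Y$). Combining these gives the two mutually exclusive possibilities \caseIy{} and \caseIIy{}, which is exactly the assertion of Proposition~\ref{P:Dpsi=Z*}; I would simply state that the surjections $\psi_{11}^*,\psi_{22}^*$ (resp.\ $\psi_{12}^*,\psi_{21}^*$) are the functions $D_{ij}^*\to\mathbb Z$ obtained as in the proof of Proposition~\ref{P:Dpsi=Z}.

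I do not expect a genuine obstacle here — Proposition~\ref{P:Dpsi=Z*} is purely about the coarse combinatorial behaviour of $F$ on path components, and every ingredient (path-connectedness, density of the first-type path components, the countability of the fourth type, the closure relations \eqref{EQ:Cn-closure}, and the monotonicity of $\psi_{ij}$) is insensitive to the extra path-connected factor $Y$. The only mild care needed is to note, as the section's opening paragraph already points out, that all statements and proofs of Section~\ref{SS:XxX-surjection} remain valid verbatim after appending $Y$; the genuinely new difficulty (the failure of the fixed-point property of $W_{m,n}\times Y$, which breaks Lemma~\ref{L:coin-F-Tab}) only appears later, in the analogue of Lemma~\ref{L:Eab}, and does not affect Proposition~\ref{P:Dpsi=Z*} at all. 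Accordingly the proof I would write is essentially: ``Repeat the proofs of Lemmas~\ref{L:F(alphaxbeta)}--\ref{L:psi-row-possib} and Proposition~\ref{P:Dpsi=Z} with every product $A\times B$ of subsets of $X\times X$ replaced by $A\times B\times Y$; the arguments are unchanged.''
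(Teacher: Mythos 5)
Your proposal is correct and is exactly the paper's own proof: the paper simply observes (at the start of Section~\ref{SS:XxX-surjection} and again before Proposition~\ref{P:Dpsi=Z*}) that all statements and proofs of that section remain valid verbatim after appending the path-connected factor $Y$, which is precisely the verification you carry out. You also correctly locate the only genuine new difficulty (the failure of the fixed-point property for $W_{m,n}\times Y$) as arising later, in the analogue of Lemma~\ref{L:Eab}, not here.
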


\begin{lemma}[analogue of Lemma~\ref{L:F(gammaxgamma)}]\label{L:F(gammaxgamma)*}
	$F(\gamma\times\gamma\times Y)=F^{-1}(\gamma\times\gamma\times Y)=\gamma\times\gamma\times Y$.
\end{lemma}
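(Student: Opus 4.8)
The plan is to mimic, almost verbatim, the proof of Lemma~\ref{L:F(gammaxgamma)} from Section~\ref{SS:XxX-surjection}, replacing every occurrence of a product of two subsets of $X\times X$ by the corresponding product with the third factor $Y$ attached, and invoking Proposition~\ref{P:Dpsi=Z*} in place of Proposition~\ref{P:Dpsi=Z}. Throughout, $F\colon Z\to Z$ is a fixed continuous surjection, where $Z=X\times X\times Y$.

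First I would fix $\alpha\in\AAa$ and argue that $F(\gamma\times\alpha\times Y)$ is disjoint from $\gamma\times\gamma\times Y$: if it were not, then by a path-connectedness argument (a path component of $Z$ contained in $\gamma\times\alpha\times Y$ is of the form $C_m\times\alpha\times Y$), there would be integers $m,k,l$ with $F(C_m\times\alpha\times Y)\subseteq C_k\times C_l\times Y$, forcing $m\in D_{11}\cap D_{12}$, contradicting Proposition~\ref{P:Dpsi=Z*}. Symmetrically $F(\alpha\times\gamma\times Y)$ is disjoint from $\gamma\times\gamma\times Y$. Taking unions over $\alpha\in\AAa$ gives
\begin{equation*}
F\big(\gamma\times(X\setminus\gamma)\times Y\big)\cap(\gamma\times\gamma\times Y)=F\big((X\setminus\gamma)\times\gamma\times Y\big)\cap(\gamma\times\gamma\times Y)=\emptyset,
\end{equation*}
and combining this with the analogue of Lemma~\ref{L:F(alphaxbeta)}, namely $F\big((X\setminus\gamma)\times(X\setminus\gamma)\times Y\big)\subseteq(X\setminus\gamma)\times(X\setminus\gamma)\times Y$, yields $F^{-1}(\gamma\times\gamma\times Y)\subseteq\gamma\times\gamma\times Y$.

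Next I would prove the forward inclusion $F(\gamma\times\gamma\times Y)\subseteq\gamma\times\gamma\times Y$. Fix $m,n\in\ZZZ$. In Case~\caseIy{} of Proposition~\ref{P:Dpsi=Z*}, passing to closures gives $F(C_m\times X\times Y)\subseteq\overline{C}_{\psi_{11}^*(m)}\times X\times Y$ and $F(X\times C_n\times Y)\subseteq X\times\overline{C}_{\psi_{22}^*(n)}\times Y$, so $F(C_m\times C_n\times Y)\subseteq\overline{C}_{\psi_{11}^*(m)}\times\overline{C}_{\psi_{22}^*(n)}\times Y\subseteq\gamma\times\gamma\times Y$; in Case~\caseIIy{} one argues identically with the roles of the two coordinates interchanged, obtaining $F(C_m\times C_n\times Y)\subseteq\overline{C}_{\psi_{21}^*(n)}\times\overline{C}_{\psi_{12}^*(m)}\times Y\subseteq\gamma\times\gamma\times Y$. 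Since $\gamma\times\gamma\times Y=\bigcup_{m,n\in\ZZZ}C_m\times C_n\times Y$, this gives $F(\gamma\times\gamma\times Y)\subseteq\gamma\times\gamma\times Y$. Finally, setting $A=\gamma\times\gamma\times Y$ we have both $F^{-1}(A)\subseteq A$ and $F(A)\subseteq A$; the first gives $F^{-1}(A)=A$, and then surjectivity of $F$ yields $F(A)=A$, completing the proof.

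The only point requiring a moment's care is the verification that the analogues of Section~\ref{SS:XxX-surjection} — in particular the analogue of Lemma~\ref{L:F(alphaxbeta)} and of Lemma~\ref{L:F(Cmxalpha)}, which underlie Proposition~\ref{P:Dpsi=Z*} — survive the addition of the factor $Y$; but since $Y$ is path-connected, all the path-connectedness and density arguments go through unchanged, as already noted in the opening paragraph of Section~\ref{SS:XxX-surjection}. I do not expect any genuine obstacle here: unlike the later generalization of Lemma~\ref{L:Eab}, this statement makes no use of the fixed point property of $W_{m,n}$, so the argument is purely topological-combinatorial and transfers mechanically.
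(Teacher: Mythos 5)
Your proof is correct and is precisely the argument the paper intends: Lemma~\ref{L:F(gammaxgamma)*} is stated only as an ``analogue'' of Lemma~\ref{L:F(gammaxgamma)}, and your line-by-line transfer of that proof (with the factor $Y$ attached everywhere and Proposition~\ref{P:Dpsi=Z*} in place of Proposition~\ref{P:Dpsi=Z}) is exactly the intended justification. You also correctly note that the prerequisite results of Section~\ref{SS:XxX-surjection} survive the addition of the path-connected factor $Y$ and that, unlike the analogue of Lemma~\ref{L:Eab}, no fixed-point-property argument is needed here.
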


\subsection{Case~\caseIy{} from Proposition~\ref{P:Dpsi=Z*}}
Assume that \caseIy{} is true and consider the
surjection $\varphi^*=\psi_{11}^*\times\psi_{22}^*$ of $\ZZZ\times\ZZZ$. 
Then, for all $m,n\in\ZZZ$,
\begin{equation*}
F(C_{m,n}\times Y)\subseteq \overline{C}_{\varphi^*(m,n)}\times Y.
\end{equation*}
Proofs of the following two lemmas are analogous to those given in 
Subsection~\ref{SS:XxX-minimal-11}.
Recall that $I_m^+=\mathbb Z\cap[m,\infty)$ and $I_m^-=\mathbb Z\cap(-\infty,m]$ for 
$m\in\ZZZ$; for integers $a\ne 0\ne b$ put
$$
  E_{a,b}^* = \{(m,n)\in\ZZZ^2\colon \varphi^*(m,n)= (m,n) + (a,b)\}.  
$$
\begin{lemma}[analogue of Lemma~\ref{L:F(WxW)}]\label{L:F(WxW)*}
	Let a continuous surjection $F$ satisfy \caseIy{} from Proposition~\ref{P:Dpsi=Z*}.
	Let $m,n\in\ZZZ$ be such that
	\begin{equation*}
	\varphi^*(m+1,n+1)=\varphi^*(m,n) + (1,1).
	\end{equation*}
	Then $F(W_{m+1,n+1}\times Y) \subseteq W_{\varphi^*(m+1,n+1)}\times Y$.
\end{lemma}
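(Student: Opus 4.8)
The plan is to imitate closely the proof of Lemma~\ref{L:F(WxW)}, since the only change is the presence of the extra factor $Y$ and the decorated maps $\varphi^*$, $\psi_{11}^*$, $\psi_{22}^*$ instead of $\varphi$, $\psi_{11}$, $\psi_{22}$. The key point is that all the structural facts used there depend only on $X\times X$, with the factor $Y$ riding along passively.

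First I would recall that, by the (already established) analogues of the results of Section~\ref{SS:XxX-surjection} carried over to $Z=X\times X\times Y$, Case~\caseIy{} gives, for all $m,n\in\ZZZ$ and $\alpha\in\AAa$, the inclusions displayed in Proposition~\ref{P:Dpsi=Z*}\caseIy{}, and hence, passing to closures (using density of $\alpha$ and $\beta$ in $X$, exactly as in \eqref{EQ:case1b}),
\begin{equation*}
F(C_m\times X\times Y)\subseteq \overline{C}_{\psi_{11}^*(m)}\times X\times Y,
\qquad
F(X\times C_n\times Y)\subseteq X\times \overline{C}_{\psi_{22}^*(n)}\times Y,
\end{equation*}
and therefore $F(C_{m,n}\times Y)\subseteq \overline{C}_{\varphi^*(m,n)}\times Y$ for all $m,n\in\ZZZ$, where $\varphi^*=\psi_{11}^*\times\psi_{22}^*$.

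Next I would invoke the identity
$W_{m+1,n+1}=\overline{C}_{m,n}\cap\overline{C}_{m+1,n+1}$, which follows from \eqref{EQ:Cn-closure} and the definitions \eqref{EQ:Cmn} and is a statement purely about $X\times X$; intersecting with $Y$ gives
$W_{m+1,n+1}\times Y=(\overline{C}_{m,n}\times Y)\cap(\overline{C}_{m+1,n+1}\times Y)$. Applying $F$ and using the displayed inclusion above for both indices $(m,n)$ and $(m+1,n+1)$, together with the hypothesis $\varphi^*(m+1,n+1)=\varphi^*(m,n)+(1,1)$, I get
\begin{equation*}
F(W_{m+1,n+1}\times Y)\subseteq \big(\overline{C}_{\varphi^*(m,n)}\times Y\big)\cap\big(\overline{C}_{\varphi^*(m+1,n+1)}\times Y\big)
=\big(\overline{C}_{\varphi^*(m,n)}\cap\overline{C}_{\varphi^*(m+1,n+1)}\big)\times Y
=W_{\varphi^*(m+1,n+1)}\times Y,
\end{equation*}
the last equality again being an instance of $\overline{C}_{k,l}\cap\overline{C}_{k+1,l+1}=W_{k+1,l+1}$ applied to $(k,l)=\varphi^*(m,n)$. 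This is exactly the assertion of Lemma~\ref{L:F(WxW)*}. I do not expect any real obstacle here; unlike the upcoming analogue of Lemma~\ref{L:Eab}, this step does not require the fixed point property of $W_{m,n}\times Y$, so the argument transfers verbatim. The only care needed is to keep track that the two appearances of the ``$\cap W$'' identity are legitimate: one to split $W_{m+1,n+1}\times Y$ as an intersection, and one to recombine the images — both are purely combinatorial facts about the composants of $X$.
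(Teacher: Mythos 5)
Your proof is correct and follows exactly the route the paper intends: the paper simply declares this lemma ``analogous'' to Lemma~\ref{L:F(WxW)}, whose proof is precisely your argument (write $W_{m+1,n+1}\times Y$ as the intersection of $\overline{C}_{m,n}\times Y$ and $\overline{C}_{m+1,n+1}\times Y$, push $F$ through using the closed-set form of the Case~\caseIy{} inclusions, and recombine via $\overline{C}_{k,l}\cap\overline{C}_{k+1,l+1}=W_{k+1,l+1}$). No gaps.
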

\begin{lemma}[analogue of Lemma~\ref{L:psi-translation}]\label{L:psi-translation*}
	Let a minimal map $F$ satisfy \caseIy{} from Proposition~\ref{P:Dpsi=Z*}.
	Then there are integers $a,m_a$ and $b,n_b$ such that $a\ne 0 \ne b$ and
	\begin{equation*}
	\psi_{11}^*(m)=m+a,
	\qquad
	\psi_{22}^*(n)=n+b
	\end{equation*}
	for all integers $m\in I^{\sgn(a)}_{m_a}$ and $n\in I^{\sgn(b)}_{n_b}$.
	Hence
	\begin{equation*}
	E_{a,b}^*\supseteq I^{\sgn(a)}_{m_a}\times I^{\sgn(b)}_{n_b}.
	\end{equation*}
\end{lemma}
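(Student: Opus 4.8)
The plan is to mirror verbatim the argument from Lemma~\ref{L:psi-translation} in Subsection~\ref{SS:XxX-minimal-11}, simply carrying the extra factor $Y$ through every set. So first I would note that it suffices to prove the claim for $\psi_{11}^*$, the claim for $\psi_{22}^*$ being entirely analogous by symmetry of \caseIy{}. The main point is that $\psi_{11}^*$ has no fixed point: if $\psi_{11}^*(k)=k$ for some $k\in\ZZZ$, then by the first inclusion in \caseIy{} we would have $F(C_k\times\alpha\times Y)\subseteq C_k\times X\times Y$ for all $\alpha\in\AAa$, hence $F(C_k\times X\times Y)\subseteq\overline{C}_k\times X\times Y$, and by taking closures (using the analogue of Lemma~\ref{L:F(gammaxgamma)} and the fact that $F$ is surjective, or more simply just that $\overline{C}_k$ is $F$-invariant in the first coordinate together with \eqref{EQ:Cn-closure}) one checks that the nonempty closed proper set $\overline{C}_k\times X\times Y$ is $F$-invariant. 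Wait — one must be a little careful here: $F(C_k\times X\times Y)\subseteq\overline{C}_k\times X\times Y$ and $\overline{C}_k\times X\times Y$ is the closure of $C_k\times X\times Y$, so $F$ maps this closed set into itself, contradicting the minimality of $F$. Hence $\psi_{11}^*-\Id_\ZZZ$ never vanishes.

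Next I would invoke the monotonicity statement. The analogue of Lemma~\ref{L:psi(m+1)}\eqref{IT:L:psi(m+1):2} holds for $\psi_{11}^*$ (it is one of ``the analogues of all the results from Section~\ref{SS:XxX-surjection}'' that the paragraph preceding Proposition~\ref{P:Dpsi=Z*} grants us): $\psi_{11}^*$ is nondecreasing and $\psi_{11}^*(m+1)-\psi_{11}^*(m)\in\{0,1\}$ for every $m\in\ZZZ$. Consequently $\psi_{11}^*-\Id_\ZZZ$ is nonincreasing and
\begin{equation*}
(\psi_{11}^*-\Id_{\ZZZ})(m+1)-(\psi_{11}^*-\Id_{\ZZZ})(m)\in\{-1,0\}
\end{equation*}
for every $m\in\ZZZ$. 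Now I split into two cases exactly as in the proof of Lemma~\ref{L:psi-translation}. If there is $m_1$ with $\psi_{11}^*(m_1)>m_1$, then since $\psi_{11}^*-\Id_\ZZZ$ is a positive-somewhere, nonincreasing, integer-valued function with unit downward jumps that never vanishes, it must eventually stabilize at a constant value $a>0$: there are integers $a>0$ and $m_a$ with $\psi_{11}^*(m)=m+a$ for all $m\ge m_a$, i.e.\ for all $m\in I^{+}_{m_a}=I^{\sgn(a)}_{m_a}$. Otherwise $\psi_{11}^*<\Id_\ZZZ$ everywhere, so $\psi_{11}^*-\Id_\ZZZ$ is negative and nonincreasing, hence bounded above by $-1$ and constant, say equal to $a<0$, on some lower interval: there are integers $a<0$ and $m_a$ with $\psi_{11}^*(m)=m+a$ for all $m\le m_a$, i.e.\ for all $m\in I^{-}_{m_a}=I^{\sgn(a)}_{m_a}$.

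Running the same dichotomy for $\psi_{22}^*$ produces integers $b\ne 0$ and $n_b$ with $\psi_{22}^*(n)=n+b$ for all $n\in I^{\sgn(b)}_{n_b}$. Finally, for $(m,n)\in I^{\sgn(a)}_{m_a}\times I^{\sgn(b)}_{n_b}$ we have $\varphi^*(m,n)=(\psi_{11}^*(m),\psi_{22}^*(n))=(m+a,n+b)=(m,n)+(a,b)$, so $(m,n)\in E_{a,b}^*$; this gives the claimed inclusion $E_{a,b}^*\supseteq I^{\sgn(a)}_{m_a}\times I^{\sgn(b)}_{n_b}$ and finishes the proof. I do not expect any genuine obstacle here: the only point requiring a moment's thought is the no-fixed-point argument, which relies on the minimality of $F$ and the structure of \caseIy{}; everything else is the elementary combinatorics of nonincreasing integer sequences with unit jumps, copied unchanged from the two-factor case.
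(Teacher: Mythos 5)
Your proof is correct and is essentially the paper's own argument: the paper simply declares the proof analogous to that of Lemma~\ref{L:psi-translation}, and you have carried out exactly that transcription, with the no-fixed-point step (minimality forbids the closed invariant set $\overline{C}_k\times X\times Y$) and the stabilization of the nonincreasing, nonvanishing integer-valued function $\psi_{11}^*-\Id_\ZZZ$ both handled correctly.
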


Contrary to Subsection~\ref{SS:XxX-minimal-11}, now we cannot prove
that a minimal map $F$
satisfying \caseIy{} is a direct product. Instead, we prove that $F$ is 
a skew product over some $T^{a\times b}$.

\begin{lemma}[analogue of Lemma~\ref{L:F-is-Tab}]\label{L:F-is-Tab*}
	Let a minimal map $F$ satisfy \caseIy{} from Proposition~\ref{P:Dpsi=Z*}.
	Then there are integers $a\ne 0 \ne b$ such that $F$ is a skew product over
	$T^{a\times b}$; that is, for every $(x,x',y)\in Z$ we have
	$$
	  \pi_{12} \circ F(x,x',y) = T^{a\times b}(x,x'),
	$$
	where $\pi_{12}\colon Z\to X\times X$ is the projection onto the first two coordinates.
\end{lemma}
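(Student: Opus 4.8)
The plan is to mimic the proof of Lemma~\ref{L:F-is-Tab}, but since the fiber $W_{m,n}\times Y$ need not have the fixed point property, I would replace the coincidence-point argument (Lemmas~\ref{L:coin-F-Tab} and~\ref{L:point-of-coincidence}) by a more direct path-connectedness argument that identifies the first two coordinates of $F$ on each square $W_{m,n}\times Y$, and then extends the identity by density. First I would fix the integers $a\ne 0\ne b$ supplied by Lemma~\ref{L:psi-translation*}, so that $\psi_{11}^*(m)=m+a$ and $\psi_{22}^*(n)=n+b$ for all $(m,n)$ in a quadrant $I^{\sgn(a)}_{m_a}\times I^{\sgn(b)}_{n_b}$. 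For such $(m,n)$ with both $(m,n)$ and $(m+1,n+1)$ in $E_{a,b}^*$, Lemma~\ref{L:F(WxW)*} gives $F(W_{m+1,n+1}\times Y)\subseteq W_{m+1+a,n+1+b}\times Y$; in particular $\pi_{12}\circ F$ maps the square $W_{m+1,n+1}$ (times $Y$) into the square $W_{m+1+a,n+1+b}=T^{a\times b}(W_{m+1,n+1})$.

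The key step is then to show that on each such fiber $\pi_{12}\circ F = T^{a\times b}\circ\pi_{12}$. I would argue as follows. Consider the map $G = (T^{a\times b})^{-1}\circ\pi_{12}\circ F \colon W_{m+1,n+1}\times Y \to W_{m+1,n+1}$. Here $W_{m+1,n+1}$ is homeomorphic to a square $[0,1]^2$. The claim is that $G$ is the first projection, i.e. $G(w,y)=w$ for all $w\in W_{m+1,n+1}$, $y\in Y$. For this, fix $y\in Y$ and consider the continuous self-map $G(\cdot,y)$ of the square; by the fixed point property of the square it has a fixed point, so $\Coin(\pi_{12}\circ F(\cdot,y), T^{a\times b})\cap W_{m+1,n+1}\ne\emptyset$ for every $y$. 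Thus for every $y$ the point-coincidence set of $\pi_{12}\circ F(\cdot,y)$ and $T^{a\times b}$ inside $W_{m+1,n+1}$ is nonempty. Running this over all $(m,n)$ in the quadrant, using \eqref{EQ:Wn-union-dense} to build (exactly as in the proof of Lemma~\ref{L:F-is-Tab}) a sequence $((m_i,n_i))$ with $\bigcup W_{m_i+1,n_i+1}$ dense in $X\times X$, diameters tending to $0$, I obtain for each fixed $y\in Y$ that $\pi_{12}\circ F(\cdot,y)$ and $T^{a\times b}$ agree on a set meeting every member of a dense family of nowhere dense sets with diameters $\to 0$; by Lemma~\ref{L:coincidence-of-maps} (applied with $Y$ there equal to $X\times X$ and $Z$ there equal to $X\times X$) this forces $\pi_{12}\circ F(x,x',y)=T^{a\times b}(x,x')$ for all $(x,x')\in X\times X$. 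Since $y$ was arbitrary, $F$ is a skew product over $T^{a\times b}$, as claimed.

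The main obstacle I anticipate is making the fiberwise coincidence argument uniform in $y$: for each individual $y$ the fixed point of $G(\cdot,y)$ may be a different point of the square, so one cannot directly conclude that a single point of $W_{m+1,n+1}$ is a coincidence point for all $y$ simultaneously. The remedy, which I would carry out carefully, is precisely to apply Lemma~\ref{L:coincidence-of-maps} \emph{separately for each fixed} $y$ — the nowhere dense sets $W_{m_i+1,n_i+1}$ and the construction of the dense sequence do not depend on $y$, only the coincidence points do, and Lemma~\ref{L:coincidence-of-maps} only needs the coincidence set to \emph{meet} each $W_{m_i+1,n_i+1}$, which holds for every $y$ by the fixed point property of the square. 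Everything else (Proposition~\ref{P:Dpsi=Z*}, Lemma~\ref{L:F(gammaxgamma)*}, Lemmas~\ref{L:F(WxW)*} and~\ref{L:psi-translation*}) transcribes verbatim from the two-factor case with a spectator factor $Y$ glued to every set, as noted at the start of Section~\ref{Sec:other.min}, so I would not belabor those. The point-transitivity/minimality of $F$ enters only through Lemma~\ref{L:psi-translation*} (to guarantee $a\ne 0\ne b$ and the eventual translation behavior of $\psi_{11}^*,\psi_{22}^*$), exactly as in the original proof.
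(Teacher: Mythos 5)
Your proposal is correct and follows essentially the same route as the paper: the paper also fixes $y$, defines $F_y=\pi_{12}\circ F(\cdot,y)$, uses Lemma~\ref{L:F(WxW)*} together with the fixed point property of the square (via Lemma~\ref{L:coin-F-Tab}) to get a coincidence point in each $W_{m_i+1,n_i+1}$, and then applies Lemma~\ref{L:coincidence-of-maps} separately for each fixed $y$ to conclude $F_y=T^{a\times b}$. Your worry about uniformity in $y$ is resolved exactly as you say, so despite your opening remark you have not really replaced the coincidence-point argument but reproduced it fiberwise, which is precisely what the paper does.
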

\begin{proof}
	Take $a\ne 0 \ne b$ from Lemma~\ref{L:psi-translation*}.
	As in the proof of Lemma~\ref{L:F-is-Tab} construct sequences $(m_i)_{i\ge 1}$ and 
	$(n_i)_{i\ge 1}$ in 
	$\ZZZ$ such that
	\begin{enumerate}
 	\item\label{IT:1} for every $i$, both $(m_i,n_i)$ and $(m_{i}+1,n_{i}+1)$ belong to $E_{a,b}^*$;
 	\item the union of $W_{m_i+1,n_i+1}$ is dense in $X\times X$;
 	\item the diameters of $W_{m_i+1,n_i+1}$ tend to zero.
	\end{enumerate}
	
	Fix $y\in Y$ and consider the map $F_y\colon X\times X\to X\times X$
	defined by $F_y(x,x')=\pi_{12}\circ F(x,x',y)$.	
	Then $F_y$ is a well-defined continuous map.
	Notice that Lemma~\ref{L:F(WxW)*} and the property (1) above imply that, for every $i$,
	\begin{equation*}
	  F_y(W_{m_i+1,n_i+1})\subseteq W_{\varphi^*(m_i+1,n_i+1)} = W_{m_i+1+a,n_i+1+b}
	  = T^{a\times b} (W_{m_i+1,n_i+1}).
	\end{equation*}
	Thus, by Lemma~\ref{L:coin-F-Tab},
	$\Coin(F_y,T^{a\times b})$ intersects every $W_{m_i+1,n_i+1}$.
	Now Lemma~\ref{L:coincidence-of-maps} applied to the maps $F_y,T^{a\times b}\colon X\times X \to X\times X$ and
	the nowhere dense sets $W_{m_i+1,n_i+1}$ 
	(which satisfy the conditions (2) and (3) above)
	shows that $F_y=T^{a\times b}$.
    Since this is true for
	every $y\in Y$ and $a,b$ do not depend on $y$, $F$ is a skew product over $T^{a\times b}$.
\end{proof}

\subsection{Case~\caseIIy{} from Proposition~\ref{P:Dpsi=Z*}}

\begin{lemma}[analogue of Lemma~\ref{L:case2}]\label{L:case2*}
	Let a minimal map $F\colon Z\to Z$ satisfy \caseIIy{} from Proposition~\ref{P:Dpsi=Z*}.
	Then there are integers $a'\ne 0 \ne b'$ such that $F^2$ is a skew product over 
	$T^{a'\times b'}$.
\end{lemma}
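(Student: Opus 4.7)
The plan is to mirror the proof of Lemma~\ref{L:case2} from the two-factor case: I would show that $F^2$ satisfies \caseIy{} from Proposition~\ref{P:Dpsi=Z*} and then invoke Lemma~\ref{L:F-is-Tab*} applied to $F^2$. Note that $F^2$ is minimal because $F$ is minimal and $Z = X\times X \times Y$ is connected (being the product of the continuum $X\times X$ with the path-connected continuum $Y$), and $F^2$ is a continuous surjection because $F$ is.

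The substantive step — which I expect to be the main obstacle, as it was in the two-factor case — is upgrading the \caseIIy{} inclusions to show that $F^2$ actually satisfies \caseIy{}. Starting from the assumption \caseIIy{}, for every $m\in\ZZZ$ and $\alpha\in\AAa$ I have
\begin{equation*}
F(C_m\times\alpha\times Y)\subseteq X\times C_{\psi_{12}^*(m)}\times Y\subseteq X\times\gamma\times Y.
\end{equation*}
I would invoke Lemma~\ref{L:F(gammaxgamma)*}, which asserts $F^{-1}(\gamma\times\gamma\times Y)=\gamma\times\gamma\times Y$ and hence that $F$ sends the complement $Z\setminus(\gamma\times\gamma\times Y)$ into itself. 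Since $C_m\times\alpha\times Y$ lies in this complement (as $\alpha\subseteq X\setminus\gamma$), combining the two inclusions gives
\begin{equation*}
F(C_m\times\alpha\times Y)\subseteq (X\setminus\gamma)\times C_{\psi_{12}^*(m)}\times Y.
\end{equation*}
The target splits as the disjoint union of path components $\beta\times C_{\psi_{12}^*(m)}\times Y$ with $\beta\in\AAa$; by path-connectedness of $C_m\times\alpha\times Y$ its image lies in exactly one such component. Applying \caseIIy{} once more to a component of type $\beta\times C_{\psi_{12}^*(m)}\times Y$ yields
\begin{equation*}
F^2(C_m\times\alpha\times Y)\subseteq C_{\psi_{21}^*(\psi_{12}^*(m))}\times X\times Y,
\end{equation*}
and by the symmetric argument $F^2(\alpha\times C_n\times Y)\subseteq X\times C_{\psi_{12}^*(\psi_{21}^*(n))}\times Y$. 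These are precisely the inclusions expressing that $F^2$ satisfies \caseIy{} of Proposition~\ref{P:Dpsi=Z*}, with $\psi_{11}$-data $\psi_{21}^*\circ\psi_{12}^*$ and $\psi_{22}$-data $\psi_{12}^*\circ\psi_{21}^*$.

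Having verified \caseIy{} for the minimal continuous surjection $F^2$, Lemma~\ref{L:F-is-Tab*} delivers integers $a'\ne 0\ne b'$ such that $F^2$ is a skew product over $T^{a'\times b'}$, completing the proof. The only point requiring care is the path-component bookkeeping in the middle step above — ensuring that the intermediate image, which a priori could straddle either an $\alpha\times C_n\times Y$-type component or a $C_m\times C_n\times Y$-type component, is forced into the former by Lemma~\ref{L:F(gammaxgamma)*}; everything else is a direct transcription of the two-factor argument.
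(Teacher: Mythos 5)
Your proposal is correct and follows essentially the same route as the paper: combine the \caseIIy{} inclusions with Lemma~\ref{L:F(gammaxgamma)*} to force the intermediate images into $(X\setminus\gamma)\times C_{\psi_{12}^*(m)}\times Y$ (resp.\ $C_{\psi_{21}^*(n)}\times(X\setminus\gamma)\times Y$), apply \caseIIy{} once more to conclude that $F^2$ satisfies \caseIy{} with data $\psi_{21}^*\circ\psi_{12}^*$ and $\psi_{12}^*\circ\psi_{21}^*$, note that $F^2$ is minimal by connectedness of $Z$, and invoke Lemma~\ref{L:F-is-Tab*}. The path-component bookkeeping you spell out in the middle step is exactly the point the paper treats (more tersely) in the same way, so there is nothing to add.
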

\begin{proof}
By \caseIIy{} and Lemma~\ref{L:F(gammaxgamma)*}, for every $m,n\in\ZZZ$ and $\alpha\in\AAa$ 
it holds that
\begin{equation*}
  F(C_m\times\alpha\times Y)
  \subseteq 
  (X\setminus\gamma)\times C_{\psi_{12}^*(m)}\times Y,
\qquad
  F(\alpha\times C_n\times Y)
  \subseteq 
  C_{\psi_{21}^*(n)}\times (X\setminus\gamma)\times Y.
\end{equation*}
These two inclusions imply that, for all $m,n\in\ZZZ$ and $\alpha\in\AAa$,
\begin{equation*}
  F^2(C_m\times\alpha\times Y)
  \subseteq 
  C_{\psi_{21}^*(\psi_{12}^*(m))}\times X\times Y
\quad\text{and}\quad
  F^2(\alpha\times C_n\times Y)
  \subseteq 
  X\times C_{\psi_{12}^*(\psi_{21}^*(n))}\times Y.
\end{equation*}
So $F^2$ satisfies \caseIy{} from Proposition~\ref{P:Dpsi=Z*}.
Moreover, $F^2$ is also minimal since $F$ is minimal and $Z$ is a continuum.
Thus, applying Lemma~\ref{L:F-is-Tab*} to $F^2$, we immediately obtain the assertion of the 
lemma.
\end{proof}

\subsection{Proof of Theorem~\ref{T:XxXxY}}
\begin{proof}
Suppose that $F\colon Z\to Z$ is minimal.
Since $F$ satisfies either \caseIy{} or \caseIIy{} from Proposition~\ref{P:Dpsi=Z*},
Lemmas~\ref{L:F-is-Tab*} and \ref{L:case2*} yield that $F^2$ is a skew product 
over $T^{c\times d}$ for some integers $c,d$. Since $F^2$ is minimal we get that also $T^{c\times d}$ is minimal, which contradicts Proposition~\ref{P:T-ab-nonminimal}.
\end{proof}

\subsection{Proof of Theorem~A(2,3)}

Theorem~\ref{T:XxXxY} enables us to find minimal spaces,
other than DST spaces, with nonminimal squares. In fact, we have the following theorem, giving (2) of Theorem~A.

\begin{theorem}\label{T:XxY-nonminimal-square}
   Let $X$ be a DST space and $Y$ be a product-minimal path-connected continuum.
   Then $X\times Y$ is a minimal space with nonminimal square.
\end{theorem}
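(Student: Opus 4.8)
The plan is to prove Theorem~\ref{T:XxY-nonminimal-square} in two halves: first that $X\times Y$ is minimal, then that $(X\times Y)^2$ is not minimal. The first half is immediate from the hypotheses. Since $X$ is a DST space, it carries the minimal homeomorphism $T$ from Subsection~\ref{SS:Slovak}, so $(X,T)$ is a minimal system. Since $Y$ is a product-minimal space, by Definition~\ref{D:pum space} there exists a continuous map $S\colon Y\to Y$ such that $(X\times Y, T\times S)$ is minimal. Hence $X\times Y$ admits a minimal continuous map, i.e., it is a minimal space.

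For the second half, the plan is to apply Theorem~\ref{T:XxXxY}. Note that $(X\times Y)\times(X\times Y)$ is homeomorphic to $X\times X\times (Y\times Y)$. Since $Y$ is a path-connected continuum, so is $Y\times Y$ (a product of path-connected continua is again a path-connected continuum). Therefore Theorem~\ref{T:XxXxY}, applied with the path-connected continuum $Y\times Y$ in place of $Y$, tells us that $X\times X\times(Y\times Y)$ is not minimal. Consequently $(X\times Y)^2$ is not minimal, which completes the proof.

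The only point that requires a word of care is the footnote claim that $X\times Y$ may admit a minimal homeomorphism or only a minimal noninvertible map depending on $Y$: this is not needed for the statement itself, since product-minimality only guarantees a minimal continuous map, and the conclusion of the theorem merely asserts that $X\times Y$ is a minimal \emph{space}. So I do not expect any real obstacle here; the entire argument is a short deduction from Definition~\ref{D:pum space} and Theorem~\ref{T:XxXxY}, together with the elementary observation that a finite product of path-connected continua is a path-connected continuum and that $(X\times Y)^2 \cong X^2\times (Y\times Y)$.

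The proof in full:
\begin{proof}
Since $X$ is a DST space, $(X,T)$ is a minimal system, where $T$ is the minimal homeomorphism from Subsection~\ref{SS:Slovak}. As $Y$ is product-minimal, there is a continuous map $S\colon Y\to Y$ such that $(X\times Y, T\times S)$ is minimal; hence $X\times Y$ is a minimal space.

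It remains to show that $(X\times Y)^2$ is not minimal. The space $Y\times Y$ is a path-connected continuum, being a finite product of path-connected continua. By Theorem~\ref{T:XxXxY} (applied with $Y\times Y$ in place of $Y$), the space $X\times X\times(Y\times Y)$ is not minimal. Since $(X\times Y)\times(X\times Y)$ is homeomorphic to $X\times X\times(Y\times Y)$, it follows that $(X\times Y)^2$ is not minimal.
\end{proof}
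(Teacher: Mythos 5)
Your proposal is correct and follows essentially the same route as the paper: minimality of $X\times Y$ is immediate from the definition of product-minimality, and nonminimality of the square is obtained by observing that $(X\times Y)^2\cong X\times X\times Y^2$ with $Y^2$ a path-connected continuum and invoking Theorem~\ref{T:XxXxY}. No gaps.
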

\begin{proof}
   Since $X$ is minimal and $Y$ is product-minimal, the space $X\times Y$ is minimal. Further, $Y^2$ is a path-connected continuum, hence the space $X\times X\times Y^2$ is not minimal by Theorem~\ref{T:XxXxY}. Finally, since the square $(X\times Y)^2$ is homeomorphic to $X\times X\times Y^2$, we are done.
\end{proof}

Finally we prove Theorem~A(3).

\begin{theorem}\label{T:XxT-nonminimal-square}
   Let $X$ be a DST space and $n\ge 2$ be an integer.
   Then the space $X\times\mathbb T^n$ admits a minimal homeomorphism as well as a minimal noninvertible map,
   but its square is not minimal.
\end{theorem}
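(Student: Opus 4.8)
\textbf{Proof proposal for Theorem~\ref{T:XxT-nonminimal-square}.}

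The plan is to split the statement into two independent parts: (a) the square $(X\times\mathbb T^n)^2$ is not minimal, and (b) $X\times\mathbb T^n$ admits both a minimal homeomorphism and a minimal noninvertible map. For part (a) I would simply observe that $(X\times\mathbb T^n)^2$ is homeomorphic to $X\times X\times\mathbb T^{2n}$, that $\mathbb T^{2n}$ is a path-connected continuum, and invoke Theorem~\ref{T:XxXxY} directly; this is immediate and is exactly the same reduction as in the proof of Theorem~\ref{T:XxY-nonminimal-square}. (Note that $\mathbb T^n$ itself is not product-minimal as a standalone space in the relevant sense for us to quote Theorem~\ref{T:XxY-nonminimal-square} verbatim with $Y=\mathbb T^n$ — actually $\mathbb T^n$ \emph{is} an HPM space by Theorem~B(1), so one could also just cite Theorem~\ref{T:XxY-nonminimal-square}; either route works, and I would state it through Theorem~\ref{T:XxXxY} to keep the argument self-contained.)

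For the minimal homeomorphism in part (b): since $\mathbb T^n$ is a compact connected metrizable abelian group, Theorem~\ref{T:groups} (or Theorem~B(1)) applied to the minimal system $(X,T)$, where $T$ is the minimal homeomorphism on the DST space $X$ from Subsection~\ref{SS:Slovak}, produces an element $g\in\mathbb T^n$ such that $T\times R_g$ is a minimal homeomorphism on $X\times\mathbb T^n$. So this half is a one-line consequence of homeo-product-minimality of tori.

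For the minimal noninvertible map in part (b) — which I expect to be the main obstacle — the idea is to keep the homeomorphism $T$ in the $X$-coordinate (a DST space admits no noninvertible minimal map, so the noninvertibility must come from the torus factor) and to find a noninvertible minimal continuous selfmap $S$ of $\mathbb T^n$ with $T\times S$ minimal. The natural candidate is an affine map $S(z)=Bz+g$ on $\mathbb T^n$, where $B\in M_n(\mathbb Z)$ is an integer matrix with $|\det B|\ge 2$ (so $S$ is a $|\det B|$-to-one covering, hence noninvertible) chosen so that $S$ is still topologically transitive / minimal on $\mathbb T^n$, and then perturbing $g$ to achieve minimality of the product with $(X,T)$. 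Here the key point is that for a suitable hyperbolic or quasi-hyperbolic toral endomorphism part $B$ together with a generic translation part, the skew-type argument of Section~\ref{S:new.from.old} does not directly apply (that machinery used group rotations), so instead I would argue via the omega-limit / Baire category scheme as in the proof of Theorem~\ref{T:groups}: set $Q=\{g\in\mathbb T^n\colon T\times S_g \text{ is minimal}\}$ with $S_g(z)=Bz+g$, show $Q$ is a $G_\delta$ (same computation with basic open boxes and iterates, using Lemma~\ref{L:fiber} to reduce minimality to transitivity of a single fiber), and then show $Q$ is nonempty (in fact dense or residual) using that $\det(B^k - \mathrm{Id})\ne 0$ combined with a uniform-distribution input; the recurrence times $k_n$ with $T^{k_n}(x_0)\to x_0$ coming from minimality of $T$ must be matched against the orbit of $g$ under the maps $z\mapsto B z + (\text{sums})$. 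The hard part will be verifying that the relevant points $(B^{k_n}+B^{k_n-1}+\dots+I)g + B^{k_n}h_0$ are dense in $\mathbb T^n$ for almost every $g$, which requires either a Weyl-type equidistribution theorem for polynomial-in-$k$ sequences on the torus or a careful choice of $B$ (e.g. $B$ with no eigenvalue a root of unity) so that $B^{k}-\mathrm{Id}$ is invertible over $\mathbb Q$ and a transfer-operator mixing argument in the spirit of Lemma~\ref{s.mix} goes through. Once density of that orbit set is established for a residual/full-measure set of $g$, the argument that such $g$ lie in $Q$ is the same three-step pattern as in Theorem~\ref{T:groups}, and picking any such $g$ completes the construction of the noninvertible minimal map.

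Assembling the three pieces — part (a) via Theorem~\ref{T:XxXxY}, the homeomorphism via Theorem~\ref{T:groups}, and the noninvertible map via the endomorphism-plus-translation Baire argument — yields the theorem. I would present part (a) and the homeomorphism part in full and treat the noninvertible part at the level of detail of the analogous Baire-category proofs already in the paper, flagging the equidistribution lemma as the one genuinely new ingredient.
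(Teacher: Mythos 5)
Your reduction of the nonminimal-square claim to Theorem~\ref{T:XxXxY} (equivalently, to Theorem~\ref{T:XxY-nonminimal-square} with $Y=\mathbb T^n$, which is legitimate since $\mathbb T^n$ is HPM) and your derivation of the minimal homeomorphism from Theorem~\ref{T:groups} are both correct and coincide with the paper. The gap is in the noninvertible part: an affine map $S(z)=Bz+g$ on $\mathbb T^n$ with $\abs{\det B}\ge2$ is \emph{never} minimal, so no Baire-category tuning of the translation part $g$ can rescue the construction. Your own ``careful choice'' makes the failure immediate: if no eigenvalue of $B$ is a root of unity, then $\det(B-I)\neq0$, so the endomorphism $z\mapsto(B-I)z$ of $\mathbb T^n$ is surjective, the equation $Bz+g=z$ has a solution, and $S$ has a fixed point. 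The obstruction persists for every $B$ with $\abs{\det B}\ge2$: split the characteristic polynomial of $B$ over $\mathbb Q$ into its cyclotomic part and the rest, and pass to the quotient torus corresponding to the rational $B$-invariant subspace on which no eigenvalue is a root of unity. This quotient is nondegenerate (otherwise all eigenvalues of $B$ would be roots of unity, forcing $\abs{\det B}=1$), it carries an induced affine factor $\bar S(\bar z)=\bar B\bar z+\bar g$ with $\det(\bar B-I)\neq0$, hence with a fixed point; a factor of a minimal system being minimal, $S$ cannot be minimal. So minimal affine selfmaps of tori are automatically invertible, and the noninvertibility must come from a non-algebraic construction.

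The paper produces the noninvertible minimal map by combining three ingredients absent from your sketch: Theorem~\ref{T:flow.centr} gives a residual set of times $t$ for which $T\times\varphi_t$ is minimal, where $\phi=(\varphi_t)_{t\in\mathbb R}$ is a minimal irrational flow on $\mathbb T^n$ (its centralizer contains all rotations, hence acts transitively); \cite[Theorem~2.1]{BCO} gives a residual set of times $t$ for which $\varphi_t$ admits a noninvertible minimal map $S_t$ on $\mathbb T^n$ as an almost 1-to-1 extension; and Lemma~\ref{L:minim.ext} (with Lemma~\ref{L:closed}) transfers minimality from $T\times\varphi_t$ to its almost 1-to-1 extension $T\times S_t$. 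Choosing $t$ in the intersection of the two residual sets finishes the proof. Note that the hypothesis $n\ge2$ enters precisely here: the circle admits no noninvertible minimal maps, so an extension-type argument on a higher-dimensional torus, rather than an algebraic formula, is unavoidable.
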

\begin{proof}
  Since the $n$-torus $\mathbb T^n$ is homeo-product-minimal by Theorem~\ref{T:groups}, the space $X\times\mathbb T^n$ admits a minimal homeomorphism. 
  By Theorem~\ref{T:XxY-nonminimal-square}, the square of $X\times\mathbb T^n$ is not minimal. So, to finish the proof, it remains to show that $X\times\mathbb T^n$ admits a minimal noninvertible map.
  
Fix a minimal homeomorphism $T$ on $X$ and a minimal irrational flow $\phi=(\varphi_t)_{t\in\mathbb R}$ on $\mathbb T^n$. Since the centralizer $Z(\phi)$ of $\phi$ in $\mathcal H(\mathbb T^n)$
contains all rotations on $\mathbb T^n$ and so it acts transitively on $\mathbb T^n$ in the algebraic sense, Theorem~\ref{T:flow.centr} yields a residual set $G_1\subseteq\mathbb R$ such that for every $t\in G_1$, the product $(X\times \mathbb T^n, T\times\varphi_t)$ is a minimal homeomorphism. By \cite[Theorem~2.1]{BCO},
  there is a residual set $G_2\subseteq\mathbb R$ such that for every $t\in G_2$, the homeomorphism $\varphi_t$ admits a noninvertible minimal map $S_t\colon \mathbb T^n\to\mathbb T^n$ as an almost 1-1 extension. Fix $t\in G_1\cap G_2$. Then $T\times\varphi_t$ is a minimal homeomorphism and, clearly, $T\times S_t$ is an almost 1-1 extension of $T\times\varphi_t$. Hence $T\times S_t$ is a minimal noninvertible map on $X\times\mathbb T^n$ by Lemma~\ref{L:minim.ext}.
\end{proof}


\end{large}
\end{document}